\def\titlerunning#1{\gdef\titrun{#1}}
\def\author#1{\gdef\autrun{\def\and{\unskip, }#1}\gdef\@author{#1}}
\def\address#1{{\def\and{\\\hspace*{18pt}}\renewcommand{\thefootnote}{}%
\footnote {#1}}%
\markboth{\autrun}{\titrun}}
\def\email#1{e-mail: #1}
\def\subjclass#1{{\renewcommand{\thefootnote}{}%
\footnote{\emph{Mathematics Subject Classification (2010):} #1}}}
\def\keywords#1{\par\medskip
\noindent\textbf{Keywords.} #1}
\newtheorem{theorem}{Theorem}[section]
\newtheorem{corollary}[theorem]{Corollary}
\newtheorem{lemma}[theorem]{Lemma}
\newtheorem{thmx}{Theorem}
\newtheorem{proposition}[theorem]{Proposition}
\newtheorem*{lemmastar}{\textbf{Lemma}}
\numberwithin{theorem}{section}
\theoremstyle{definition}
\newtheorem{definition}[theorem]{Definition}
\newtheorem{remark}[theorem]{Remark}
\newtheorem{example}[theorem]{Example}
\numberwithin{equation}{section}
\def\hat{\widehat}
\let\li\overline
\newcommand{\Aut}{\textup{Aut}}
\newcommand{\A}{\mathbb A}
\newcommand{\BC}{\mathrm{BC}}
\newcommand{\BGSp}{\tu{B}_{\GSp}}
\newcommand{\BSp}{\tu{B}_{\Sp}}
\newcommand{\Bst}{{\mathbb B}_\st}
\newcommand{\C}{\mathbb C}
\newcommand{\End}{\tu{End}}
\newcommand{\Frob}{\textup{Frob}}
\newcommand{\GL}{{\textup {GL}}}
\newcommand{\GO}{\textup{GO}}
\newcommand{\GSO}{\tu{GSO}}
\newcommand{\GSpcmptFv}{\tu{GSp}_{2n, F_y}^{\tu{cmpt}}}
\newcommand{\GSpin}{\textup{GSpin}}
\newcommand{\GSp}{\textup{GSp}}
\newcommand{\Gal}{{\textup{Gal}}}
\newcommand{\Ga}{{\mathbb G}_\tu{a}}
\newcommand{\Gl}{{\textup {GL}}}
\newcommand{\Gm}{\textbf{G}_{\textup{m}}}
\newcommand{\Gsp}{\textup{GSp}}
\newcommand{\G}{\mathbb G}
\newcommand{\HT}{\tu{HT}}
\newcommand{\Hodge}{\textup{Hodge}}
\newcommand{\Hom}{\textup{Hom}}
\newcommand{\Hunr}{\cH^{\tu{unr}}}
\newcommand{\IH}{\tu{IH}}
\newcommand{\Irr}{\mathrm{Irr}}
\newcommand{\Isom}{\tu{Isom}}
\newcommand{\Ker}{\textup{ker}}
\newcommand{\Lef}{\textup{Lef}}
\newcommand{\Lie}{\textup{Lie}}
\newcommand{\PGL}{\tu{PGL}}
\newcommand{\PGSp}{\textup{PGSp}}
\newcommand{\PO}{\textup{PO}}
\newcommand{\PSp}{\tu{PSp}}
\newcommand{\Q}{\mathbb Q}
\newcommand{\Rep}{{\tu{Rep}}}
\newcommand{\Res}{\textup{Res}}
\newcommand{\R}{\mathbb R}
\newcommand{\SL}{\textup{SL}}
\newcommand{\SO}{{\textup{SO}}}
\newcommand{\STO}{\tu{STO}}
\newcommand{\ST}{\textup{ST}}
\newcommand{\SU}{{\textup{SU}}}
\newcommand{\Sbad}{{S_{\tu{bad}}}}
\newcommand{\Sh}{\textup{Sh}}
\newcommand{\Spin}{\textup{Spin}}
\newcommand{\Sp}{\textup{Sp}}
\newcommand{\Stab}{\textup{stab}}
\newcommand{\St}{\textup{St}}
\newcommand{\TGO}{\uT_{\GO}}
\newcommand{\TGSpin}{\uT_{\GSpin}}
\newcommand{\TGSp}{\uT_{\GSp}}
\newcommand{\TGsp}{\uT_{\GSp}}
\newcommand{\TSO}{\uT_{\SO}}
\newcommand{\TSp}{\uT_{\Sp}}
\newcommand{\Tr}{\textup{Tr}}
\newcommand{\Zhat}{\widehat{\mathbb{Z}}}
\newcommand{\Z}{\mathbb Z}
\newcommand{\ad}{\textup{ad}}
\newcommand{\armapsto}{\ar@{|->}}
\newcommand{\bad}{\textup{bad}}
\newcommand{\bs}{\backslash}
\newcommand{\bydef}{\overset{\textup{def}}{=}}
\newcommand{\cA}{{\mathcal A}}
\newcommand{\cE}{\mathcal E}
\newcommand{\cH}{\mathcal H}
\newcommand{\cL}{\mathcal L}
\newcommand{\cN}{\mathcal N}
\newcommand{\cO}{\mathcal O}
\newcommand{\cS}{\mathcal S}
\newcommand{\cV}{\mathcal V}
\newcommand{\coh}{\tu{shim}}
\newcommand{\cusp}{\textup{cusp}}
\newcommand{\dd}{\textup{d}}
\newcommand{\der}{\textup{der}}
\newcommand{\diag}{{\textup{diag}}}
\newcommand{\disc}{\textup{disc}}
\newcommand{\el}{\mathrm{ell}}
\newcommand{\eps}{\varepsilon}
\newcommand{\ep}{\textup{ep}}
\newcommand{\et}{{\textup{\'et}}}
\newcommand{\fkX}{\mathfrak{X}}
\newcommand{\fkh}{\mathfrak{h}}
\newcommand{\fkp}{\mathfrak{p}}
\newcommand{\funcGLlef}{f_{\Lef}^{\GL_{2n+1},\theta}}
\newcommand{\funcSP}{f^{\Sp_{2n}}_{\vst}}
\newcommand{\hra}{\hookrightarrow}
\newcommand{\iH}{\mathfrak H}
\newcommand{\ie}{\textit{i.e.}\ }
\newcommand{\ig}{\mathfrak g}
\newcommand{ \injects}{\rightarrowtail}
\newcommand{\inv}{^{-1}}
\newcommand{\isomto}{\overset \sim \to}
\newcommand{\kleinvierkant}[4]{{\tiny{\lhk \begin{smallmatrix} #1 & #2 \cr #3 & #4 \end{smallmatrix}\rhk}}}
\newcommand{\lbr}{\left\lbrace}
\newcommand{\lhk}{\left(}
\newcommand{\lmapsto}{\longmapsto}
\newcommand{\lql}{\overline{\Q}_\ell}
\newcommand{\lzl}{\overline{\Z}_\ell}
\newcommand{\ol}{\overline}
\newcommand{\one}{\textbf{\textup{1}}}
\newcommand{\ql}{\Q_\ell}
\newcommand{\qp}{{\mathbb{Q}_p}}
\newcommand{\rank}{\textup{rank}}
\newcommand{\ra}{\rightarrow}
\newcommand{\rbr}{\right\rbrace}
\newcommand{\rec}{\textup{rec}}
\newcommand{\rhk}{\right)}
\newcommand{\simil}{\textup{sim}}
\newcommand{\spin}{\textup{spin}} 
\newcommand{\ssimple}{ {\tu{ss}} }
\newcommand{\sslash}{{\mathbin{/\mkern-6mu/}}}
\newcommand{\std}{\textup{std}}
\newcommand{\st}{\tu{st}}
\newcommand{\surjects}{\twoheadrightarrow}
\newcommand{\tilrhoflat}{\widetilde {\rho_{\pi^\flat}}}
\newcommand{\tu}[1]{\textup{#1}}
\newcommand{\uHom}{\underline {\Hom}}
\newcommand{\uH}{\textup{H}}
\newcommand{\uIH}{\tu{IH}}
\newcommand{\uM}{\textup{M}}
\newcommand{\uN}{\textup{N}}
\newcommand{\uO}{\textup{O}}
\newcommand{\uT}{\textup{T}}
\newcommand{\uc}{\tu{c}}
\newcommand{\unr}{\mathrm{unr}}
\newcommand{\vierkant}[4]{{\lhk \begin{smallmatrix} #1 & #2 \cr #3 & #4 \end{smallmatrix} \rhk } }
\newcommand{\vinfty}{{v_ \infty}}
\newcommand{\vol}{\textup{vol}}
\newcommand{\vst}{{v_{\tu{St}}}}
\newcommand{\wh}[1]{\widehat{#1}}
\newcommand{\wt}{\widetilde}
\renewcommand{\Gm}{{\mathbb G}_{\tu{m}}}
\renewcommand{\Gm}{{\mathbb{G}_{\textup{m}}}}
\renewcommand{\Lie}{\tu{Lie}\,}
\begin{document}

\baselineskip=17pt

\titlerunning{Galois representations for general symplectic groups}

\title{Galois representations for general symplectic groups}

\author{Arno Kret \and Sug Woo Shin}

\date{\today}

\maketitle

\address{A. Kret: Korteweg-de Vries Institute
Science Park 105
1090 GE Amsterdam, Netherlands; \email{arnokret@gmail.com}
\and S. W. Shin: Department of Mathematics, UC Berkeley, Berkeley, CA 94720, USA /\!/ Korea Institute for Advanced Study, 85 Hoegiro,
Dongdaemun-gu, Seoul 130-722, Republic of Korea; \email{sug.woo.shin@berkeley.edu}}

\subjclass{Primary 11R39; Secondary 11F70, 11F80, 11G18}

\begin{abstract}
We prove the existence of $\GSpin$-valued Galois representations corresponding to cohomological cuspidal automorphic representations of general symplectic groups over totally real number fields under the local hypothesis that there is a Steinberg component. This confirms the Buzzard--Gee conjecture on the global Langlands correspondence in new cases. As an application we complete the argument by Gross and Savin to construct a rank seven motive whose Galois group is of type $G_2$ in the cohomology of Siegel modular varieties of genus three. Under some additional local hypotheses we also show automorphic multiplicity one as well as meromorphic continuation of the spin $L$-functions.
\keywords{Automorphic representations, Galois representations, Langlands correspondence, Shimura varieties}
\end{abstract}

\tableofcontents

\section*{Introduction}
Let $G$ be a connected reductive group over a number field $F$. The conjectural global Langlands correspondence for $G$ predicts a correspondence between certain automorphic representations of $G(\A_F)$ and certain $\ell$-adic Galois representations valued in the $L$-group of $G$. Let us recall from \cite[\S3.2]{BuzzardGee} a rather precise conjecture on the existence of Galois representations for a connected reductive group $G$ over a number field $F$. Let $\pi$ be a cuspidal $L$-algebraic automorphic representation of $G(\A_F)$. (We omit their conjecture in the $C$-normalization, cf. \cite[Conj. 5.40]{BuzzardGee}, but see Theorem \ref{thm:RhoPi0} below.) Denote by $\hat G(\lql)$ the Langlands dual group of $G$ over $\lql$, and by $^L G(\lql)$ the $L$-group of $G$ formed by the semi-direct product of $\hat G(\lql)$ with $\Gal(\ol{F}/F)$. According to their conjecture, for each prime $\ell$ and each field isomorphism $\iota:\C\simeq \lql$, there should exist a continuous representation 
$$
\rho_{\pi,\iota} \colon \Gal(\ol{F}/F)\ra {}^LG(\lql),
$$
which is a section of the projection $^L G(\lql)\ra \Gal(\ol{F}/F)$, such that the following holds: at each place $v$ of $F$ where $\pi_v$ is unramified, the restriction $\rho_{\pi,\iota,v} \colon \Gal(\ol{F}_v/F_v)\ra {}^L G(\lql)$ corresponds to $\pi_v$ via the unramified Langlands correspondence. Moreover $\rho_{\pi,\iota}$ should satisfy other desiderata, cf. Conjecture 3.2.2 of \emph{loc. cit}. For instance at places $v$ of $F$ above $\ell$, the localizations $\rho_{\pi,\iota,v}$ are potentially semistable and have Hodge-Tate cocharacters determined by the infinite components of $\pi$. Note that if $G$ is a split group over $F$, we may as well take $\rho_{\pi,\iota}$ to have values in $\hat G(\lql)$. To simplify notation, we often fix $\iota$ and write $\rho_\pi$ and $\rho_{\pi,v}$ for $\rho_{\pi,\iota}$ and $\rho_{\pi,\iota,v}$, understanding that these representations do depend on the choice of $\iota$ in general.

Our main result confirms the conjecture for general symplectic groups over totally real fields in a number of cases (up to Frobenius semisimplification, meaning that we take the semisimple part in (ii) of Theorem \ref{thm:A} below). We find these groups interesting for two reasons. Firstly they naturally occur in the moduli spaces of polarized abelian varieties and their automorphic/Galois representations have been useful for arithmetic applications (such as the study of $L$-functions, modularity and the Sato-Tate conjecture). Secondly new phenomena (as the semisimple rank grows) make the above conjecture sufficiently nontrivial, stemming from the nature of the dual group of a general symplectic group: e.g. faithful representations have large dimensions and locally conjugate representations may not be globally conjugate.

Let $F$ be a totally real number field. Let $n \geq 2$. Let $\GSp_{2n}$ denote the split general symplectic group over $F$, equipped with similitude character $\simil:\GSp_{2n}\to \G_m$ over $F$. The dual group $\hat \GSp_{2n}$ is the general spin group $\GSpin_{2n+1}$, which we view over $\lql$ (or over $\C$ via $\iota$), admitting the spin representation
$$
\spin \colon \GSpin_{2n+1} \ra \GL_{2^n}.
$$
Consider the following hypotheses on $\pi$.
\begin{itemize}[leftmargin=+1in]
\item[\textbf{(St)}] There is a finite $F$-place $\vst$ such that $\pi_{\vst}$ is the Steinberg representation of $\GSp_{2n}(F_{\vst})$ twisted by a character.
\item[\textbf{(L-coh)}] $\pi_ \infty|\simil|^{-n(n+1)/4}$ is $\xi$-cohomological for an irreducible algebraic representation $\xi=\otimes_{y:F\hra \C} \xi_y$ of the group $(\Res_{F/\Q}\GSp_{2n}) \otimes_\Q \C\simeq \prod_{y:F\hra \C} \GSp_{2n,\C}$ (Definition \ref{def:cohomological} below).
\end{itemize}
A trace formula argument shows that there are plenty of (in particular infinitely many) $\pi$ satisfying (St) and (L-coh), cf. \cite{Clozel-limit}.
Let $S_{\mathrm{bad}}$ denote the finite set of rational primes $p$ such that either $p=2$, $p$ ramifies in $F$, or $\pi_v$ ramifies at a place $v$ of $F$ above $p$.

\begin{thmx}[Theorem \ref{thm:ThmAisTrue}]\label{thm:A}
Suppose that $\pi$ satisfies hypotheses (St), (L-coh). Let $\ell$ be a prime number and $\iota \colon \C \isomto \lql$ a field isomorphism. Then there exists a representation
$$
\rho_{\pi}=\rho_{\pi,\iota} \colon \Gal(\li F/F) \to \GSpin_{2n+1}(\lql),
$$
unique up to $\GSpin_{2n+1}(\lql)$-conjugation, attached to $\pi$ and $\iota$ such that the following hold.
\begin{enumerate}[label=(\textit{\roman*})]
\item\label{thm:Ai} The composition
$$
\Gal(\li F/F) \overset {\rho_{\pi}} \to \GSpin_{2n+1}(\lql) \to \SO_{2n+1}(\lql) \subset \GL_{2n+1}(\lql)
$$
is the Galois representation attached to a cuspidal automorphic $\Sp_{2n}(\A_F)$-{sub\-re\-pre\-sen\-ta\-tion} $\pi^\flat$ contained in $\pi$. Further, the composition
$$
\Gal(\li F/F) \overset {\rho_{\pi}} \to \GSpin_{2n+1}(\lql)/\Spin_{2n+1}(\lql) \simeq \GL_1(\lql)
$$
corresponds to the central character of $\pi$ via class field theory and $\iota$.
\item\label{thm:Aii} For every finite place $v$ which is not above $S_{\mathrm{bad}}\cup \{\ell\}$, the semisimple part of $\rho_{\pi}(\Frob_v)$ is conjugate to $\iota\phi_{\pi_v}(\Frob_v)$ in $\GSpin_{2n+1}(\lql)$, where $\phi_{\pi_v}$ is the unramified Langlands parameter of $\pi_{v}$.
\item\label{thm:Aiii} For every $v|\ell$, the representation $\rho_{\pi, v}$ is de Rham (in the sense that $r \circ \rho_{\pi,v}$ is de Rham for all representations $r$ of $\GSpin_{2n+1,\lql}$). Moreover
\begin{itemize}
\item[(a)] The Hodge-Tate cocharacter of $\rho_{\pi, v}$ is explicitly determined by $\xi$. More precisely, for all $y \colon F \to \C$ such that $\iota y$ induces $v$, we have
$$
\mu_{\tu{HT}}(\rho_{\pi, v}, \iota y) = \iota \mu_{\tu{Hodge}}(\xi_y) - \tfrac{n(n+1)}{4}\simil 
$$
(for $\mu_{\tu{HT}}$ and $\mu_{\tu{Hodge}}$ see Definitions \ref{def:Hodge-Tate} and \ref{def:Hodge} below).
\item[(b)] If $\pi_v$ has nonzero invariants under a hyperspecial (resp. Iwahori) subgroup of $\GSp_{2n}(F_v)$ then either $\rho_{\pi, v}$ or a quadratic character twist is crystalline (resp. semistable).
\item[(c)] If $\ell \notin S_{\mathrm{bad}}$ then $\rho_{\pi, v}$ is crystalline.
\end{itemize}
\item\label{thm:Aiv} For every $v| \infty$, $\rho_{\pi,v}$ is odd (see Definition \ref{def:odd} and Remark \ref{rem:odd} below).
\item\label{thm:Av}
The Zariski closure of the image of $\rho_\pi$ maps onto one of the following three subgroups of $\SO_{2n+1}$:
(a) $\SO_{2n+1}$, (b) the image of a principal $\SL_2$ in $\SO_{2n+1}$, or (c) (only possible when $n=3$) $G_2$ embedded in $\SO_7$.
\item\label{thm:Avi}
If $\rho' \colon \Gal(\li F/F) \to \GSpin_{2n+1}(\lql)$ is any other continuous morphism such that, for almost all $F$-places $v$ where $\rho'$ and $\rho_\pi$ are unramified, the semisimple parts $\rho'(\Frob_v)_{\textup{ss}}$ and $\rho_{\pi}(\Frob_v)_{\textup{ss}}$ are conjugate, then $\rho$ and $\rho'$ are conjugate.
\end{enumerate}
\end{thmx}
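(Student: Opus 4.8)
The plan is to construct $\rho_\pi$ in two stages — first an $\SO_{2n+1}$-valued representation coming from transfer to $\GL_{2n+1}$, then a lift of it to $\GSpin_{2n+1}$ extracted from the cohomology of Siegel modular varieties — and then to verify parts \ref{thm:Ai}--\ref{thm:Avi} one at a time.

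\emph{Stage one.} Choose a cuspidal $\Sp_{2n}(\A_F)$-subrepresentation $\pi^\flat\subset\pi$. By Arthur's endoscopic classification for $\Sp_{2n}$ it has a discrete global parameter, which transfers to a self-dual isobaric automorphic representation $\Pi$ of $\GL_{2n+1}(\A_F)$ of orthogonal type. Here hypothesis (St) does essential work: since $\pi^\flat_{\vst}$ is Steinberg, it is square-integrable with Steinberg-type $L$-parameter, so $\Pi_{\vst}$ is the Steinberg representation of $\GL_{2n+1}(F_{\vst})$, and this is possible only if $\Pi$ is cuspidal with trivial Arthur $\SL_2$; by (L-coh), $\Pi$ is moreover regular algebraic. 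Hence the Galois representation $\rho_\Pi\colon\Gal(\li F/F)\to\GL_{2n+1}(\lql)$ attached to $\Pi$ by the now-standard construction (via Shimura varieties of unitary groups) is available, together with its local--global compatibility at unramified places and at $\ell$, its de Rham property with Hodge--Tate weights determined by $\xi$, crystallinity where $\Pi_v$ is unramified, and semistability where $\Pi_v$ is Iwahori-spherical or Steinberg; irreducibility of $\rho_\Pi$ also follows, using (St). As $\Pi$ is orthogonal of odd dimension, $\rho_\Pi$ preserves a nondegenerate symmetric pairing and has trivial determinant, so it factors through $\SO_{2n+1}(\lql)$; write $\rho_{\pi^\flat}$ for the result. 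This is the first composition in part \ref{thm:Ai}, and it yields the $\SO_{2n+1}$-shadows of parts \ref{thm:Aii} and \ref{thm:Aiii}.

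\emph{Stage two.} The central character $\omega_\pi$ defines, via class field theory and $\iota$, a character $\chi\colon\Gal(\li F/F)\to\lql^\times$ that should be the spinor-norm composite of $\rho_\pi$. I would produce the lift geometrically, from the cohomology of the Siegel modular varieties attached to $\GSp_{2n}$: the minuscule cocharacter of this Shimura datum corresponds under duality to $\spin$, so the $\pi$-isotypic part of the intersection cohomology with $\xi$-coefficients should realize, up to a Tate twist and a multiplicity, a $2^n$-dimensional Galois representation $r$ playing the role of $\spin\circ\rho_\pi$. Identifying this $\pi$-part precisely is the technical heart: it requires the stabilization of the trace formula for the (noncompact) intersection cohomology — Morel's theory, Kottwitz's conjecture, Arthur's classification — with (St) again cutting the endoscopic contributions down to the stable tempered part. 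One then checks that $r$ preserves the Clifford-multiplication tensor cutting out $\GSpin_{2n+1}\subset\GL_{2^n}$ — using that $\rho_{\pi^\flat}$, up to twist, occurs inside $r\otimes r$ compatibly with the spin-to-standard contraction — so that $r$ comes from a homomorphism $\rho_\pi\colon\Gal(\li F/F)\to\GSpin_{2n+1}(\lql)$ with $\spin\circ\rho_\pi\simeq r$; comparison at unramified places with $\rho_{\pi^\flat}$ and with $\chi$ identifies its projections to $\SO_{2n+1}$ and to $\GSpin_{2n+1}/\Spin_{2n+1}\simeq\Gm$, giving part \ref{thm:Ai}. Because $\spin$ together with the spinor norm separate semisimple $\GSpin_{2n+1}(\lql)$-conjugacy classes, local--global compatibility at unramified $v$ (part \ref{thm:Aii}) and at $v\mid\ell$ (part \ref{thm:Aiii}, including the Hodge--Tate cocharacter $\iota\mu_{\tu{Hodge}}(\xi,y)$ and the crystalline and semistable refinements) follows from the corresponding statements already known after $\spin$ and after the two projections. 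Oddness (part \ref{thm:Aiv}) I would read off from the archimedean $L$-parameter of $\pi_\infty$, or from the realization of $\rho_\pi$ in the cohomology of an abelian-type Shimura variety.

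For the image statement \ref{thm:Av}: cuspidality of $\Pi$ makes $\rho_{\pi^\flat}$ irreducible, and this persists on open subgroups of $\Gal(\li F/F)$ since $\Pi$ stays cuspidal after base change; hypothesis (spin-reg) makes the Hodge--Tate weights of $\spin\circ\rho_\pi$ at $v_\infty$ regular. Combining these with a classification of the connected reductive subgroups $H\subseteq\GSpin_{2n+1}$ for which $\spin|_H$ can remain multiplicity-free with regular weights while the $\SO_{2n+1}$-image acts irreducibly on the standard representation, one concludes that for $n\ge3$ the identity component of the Zariski closure of $\mathrm{image}(\rho_\pi)$ is all of $\GSpin_{2n+1}$, and that $\spin\circ\rho_\pi$ is irreducible for all $n\ge2$; the case $n=2$, where $\GSpin_5\simeq\GSp_4$ and endoscopic (symmetric-cube-type) images persist, needs a separate argument for the density assertion but still yields irreducibility, and the remaining disconnectedness possibilities are excluded using irreducibility. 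Part \ref{thm:Avi} is Chebotarev plus rigidity: the hypothesis on semisimple Frobenii forces $\spin\circ\rho'$ and $\spin\circ\rho_\pi$ to have equal characteristic polynomials on a density-one set, hence $\spin\circ\rho'\simeq\spin\circ\rho_\pi$ as $\rho_\pi$ has irreducible, so semisimple, $\spin$-composite; since $\spin$ is faithful on $\GSpin_{2n+1}$, $\rho'$ and $\rho_\pi$ are $\GL_{2^n}(\lql)$-conjugate, and Zariski density upgrades this to $\GSpin_{2n+1}(\lql)$-conjugacy for $n\ge3$ (with a separate argument for $n=2$); taking $\rho'=\rho_\pi$ gives the uniqueness clause. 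The main obstacle, as flagged, is Stage two: isolating the $\pi$-part of the intersection cohomology of the noncompact Siegel modular varieties precisely enough — through the stabilized trace formula, Arthur's classification, and careful bookkeeping of the center, the connected components, and the gap between $\SO_{2n+1}$- and $\GSpin_{2n+1}$-conjugacy — both to construct the lift and to pin it down uniquely.
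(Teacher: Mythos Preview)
Your Stage one is essentially the paper's, but Stage two has a genuine gap at the most delicate point. You propose to show that the Shimura-variety representation $r$ lands in $\GSpin_{2n+1}(\lql)\subset\GL_{2^n}(\lql)$ by checking that it ``preserves the Clifford-multiplication tensor'', using that $\rho_{\pi^\flat}$ occurs inside $r\otimes r$ via the spin-to-standard contraction. The paper explicitly explains (Introduction, ``First Method'') why this kind of argument does not work: point-counting only tells you that each $r(\Frob_v)_{\ssimple}$ is $\GL_{2^n}(\lql)$-conjugate to an element of $\GSpin_{2n+1}(\lql)$, and there is no way to conjugate them all simultaneously. Knowing that a tensor invariant is preserved Frobenius-by-Frobenius, or that $\rho_{\pi^\flat}$ sits inside $r\otimes r$ after semisimplification, does not pin down a single $\GSpin_{2n+1}$-structure on $r$; this is precisely the local-versus-global conjugacy failure for spin groups (Larsen's examples, discussed in \S5 of the paper).

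The paper's fix is the key idea you are missing. Instead of trying to force $r$ into $\GSpin_{2n+1}$, one lifts $\rho_{\pi^\flat}$ \emph{abstractly} to $\tilde\rho_{\pi^\flat}\colon\Gamma\to\GSpin_{2n+1}(\lql)$ using Tate's vanishing of $\uH^2(\Gamma,\Q/\Z)$ (Proposition~\ref{prop:LiftToGSpin}), sets $\rho_1:=\spin\circ\tilde\rho_{\pi^\flat}$, and proves $\rho_1$ is \emph{strongly} irreducible (Proposition~\ref{prop:Rho1Irred}) via Testerman--Zalesski's classification of semisimple subgroups of $\SO_{2n+1}$ containing a regular unipotent (the regular unipotent comes from the Steinberg place). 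One then compares $\rho_1$ with the Shimura representation $\rho_2$ only \emph{projectively}: Proposition~\ref{prop:LocallyConjugateProjectiveReps} says that two $\PGL_m$-valued representations which are locally conjugate are globally conjugate provided one of them is strongly irreducible. This produces a character $\chi$ with $\rho_2\simeq\chi\otimes\rho_1$, and one \emph{defines} $\rho_\pi:=\chi\otimes\tilde\rho_{\pi^\flat}$, which lands in $\GSpin_{2n+1}$ by construction. (There is also a nontrivial intermediate step you omit: the cohomology gives $\rho_2$ only up to an a priori unknown multiplicity $a(\pi^\natural)$, removed by a Taylor-style argument in Lemma~\ref{lem:RemoveMultiplicity} that crucially uses (spin-reg).) Finally, the Shimura varieties used are not the noncompact Siegel ones when $F\neq\Q$ but those for an inner form $G$ compact at all infinite places except $v_\infty$; these are of abelian type, and the point-counting input is \cite{KSZ} rather than Morel's work.
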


Our theorem is new when $n\ge 3$. When $n=2$, a better and fairly complete result without conditions (St) has been known by \cite{Taylor-GSp4, Laumon-GSp4, Weissauer-GSp4, Urban-GSp4, Sorensen-GSp4, Jorza-GSp4, GeeTaibi}. (Often $\pi$ is assumed globally generic in the references but this is no longer necessary thanks to \cite{GeeTaibi}.)

The above theorem in particular associates a weakly compatible system of $\lambda$-adic representations with $\pi$. See also Proposition \ref{prop:weakly-compatible} below for precise statements on the weakly compatible system consisting of $\spin\circ\rho_\pi$. It is worth noting that the uniqueness in (vi) would be false for general $\GSpin_{2n+1}(\lql)$-valued Galois representations in view of Larsen's example in \S\ref{sec:GSpin-valued} below.\footnote{In contrast, for cuspidal automorphic representations of the group $\Sp_{2n}(\A_F)$, the corresponding analogue of statement (vi) \emph{does} hold (cf. Proposition \ref{prop:ConjugacyStdRep}). So the possible failure of (vi) is a new phenomenon for the cuspidal spectrum of the similitude group $\GSp_{2n}(\A_F)$.} Our proof of (vi) relies heavily on the fact that $\rho_\pi$ contains a regular unipotent element in the image, coming from condition (St).

When $n=3$ and $F=\Q$, we employ the strategy of Gross and Savin \cite{GrossSavin} to construct a rank 7 motive over $\Q$ with Galois group of exceptional type $G_2$ in the cohomology of Siegel modular varieties of genus 3. The point is that $\rho_\pi$ as in the above theorem factors through $G_2(\lql)\hra \GSpin_7(\lql)$ if $\pi$ comes from an automorphic representation on (an inner form of) $G_2(\A)$ via theta correspondence. In particular we get yet another proof affirmatively answering a question of Serre in the case of $G_2$, cf. \cite{KLS2,DettweilerReiter,Yun-Motives,Patrikis} for other approaches to Serre's question (none of which uses Siegel modular varieties). Along the way, we also obtain some new instances of the Buzzard--Gee conjecture for a group of type $G_2$. Our result also provides a solid foundation for investigating the suggestion of Gross--Savin that a certain Hilbert modular subvariety of the Siegel modular variety should give rise to the cohomology class for the complement of the rank 7 motive of type $G_2$ in the rank 8 motive cut out by $\pi$, as predicted by the Tate conjecture. See Theorem \ref{thm:G2-A}, Corollary \ref{cor:G2}, and Remark \ref{remark:Tate} below for details. We mention that Magaard and Savin obtained similar results (using a different method) in a new version of \cite{MagaardSavin}.

As another consequence of our theorems, we deduce multiplicity one theorems for automorphic representations for inner forms of $\GSp_{2n, F}$ under similar hypotheses from the multiplicity formula by Bin Xu \cite{Xu}. As his formula suggests, multiplicity one is not always expected when all hypotheses are dropped.

\begin{thmx}[Theorem \ref{thm:AutomorphicMultiplicity}]\label{thm:IntroAutomMult}
Let $\pi$ be a cuspidal automorphic representation of $\GSp_{2n}(\A_F)$ satisfying (St) and (L-coh). The automorphic multiplicity of $\pi$ is equal to $1$.
\end{thmx}

By part (vi) of Theorem \ref{thm:A} asserting uniqueness, we have (a version of) strong multiplicity one for the $L$-packet of $\pi$. In Proposition \ref{prop:JL-for-GSp} we prove a weak Jacquet-Langlands transfer for $\pi$ in Theorem~\ref{thm:IntroAutomMult}. This allows us to propagate multiplicity one from $\pi$ as above to the corresponding automorphic representations on a certain inner form. See Theorem \ref{thm:AutomorphicMultiplicity2} below for details. Note that (weak and strong) multiplicity one theorem for globally generic cuspidal automorphic representations of $\GSp_4(\A_F)$ has been known by Jiang and Soudry \cite{Soudry-GSp4,JiangSoudry-GSp4}.

Our results yield a potential version of the spin functoriality, thus also a meromorphic continuation of the spin $L$-function, for cuspidal automorphic representations of $\GSp_{2n}(\A_F)$ satisfying (St), (L-coh), and the following strengthening of (spin-reg):
\begin{itemize}[leftmargin=+1in]
\item[\textbf{(spin-REG)}] The representation $\pi_v$ is spin-regular at \emph{every} infinite place $v$ of $F$.
\end{itemize}

The last condition means that the Langlands parameter of $\pi_{v}$ maps to a regular parameter for $\GL_{2^n}$ by the spin representation, cf. Definitions \ref{def:spin-reg-param} and \ref{def:SpinRegular} below.
Thanks to the potential automorphy theorem of Barnet-Lamb, Gee, Geraghty, and Taylor \cite[Thm. A]{BLGGT14-PA} it suffices to check the conditions for their theorem to apply to $\spin\circ \rho_\pi$ (for varying $\ell$ and $\iota$). This is little more than Theorem \ref{thm:A}; the details are explained in \S\ref{sec:meromorphic} below.

\begin{thmx}[Corollary \ref{cor:ThmDisTrue}]\label{thm:D}
Under hypotheses (St), (L-coh), and (spin-REG) on $\pi$, there exists a finite totally real extension $F'/F$ (which can be chosen to be disjoint from any prescribed finite extension of $F$ in $\ol{F}$) such that $\spin\circ \rho_\pi|_{\Gal(\li F/F')}$ is automorphic. More precisely,
there exists a cuspidal automorphic representation $\Pi$ of $\GL_{2^n}(\A_{F'})$ such that
\begin{itemize}
\item for each finite place $w$ of $F'$ not above $S_{\mathrm{bad}}$, the representation $\iota^{-1}\spin\circ\rho_\pi|_{W_{F'_w}}$ is unramified and its Frobenius semisimplification is the Langlands parameter for~$\Pi_w$, 
\item at each infinite place $w$ of $F'$ above a place $v$ of $F$, we have $\phi_{\Pi_w}|_{W_\C}\simeq \spin\circ \phi_{\pi_v}|_{W_\C}$.
\end{itemize}
In particular the partial spin $L$-function $L^S(s,\pi,\spin)$ admits a meromorphic continuation and is holomorphic and nonzero in a right half plane.
\end{thmx}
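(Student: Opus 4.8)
The plan is to feed the $2^n$-dimensional representation $r:=\spin\circ\rho_\pi\colon\Gal(\li F/F)\to\GL_{2^n}(\lql)$ --- which by Theorem~\ref{thm:A} and Proposition~\ref{prop:weakly-compatible} is a member of a weakly compatible system of $\ell$-adic representations --- into the potential automorphy theorem of Barnet-Lamb--Gee--Geraghty--Taylor \cite[Thm.~A]{BLGGT14-PA}, and then to translate its conclusion. As the introduction suggests, this is ``little more than Theorem~\ref{thm:A}'': almost all the hypotheses of \cite[Thm.~A]{BLGGT14-PA} are already recorded there, and the rest of the argument is the standard Brauer-induction bookkeeping for the $L$-function.

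Concretely, I would fix a prime $\ell\notin S_{\mathrm{bad}}$ (together with a compatible $\iota$) which is moreover large enough that the Hodge--Tate weights of $r$ at the places above $\ell$ lie in the Fontaine--Laffaille range, and then verify the inputs of \cite[Thm.~A]{BLGGT14-PA} one by one. \emph{Regularity}: the Hodge--Tate weights of $r$ are pairwise distinct at each place above $\ell$, which is precisely hypothesis (spin-REG) read through Theorem~\ref{thm:A}(iii)(a). \emph{Purity}: this follows from temperedness of the unramified local components $\pi_v$ (available here because $\pi$ has a Steinberg component) together with Theorem~\ref{thm:A}(ii). \emph{Potential diagonalizability at $\ell$}: by Theorem~\ref{thm:A}(iii)(c), $r|_{\Gal(\li F_v/F_v)}$ is crystalline with small Hodge--Tate weights, hence potentially diagonalizable. \emph{Essential self-duality and total oddness}: the spin representation of $\GSpin_{2n+1}$ satisfies $\spin^\vee\simeq\spin\otimes\nu^{-1}$, where $\nu\colon\GSpin_{2n+1}\to\Gm$ is the spinor norm (so that $\nu\circ\rho_\pi$ is the Galois character of Theorem~\ref{thm:A}(i), up to a cyclotomic twist), and it has Frobenius--Schur indicator $\pm1$ according to $n\bmod 4$; thus $r$ is essentially self-dual with a Galois-character multiplier, and the sign of the pairing matched against the oddness of $\rho_{\pi,v}$ from Theorem~\ref{thm:A}(iv) is exactly the ``totally odd'' (polarizability) condition. \emph{Image}: by Theorem~\ref{thm:A}(v), $r$ is irreducible and remains so on every open subgroup, and --- for $n\ge 3$ and $\ell$ large --- the Zariski-density of the image of $\rho_\pi$ modulo the centre forces the residual image to be adequate for $\spin$ by Larsen--Pink type arguments.

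With such an $\ell$ fixed, \cite[Thm.~A]{BLGGT14-PA} produces a finite totally real extension $F'/F$ --- which may be chosen linearly disjoint over $F$ from any prescribed finite extension of $F$ in $\li F$, and which I will also take to be Galois over $F$ --- together with a cuspidal automorphic representation $\Pi$ of $\GL_{2^n}(\A_{F'})$ such that $r|_{\Gal(\li F/F')}$ is automorphic and corresponds to $\Pi$; cuspidality of $\Pi$ comes from the irreducibility of $r|_{\Gal(\li F/F')}$, again by Theorem~\ref{thm:A}(v). The first local bullet of the statement is then immediate from the local--global compatibility at unramified places built into \cite{BLGGT14-PA}, combined with Theorem~\ref{thm:A}(ii). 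For the archimedean bullet, the infinitesimal character of $\Pi_w$ at an infinite place $w$ above $v$ is determined by the Hodge--Tate weights of $r$ above $\ell$, which by Theorem~\ref{thm:A}(iii)(a) are $\iota\mu_{\Hodge}(\xi)$ pushed through the spin representation; this is exactly $\spin\circ\phi_{\pi_v}|_{W_\C}$, giving $\phi_{\Pi_w}|_{W_\C}\simeq\spin\circ\phi_{\pi_v}|_{W_\C}$.

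Finally, for the $L$-function: enlarging $S$ to contain the ramified places, Theorem~\ref{thm:A}(ii) identifies $L^S(s,\pi,\spin)$ with the Artin-type $L$-function $L^S(s,r)$ under $\iota$. Its meromorphic continuation follows from the potential automorphy of $r$ by the usual Brauer argument: writing the trivial character of $\Gal(F'/F)$ as a $\Z$-linear combination of characters induced from characters $\chi_i$ of subgroups $\Gal(F'/F_i)$ which are nilpotent, hence solvable, inductivity of $L$-functions expresses $L^S(s,r)$ as a product of integer powers of the $L^S(s,\,r|_{\Gal(\li F/F_i)}\otimes\chi_i)$, and each factor is meromorphic on $\C$ because $r|_{\Gal(\li F/F_i)}\otimes\chi_i$, having automorphic base change to $F'$ along the solvable extension $F'/F_i$, is automorphic on $\GL_{2^n}(\A_{F_i})$ by solvable descent. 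Holomorphy and nonvanishing of $L^S(s,\pi,\spin)$ in a right half plane then follow from absolute convergence of its Euler product there, which is guaranteed by the purity (temperedness) bounds. I expect the only genuinely substantial point to be the verification, for one conveniently chosen $\ell$, of the potential-diagonalizability and adequate-image hypotheses of \cite[Thm.~A]{BLGGT14-PA}; everything else is a readout from Theorem~\ref{thm:A} plus standard automorphic bookkeeping.
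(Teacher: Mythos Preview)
Your proposal is correct and follows essentially the same route as the paper: package $\spin\circ\rho_\pi$ into a weakly compatible system (this is Proposition~\ref{prop:weakly-compatible}), verify irreducibility, regularity, essential self-duality with the correct sign on the multiplier, and then invoke \cite[Thm.~A]{BLGGT14-PA}; the $L$-function statement is the standard Brauer deduction (cf.\ the paper's remark following the corollary).

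One small point of emphasis: the paper's proof is a two-line reference to Proposition~\ref{prop:weakly-compatible} plus the observation that the multiplier characters $\mu_\lambda$ themselves form a weakly compatible system (coming from the central character of $\pi$). Your framing ``fix a prime $\ell$ large enough for Fontaine--Laffaille, verify adequate image via Larsen--Pink'' is more than you need, because \cite[Thm.~A]{BLGGT14-PA} takes the entire compatible system as input and handles the choice of a good density-one set of $\ell$ (including potential diagonalizability and residual adequacy) internally. You only have to supply the system-level hypotheses---irreducible, regular, totally odd essentially self-dual with a compatible multiplier system---and those are exactly what Proposition~\ref{prop:weakly-compatible} records. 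So your argument is right but slightly over-engineered; the paper buys concision by front-loading the verification into that proposition.
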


We can be precise about the right half plane in terms of $\pi$: For instance it is given by $\mathrm{Re}(s) \geq 1$ if $\pi$ has unitary central character. Due to the limitation of our method, we cannot control the poles. Before our work, the analytic properties of the spin $L$-functions have been studied mainly via Rankin-Selberg integrals; some partial results have been obtained for $\GSp_{2n}$ for $2\le n\le 5$ by Andrianov, Novodvorsky, Piatetski-Shapiro, Vo, Bump--Ginzburg, and more recently by Pollack--Shah and Pollack, cf. \cite{PS97,Vo97,BG00,PS,Pol}. See \cite[1.3]{Pol} for remarks on spin $L$-functions with further references.

Finally we comment on the hypotheses of our theorems. The statements (ii) and (iii.c) are not optimal in that we exclude a little more than the finite places $v$ where $\pi_v$ is unramified. This is due to the fact that the Langlands--Rapoport conjecture has been proved by \cite{KisinPoints} at a $p$-adic place only when $p>2$ and the defining data (including the level) are unramified at \emph{all} $p$-adic places\footnote{When $F=\Q$, our argument uses Siegel modular varieties, so we may allow $p=2$ by appealing to Kottwitz's work on PEL-type Shimura varieties instead of \cite{KisinPoints}.}. Condition (L-coh) is essential to our method but it is perhaps possible to prove Theorem \ref{thm:A} under a slightly weaker condition that $\pi$ appears in the coherent cohomology of our Shimura varieties.

The rather strong condition (spin-REG) in Theorem \ref{thm:D} is necessary due to the current limitation of potential automorphy theorems. Condition (St) in Theorem \ref{thm:A} is believed to be superfluous. Significant work and ideas would be needed to get around it. On the other hand, (St) is harmless to assume for local applications, which we intend to pursue in future work.

\smallskip
\noindent\textbf{Idea of proof.}
Our proof of Theorem \ref{thm:A} relies crucially on Arthur's book \cite{ArthurBook}. His results used to be conditional on the stabilization of the twisted trace formula, whose proof has been completed by Moeglin and Waldspurger; see \cite{MoeglinWaldspurgerStabilization1, MoeglinWaldspurgerStabilization2} for the last one. Thus Arthur's results are essentially unconditional.\footnote{Strictly speaking, Arthur postponed some technical details in harmonic analysis to future articles ([A25, A26, A27] in the bibliography of \cite{ArthurBook}), which have not appeared yet. The weighted fundamental lemma has been proved by Chaudouard--Laumon but the proof has appeared for split groups thus far.} Since Theorems \ref{thm:IntroAutomMult} and \ref{thm:D} depend in turn on Theorem \ref{thm:A}, all our results count on Arthur's book.

Let us sketch our proof with a little more detail. We prove Theorem \ref{thm:A} by combining two approaches: (M1) Shimura varieties for inner forms of $\GSp_{2n}$, and (M2) Lifting to $\GSpin_{2n+1}(\lql)$ the $\SO_{2n+1}(\lql)$-valued Galois representations as constructed by the works of Arthur and Harris-Taylor.

\smallskip
\noindent\textbf{First Method.} Consider the inner form $G$ over $F$ of $G^* := \GSp_{2n, F}$, such that the local group $G_v$ is
$$
\begin{cases}
\tu{non-split} & \tu{ if } v = \vst \tu{ and $[F=\Q]$ is even}, \cr
\tu{anisotropic modulo center} & \tu{ if } v| \infty,~ v\neq v_0, \cr
\tu{split} & \tu{otherwise}.
\end{cases}
$$
We consider Shimura varieties arising from the group $\Res_{F/\Q} G$ (and the choice of $X$ as in \S\ref{sect:ShimuraDatum} below). Note that (the $\Q$-points of) $\Res_{F/\Q} G$ has factor of similitudes in $F^\times$, as opposed to $\Q^\times$, and that our Shimura varieties are not of PEL type (when $F \neq \Q$) but of abelian type. This should already be familiar for $n = 1$ (though we assume $n\ge 2$ for our main theorem to be interesting), where we obtain the usual Shimura curves, cf. \cite{CarayolMauvaiseReduction}. In case $F = \Q$ our Shimura varieties are the classical Siegel modular varieties.

The idea is to consider (the semisimplification of) the compactly supported \'etale cohomology
$$
\uH^i_\tu{c}(\Sh_K, \cL_{\iota\xi}) = \varinjlim_{K \subset G(\A_F^ \infty)} \uH^i_{\tu{c}}(\Sh_K, \cL_{\iota\xi}),\quad i\ge 0,
$$
where $\cL_{\iota\xi}$ is the $\ell$-adic local system attached to some irreducible complex representation $\xi$ of $G$ via $\iota$. Then $\uH^i_\tu{c}(\Sh_K, \cL_{\iota\xi})$ has an action of $G(\A_F^ \infty) \times \Gal(\li F/F)$; and one hopes to prove that through this action the module $\uH^i_\tu{c}(\Sh_K, \cL_{\iota \xi})$ realizes the Langlands correspondence. In particular, one tries to attach to a cuspidal automorphic representation $\pi$ of $\GSp_{2n}(\A_F)$ the following virtual Galois representation (see also Equation \eqref{eq:GrothendieckGp})
$$
\pi \rightsquigarrow \pi' \mapsto H_\pi \bydef \sum_{i\ge0}(-1)^i [\Hom_{G(\A_F^ \infty)}(\pi^{\prime, \infty}_{\lql}, \uH^i_{\tu{c}}(\Sh_K, \cL_{\iota \xi})_{\tu{ss}})],
$$
where the first mapping is a weak transfer of $\pi$ from $G^*$ to $G$. (We need to twist $\pi$ by $|\cdot|^{-n(n+1)/4}$ to have $H_\pi\neq 0$ but this twist will be ignored in the introduction.) The subscript $(\cdot)_{\tu{ss}}$ denotes the semisimplification as a $G(\A_F^ \infty)$-module.

The construction of $H_\pi$ has two issues. Firstly there are the usual issues coming from endoscopy. Our assumption (St) circumvents this difficulty (and helps us at other places). Secondly, even without endoscopy, one does not expect $H_\pi$ to realize the representation $\rho_\pi$ itself; rather it realizes (up to dual, sign, twist, and multiplicity) the composition
$$
\Gal(\li F/F) \overset {\rho_\pi} \to \GSpin_{2n+1}(\lql) \overset {\spin} \to \GL_{2^n}(\lql).
$$
In particular if one wants to use $H_\pi$ to construct $\rho_\pi$, one has to show that $r \colon \Gal(\li F/F) \to \GL_{\lql}(H_\pi) \simeq \GL_{2^n}(\lql)$ has (up to conjugation) image in the group $\GSpin_{2n+1}(\lql) \subset \GL_{2^n}(\lql)$. With point counting methods it can be shown that this is true for the Frobenius elements, but since these elements are only defined up to $\GL_{2^n}(\lql)$-conjugation, we are unable to deduce directly that the entire representation $r$ has image in $\GSpin_{2n+1}(\lql)$. More information seems to be required.

\smallskip

\noindent\textbf{Second Method.} Consider a continuous representation $\rho \colon \Gal(\li F/F) \to \SO_{2n+1}(\lql)$, and the exact sequence
$$
1 \to \lql^\times \to \GSpin_{2n+1}(\lql) \to \SO_{2n+1}(\lql) \to 1.
$$
Using the theorem of Tate that $\uH^2(F, \Q/\Z)$ vanishes, it is not hard to show that $\rho$ has a continuous lift $\wt \rho \colon \Gal(\li F/F) \to \GSpin_{2n+1}(\lql)$. In particular one could try to attach to $\pi$ a (non-unique) automorphic $\Sp_{2n}(\A_F)$-{sub\-re\-pre\-sen\-ta\-tion} $\pi^\flat \subset \pi$, and then to $\pi^\flat$ the Galois representation $\rho_{\pi^\flat} \colon \Gal(\li F/F) \to \SO_{2n+1}(\lql)$, constructed by the combination of results of Arthur and others (Theorem \ref{thm:ExistGaloisRep}). One now hopes to construct $\rho_{\pi} \colon \Gal(\li F/F) \to \GSpin_{2n+1}(\lql)$ as a twist $\wt {\rho_{\pi^\flat}} \otimes \chi$ for some continuous character $\chi \colon \Gal(\li F/F) \to \lql^\times$. However it is a priori unclear where $\chi$ should come from. (The central character of $\pi$ only determines the square of $\chi$.)

\smallskip

\noindent \textbf{Construction.} \quad We find the character $\chi$ by comparing $\wt {\rho_{\pi^\flat}}$ with the representation $H_\pi$. Consider the diagram
$$
\xymatrix{
\Gal(\li F/F)\ar@/^1.5pc/[rrr]_{\rho_1} \ar@/^2.5pc/[rrr]_{\rho_2}\ar[rr]_{\tilrhoflat\quad \quad}\ar@/_1.5pc/[drr]_{\rho_{\pi^\flat}}&&\GSpin_{2n+1}(\lql)\ar[d]\ar[r]_{\quad\spin}&\GL_{2^n}(\lql)\ar[d]&\cr&&\SO_{2n+1}(\lql)\ar[r]_{\overline{\spin}\quad}&\PGL_{2^n}(\lql)\cr }
$$
where $\rho_1 = \spin \circ \wt{\rho_{\pi^\flat}}$ for some choice of lift $\wt{\rho_{\pi^\flat}}$ of $\rho_{\pi^\flat}$, and we construct $\rho_2$ from the representation $H_\pi$. We show in three steps that $\rho_1$ and $\rho_2$ are conjugate up to twisting by the sought after character $\chi$.

\smallskip

\noindent\textbf{Step 1}: Show connectedness of image.
\quad More precisely we show that the image of $\rho_{\pi^\flat}$ (thus also the image of $\overline{\spin} \circ\rho_{\pi^\flat}$) has connected Zariski closure. Because $\pi$ is a twist of the Steinberg representation at a finite place, we can assure that $\rho_{\pi^\flat}|_{\Gal(\li F/E)}$ has a regular unipotent element $N$ in its image. It then follows from results of Saxl--Seitz that the reductive subgroups of $\SO_{2n+1}(\lql)$ containing $N$ are connected (see Proposition \ref{prop:DetermineZariskiImage}).

\smallskip

\noindent\textbf{Step 2}: Construct $\rho_2$. \quad We compare the point counting formula for $(\Res_{F/\Q} G, \iH)$ with the Arthur-Selberg trace formula for $G/F$. Since the datum $(\Res_{F/\Q} G, \iH)$ is not of PEL type (unless $F=\Q$), the classical work of Kottwitz \cite{KottwitzPoints, KottwitzAnnArbor} does not apply. Instead we use the counting point formula as derived in \cite{KSZ} from Kisin's recent proof of the Langlands--Rapoport conjecture for Shimura data of abelian type, so in particular for $(\Res_{F/\Q} G, \iH)$. Consequently, we have $H_{\pi}|_{\Gal(\li F_v/F_v)} = a \cdot \spin \circ \iota\phi_{\pi_v}$ at the unramified places $v$, where $\phi_{\pi_v}$ is the unramified $L$-parameter of $\pi_v$ and $a \in \Z_{>0}$ is essentially the automorphic multiplicity of $\pi$. (In fact, we show that $a=1$ together with Theorem \ref{thm:IntroAutomMult} but only after the construction of $\rho_\pi$ is done. See \S\ref{sect:AutomorphicMultiplicity} below.)

\smallskip

\noindent\textbf{Step 3}: Produce $\chi$. \quad To do this we prove:

\begin{lemmastar}\label{lem:Intro}
Let $r_1, r_2 \colon \Gal(\li F/F) \to \GL_m(\lql)$ two continuous representations, which are unramified almost everywhere, $r_1$ is has Zariski connected image modulo center, and for almost all $F$-places
$$
\overline{r_1(\Frob_v)_{\ssimple}} \quad \tu{is conjugate to} \quad \overline{ r_2(\Frob_v)_{\ssimple}} \quad \tu{in} \quad \PGL_m(\lql).
$$
Then $r_1 \simeq r_2 \otimes \chi$ for a continuous character $\chi \colon \Gal(\li F/F) \to \lql^\times$.
\end{lemmastar}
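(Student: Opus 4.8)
The plan is to reduce the statement to a rigidity property of projective representations with large image and then to a highest-weight computation; throughout write $\Gamma=\Gal(\ol F/F)$. First I would arrange that $r_2$ is semisimple: the hypothesis involves only $r_2(\Frob_v)_{\ssimple}$, so it is untouched by semisimplifying $r_2$, and once $r_1\simeq r_2\otimes\chi$ is known the target is forced to be irreducible (it becomes isomorphic to $r_1$), so $r_2$ was semisimple to begin with. With $r_2$ semisimple and $r_1$ irreducible, both $r_1\otimes r_1^\vee$ and $r_2\otimes r_2^\vee$ are semisimple.

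Next comes the adjoint trick. If $g,h\in\GL_m(\lql)$ are conjugate in $\PGL_m(\lql)$ then the operators $X\mapsto gXg^{-1}$ and $X\mapsto hXh^{-1}$ on $m\times m$ matrices are conjugate in $\GL_{m^2}(\lql)$, hence have the same characteristic polynomial; since the conjugation action of a matrix and that of its semisimple part have the same characteristic polynomial, it follows that $(r_1\otimes r_1^\vee)(\Frob_v)$ and $(r_2\otimes r_2^\vee)(\Frob_v)$ have the same characteristic polynomial for almost all $v$. By Chebotarev and the Brauer--Nesbitt theorem (both sides semisimple) we get $r_1\otimes r_1^\vee\simeq r_2\otimes r_2^\vee$ as $\Gamma$-representations. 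Consequently $\End_\Gamma(r_2)=\Hom_\Gamma(\one,r_2\otimes r_2^\vee)=\Hom_\Gamma(\one,r_1\otimes r_1^\vee)=\lql$, so $r_2$ is irreducible; restricting the isomorphism to an arbitrary open subgroup shows likewise that $r_2$ is strongly irreducible; and comparing trace-zero summands yields an isomorphism $\Ad^0 r_1\simeq\Ad^0 r_2$ of $\Gamma$-representations, where $\Ad^0 r$ denotes $\mathfrak{sl}_m(\lql)$ with $\Gamma$ acting through the projectivization $\ol r$. It now suffices to show that $\ol r_1$ and $\ol r_2$ are conjugate in $\PGL_m(\lql)$, possibly after replacing $\ol r_1$ by its contragredient: lifting a conjugating element of $\PGL_m(\lql)$ to $g\in\GL_m(\lql)$, equivariance forces $r_2(\sigma)g\,r_1(\sigma)^{-1}g^{-1}$ to be a scalar for every $\sigma$, so $r_2\simeq r_1\otimes\chi$ (or $r_1^\vee\otimes\chi$) with $\chi(\sigma)=\tfrac1m\Tr\bigl(g^{-1}r_2(\sigma)g\,r_1(\sigma)^{-1}\bigr)$, a continuous character.

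To obtain this projective conjugacy, let $\mathbf G_1\subseteq\PGL_m$ be the Zariski closure of $\ol r_1(\Gamma)$. Strong irreducibility of $r_1$ makes $\mathbf G_1^\circ$ act irreducibly on $\lql^m$, so $\mathbf G_1$ is reductive and its preimage in $\GL_m$ acts irreducibly. The isomorphism $\Ad^0 r_1\simeq\Ad^0 r_2$ conjugates the image of $\mathbf G_1$ onto that of $\mathbf G_2$ inside $\GL(\mathfrak{sl}_m)$ under the adjoint representation, and since $\Ad\colon\PGL_m\hra\GL(\mathfrak{sl}_m)$ is a closed immersion this induces an isomorphism of algebraic groups $\vartheta\colon\mathbf G_1\isomto\mathbf G_2$ with $\ol r_2=\vartheta\circ\ol r_1$ and identifies the adjoint representations $\Ad|_{\mathbf G_1}$ and $\Ad\circ\vartheta$ of $\mathbf G_1$. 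Choosing a reductive cover $\widehat{\mathbf G}_1$ of $\mathbf G_1$ over which both the inclusion $\mathbf G_1\hra\PGL_m$ and the embedding obtained by composing $\vartheta$ with the inclusion of $\mathbf G_2$ lift to genuine representations, I obtain irreducible representations $U_1,U_2$ of $\widehat{\mathbf G}_1$ on $\lql^m$ with $U_1\otimes U_1^\vee\simeq U_2\otimes U_2^\vee$ as $\widehat{\mathbf G}_1$-representations. A highest-weight argument then finishes it: the highest weight of $U\otimes U^\vee$ is $\lambda+\lambda^\vee$ where $\lambda$ is the highest weight of $U$, and, using that the weights of an irreducible $U$ fill the convex hull of a single Weyl orbit --- which rigidly constrains their difference multiset --- one recovers $\lambda$ from the $\widehat{\mathbf G}_1$-representation $U\otimes U^\vee$ up to adding a character of $\widehat{\mathbf G}_1$ and up to the flip $\lambda\leftrightarrow\lambda^\vee$; hence $U_2\simeq U_1\otimes\eta$ or $U_2\simeq U_1^\vee\otimes\eta$ for a character $\eta$, which says precisely that $\vartheta$ is conjugation by an element of $\PGL_m$, respectively such composed with the contragredient automorphism. (If $\mathbf G_1$ is disconnected, first apply the connected statement to $\mathbf G_1^\circ$ --- on which $U_i$ stays irreducible --- and then propagate $\eta$ using the $\widehat{\mathbf G}_1$-equivariance.)

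Lastly I would eliminate the contragredient alternative. If it occurs then $\ol r_2=\ol r_1^\vee$, so the Frobenius hypothesis reads $\ol r_1(\Frob_v)\sim\ol r_1^\vee(\Frob_v)$ in $\PGL_m$ for almost all $v$; since the $\ol r_1(\Frob_v)$ are Zariski-dense in $\mathbf G_1$ and the condition ``$x$ is $\PGL_m$-conjugate to its contragredient'' is closed, the contragredient automorphism of $\PGL_m$ restricts to an \emph{inner} automorphism of $\mathbf G_1$ (a genuinely outer automorphism cannot preserve a Zariski-dense set of conjugacy classes), forcing $U_1\simeq U_1^\vee\otimes\eta'$, i.e.\ $r_1$ essentially self-dual; then $r_2\simeq r_1^\vee\otimes\chi\simeq r_1\otimes\chi'$ anyway, completing $r_1\simeq r_2\otimes\chi$. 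I expect the genuine obstacle to be the rigidity input of the previous paragraph --- upgrading ``$\Ad^0$ isomorphic'' to ``$\PGL_m$-conjugate up to contragredient'' --- which really uses the large-image hypothesis: by Larsen's example (reflected in the failure of Theorem~\ref{thm:A}(vi) for general $\GSpin$-valued Galois representations) two representations with \emph{finite} image can have isomorphic $\Ad^0$, hence be everywhere locally $\PGL_m$-conjugate, yet not be twist-equivalent; the highest-weight argument collapses there because for a finite group $U\otimes U^\vee$ does not pin down $U$ up to twist and contragredient.
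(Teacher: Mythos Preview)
Your approach is genuinely different from the paper's, and substantially more involved. The paper's argument (Lemma~\ref{lem:mut} together with Proposition~\ref{prop:LocallyConjugateProjectiveReps}) is elementary and avoids algebraic groups entirely. After twisting so that $\det r_i$ has finite order dividing $s$, projective conjugacy at $\Frob_v$ becomes conjugacy modulo $\mu_{ms}(\lql)$, hence $\Tr r_1(\sigma)=\zeta_\sigma\Tr r_2(\sigma)$ for some $\zeta_\sigma\in\mu_{ms}$ and all $\sigma$. The crucial observation is topological: choose an open ideal $I\subset\lzl$ with $(1-\zeta)m\notin I$ for every nontrivial $\zeta\in\mu_{ms}$; on the open neighbourhood $U=\{\sigma:\Tr r_i(\sigma)\equiv m\bmod I\}$ of the identity the root of unity $\zeta_\sigma$ is forced to be $1$, so $\Tr r_1=\Tr r_2$ on $U$. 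Taking a normal open $\Gamma_E\subset U$ gives $r_1|_{\Gamma_E}\simeq r_2|_{\Gamma_E}$; strong irreducibility makes $H=\Hom_{\Gamma_E}(r_1,r_2)$ one-dimensional, and the natural $\Gamma$-action on $H$ is by a character $\chi$, any nonzero element of $H$ furnishing the intertwiner $r_1\simeq r_2\otimes\chi^{-1}$. No Zariski closures, no highest weights, no contragredient case.

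Your route through $\Ad^0$ is correct and clean up to obtaining $r_1\otimes r_1^\vee\simeq r_2\otimes r_2^\vee$ and the strong irreducibility of $r_2$. The gap is the step you flag as the ``genuine obstacle'': from $U_1\otimes U_1^\vee\simeq U_2\otimes U_2^\vee$ as $\widehat{\mathbf G}_1$-representations you want $U_2\simeq U_1\otimes\eta$ or $U_1^\vee\otimes\eta$. Your justification (``weights fill the convex hull of a Weyl orbit, rigidly constraining the difference multiset'') is not a proof: the problem of recovering a multiset from its difference multiset is the homometric-set problem, which has nontrivial solutions in general, and you have not explained why the extra structure ($W$-invariance, saturation) rules them out. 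Concretely, from highest weights alone you only get $\lambda_1-w_0\lambda_1=\lambda_2-w_0\lambda_2$, which for type $A_{n\ge2}$ leaves a nontrivial ambiguity beyond duality; one must use the full decomposition, and you have not said how. The contragredient elimination also needs care: knowing that every element of $\mathbf G_1$ is $\PGL_m$-conjugate to its transpose-inverse does not immediately show that transpose-inverse preserves $\mathbf G_1$, let alone acts by an inner automorphism there. These points may be repairable, but as written the argument is incomplete --- and, in any case, the paper's topological argument reaches the goal in a few lines.
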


By counting points on Shimura varieties we control $\rho_2$ at the unramified places. By a different argument $\rho_1$ is, up to scalars, also controlled at the unramified places. Hence the lemma applies, and allows us to find a character $\chi$ such that $\rho_2 \simeq \rho_1\otimes \chi$.

To prove Theorem \ref{thm:A} we define
$$
\rho_\pi \bydef \wt {\rho_{\pi^\flat}} \otimes \chi\colon \Gal(\li F/F) \to \GSpin_{2n+1}(\lql),
$$
and check that $\rho_\pi$ satisfies the desired properties stated in the theorem.

\smallskip

{\small
\noindent \textbf{Acknowledgments: } \quad
The authors wish to thank Fabrizio Andreatta, Alexis Bouthier, Laurent Clozel, Wushi Goldring, Christian Johansson, Kai-Wen Lan, Brandon Levin, David Loeffler, Judith Ludwig, Giovanni Di Matteo, Sophie Morel, Stefan Patrikis, Peter Scholze, Benoit Stroh, Richard Taylor, Olivier Ta\"\i bi, Jack Thorne, Jacques Tilouine, David van Overeem and David Vogan for helpful discussions and answering our questions. A.K. thanks S.W.S. and the Massachusetts Institute of Technology for the invitation to speak in the number theory seminar, which led to this work. Furthermore he thanks the Institute for Advanced Study, the Mathematical Sciences Research Institute, the Max Planck Institute for Mathematics, Korea Institute for Advanced Study, and the University of Amsterdam, where this work was carried out. During his time at the IAS (resp. MSRI) A.K. was supported by the NSF under Grant No. DMS 1128155 (resp. No. 0932078000). S.W.S. was partially supported by NSF grant DMS-1449558/1501882 and a Sloan Fellowship. Both authors are grateful to an anonymous referee for an extensive list of comments and simplifications. In particular he/she explained Proposition \ref{prop:WeaklyAcceptable} to us and suggested a significant simplification based on the fact that the $\SO_{2n+1}$-valued Galois representation $\rho_{\pi^\flat}$ has connected image (without extra regularity hypotheses on the component of $\pi$ at infinity). Following the suggestion we dispensed with the eigenvariety and patching arguments in an earlier version (\texttt{arXiv:1609.04223v1}) of our paper.}
We thank another anonymous reviewer for sending us corrections and suggesting improvements on the next version (\texttt{arXiv:1609.04223v2}), especially in \S\ref{sec:GSpin-valued}.

\section*{Notation}\label{sect:Notation}
We fix the following notation and convention.
\begin{itemize}
\item ``Almost all'' always means ``all but finitely many''.
\item $n \geq 2$ is an integer.
\item $F$ is a totally real number field, embedded into $\C$.
\item $\mathcal{O}_F$ is the ring of integers of $F$.
\item $\mathcal{O}_F[1/S]$ is the localization of $\cO_F$ with finite primes in $S$ inverted, where $S$ is a finite set of places of $F$. (Finite places are used interchangeably with finite primes.)
\item $\A_F$ is the ring of ad\`eles of $F$, i.e. $\A_F := (F \otimes_\Q \R) \times (F \otimes_\Z \Zhat)$.
\item If $\Sigma$ is a finite set of $F$-{pla\-ces}, then $\A_F^\Sigma \subset \A_F$ is the ring of ad\`eles with trivial components at the places in $\Sigma$, and $F_\Sigma := \prod_{v \in \Sigma} F_v$; $F_ \infty:=F\otimes_\Q \R$.
\item If $p$ is a prime number, then $F_p := F \otimes_{\Q} \qp$.
\item $\ell$ is a fixed prime number (different from $p$).
\item $\lql$ is a fixed algebraic closure of $\ql$, and $\iota \colon \C \isomto \lql$ is an isomorphism.
\item For each prime number $p$ we fix the positive root $\sqrt p \in \R_{>0} \subset \C$. From $\iota$ we then obtain a choice for $\sqrt p \in \lql$. Thus, if $q$ is a prime power, then $q^x$ is well-defined in $\C$ as well as in $\lql$ for all half integers $x \in \tfrac 12 \Z$ (e.g. $q_v^{n(n+1)/4}$ in Corollary \ref{cor:lgc-unramified})
\item If $\pi$ is a representation on a complex vector space then we set $\iota\pi:=\pi\otimes_{\C,\iota} \lql$. Similarly if $\phi$ is a local $L$-parameter of a reductive group $G$ so that $\phi$ maps into $^L G(\C)$ then $\iota\phi$ is the parameter with values in $^L G(\lql)$ obtained from $\phi$ via $\iota$.
\item $\Gamma := \Gal(\li F/F)$ is the absolute Galois group of $F$.
\item $\Gamma_v := \Gal(\li F_v/F_v)$ is (one of) the local Galois group(s) of $F$ at the place $v$.
\item $\cV_ \infty:= \Hom_\Q(F, \R)$ is the set of infinite places of $F$.
\item $c_v \in \Gamma$ is the complex conjugation (well-defined as a conjugacy class) induced by any embedding $\li{F}\hookrightarrow \C$ extending $v \in \cV_ \infty$.
\item If $G$ is a locally profinite group equipped with a Haar measure, then we write $\cH(G)$ for the \emph{Hecke algebra} of locally constant, complex valued functions with compact support. We write $\cH_{\lql}(G)$ for the same algebra, but now consisting of $\lql$-{va\-lued} functions.
\item We normalize parabolic induction by the half power of the modulus character as in \cite[1.8]{BZ77}, so as to preserve unitarity.
The Satake transform and parameters are normalized similarly, e.g. as in \cite[2.2]{BuzzardGee}.
\item We normalize class field theory so that geometric Frobenius elements correspond to local uniformizers. Our normalization of the local Langlands correspondence for $\GL_n$ is the same as in \cite{HarrisTaylor}.
\item If $H/\lql$ is a reductive group, a group-valued representation $\Gamma \to H(\lql)$ means, by definition, a \emph{continuous} group morphism for the Krull topology on $\Gamma$ and the $\ell$-adic topology on $H(\lql)$. Similarly every character is assumed to be continuous throughout the paper.
\end{itemize}

\subsection*{The (general) symplectic group}
Write $A_n$ for the $n \times n$-matrix with zeros everywhere, except on its antidiagonal, where we put ones. Write $J_n := \kleinvierkant \ {A_n}{-A_n}\ \in \GL_{2n}(\Z)$. We define $\GSp_{2n}$ as the algebraic group over $\Z$, such that for all rings $R$,
$$
\GSp_{2n}(R) = \{g \in \Gl_{2n}(R) \ |\ {}^\tu{t} g \cdot J_n \cdot g = x \cdot J_n \tu{ for some } x \in R^\times\}.
$$
The \emph{factor of similitude} $x \in R^\times$ induces a morphism $\simil \colon \Gsp_{2n} \to \Gm$. Write $\TGSp \subset \GSp_{2n}$ for the diagonal maximal torus. Then $X^*(\TGsp) = \bigoplus_{i=0}^n \Z e_i$ where
\begin{align*}
e_i \colon & \diag(a_1, \ldots, a_n, c a_n^{-1}, \ldots, c a_1^{-1}) \mapsto a_i \quad \quad (i > 0), \cr
e_0 \colon & \diag(a_1, \ldots, a_n, c a_n^{-1}, \ldots, c a_1^{-1}) \mapsto c.
\end{align*}
We let $\BGSp \subset \Gsp_{2n}$ be the upper triangular Borel subgroup. We have the following corresponding simple roots and coroots,
\begin{align*}
\alpha_1 &= e_1 - e_2, ~\ldots, ~\alpha_{n-1} = e_{n-1} - e_n, ~\alpha_n = 2e_n - e_0 \in X^*(\TGsp), \cr
\alpha_1^\vee &= e^*_1 - e^*_2, ~\ldots, ~\alpha_{n-1}^\vee = e^*_{n-1} - e^*_n, ~\alpha_n^\vee = e^*_n \in X_*(\TGsp).
\end{align*}
We define $\Sp_{2n} = \Ker(\simil)$. Write $\BSp = \BGSp \cap \Sp_{2n}$ and $\TSp = \TGsp \cap \Sp_{2n}$.

\subsection*{The (general) orthogonal group}
Let $m \in \Z_{\geq 1}$ and let $\GO_{m}$ be the algebraic group over $\Q$ such that for all $\Q$-algebras $R$,
$$
\GO_{m}(R) = \{g \in \GL_{m}(R)\ |\ {}^{\tu t} g \cdot A_{m} \cdot g = x \cdot A_{m} \tu{ for some $x \in R^\times$}\},
$$
where $A_m$ is the $m \times m$-antidiagonal unit matrix. We have the factor of similitude $\simil \colon \GO_{2n+1} \to \Gm$ and put ${\tu{O}}_{m} = \ker(\simil)$ and $\SO_{m} = \tu{O}_{m} \cap \SL_{m}$. Let $\TGO \subset \GO_{m}$, $\TSO \subset \SO_{m}$ be the diagonal tori. We write $\std \colon \GO_{m} \to \GL_{m}$ for the standard representation. If $m = 2n+1$ is odd, the root datum of $\SO_{2n+1}$ is dual to $\Sp_{2n}$. In particular, we identify $X_*(\TSO) = X^*(\TSp)$.

\subsection*{The (general) spin group} Consider the symmetric form
$$
\langle x, y \rangle = x_1 y_{2n+1} + x_2 y_{2n} + \cdots + x_{2n+1} y_1 = {}^{\tu{t}} x \cdot A_{2n+1} \cdot y
$$
on $\Q^{2n+1}$. The associated quadratic form is $Q(x) = x_1 x_{2n+1} + x_2 x_{2n} + \cdots + x_{2n+1} x_1$. Let $C$ be the Clifford algebra associated to $(\Q^{2n+1}, Q)$. It is equipped with an embedding $\Q^{2n+1} \subset C$ which is universal for maps $f \colon \Q^{2n+1} \to A$ into associative rings $A$ satisfying $f(x)^2 = Q(x)$ for all $x \in \Q^{2n+1}$.
Let $b_1, \ldots, b_{2n+1}$ be the standard basis of $\Q^{2n+1}$. The products $B_I = \prod_{i \in I} b_i$ for $I \subset \{1, 2, \ldots, 2n+1\}$ form a basis of $C$. The algebra $C$ has a $\Z/2\Z$-grading, $C = C^+ \oplus C^-$, induced from the grading on the tensor algebra. On the Clifford algebra $C$ we have an unique anti-involution $*$ that is determined by $(v_1 \cdots v_r)^* = (-1)^r v_r \cdots v_1$ for all $v_1, \ldots, v_r \in V$. We define for all $\Q$-algebras $R$,
$$
\GSpin_{2n+1}(R) = \{g \in (C^+ \otimes R)^\times \ |\ g \cdot R^n \cdot g^* = R^n \}.
$$
The spinor norm on $C$ induces a character $\mathcal N \colon \GSpin_{2n+1} \to \Gm$. The action of the group $\GSpin_{2n+1}$ stabilizes $\Q^{2n+1} \subset C$ and we obtain a surjection $q' \colon \GSpin_{2n+1} \to \GO_{2n+1}$. We write $q$ for the surjection $\GSpin_{2n+1} \surjects \SO_{2n+1}$ obtained from $q'$.

We write $\TGSpin \subset \GSpin_{2n+1}$ for the torus $(q')^{-1}(\TGSpin)$. We then have
\begin{align*}
X^*(\TGSpin) &= \Z e^*_0 \oplus \Z e^*_1 \oplus \cdots \oplus \Z e^*_n (= X_*(\TGSp)), \cr
X_*(\TGSpin) &= \Z e_0 \oplus \Z e_1 \oplus \cdots \oplus \Z e_n (= X^*(\TGsp)).
\end{align*}
The group $\GSpin_{2n+1}(\C)$ is dual to $\GSp_{2n}(\C)$. The character $\simil:\GSp_{2n}\ra \G_m$ induces a central embedding $\G_m\ra \GSpin_{2n+1}$, still denoted by $\simil$.

\subsection*{The spin representation}
Let $k$ be an algebraically closed field of characteristic zero. Write $W:=\oplus_{i=1}^n k\cdot b_i$ and $\bigwedge^\bullet W$ for the exterior algebra of $W$. Then $W$ is an $n$-dimensional isotropic subspace of $(\Q^{2n+1},Q)$. We have $\End(\bigwedge^\bullet W) \simeq C^+$, and hence $\bigwedge^\bullet W$ is a $2^n$-dimensional representation of $\GSpin_{2n+1, k}$, called the \emph{spin representation} (cf. \cite[(20.18)]{FultonHarris}). The composition of $\spin$ with $\GL_{2^n,k} \to \PGL_{2^n,k}$ induces a morphism $\overline {\spin} \colon \SO_{2n+1,k} \to \PGL_{2^n,k}$.

\begin{lemma}\label{lem:similitude-of-spin-rep}
When $n$ mod 4 is 0 or 3 (resp. 1 or 2), there exists a symmetric (resp. symplectic) form on the $2^n$-dimensional vector space underlying the spin representation such that the form is preserved under $\GSpin_{2n+1}(k)$ up to scalars. The resulting map $\GSpin_{2n+1}\ra \GO_{2^n}$ (resp. $\GSpin_{2n+1}\ra \GSp_{2^n}$) over $k$ followed by the similitude character of $\GO_{2^n}$ (resp. $\GSp_{2^n}$) coincides with the spinor norm $\mathcal N$.
\end{lemma}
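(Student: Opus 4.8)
The plan is to produce the form algebraically, from the even Clifford algebra $C^+_k$ and its canonical anti-involution $*$, rather than from the Lie algebra; then the invariance and the identification of the similitude character with $\mathcal N$ are essentially automatic, and only the symmetry type requires genuine work. Since $k$ is algebraically closed of characteristic zero, $(k^{2n+1},Q)$ is split, so $C^+_k$ is a central simple $k$-algebra of dimension $2^{2n}$, hence $C^+_k\cong\End_k(S)$ for its unique simple module $S$, with $\dim_k S=2^n$; concretely $S$ is the exterior algebra $\bigwedge^\bullet W$ of the identification recalled above, and $\spin$ is the resulting action of $\GSpin_{2n+1}\subset (C^+_k)^\times$ on $S$. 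The anti-involution $*$ preserves $C^+_k$ (the reversal of an even product is even), hence becomes a $k$-linear anti-involution of $\End_k(S)$. Such an anti-involution is adjunction against a non-degenerate bilinear form $\langle\,,\,\rangle$ on $S$, unique up to scalar, and since $*$ squares to the identity the form is forced to be symmetric or alternating; we take this $\langle\,,\,\rangle$, so that $c^*$ is the adjoint of $c$ for every $c\in C^+_k$.

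Next I would check invariance. Viewing $S$ as a $C^+_k$-module so that $\spin(g)s=gs$, the defining property of the spinor norm gives $gg^*=\mathcal N(g)\cdot 1\in k^\times$ for $g\in\GSpin_{2n+1}(k)$, whence
$$\langle \spin(g)s,\ \spin(g)s'\rangle=\langle s,\ g^*gs'\rangle=\mathcal N(g)\,\langle s,s'\rangle\qquad(s,s'\in S).$$
Thus $\langle\,,\,\rangle$ is preserved by $\GSpin_{2n+1}(k)$ up to the scalar $\mathcal N(g)$, so the image of $\spin$ lies in $\GO_{2^n}$ (resp.\ $\GSp_{2^n}$) and the composition with the similitude character is $\mathcal N$.

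It remains to decide whether $\langle\,,\,\rangle$ is symmetric or alternating, equivalently whether $*$ is of orthogonal or symplectic type on $C^+_k$. Since $\spin$ is irreducible, the restriction of $\langle\,,\,\rangle$ to $\Spin_{2n+1}=\ker(\mathcal N)$ is the (unique up to scalar) invariant form on the spin representation of $\Spin_{2n+1}$, whose type is classical: orthogonal for $n\equiv 0,3\pmod 4$ and symplectic for $n\equiv 1,2\pmod 4$ (see e.g.\ \cite{FultonHarris}). Alternatively, in the model $S=\bigwedge^\bullet W$ the form pairs $\bigwedge^iW$ with $\bigwedge^{n-i}W$ by a sign-twisted reversal followed by wedging into $\bigwedge^n W\cong k$, and evaluating on the basis $\{E_I\}$ yields $\langle\beta,\alpha\rangle=(-1)^{n(n+1)/2}\langle\alpha,\beta\rangle$, which is $+1$ exactly for $n\equiv 0,3\pmod 4$. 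Either route matches the statement.

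The main obstacle is this last step: fixing the symmetry type with its precise dependence on $n\bmod 4$. Everything else is formal once one argues inside $C^+_k$ with $*$; the content is the sign bookkeeping in the $\bigwedge^\bullet W$ computation — equivalently the Frobenius--Schur indicator for type $B_n$ — where one must track carefully the normalization of $*$: it is the reversal, which on a Clifford monomial of exterior degree $k$ differs from the naive reversal by $(-1)^k$, and this sign is what produces the $n(n+1)/2$ above.
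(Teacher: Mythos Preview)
Your proposal is correct and follows essentially the same approach as the paper: the paper also realizes $S$ as $\bigwedge^\bullet W\cong C^+_k$, defines the form via the involution $*$ (explicitly as the projection of $s^*\wedge t$ onto $\bigwedge^n W$), and computes $\beta(xs,xt)=(xs)^*\wedge(xt)=(x^*x)\,\beta(s,t)=\mathcal N(x)\,\beta(s,t)$, citing \cite[Exercise~20.38]{FultonHarris} for the symmetry type. Your only variation is to obtain the form abstractly from the fact that an involution on $\End_k(S)$ is adjunction for a unique-up-to-scalar bilinear form, rather than writing it down directly; this is a cosmetic difference.
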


\begin{proof}
We may identify the $2^n$-dimensional space with $\bigwedge^\bullet W$. Write $*$ for the main involution on $C^+_k$ as well as on $\bigwedge^\bullet W$. Given $s,t \in \bigwedge^\bullet W$, write $\beta(s,t) \in k$ for the projection of $s^*\wedge t \in \bigwedge^\bullet W$ onto $\bigwedge^n W = k$. It is elementary to check that $\beta(\cdot,\cdot)$ is symmetric if $n$ mod 4 is 0 or 3 and symplectic otherwise, cf. \cite[Exercise 20.38]{FultonHarris}. Now let $x \in \GSpin_{2n+1}(k)$, also viewed as an element of $C^+_k$. Note that $x^*x \in k^\times$ is the spinor norm of $x$. Then $\beta(xs,xt)=(xs)^* \wedge (xt) = (x^*x) s\wedge t = x^*x \beta(s,t)$, completing the proof.
\end{proof}

\section{Conventions and recollections}\label{sec:conventions}

Let $G/\lql$ be a reductive group, $T \subset G$ a maximal torus and $W$ its Weyl group in $G$.
Recall that by highest weight theory the trace characters of irreducible representations form a basis of the algebra $\cO(T/W)$.
We call a set $S$ of representations of $G/\lql$ a \emph{fundamental set} if the trace characters of all exterior powers of the representations in $S$ generate the algebra $\cO(T/W)$ of global sections of the variety $T/W$. Here are some examples of such $S$, where we define the \emph{standard representation} $\std$ of $\GSpin_m$ ($m$ even or odd) to be the composition $\GSpin_m \surjects \GSO_m \hookrightarrow \GL_m$. For the group $G_2$, write $\std$ and $\ad$ for the irreducible $7$-dimensional and $14$-dimensional (adjoint) representations, respectively.
\begin{center}
\begin{tabular}{|c||c|c|c|c|c|c|c|}\hline
$G$ &	 $\GL_n$ &		$\GSp_{2n}$ & 		$\GSO_{2n+1}$ &		$\GO_{2n}$	&	$\GSpin_{2n+1}$ &	$\GSpin_{2n}$	& $G_2$ \\\hline
$S$	& $\std$ & $\std, \tu{sim}$ & $\std, \tu{sim}$ & $\std, \tu{sim}$ & $\spin, \std, \cN$ & $\spin^+, \spin^-, \std, \cN$ & $\std$, $\ad$
\\ \hline
\end{tabular}
\end{center}
\begin{proof}[Justification of Table] The justification for the groups $G = \GO_{2n}$ and $\GSO_{2n+1}$ follows from appendix B. Notice that the remaining groups are all connected. The fundamental representations of a simply connected semisimple group $G$ are those irreducible representations whose highest weights (for some Borel) are dual to the basis of coroots of $G$; the trace characters of fundamental representations generate $\cO(T/W)$ as an algebra.
Exercises V.28--30 and 33 of \cite[p. 344]{KnappLieGroupsBeyond} show that for the Lie algebras ${\mathfrak s \mathfrak l}_n$, ${\mathfrak s \mathfrak o}_{2n+1}$, ${\mathfrak s \mathfrak o}_{2n}$ and ${\mathfrak g}_2$ the representations $\{\wedge^i \std \ (1 \leq i \leq n)\}$, $\{\wedge^i \std\ (1 \leq i \leq n-1), \spin\}$, $\{\wedge^i \std \ (1 \leq i \leq n-2), \spin^+, \spin^-\}$ and $\{\std, \ad\}$ (respectively) are the fundamental representations of these Lie algebras. The statement for the corresponding semisimple simply connected groups follow from this. It is then routine to derive the sets $S$ listed above for the groups $\GL_n$, $\GSpin_{2n+1}$, $\GSpin_{2n}$ and $G_2$. This leaves us with the group $\GSp_{2n}$. In this case the representations $\wedge^i \std$ of ${\mathfrak s \mathfrak p}_{2n}$ are reducible, but it follows from the maps $\varphi_k$ and theorem 17.5 in \cite[p. 260]{FultonHarris} that they generate the fundamental representations of ${\mathfrak s \mathfrak p}_{2n}$.
\end{proof}

\begin{lemma}\label{lem:GaugerSteinberg}
Let $g_1, g_2 \in G(\lql)$ be two semisimple elements. Then $g_1 \sim g_2 \in G(\lql)$ if and only if $\rho(g_1) \sim \rho(g_2) \in \GL(V)$ for all $(\rho, V) \in S$.
\end{lemma}
\begin{proof}
``$\Rightarrow$'' is obvious. We prove ``$\Leftarrow$''.
Since $T/W$ is an algebraic variety, two $\lql$-points $x_1, x_2$ of it are the same if and only if $f(x_1) = f(x_2)$ for the algebraic functions $f \in \cO(T/W)$. Since $\rho(g_1) \sim \rho(g_2)$ for all $\rho \in S$, we also have $\Tr \bigwedge^n \rho(g_1) = \Tr \bigwedge^n \rho(g_2)$ for all $n \geq 1$. Since the functions $f = \Tr \bigwedge^n \rho$ in $\cO(T/W)$ generate this algebra the images of $g_1$ and $g_2$ in $T/W$ are the same.
\end{proof}

\begin{remark}
Gauger \cite{Gauger} and Steinberg \cite[Thm. 3]{SteinbergConjugacy} proved Lemma \ref{lem:GaugerSteinberg} also for non-semisimple elements $g_1, g_2 \in G(\lql)$ and (under various assumptions) algebraically closed fields of characteristic $p \geq 0$.
\end{remark}

Let $r_1, r_2 \colon \Gamma \to G(\lql)$ be two semisimple representations that are unramified at almost all places. (As remarked before, all representations are continuous by convention.)

\begin{lemma}\label{lem:EquivalenceLocalConjugacy}
The following statements are equivalent:
\begin{enumerate}
\item for a Dirichlet density one set of finite $F$-places $v$ where $r_1, r_2$ are unramified we have $r_1(\Frob_v)_{\ssimple} \sim r_2(\Frob_v)_{\ssimple} \in G(\lql)$;
\item there exists a dense subset $\Sigma \subset \Gamma$ such that for all $\sigma \in \Sigma$ we have $r_1(\sigma)_{\ssimple} \sim r_2(\sigma)_{\ssimple} \in G(\lql)$;
\item for all $\sigma \in \Gamma$ we have $r_1(\sigma)_{\ssimple} \sim r_2(\sigma)_{\ssimple} \in G(\lql)$;
\item for all linear representations $\rho \colon G \to \GL_N$ the representations $\rho \circ r_1$ and $\rho \circ r_2$ are isomorphic.
\item for a fundamental set of irreducible linear representations $\rho \colon G \to \GL_N$ the representations $\rho \circ r_1$ and $\rho \circ r_2$ are isomorphic.
\end{enumerate}
\end{lemma}
\begin{proof}
(3) $\Rightarrow$ (1) is tautological, (1) $\Rightarrow$ (2) follows from the Chebotarev density theorem, and
(2) $\Rightarrow$ (3) follows from the continuity of the map $G(\lql) \to (T/W)(\lql)$ taking the semisimple part. The implication (3) $\Rightarrow$ (4) follows from the Brauer-Nesbitt theorem, (4) $\Rightarrow$ (5) is obvious, and (5) $\Rightarrow$ (3) is Lemma \ref{lem:GaugerSteinberg}.
\end{proof}

\begin{definition}\label{def:locconj}
If one of the conditions in Lemma \ref{lem:EquivalenceLocalConjugacy} holds, then $r_1$ and $r_2$ are said to be \emph{locally conjugate}, and we write $r_1 \approx r_2$.
\end{definition}

\begin{definition}\label{def:DefSuffRegular}
Let $T$ be a maximal torus in a reductive group $G$ over an algebraically closed field. A weight $\nu \in X^*(T)$ is \emph{regular} if $\langle \alpha^\vee, \nu \rangle \neq 0$ for all coroots $\alpha^\vee$ of $T$ in $G$.
\end{definition}

\begin{definition}\label{def:spin-reg-param}
Let $\phi \colon W_{\R} \to \GSpin_{2n+1}(\C)$ be a Langlands parameter. Denote by $T$ the diagonal maximal torus in $\GL_{2^n}$ and by $\hat T$ its dual torus. We have $W_\C=\C^\times \subset W_{\R}$. The composition
$$
\C^\times \subset W_{\R} \to \GSpin_{2n+1}(\C) \overset {\tu{spin}} \to \GL_{2^n}(\C)
$$
is conjugate to the cocharacter $z\mapsto \mu_1(z)\mu_2(\ol{z})$ given by some $\mu_1,\mu_2 \in X_*(\hat T)\otimes_\Z \C=X^*(T)\otimes_\Z \C$ such that $\mu_1-\mu_2 \in X^*(T)$. Then $\phi$ is \emph{spin-regular} if $\mu_1$ is regular (equivalently if $\mu_2$ is regular; note that $\mu_1$ and $\mu_2$ are swapped if $\spin\circ \phi$ is conjugated by the image of an element $j \in W_\R$ such that $j^2=-1$ and $jwj^{-1}=\ol{w}$ for $w \in W_\C$).\end{definition}

\begin{definition}\label{def:SpinRegular}
An automorphic representation $\pi$ of $\GSp_{2n}(\A_F)$ is \emph{spin-regular} at $v_ \infty$ if the Langlands parameter of the component $\pi_{\vinfty}$ is spin-regular.
\end{definition}

Let $H$ be a connected reductive group over $\lql$ for the following two definitions (which could be extended to disconnected reductive groups). Let $\fkh_{\der}$ denote the Lie algebra of its derived subgroup. Write $c$ for the nontrivial element of $\Gal(\C/\R)$.

\begin{definition}[cf. \cite{GrossOdd}]\label{def:odd}
A representation $\rho:\Gal(\C/\R) \to H(\lql)$ is \emph{odd} if the trace of $c$ on $\fkh_{\der}$ through the adjoint action of $\rho(c)$ is equal to $-\mathrm{\rank}(\fkh_{\der})$.
\end{definition}

We remark that the mapping $\Gl_1 \times \SO_{2n+1}\to \GO_{2n+1}$ is an isomorphism, in particular the latter is connected.

\begin{lemma}\label{lem:odd-GO}
Let $\rho^\flat\colon \Gal(\C/\R)\to \GO_{2n+1}(\lql)$ be a representation. Write $\rho^\sharp \colon \Gamma\to \GL_{2n+1}(\lql)$ for the composition of $\rho^\flat$ with the standard embedding. If $\Tr\rho^\sharp(c) \in \{\pm 1\}$ then $\rho^\flat$ is odd.
\end{lemma}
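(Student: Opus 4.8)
The plan is to compute the trace of $\rho^\flat(c)$ acting via the adjoint representation on $\mathfrak{h}_{\der}$, where $\mathfrak h = \Lie \GO_{2n+1}$, and to compare it with $-\rank(\mathfrak h_{\der})$. Since $\GO_{2n+1} = \SO_{2n+1} \times \Gm$ up to isogeny (the similitude character splits the map $\GO_{2n+1} \to \mathrm{O}_{2n+1}$ over the connected component, as $2n+1$ is odd and $-1 \in \SO_{2n+1}$ witnesses the extra component), we have $\mathfrak h_{\der} = \mathfrak{so}_{2n+1}$, which has rank $n$. So the goal is to show that $\Tr(\Ad \rho^\flat(c) \mid \mathfrak{so}_{2n+1}) = -n$ whenever $\Tr \rho^\sharp(c) \in \{\pm 1\}$.

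First I would note that $c^2 = 1$ forces $\rho^\sharp(c)$ to be a semisimple involution in $\GL_{2n+1}(\lql)$, hence diagonalizable with eigenvalues $\pm 1$; if it has $p$ eigenvalues equal to $+1$ and $q = 2n+1-p$ equal to $-1$, then $\Tr \rho^\sharp(c) = p - q$, so the hypothesis $\Tr\rho^\sharp(c) \in \{\pm 1\}$ means $\{p,q\} = \{n, n+1\}$ (in some order). Next, for the standard representation $V = \lql^{2n+1}$ of $\SO_{2n+1}$ one has $\mathfrak{so}_{2n+1} \cong \bigwedge^2 V$ as a module under $\SO_{2n+1}$ (using the quadratic form to identify $V \cong V^\vee$), and this isomorphism is compatible with the $\GO_{2n+1}$-action up to a twist by the similitude character which does not affect the trace of an involution. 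Therefore $\Tr(\Ad\rho^\flat(c)\mid \mathfrak{so}_{2n+1}) = \Tr(\rho^\sharp(c) \mid \bigwedge^2 V) = \binom{p}{2} + \binom{q}{2} - pq$, which one computes using $\Tr(g \mid \bigwedge^2 V) = \tfrac12((\Tr g)^2 - \Tr(g^2))$ with $\Tr \rho^\sharp(c) = \pm 1$ and $\Tr \rho^\sharp(c)^2 = \Tr(\mathrm{Id}) = 2n+1$: this gives $\tfrac12(1 - (2n+1)) = -n$, as desired.

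The only subtle point — and the part I would be most careful about — is the reduction $\mathfrak h_{\der} = \mathfrak{so}_{2n+1}$ together with the identification of the adjoint action of $\rho^\flat(c)$ on it with the action of $\rho^\sharp(c)$ on $\bigwedge^2 V$, i.e. checking that conjugating $\rho^\flat$ into $\GO_{2n+1}$ via the given model and then passing to the Lie algebra really is the wedge-square of the standard embedding twisted by a character. This is standard linear algebra once one fixes the bilinear form $A_{2n+1}$, but it is where all the bookkeeping lives; everything else is the elementary trace identity $\Tr(g\mid\bigwedge^2 V) = \tfrac12((\Tr g)^2 - \Tr g^2)$ applied to the involution $g = \rho^\sharp(c)$. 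No case distinction between the two possibilities $\Tr\rho^\sharp(c) = 1$ and $\Tr\rho^\sharp(c) = -1$ is needed since only $(\Tr g)^2$ enters.
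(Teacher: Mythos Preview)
Your proof is correct and takes a cleaner route than the paper's. The paper diagonalises $\rho^\flat(c)$ explicitly inside a maximal torus of $\SO_{2n+1}$ (after multiplying by $\pm 1$ to land there), writes it as $\diag(1_a,-1_b,1,-1_b,1_a)$ with $a+b=n$, computes the adjoint trace by hand as $2(a-b)^2+2(a-b)-n$, and then checks case by case (according to the parity of $n$) that $a-b\in\{0,-1\}$ forces this to be $-n$. Your argument replaces that explicit matrix computation by the representation-theoretic identity $\mathfrak{so}_{2n+1}\cong\bigwedge^2 V$ together with the Newton-type identity $\Tr(g\mid\bigwedge^2 V)=\tfrac12\big((\Tr g)^2-\Tr(g^2)\big)$, which gives $-n$ in one line with no case split. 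This is genuinely more conceptual; the paper's approach has the virtue of being entirely elementary linear algebra, but yours explains \emph{why} only $(\Tr g)^2$ matters.

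One point you should make precise rather than leave as an aside: your claim that the similitude twist ``does not affect the trace of an involution'' amounts to the assertion that $\simil(\rho^\flat(c))=1$. This is true, but it is exactly where the odd dimension is used and deserves a sentence. Since $c^2=1$ we have $\simil(\rho^\flat(c))^2=1$; if $\simil(\rho^\flat(c))=-1$ then both $(\pm 1)$-eigenspaces of $\rho^\sharp(c)$ would be totally isotropic for the form, hence of dimension $\le n$, contradicting $\dim V=2n+1$. Equivalently, in the decomposition $\GO_{2n+1}\cong\Gm\times\SO_{2n+1}$ the scalar part of an involution is $\pm 1$, which acts trivially by conjugation --- this is precisely the paper's ``multiplying by $-1$ if necessary'' step. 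With that check in place your argument is complete.
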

\begin{proof}
We may choose a model for the Lie algebra of $\SO_{2n+1}$ to consist of $X \in \GL_{2n+1}$ such that $X+A_{2n+1}^{\tu{t}} X A_{2n+1}^{-1}=0$. Such an $X=(x_{i,j})$ is characterized by the condition $x_{i,j}+x_{n+1-j,n+1-i}=0$ for every $1\le i,j\le 2n+1$. Write $t:=\rho^\flat(c)$. By conjugation and multiplying with $-1 \in \GL_{2n+1}$ if necessary, we can assume that $t$ is in the diagonal maximal torus in $\SO_{2n+1}$ (not only in $\GO_{2n+1}$, using the fact that the latter is the product of $\SO_{2n+1}$ with center) of the form $\diag(1_a,-1_b,1,-1_b,1_a)$, where $0\le a,b\le n$ and $a+b=n$. Since $\Tr \rho^\sharp(c_v) \in \{\pm 1\}$ we have $a=b$ if $n$ is even and $b-a=1$ if $n$ is odd. Now an explicit computation shows that the trace of the adjoint action of $t$ on $\Lie(\SO_{2n+1})$ has trace
$2 (a-b)^2 +2(a-b)-n$, which is equal to $-n=-\rank(\SO_{2n+1})$ in all cases.
\end{proof}

Let $K$ be a finite extension of $\Q_\ell$. Fix its algebraic closure $\ol{K}$ and write $\hat {\ol K}$ for its completion.

\begin{definition}[{cf. \cite[2.4]{BuzzardGee}}]\label{def:Hodge-Tate}
Let $\rho:\Gal(\ol{K}/K)\to H(\lql)$ be a representation. We say that $\rho$ is crystalline/semistable/de Rham/Hodge-Tate if for some (thus every) faithful algebraic representation $\xi: H\to \GL_N$ over $\lql$, the composition $\xi\circ \rho$ is crystalline/semistable/de Rham/Hodge-Tate. Now suppose that $\rho$ is Hodge-Tate. For each field embedding $i\colon \lql \to \hat{\ol K}$, a cocharacter $\mu_{\HT}(\rho,i)_{\hat{\ol K}} \colon \G_m\to H$ over $\hat{\ol K}$ is called a \emph{Hodge-Tate cocharacter} for $\rho$ and $i$ if for some (thus every) faithful algebraic representation $\xi \colon H\to \GL(V)$ on a finite dimensional $\lql$-vector space $V$, the cocharacter $\xi\circ \rho$ induces the Hodge-Tate decomposition of semilinear $\Gal(\ol{K}/K)$-modules on $\hat{\ol K}$-vector spaces:
$$V\otimes_{\lql,i} \hat{\ol K}= \bigoplus_{k \in \Z} V_k.$$
Namely $V_k$ is the
weight $k$ space for the $\G_m$-action through $\xi\circ\mu_{\HT}(\rho,i)$, while $V_k$ is also the ${\hat{\ol K}}$-linear span of the $K$-subspace in $V \otimes_{\lql,i} \hat{ \ol K}$ on which $\Gal(\ol{K}/K)$ acts through the $(-k)$-th power of the cyclotomic character. (So our convention is that the Hodge-Tate number of the cyclotomic character is $-1$.) Finally, we call a cocharacter $\mu_{\HT}(\rho,i) \colon \G_m \to H$ over $\lql$ a Hodge-Tate cocharacter, if it is conjugate to $\mu_{\HT}(\rho,i)_{\hat{\ol K}}$ in $H(\hat{\ol K})$.
\end{definition}

In any of the above conditions on $\rho$, if it holds for one $\xi$ then it holds for all $\xi$ (use \cite[Prop. I.3.1]{DMOS82}). Whenever $\rho$ is Hodge-Tate, a Hodge-Tate cocharacter exists by \cite[\S1.4]{SerreHT} and is shown to be unique (independent of $\xi$) by a standard Tannakian argument. Often we only care about the isomorphism class of $\rho$, in which case only the $H(\lql)$-conjugacy class of a Hodge-Tate cocharacter matters.

\begin{lemma}\label{lem:Hodge-Tate-cochar}
Let $f:H_1\ra H_2$ be a morphism of connected reductive groups over $\lql$. If $\rho:\Gal(\ol{K}/K)\to H_1(\lql)$ is a Hodge-Tate representation then $f\circ \rho$ is also Hodge-Tate with $\mu_{\HT}(f\circ \rho,i)=f\circ \mu_{\HT}(\rho,i)$ for all $i:\lql\to \hat{\ol K}$.
\end{lemma}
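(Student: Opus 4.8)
The plan is to reduce to the case of $\GL_N$-valued representations, where the assertion is built into Serre's theory, and then propagate it by the Tannakian formalism. First I would fix a faithful algebraic representation $\xi\colon H_2\to\GL(V)$ over $\lql$ and form $\xi\circ f\colon H_1\to\GL(V)$, an algebraic representation of $H_1$ that need not be faithful. Since $\xi\circ(f\circ\rho)=(\xi\circ f)\circ\rho$, understanding $f\circ\rho$ in the sense of Definition~\ref{def:Hodge-Tate} amounts to understanding the composite of $\rho$ with the single algebraic representation $\xi\circ f$ of $H_1$.

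Next I would apply the remark following Definition~\ref{def:Hodge-Tate}: because $\rho$ is Hodge--Tate with Hodge--Tate cocharacter $\mu_{\HT}(\rho,i)$, the composite $(\xi\circ f)\circ\rho$ is Hodge--Tate and its Hodge--Tate decomposition of $V\otimes_{\lql,i}\hat{\ol K}$ is the grading induced by the cocharacter $(\xi\circ f)\circ\mu_{\HT}(\rho,i)=\xi\circ\bigl(f\circ\mu_{\HT}(\rho,i)\bigr)$. Since $\xi$ is faithful, this is exactly the assertion that $f\circ\rho$ is Hodge--Tate and that $f\circ\mu_{\HT}(\rho,i)\colon\Gm\to H_2$ is a Hodge--Tate cocharacter for $f\circ\rho$ and $i$. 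Invoking the uniqueness of the Hodge--Tate cocharacter noted after Definition~\ref{def:Hodge-Tate}, I conclude $\mu_{\HT}(f\circ\rho,i)=f\circ\mu_{\HT}(\rho,i)$.

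The only step carrying any content is the ``for some (thus every)'' passage used above, namely that a cocharacter of $H_1$ which induces the Hodge--Tate grading on one faithful representation induces it on every algebraic representation. This is the standard Tannakian argument: every algebraic representation of the connected reductive group $H_1$ is a subquotient of a finite direct sum of tensor powers of a fixed faithful representation and its dual (\cite[Prop. I.3.1]{DMOS82}), and both the Hodge--Tate decomposition from $p$-adic Hodge theory and the grading attached to a cocharacter are compatible with direct sums, tensor products, duals, and subquotients. Hence agreement on the faithful representation forces agreement on all of them, and I expect no genuine obstacle --- the lemma is essentially a bookkeeping consequence of functoriality.
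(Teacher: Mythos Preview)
Your proof is correct and follows exactly the same approach as the paper's own one-line argument: consider $\xi\circ f\circ\rho$ for a faithful representation $\xi\colon H_2\to\GL_N$ and invoke the ``for some (thus every)'' clause of Definition~\ref{def:Hodge-Tate}. You have simply unpacked the Tannakian justification behind that clause, which the paper leaves implicit.
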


\begin{proof}
This is obvious by considering $\xi\circ f\circ \rho$ for any faithful algebraic representation $\xi:H_2\to \GL_N$.
\end{proof}

We return to the global setup. A representation $\rho:\Gamma\to H(\lql)$ is said to be \emph{totally odd} if $\rho|_{\Gal(\ol{F}_v/F_v)}$ is odd for every $v \in \cV_ \infty$. It is crystalline/semistable/de Rham/Hodge-Tate if $\rho|_{\Gal(\ol{F}_v/F_v)}$ is crystalline/semistable/de Rham/Hodge-Tate for every place $v$ above $\ell$.

\begin{definition}\label{def:cohomological}
Let $H$ be a real reductive group. Let $K_H$ be a maximal compact subgroup of $H(\R)$. Put $\widetilde K_H:=(K_H)^0 Z(H)(\R)$. Let $\xi$ be an irreducible algebraic representation of $H\otimes_\R \C$. An irreducible unitary representation $\pi$ of $H(\R)$ is said to be \emph{cohomological} for $\xi$ (or $\xi$-cohomological) if there exists $i\ge 0$ such that $\uH^i(\Lie H(\C),\widetilde K_H,\pi\otimes_\C \xi)\neq 0$. (The definition is independent of the choice of $K_H$. The group $\widetilde K_H$ is consistent with $K_ \infty$ in \S\ref{sect:ShimuraDatum} below.)
\end{definition}

\begin{example}\label{ex:example}
Let $H,\xi$ be as in Definition \ref{def:cohomological}. Assume that $H(\R)$ has discrete series representations.
Let $\Pi_{\xi}$ be the set of (irreducible) discrete series representations which have the same infinitesimal and central characters as $\xi^\vee$. Then $\Pi_\xi$ is a discrete series $L$-packet, whose $L$-parameter is going to be denoted by $\phi_\xi \colon W_\R\to {}^L H$. Then there are a Borel subgroup $\hat B\subset \hat H$ and a maximal torus $\hat T\subset \hat B$ such that $\phi_\xi(z)=(z/\ol{z})^{\lambda_\xi+\rho}$, where $\lambda_\xi$ is the $\hat B$-dominant highest weight of $\xi$, and $\rho$ is the half sum of $\hat B$-positive roots. Every member of $\Pi_\xi$ is $\xi$-cohomological. More precisely $\uH^i(\Lie H(\C),\widetilde K_H,\pi\otimes \xi)\neq 0$ exactly when $i=\frac{1}{2}\dim_{\R} H(\R)/K'_H$, in which case the cohomology is of dimension $[K_HZ(H)(\R): \widetilde K_H]$, cf. Remark \ref{rem:K_infty} below.
\end{example}

\begin{definition}\label{def:Hodge}
Consider a complex $L$-parameter $\phi\colon W_\C\ra \hat H$. For a suitable maximal torus $\hat T\subset \hat H$, one can describe $\phi$ as $z\mapsto \mu_1(z)\mu_2(\ol{z})$ for $\mu_1,\mu_2 \in X_*(\hat T)_\C$ with $\mu_1-\mu_2 \in X_*(\hat T)$. Write $\Omega_{\hat H}$ for the Weyl group of $\hat T$ in $\hat H$. We define $\mu_{\Hodge}(\phi)$ to be $\mu_1$ viewed as an element of $X_*(\hat T)_\C/\Omega_{\hat H}$. (When $\mu_1$ happens to be integral, i.e. in $X_*(\hat T)$, then we may also view $\mu_{\Hodge}(\phi)$ as a conjugacy class of cocharacters $\G_m\ra \hat H$ over $\C$.) Given $H,\xi$ as in the preceding example, define
$$\mu_{\Hodge}(\xi):=\mu_{\Hodge}(\phi_\xi|_{W_\C}).$$
So if $\hat B,\hat T$ are as before, then $\mu_{\Hodge}(\xi)=\lambda_\xi+\rho \in \frac{1}{2} X_*(\hat T)\subset X_*(\hat T)_\C$ up to the $\Omega_{\hat H}$-action.
\end{definition}

Let $f:H_1\ra H_2$ be a morphism of connected reductive groups over $\R$ whose image is normal in $H_2$ such that $f$ has abelian kernel and cokernel. (Later we will consider the dual of the mapping $\GL_1 \times \Sp_{2n} \to \GSp_{2n}$). Denote by $\hat f: \hat H_2\ra \hat H_1$ the dual morphism. We choose maximal tori $\hat T_{i}\subset \hat H_i$ for $i=1,2$ such that $f(\hat T_{2})\subset \hat T_{1}$. If $\phi_2: W_\R\ra \hat H_2$ is an $L$-parameter then obviously
\begin{equation}\label{eq:Functoriality-and-MuHodge}
\hat f (\mu_{\Hodge}(\phi_2|_{W_\C})) = \mu_{\Hodge}(\hat f \circ \phi_2|_{W_\C}).
\end{equation}

\begin{lemma}\label{lem:restriction-real-parameter}
With the above notation, let $\pi_2$ be a member of the $L$-packet for $\phi_2$. Then the pullback of $\pi_2$ via $f$ decomposes as a finite direct sum of irreducible representations of $H_1(\R)$, and all of them lie in the $L$-packet for $\hat f\circ \phi_2$.
\end{lemma}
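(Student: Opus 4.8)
The plan is to reduce the statement to two elementary situations by factoring $f$, and then to invoke the functoriality of the archimedean Langlands correspondence in each. First I would record the structural facts. Since $\tu{im}(f)$ is a connected normal subgroup of $H_2$ with abelian cokernel, it contains the derived subgroup $H_2^\der$; and since $\ker(f)$ is abelian and normal in the connected group $H_1$, it is central. Hence $f$ restricts to a central isogeny $H_1^\der\to H_2^\der$ and, moreover, $H_2=\tu{im}(f)\cdot Z(H_2)^0$. Factoring $f$ as $H_1\twoheadrightarrow\tu{im}(f)\hookrightarrow H_2$, and observing that the conclusion is transitive along composites of morphisms of this kind (pullback of representations and the passage $\phi\mapsto\hat f\circ\phi$ are both functorial in $f$, while direct sum decompositions compose), it suffices to treat (a) $f$ surjective with central kernel, and (b) $f$ the inclusion of a connected normal subgroup with abelian cokernel.

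For the finiteness of the decomposition I would argue as follows. In either case $f(H_1(\R))$ is a finite-index subgroup of $H_2(\R)$: in case (a) the cokernel of $H_1(\R)\to\tu{im}(f)(\R)$ lies in the finite group $H^1(\R,\ker f)$, since $\ker f$ is of multiplicative type; in case (b) the subgroup $\tu{im}(f)(\R)\cdot Z(H_2)^0(\R)$ has finite index in $H_2(\R)$, with cokernel inside $H^1(\R,\tu{im}(f)\cap Z(H_2)^0)$. By Clifford theory, the restriction of the irreducible admissible representation $\pi_2$ to the finite-index subgroup $f(H_1(\R))\cdot Z(H_2)^0(\R)$ is a finite direct sum of irreducibles; since $Z(H_2)^0(\R)$ is central there, it acts by a scalar on each summand and can be removed without affecting irreducibility, and pulling the remaining summands back along the surjection $H_1(\R)\twoheadrightarrow f(H_1(\R))$ --- which has central kernel --- again preserves irreducibility. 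Hence $\pi_2\circ f$ is a finite direct sum of irreducible representations of $H_1(\R)$.

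It remains to identify the $L$-parameter of each irreducible constituent $\pi_1$ of $\pi_2\circ f$ with $\hat f\circ\phi_2$. Here I would use the Langlands classification to reduce to discrete series: writing $\pi_2$ as the Langlands quotient of a standard module induced from a tempered representation $\sigma_2$ of a Levi $M_2(\R)$, putting $M_1:=f^{-1}(M_2)$ --- a Levi of $H_1$ to which $f$ restricts with the same properties --- and using that parabolic induction, the formation of Langlands quotients, and the construction of $L$-parameters on Levis are all compatible with $f$, one is reduced first to $\sigma_2$ tempered, and then, by the same reduction inside the tempered case, to $\pi_2$ a (relative) discrete series. For discrete series the matching follows from Harish-Chandra's parametrization by regular characters together with Shelstad's construction of archimedean $L$-packets: via the central isogeny $H_1^\der\to H_2^\der$ and the controlled behaviour on the central tori, the Harish-Chandra data for $H_1(\R)$ and $H_2(\R)$ correspond compatibly with $\hat f$, and Shelstad's $L$-packets are exactly the fibres of the passage from Harish-Chandra parameters to $L$-parameters. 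This is the same compatibility that underlies the definition of $L$-packets for $\Sp_{2n}$, $\SO_n$ and $\SL_n$ out of those for $\GSp_{2n}$, $\GO_n$ (resp. $\tu{O}_n$) and $\GL_n$, and I would cite it in that form.

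The hard part will be this last step: keeping the bookkeeping of Harish-Chandra parameters and of Shelstad's recipe honest as one passes through the central isogeny and the abelian central directions, and across the component groups of the real points of $H_1$ and $H_2$, and pinning down clean references for the functoriality of the archimedean Langlands correspondence under central isogenies and under restriction to normal subgroups. The finiteness statement and the reduction steps are routine by comparison.
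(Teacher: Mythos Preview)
Your proposal is correct in outline but takes a genuinely different route from the paper. The paper's proof is a single citation: this compatibility is exactly property (iv) of the Langlands correspondence for real reductive groups as stated on page~125 of Langlands' original classification paper \cite{LanglandsRealClassification}. That property asserts precisely that for a morphism $f:H_1\to H_2$ of the type considered (normal image, abelian kernel and cokernel), the pullback of any member of the $L$-packet $\Pi_{\phi_2}$ decomposes as a finite sum of irreducibles, each lying in $\Pi_{\hat f\circ\phi_2}$.

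What you have sketched is essentially a reconstruction of Langlands' argument for this property: factoring $f$, handling finiteness via Clifford theory and Galois cohomology of the real points, and then tracking $L$-parameters through the Langlands classification down to the discrete series case and Harish-Chandra parameters. This is the right shape of argument, and your honest admission that the bookkeeping in the final step is ``the hard part'' is accurate --- it is exactly the work Langlands carried out. For the purposes of this paper there is no need to redo it; a direct citation suffices and is what the authors use.
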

\begin{proof}
This is property (iv) of the Langlands correspondence for real groups on page 125 of \cite{LanglandsRealClassification}.
\end{proof}

\section{Arthur parameters for symplectic groups} \label{sect:LiftingGlobal}
Assume $\pi^\flat$ is a cuspidal automorphic representation of $\Sp_{2n}(\A_F)$, such that
\begin{itemize}
\item $\pi^\flat$ is cohomological for an irreducible algebraic representation $\xi^\flat=\otimes_{v \in \cV_ \infty} \xi^\flat_v$ of $\Sp_{2n, F \otimes \C}$,
\item there is an auxiliary finite place $\vst$ such that the local representation $\pi_{\vst}^\flat$ is the Steinberg representation of $\Sp_{2n}(F_\vst)$.
\end{itemize}
In this section we apply the construction of Galois representations \cite{ArthurBook, ShinGalois} to $\pi^\flat$ to obtain a morphism $\rho_{\pi^\flat} \colon \Gamma \to \GO_{2n+1}(\lql)$ and then lift $\rho_{\pi^\flat}$ to a representation $\tilrhoflat \colon \Gamma \to \GSpin_{2n+1}(\lql)$.

Let us briefly recall the notion of (formal) Arthur parameters as introduced in \cite{ArthurBook}. We will concentrate on the discrete and generic case as this is all we need (after Corollary \ref{cor:TemperedParameter} below); refer to \textit{loc. cit.} for the general case. Here genericity means that no nontrivial representation of $\SU_2(\R)$ appears in the global parameter.

For any $N \in \Z_{\ge 1}$ let $\theta$ be the involution on (all the) general linear groups $\GL_{N, F}$, defined by $\theta(x)= J_N {}^{\tu{t}}x^{-1} J_N$ where $J_N$ is the $N\times N$-{ma\-trix} with $1$'s on its anti-diagonal, and $0$'s on all its other entries.
A generic discrete \emph{Arthur parameter} for the group $\Sp_{2n, F}$ is a finite unordered collection of pairs $\{(m_i,\tau_i)\}_{i=1}^r$, where
\begin{itemize}
\item $m_i, r \geq 1$ are positive integers, such that $2n + 1= \sum_{i=1}^r m_i$,
\item for each $i$, $\tau_i$ is a unitary cuspidal automorphic representation of $\GL_{m_i}(\A_F)$ such that $\tau_i^\theta \simeq \tau_i$,
\item the $\tau_i $ are mutually non-isomorphic, and
\item each $\tau_i$ is of orthogonal type, and the product of the central characters of the $\tau_i$ is trivial.
\end{itemize}
We write formally $\psi = \boxplus_{i=1}^r \tau_i $ for the Arthur parameter $\{(m_i,\tau_i)\}_{i=1}^r$. The parameter $\psi$ is said to be simple if $r=1$. The representation $\Pi_{\psi}$ is defined to be the isobaric sum $\boxplus_{i=1}^r \tau_i$; it is a self-dual automorphic representation of $\GL_{2n+1}(\A_F)$.

Exploiting the fact that $\Sp_{2n}$ is a twisted endoscopic group for $\GL_{2n+1}$, Arthur attaches \cite[Thm. 2.2.1]{ArthurBook} to $\pi^\flat$ a discrete Arthur parameter $\psi$. Let $\pi^\sharp$ denote the corresponding isobaric automorphic representation of $\GL_{2n+1}(\A_F)$ as in \cite[\S1.3]{ArthurBook}. (If $\psi$ is generic, which will be verified soon, then $\psi$ has the form as in the preceding paragraph.) For each $F$-{pla\-ce} $v$, the representation $\pi^\flat_v$ belongs to the local Arthur packet $\Pi(\psi_v)$ defined by $\psi$ localized at $v$. This packet $\Pi(\psi_v)$ satisfies the character relation (\cite[Thm. 2.2.1]{ArthurBook})
\begin{equation}\label{eq:EndoscopicCharacterRelation}
\Tr (A_\theta \circ \pi^\sharp_v( f_v)) = \sum_{\tau \in \Pi(\psi_v)} \Tr \tau(f_v^{\Sp_{2n}}),
\end{equation}
for all pairs of functions $f_v \in \cH(\GL_{2n+1}(F_v))$, $f_v^{\Sp_{2n, F}} \in \cH(\Sp_{2n}(F_v))$ such that $f_v^{\Sp_{2n, F}}$ is a Langlands-Shelstad-Kottwitz transfer of $f_v$. Here $A_\theta$ is an intertwining operator from $\pi^\sharp_v$ to its $\theta$-twist such that $A_\theta^2$ is the identity map. (The precise normalization is not recalled as it does not matter to us.)

\begin{lemma}\label{lem:ComponentIsSteinberg}
The component $\pi_{\vst}^\sharp$ is the Steinberg representation of $\GL_{2n+1}(F_{\vst})$.
\end{lemma}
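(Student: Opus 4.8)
The plan is to pin down the local Arthur parameter $\psi_{\vst}$ and then read off $\pi^\sharp_{\vst}$ from it. Recall that, by Arthur's construction, $\pi^\sharp=\Pi_\psi$ has local $L$-parameter $\std\circ\psi_v$ at every place $v$, where $\std$ is the standard embedding $\SO_{2n+1}(\C)\hookrightarrow\GL_{2n+1}(\C)$ underlying the twisted endoscopic datum $(\Sp_{2n},\GL_{2n+1}\rtimes\theta)$. So it suffices to show that $\std\circ\psi_{\vst}$ is the $L$-parameter of $\St_{\GL_{2n+1}(F_{\vst})}$.

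First I would use that $\pi^\flat_{\vst}=\St_{\Sp_{2n}}$ is square-integrable, in particular tempered (and generic). A tempered representation of $\Sp_{2n}(F_{\vst})$ can lie in a local Arthur packet $\Pi(\psi_{\vst})$ only when $\psi_{\vst}$ is tempered, i.e. when the ``Arthur $\SL_2$'' of $\psi$ acts trivially; hence $\psi$ is generic (this is the genericity that is ``verified soon''; alternatively invoke Corollary~\ref{cor:TemperedParameter}), and $\Pi(\psi_{\vst})$ is the tempered $L$-packet $\Pi(\phi_{\vst})$ of a discrete $L$-parameter $\phi_{\vst}\colon W_{F_{\vst}}\times\SL_2\to\SO_{2n+1}(\C)$.

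Next I would identify $\phi_{\vst}$. By the classical computation of the $L$-parameter of the Steinberg representation of a split $p$-adic group (Borel, Casselman), $\phi_{\vst}$ is trivial on $W_{F_{\vst}}$ and restricts on $\SL_2$ to the principal homomorphism $\SL_2\hookrightarrow\SO_{2n+1}(\C)$; no unramified quadratic twist intervenes because $\Sp_{2n}(F_{\vst})$ is perfect and so carries no nontrivial smooth character. Composing with $\std$, the representation $\std\circ\phi_{\vst}$ is the $(2n+1)$-dimensional irreducible representation of the (Deligne-)$\SL_2$, trivial on $W_{F_{\vst}}$, which is exactly $\rec(\St_{\GL_{2n+1}(F_{\vst})})$. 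Since this parameter is irreducible, $\psi$ is in fact simple ($r=1$, $\tau_1=\pi^\sharp$, $m_1=2n+1$), and $\pi^\sharp_{\vst}=\St_{\GL_{2n+1}(F_{\vst})}$.

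The main obstacle is the implication $\Pi(\psi_{\vst})\ni\St_{\Sp_{2n}}\Rightarrow\phi_{\vst}$ principal: it rests on the structure of local Arthur packets for $\SO_{2n+1}$ (Arthur, Moeglin) — that square-integrable representations occur only in tempered packets, and that a tempered $\Pi(\psi_{\vst})$ is an $L$-packet — together with the classical identification of the parameter of Steinberg. Alternatively one can run the whole argument through the character identity \eqref{eq:EndoscopicCharacterRelation}: feed in on the $\Sp_{2n}$-side a pseudo-coefficient (Euler--Poincar\'e function) for $\St_{\Sp_{2n}}$, transferred from a twisted pseudo-coefficient of $\St_{\GL_{2n+1}}$, and use that among $\theta$-stable tempered representations of $\GL_{2n+1}(F_{\vst})$ with the relevant infinitesimal character only $\St_{\GL_{2n+1}}$ pairs nontrivially against it; but this repackages the same inputs.
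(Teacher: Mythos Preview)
Your alternative at the end is exactly what the paper does; your main argument, however, has a circularity and a real gap.

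The circularity: Corollary~\ref{cor:TemperedParameter} in the paper is deduced \emph{from} Lemma~\ref{lem:ComponentIsSteinberg}, so you cannot invoke it here to get genericity of $\psi$.

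The gap: the assertion that a tempered (or square-integrable) member of $\Pi(\psi_{\vst})$ forces $\psi_{\vst}$ to be tempered is not a formality. Non-tempered Arthur packets can contain tempered representations, and ruling this out for $\St_{\Sp_{2n}}$ requires either Moeglin's explicit description of $p$-adic Arthur packets for classical groups or a compatibility-with-Aubert-duality statement, none of which you actually supply. Even once $\psi_{\vst}$ is known to be tempered, identifying Arthur's $\phi_{\vst}$ with the classical Borel--Casselman parameter of $\St_{\Sp_{2n}}$ is again something one extracts from the character identity rather than something one can assume.

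The paper avoids all of this by running the character identity~\eqref{eq:EndoscopicCharacterRelation} directly with the associated pair of Lefschetz functions $(\funcSP, C\,\funcGL)$ from the appendix. On the $\Sp_{2n}$-side, among unitary representations the Lefschetz function sees only the trivial and the Steinberg; temperedness of the packet (Arthur, Thm.~1.5.1) excludes the trivial, and since $\St_{\Sp_{2n}}\in\Pi(\psi_{\vst})$ the right-hand side is nonzero. On the $\GL_{2n+1}$-side, among unitary infinite-dimensional $\theta$-stable representations the twisted Lefschetz function sees only the Steinberg; since $\pi^\sharp_{\vst}$ is unitary and infinite-dimensional, the nonvanishing forces $\pi^\sharp_{\vst}=\St_{\GL_{2n+1}}$. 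This uses only the defining character relation and the elementary trace properties of Lefschetz functions, with no structural analysis of which representations inhabit which Arthur packets.
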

\begin{proof}
This follows from \cite[Prop. 8.2]{MagaardSavin}.
\end{proof}

\begin{corollary}\label{cor:TemperedParameter}
The Arthur parameter $\psi$ of $\pi^\flat$ is simple (i.e. $\psi=\tau_1$ is cuspidal) and generic.
\end{corollary}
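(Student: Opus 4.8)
The plan is to deduce both assertions from Lemma~\ref{lem:ComponentIsSteinberg} together with Arthur's local classification at the place $\vst$. Recall that the Arthur parameter $\psi = \boxplus_{i=1}^r \tau_i$ of $\pi^\flat$ is a priori of the general (non-generic) type, so I must first rule out the appearance of a nontrivial $\SU_2(\R)$-factor and then show $r=1$. Both will follow from knowing that the isobaric representation $\pi^\sharp_{\vst}$ attached to $\psi$ localizes to the Steinberg representation of $\GL_{2n+1}(F_{\vst})$, which is exactly Lemma~\ref{lem:ComponentIsSteinberg}.

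First I would recall that, writing $\psi = \boxplus_{i=1}^r \mu_i \boxtimes \nu_i$ in Arthur's notation (with $\mu_i$ a cuspidal automorphic representation of $\GL_{m_i}(\A_F)$ and $\nu_i$ the $d_i$-dimensional representation of $\SU_2(\R)$, so $2n+1 = \sum_i m_i d_i$), the local component of the corresponding isobaric automorphic representation $\pi^\sharp$ of $\GL_{2n+1}(\A_F)$ at $\vst$ is the isobaric sum $\boxplus_{i=1}^r \left( \mu_{i,\vst} \boxtimes \nu_i \right)$, where $\mu_{i,\vst} \boxtimes \nu_i$ denotes the Langlands quotient built from $\mu_{i,\vst}$ and the Speh-type segment of length $d_i$. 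The Steinberg representation of $\GL_{2n+1}(F_{\vst})$ is itself of this shape only for the single segment $\mathbf 1 \boxtimes [2n+1]$: it is the essentially square-integrable representation supported on a single length-$(2n+1)$ segment over the trivial character. Since an isobaric sum of the above form is the full Steinberg representation only when $r=1$, $m_1 = 1$, $d_1 = 2n+1$, and $\mu_1$ is the trivial character twisted appropriately, I conclude from Lemma~\ref{lem:ComponentIsSteinberg} that $r=1$.

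Next I would upgrade ``$r=1$ with $d_1 = 2n+1$'' — which at this stage only constrains the local behavior at $\vst$ — to the genericity statement ``$d_1 = 1$'', i.e. $\psi = \tau_1$ with $\tau_1$ cuspidal on $\GL_{2n+1}(\A_F)$. The point is that $d_1$ is a global invariant of $\psi$: if $\psi = \mu_1 \boxtimes \nu_1$ with $\dim \nu_1 = d_1 > 1$, then at \emph{every} place the local component $\pi^\sharp_v$ is the (non-tempered) Langlands quotient associated to a segment of length $d_1 \ge 2$ over $\mu_{1,v}$. But the computation at $\vst$ forces $\mu_1$ to be a cuspidal automorphic representation of $\GL_1(\A_F)$, hence a Hecke character, and then $\pi^\sharp = \mu_1 \boxtimes [2n+1]$ is a very specific non-tempered isobaric representation; its component at $\vst$ is Steinberg, consistent with the above, but I can instead argue directly: $d_1 = 2n+1$ globally would make $\psi$ the ``trivial'' parameter, whose Arthur packet at the infinite places consists of non-tempered representations (Langlands quotients), contradicting the hypothesis that $\pi^\flat$ is cohomological for an irreducible algebraic $\xi^\flat$ — a cohomological representation of $\Sp_{2n}(F_v)$ at an infinite place has regular infinitesimal character equal to that of $\xi^\flat{}^\vee$, and for $n \ge 2$ (in fact $n \ge 1$) this is incompatible with membership in the Arthur packet of $\mathbf 1 \boxtimes [2n+1]$, whose members all have the infinitesimal character of the trivial representation. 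Hence $d_1 = 1$, so $\psi = \tau_1$ is simple and generic, which is the assertion.

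The main obstacle is the bookkeeping in the penultimate step: matching the local component of the isobaric representation $\pi^\sharp$ attached to a possibly non-generic Arthur parameter against the Steinberg representation, and then correctly transporting the resulting rigidity from the single place $\vst$ to a global statement about the shape of $\psi$. Concretely one needs that $\pi^\sharp_v$ for all $v$ is determined by $\psi$ via the explicit isobaric/Speh recipe in \cite[\S1.3]{ArthurBook}, and that the only way a sum $\boxplus_i \mu_{i,\vst}\boxtimes\nu_i$ equals the Steinberg representation is the one listed above (this uses the classification of essentially square-integrable representations of $\GL_N$ over a $p$-adic field via segments). Once that is in hand, genericity ($d_1 = 1$) is forced by comparing infinitesimal characters at the infinite places with cohomologicality of $\pi^\flat$; alternatively, and perhaps more cleanly, one invokes that $\pi^\flat_{\vst}$ is \emph{tempered} (being Steinberg, hence square-integrable) and lies in $\Pi(\psi_{\vst})$, and Arthur's packets $\Pi(\psi_v)$ contain a tempered representation only when $\psi$ is generic — this is the route I would ultimately take, as it keeps everything at the place $\vst$ and avoids the archimedean comparison entirely.
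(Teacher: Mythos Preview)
Your primary argument has a genuine error: you conflate the two $\SU_2$-factors. In $\psi = \boxplus_i \mu_i \boxtimes \nu_{d_i}$ the factor $\nu_{d_i}$ lives on the \emph{Arthur} $\SU_2$, and the local constituent $\mu_{i,\vst} \boxtimes \nu_{d_i}$ of $\pi^\sharp_{\vst}$ is the Speh-type Langlands quotient, which is non-tempered once $d_i>1$. The Steinberg representation, by contrast, is tempered; the $(2n+1)$-dimensional $\SU_2$ appearing in its $L$-parameter is the \emph{Deligne} $\SU_2$. Both the Steinberg and the trivial representation sit at opposite ends of the same induced representation supported on the length-$(2n+1)$ segment over $\mathbf 1$, but it is the \emph{trivial} representation---not the Steinberg---that equals $\mathbf 1 \boxtimes [2n+1]$ in your sense. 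So your conclusion $m_1=1$, $d_1=2n+1$ is backwards.

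The correct reading of Lemma~\ref{lem:ComponentIsSteinberg} gives everything at once: since $\pi^\sharp_{\vst}$ is irreducible and tempered, the isobaric sum $\boxplus_i \tu{Speh}(\mu_{i,\vst}, d_i)$ has a single block ($r=1$) and that block is tempered ($d_1=1$); equivalently, $\psi_{\vst}$ is irreducible on $W_{F_{\vst}}\times \SU_2$ and trivial on the Arthur $\SU_2$, whence the global $\psi$ is simple and generic. This is exactly the paper's one-line proof. Your final ``alternatively'' paragraph correctly gravitates toward this route, so you did locate the right idea---but the archimedean detour and the initial segment computation should be discarded.
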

\begin{proof}
Lemma \ref{lem:ComponentIsSteinberg} implies in particular that $\psi_\vst$ is a generic parameter which is irreducible as a representation of the local Langlands group $W_{F_\vst}\times \SU_2(\R)$. Hence the global parameter $\psi$ is simple and generic.
\end{proof}

Denote by $\pi^\sharp$ the cuspidal automorphic representation $\tau_1=\Pi_\psi$. Let $A_\theta \colon \Pi_{\Psi} \isomto \Pi_{\Psi}^\theta$ the canonical intertwining operator such that $A_\theta^2$ is the identity and $A_\theta$ preserves the Whittaker model (see \cite[sect. 2.1]{ArthurBook} and \cite[sect. 5.3]{KottwitzShelstad} for this normalization).
Write $\eta$ for the $L$-{mor\-phism} ${}^L\Sp_{2n, F_v} \to {}^L\GL_{2n+1, F_v}$ extending the standard representation $\hat{\Sp}_{2n} = \SO_{2n+1}(\C)\to \GL_{2n+1}(\C)$ such that $\eta|_{W_{F_v}}$ is the identity map onto $W_{F_v}$.

\begin{lemma}\label{lem:ArthurUnramified}
Let $v$ be a finite $F$-{pla\-ce} where $\pi_v^\flat$ is unramified\footnote{We should mention that in the group $\Sp_{2n}(F_v)$, not all hyperspecial subgroups are conjugate (in contrast to $\Gsp_{2n}(F_v)$, where all hyperspecial groups are conjugate). When we say that $\pi_v^\flat$ is unramified, we mean that there exists a hyperspecial subgroup for which the representation has an invariant vector. }. Then $\pi_v^\sharp$ is unramified as well. Let $\phi_{\pi_v^\sharp} \colon W_{F_v} \times \SU_2(\R) \to \GL_{2n+1}(\C)$ be the Langlands parameter of $\pi_v^\sharp$. Let $\phi_{\pi_v^\flat} \colon W_{F_v} \times \SU_2(\R) \to \SO_{2n+1}(\C)$ be the Langlands parameter of $\pi_v^\flat$. Then $\eta \circ \phi_{\pi_v^\flat} = \phi_{\pi_v^\sharp}$.
\end{lemma}
\begin{proof}
The morphism $\eta^* \colon \Hunr(\GL_{2n+1}(F_v))\to \Hunr(\Sp_{2n}(F_v))$ is surjective because the restriction of finite dimensional characters of $\GL_{2n+1}$ to $\SO_{2n+1}$ generate the space spanned by finite dimensional characters of $\SO_{2n+1}$. The lemma now follows from \eqref{eq:EndoscopicCharacterRelation} and the twisted fundamental lemma (telling us that one can take $f_v^{\Sp_{2n}}=\eta^*(f_v)$ in \eqref{eq:EndoscopicCharacterRelation}).
\end{proof}

The existence of the Galois representation $\rho_{\pi^\sharp}$ attached to $\pi^\sharp$ follows from \cite[Thm. VII.1.9]{HarrisTaylor}, which builds on earlier work by Clozel and Kottwitz. (The local hypothesis in that theorem is satisfied by Lemma \ref{lem:ComponentIsSteinberg}. However this lemma is unnecessary for the existence of $\rho_{\pi^\sharp}$ if we appeal to the main result of \cite{ShinGalois}.) The theorem of \cite{HarrisTaylor} is stated over imaginary CM fields but can be easily adapted to the case over totally real fields, cf. \cite[Prop. 4.3.1]{CHT08} and its proof. (Also see \cite[Thm. 2.1.1]{BLGGT14-PA} for the general statement incorporating later developments such as the local-global compatibility at $v|\ell$, which we do not need.) We adopt the convention in terms of $L$-algebraic representations as in \cite[Conj. 5.16]{BuzzardGee} unlike the references just mentioned, in which $C$-algebraic representations are used (cf. \cite[\S5.3, \S8.1]{BuzzardGee}).

To state the Hodge-theoretic property at $\ell$ precisely, we introduce some notation based on \S\ref{sec:conventions}. At each $y \in \cV_ \infty$ we have a real $L$-parameter $\phi_{\xi^\flat_y}:W_{F_y}\ra {}^L\Sp_{2n}$ arising from $\xi^\flat_y$. The parameter is $L$-algebraic as well as $C$-algebraic. Via the embedding $y:F\hra \C$ we may identify the algebraic closure $\ol{F}_y$ with $\C$ so that $W_{\ol{F}_y}=W_\C$. As explained in Definition \ref{def:Hodge}, we have $\mu_{\Hodge}(\xi^\flat_y):=\mu_{\Hodge}(\phi_{\xi^\flat_y}|_{W_{\ol{F}_y}})$, a conjugacy class of cocharacters $\G_m\ra \SO_{2n+1}(\C)$.

\begin{theorem}\label{thm:ExistGaloisRep}
There exists an irreducible Galois representation
$$
\rho_{\pi^\flat}=\rho_{\pi^\flat,\iota} \colon \Gamma \to \SO_{2n+1}(\lql),
$$
unique up to $\SO_{2n+1}(\lql)$-conjugation, attached to $\pi^\flat$ (and $\iota$) such that the following hold.
\begin{itemize}
\item[(\textit{i})]
Let $v$ be a finite place of $F$ not dividing $\ell$. If $\pi^\flat_v$ is unramified then
$$
(\rho_{\pi^\flat}|_{W_{F_v}})_{\ssimple} \simeq \iota\phi_{\pi^\flat_v},
$$
where $\phi_{\pi^\flat_v}$ is the unramified $L$-parameter of $\pi^\flat_v$, and $(\cdot)_{\ssimple}$ is the semisimplification. For general $\pi^\flat_v$, the parameter $\iota\phi_{\pi^\flat_v}$ is isomorphic to the Frobenius-semisimplification of the parameter associated to $\rho_{\pi^\flat}|_{\Gamma_{F_v}}$. \footnote{This is equivalent to saying that $\iota(\std\circ\phi_{\pi^\flat_v})$ is isomorphic to the Frobenius-semisimplification of the Langlands parameter associated with $(\std\circ\rho_{\pi^\flat})|_{\Gamma_{F_v}}$ in view of Proposition \ref{prop:ConjugacyStdRep}. }
\item[(\textit{ii})] Let $v$ be a finite $F$-place such that $v\nmid \ell$ where $\pi_v$ is unramified. Then $\rho_{\pi^\flat,v}$ is unramified at $v$, and for all eigenvalues $\alpha$ of $\std(\rho_{\pi^\flat}(\Frob_v))_{\ssimple}$ and all embeddings $\li \Q \hookrightarrow \C$ we have $|\alpha| = 1$.
\item[(\textit{iii})] For every $v | \ell$, the representation $ \rho_{\pi^\flat, v}$ is potentially semistable. For each $y:F\hra \C$ such that $\iota y$ induces $v$, we have
$$
\mu_{\HT}(\rho_{\pi^\flat,v},\iota y) = \iota\mu_{\Hodge}(\xi^\flat_y).
$$
\item[(\textit{iv})] For every $v | \ell$, the Frobenius semisimplification of the Weil-Deligne representation attached to the de Rham representation $\rho_{\pi^\flat, v}$ is isomorphic to the Weil-Deligne representation attached to $\pi^\flat_v$ under the local Langlands correspondence.
\item[(\textit{v})] If $\pi_v$ is unramified at $v | \ell$, then $\rho_{\pi^\flat,v}$ is crystalline. If $\pi_v$ has a non-zero Iwahori fixed vector at $v | \ell$, then $\rho_{\pi^\flat,v}$ is semistable.
\item[(\textit{vi})] The representation $\rho_{\pi^\flat}$ is totally odd.
\item[(\textit{vii})] If $\pi^\flat_v$ is essentially square-integrable at $v\nmid \infty$ then $\std\circ\rho_{\pi^\flat}$ is irreducible.
\end{itemize}
\end{theorem}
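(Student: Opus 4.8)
The plan is to obtain $\rho_{\pi^\flat}$ by descending the $\GL_{2n+1}(\lql)$-valued representation $\rho_{\pi^\sharp}$ attached to the isobaric transfer $\pi^\sharp = \Pi_\psi$ to $\SO_{2n+1}(\lql)$, and then transporting all the Hodge-theoretic and local-global properties through this descent. First I would recall, from \cite{HarrisTaylor} (or \cite{ShinGalois}, with the local hypothesis supplied by Lemma \ref{lem:ComponentIsSteinberg}, and the passage from CM to totally real fields as in \cite[Prop. 4.3.1]{CHT08}), that there is a continuous semisimple $\rho_{\pi^\sharp}\colon \Gamma\to \GL_{2n+1}(\lql)$ which is unramified outside $S_{\tu{bad}}\cup\{\ell\}$, whose Frobenius eigenvalues match $\iota$ applied to the Satake parameters of $\pi^\sharp_v$ (giving (i) and the $|\alpha|=1$ of (ii) via temperedness/purity of $\pi^\sharp$ at unramified places, using that $\pi^\sharp$ is cuspidal), which is de Rham at $v|\ell$ with Hodge--Tate cocharacters read off from the infinitesimal character of $\pi^\sharp_\infty$ (hence from $\xi^\flat$ via $\eta$), satisfies local-global compatibility at all finite places (the Weil--Deligne statement of (i), and (iv)), and is crystalline (resp. semistable) at $v|\ell$ when $\pi^\sharp_v$ is unramified (resp. Iwahori-spherical). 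I would fix the unitary normalization as in \cite[Conj. 3.2.2]{BuzzardGee}, adjusting the references' arithmetic normalization by the appropriate twist.

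Next I would show that $\rho_{\pi^\sharp}$ factors through $\SO_{2n+1}(\lql)$. By Corollary \ref{cor:TemperedParameter}, $\pi^\sharp$ is cuspidal; since $\tau_1^\theta\cong\tau_1$ and $\tau_1$ is the transfer from the twisted endoscopic group $\Sp_{2n}$ of $\GL_{2n+1}$ whose dual is $\SO_{2n+1}(\C)\hookrightarrow \GL_{2n+1}(\C)$ via a quadratic (orthogonal) pairing, $\rho_{\pi^\sharp}$ is self-dual and the pairing is symmetric (the relevant sign is forced by the twisted endoscopic datum, or alternatively one invokes the compatibility of the Arthur transfer with Galois representations and the fact that $\pi^\sharp$ is of orthogonal type; see the footnote in part (i) of the theorem statement). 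Irreducibility of $\std\circ\rho_{\pi^\flat}$ when $\pi^\flat_v$ is essentially square-integrable at some finite $v$ — statement (vii) — follows because then $\pi^\sharp_v$ is essentially square-integrable (in fact, by Lemma \ref{lem:ComponentIsSteinberg}, Steinberg at $\vst$), and an essentially square-integrable local component forces the self-dual cuspidal $\pi^\sharp$ to have irreducible associated Galois representation by the standard argument (a nontrivial decomposition would be incompatible with the local monodromy being a single Jordan block at $v$); this irreducibility also yields the asserted uniqueness of $\rho_{\pi^\flat}$ up to $\SO_{2n+1}(\lql)$-conjugacy, using that an irreducible self-dual $\GL_{2n+1}$-representation lands in $\SO_{2n+1}$ with centralizer $\{\pm 1\}$ and Schur's lemma. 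Having the factorization, I define $\rho_{\pi^\flat}$ to be the induced map $\Gamma\to\SO_{2n+1}(\lql)$.

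The remaining parts are then bookkeeping through the embedding $\std$ together with Lemma \ref{lem:Hodge-Tate-cochar} and the unramified compatibility Lemma \ref{lem:ArthurUnramified}. For (i), the unramified-place statement is the equality $\eta\circ\phi_{\pi^\flat_v}=\phi_{\pi^\sharp_v}$ of Lemma \ref{lem:ArthurUnramified} combined with the $\GL_{2n+1}$-statement, and the general-place statement follows from the analogous one for $\rho_{\pi^\sharp}$ since $\eta$ is the identity on Weil groups; here one uses Proposition \ref{prop:ConjugacyStdRep} to reconstruct the $\SO_{2n+1}$-conjugacy class from the $\GL_{2n+1}$-data. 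Parts (ii), (iv), (v) are immediate transfers. For (iii), Lemma \ref{lem:Hodge-Tate-cochar} applied to $\std$ reduces the Hodge--Tate cocharacter of $\rho_{\pi^\flat,v}$ to that of $\std\circ\rho_{\pi^\flat,v}=\rho_{\pi^\sharp,v}$, which is $\iota$ applied to $\mu_{\Hodge}$ of the standard composition of $\phi_{\xi^\flat,y}$ — i.e. $\iota\mu_{\Hodge}(\xi^\flat,y)$ — by the Hodge--Tate compatibility for $\rho_{\pi^\sharp}$ and \eqref{eq:Functoriality-and-MuHodge}. For total oddness (vi), at each $v\in\cV_\infty$ the image $\rho_{\pi^\flat}(c_v)\in\SO_{2n+1}(\lql)\subset\GO_{2n+1}(\lql)$ has $\Tr\std(\rho_{\pi^\flat}(c_v))\in\{\pm 1,0\}$ — more precisely $\pm 1$ since $2n+1$ is odd and the trace of an involution in $\GL_{2n+1}$ is odd — so Lemma \ref{lem:odd-GO} applies directly and gives oddness of $\rho_{\pi^\flat}|_{\Gamma_v}$ for every infinite $v$.

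The main obstacle I expect is \emph{not} any single analytic input but rather the careful pinning-down of normalizations: one must track the $|\cdot|^{?}$-twist relating the unitary/$L$-algebraic normalization of \cite{BuzzardGee} to the $C$-algebraic/arithmetic normalization of \cite{HarrisTaylor,CHT08}, and verify that after this twist the Hodge--Tate cocharacter genuinely equals $\iota\mu_{\Hodge}(\xi^\flat,y)$ on the nose (not merely up to a shift), and that the self-duality sign coming out of the twisted endoscopic classification is orthogonal rather than symplectic for this particular parameter. Secondarily, proving (vii) — irreducibility of $\std\circ\rho_{\pi^\flat}$ from an essentially square-integrable local component — requires the input that cuspidal self-dual $\pi^\sharp$ with a square-integrable component have irreducible Galois representations, which is classical for Steinberg components (via the $N\neq 0$ local monodromy argument) and is exactly what Lemma \ref{lem:ComponentIsSteinberg} provides at $\vst$.
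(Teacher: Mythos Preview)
Your overall strategy matches the paper's: construct $\rho_{\pi^\sharp}$ via \cite{ShinGalois} (with the normalization shift), use self-duality and irreducibility to descend to $\SO_{2n+1}$, and transport (i)--(v) along $\std$. Two minor remarks: your worry about the orthogonal/symplectic sign is unnecessary, since there is no nondegenerate alternating form on an odd-dimensional space, so an irreducible self-dual $(2n+1)$-dimensional representation is automatically orthogonal; and you should attribute (iv) specifically to Caraiani \cite{CaraianiLGCl=p} and (vii) to Taylor--Yoshida \cite[Cor.~B]{TaylorYoshida} rather than folding them into ``standard references'', as these are substantial separate inputs.

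There is, however, a genuine gap in your argument for (vi). You write that $\Tr\std(\rho_{\pi^\flat}(c_v))\in\{\pm 1,0\}$, then refine to $\pm 1$ ``since $2n+1$ is odd and the trace of an involution in $\GL_{2n+1}$ is odd''. The second clause is true but only excludes $0$: an involution in $\SO_{2n+1}(\lql)$ has trace $2n+1-2j$ with $j$ even, so the trace can be any of $2n+1, 2n-3, \ldots, 1-2n$, not just $\pm 1$. No elementary parity argument forces the trace down to $\pm 1$; this is precisely the content of Taylor's theorem on the image of complex conjugation \cite{TaylorOdd}, which the paper invokes before applying Lemma~\ref{lem:odd-GO}. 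Without that input your deduction of total oddness fails.
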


\begin{remark}
We need the ramified case of (i) only when $\pi^\flat_v$ is the Steinberg representation.
\end{remark}

\begin{proof}
The representation $\pi^\sharp$ is regular algebraic \cite[Def. 3.12]{ClozelAnnArbor}, and so $\pi_ \infty^\sharp$ is cohomological for an irreducible algebraic representation.
By Theorem 1.2 of \cite{ShinGalois} (in view of Remark~7.6 in~\textit{loc. cit.}) there exists a Galois representation $\rho_{\pi^\sharp} \colon \Gamma \to \GL_{2n+1}(\lql)$ that satisfies properties (i)-(iii),(v) with $\pi^\sharp$ in place of $\pi^\flat$; property (iv) is established by Caraiani \cite[Thm. 1.1]{CaraianiLGCl=p}. (The reader may also refer to the statement of \cite[Thm. 2.1.1]{BLGGT14-PA}.) Strictly speaking, the normalization there is different from here, so one has to twist the Galois representation there by the $\frac{n-1}{2}$-th power of the cyclotomic character. In particular $\rho_{\pi^\sharp}$ is self-dual.

By Lemma \ref{lem:ComponentIsSteinberg}, $\pi_{\vst}^\sharp$ is the Steinberg representation and by Taylor--Yoshida \cite{TaylorYoshida} the representation $\rho_{\pi^\sharp} \colon \Gamma \to \GL_{2n+1}(\lql)$ is irreducible. The determinant of this representation is trivial, since in the Arthur parameter $\{(m_i,\tau_i)\}_{i=1}^r$, the product of the central characters of the $\tau_i$ is trivial. Together with the self-duality of $\rho_{\pi^\sharp}$, we see that $\rho_{\pi^\sharp}$ factors through a representation $\rho_{\pi^\flat} \colon \Gamma \to \SO_{2n+1}(\lql)$ via the standard embedding $\SO_{2n+1}\hra \GL_{2n+1}$ (after a conjugation by an element of $\GL_{2n+1}(\lql)$). We know the uniqueness of $\rho_{\pi^\flat}$ from Proposition \ref{prop:ConjugacyStdRep} (and Chebotarev density). Properties (i)-(v) for $\rho_{\pi^\flat}$ follow from those for $\rho_{\pi^\sharp}$. Part (vi) is deduced from Lemma \ref{lem:odd-GO} and the main theorem of \cite{TaylorOdd}. Lastly (vii) is \cite[Cor. B]{TaylorYoshida}.
\end{proof}

For the rest of this section, let $\pi$ be a cuspidal $\xi$-cohomological automorphic representation of $\GSp_{2n}(\A_F)$ for an irreducible algebraic representation $\xi$ of $\GSp_{2n,F\otimes \C}$.

\begin{lemma}[Labesse-Schwermer, Clozel]\label{lem:LabesseSchwermer}
There exists a cuspidal automorphic $\Sp_{2n}(\A_F)$-sub\-re\-pre\-sen\-ta\-tion $\pi^\flat$ contained in $\pi$.
\end{lemma}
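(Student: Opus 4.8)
The plan is to realize $\pi$ concretely inside the space of cusp forms on $\GSp_{2n}(F)\backslash\GSp_{2n}(\A_F)$ with central character $\omega_\pi$, and to restrict these automorphic forms to the subgroup $\Sp_{2n}(\A_F)\subset\GSp_{2n}(\A_F)$; this is essentially the argument of Labesse--Schwermer and Clozel. Write $V_\pi$ for the chosen space of cusp forms affording $\pi$, and for $\varphi\in V_\pi$ let $r(\varphi)$ denote the restriction of $\varphi$ to $\Sp_{2n}(\A_F)$.

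First I would check that $r$ maps $V_\pi$ into the space of cusp forms on $\Sp_{2n}(F)\backslash\Sp_{2n}(\A_F)$. Left $\GSp_{2n}(F)$-invariance restricts to left $\Sp_{2n}(F)$-invariance. Since $\Sp_{2n}$ is the derived group of $\GSp_{2n}$, every parabolic $F$-subgroup $Q\subset\Sp_{2n}$ has the form $Q=P\cap\Sp_{2n}$ for a parabolic $F$-subgroup $P\subset\GSp_{2n}$, and the unipotent radicals coincide, $R_u(P)=R_u(Q)$, because the similitude character is trivial on unipotents. Hence the vanishing of the constant term of $\varphi$ along $R_u(P)$ forces the vanishing of the constant term of $r(\varphi)$ along $R_u(Q)$, so $r(\varphi)$ is again a cusp form. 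As $A_{\GSp}\cap\Sp_{2n}(\A_F)$ is trivial and cusp forms are rapidly decreasing modulo $A_{\GSp}$, the restriction $r(\varphi)$ is bounded on the finite-volume space $\Sp_{2n}(F)\backslash\Sp_{2n}(\A_F)$, hence lies in $L^2_{\cusp}(\Sp_{2n}(F)\backslash\Sp_{2n}(\A_F))$. (The center $\{\pm 1\}$ of $\Sp_{2n}$ sits inside $\GSp_{2n}$ with trivial similitude, so $\omega_\pi$ restricts to a legitimate central character on this subspace, and there is no inconsistency.)

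Now $r(V_\pi)$ is an $\Sp_{2n}(\A_F)$-stable subspace of $L^2_{\cusp}(\Sp_{2n}(F)\backslash\Sp_{2n}(\A_F))$, since $V_\pi$ is $\GSp_{2n}(\A_F)$-stable and $r$ is $\Sp_{2n}(\A_F)$-equivariant; and it is nonzero, for if $r(\varphi)=0$ for every $\varphi\in V_\pi$, then applying this to $\pi(g)\varphi$ for $g\in\GSp_{2n}(\A_F)$ gives $\varphi(sg)=0$ for all $s\in\Sp_{2n}(\A_F)$, in particular $\varphi=0$ (take $s=1$), contradicting $V_\pi\neq0$. By the semisimplicity of the cuspidal spectrum of $\Sp_{2n}(\A_F)$ (Gelfand--Piatetski-Shapiro), $r(V_\pi)$ contains an irreducible subrepresentation, which is a cuspidal automorphic representation $\pi^\flat$ of $\Sp_{2n}(\A_F)$ realized inside the restriction of $\pi$; this is the desired $\pi^\flat$.

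The lemma is essentially soft, so there is no real obstacle; the points requiring a little care are that one must restrict the \emph{entire} space $V_\pi$ rather than argue with an individual vector (the map $r$ need not be injective, since $\GSp_{2n}(F)\backslash\GSp_{2n}(\A_F)/\Sp_{2n}(\A_F)\cong F^\times\backslash\A_F^\times$ is infinite) and the bookkeeping of central characters under restriction. Following Labesse--Schwermer one can moreover arrange that $r(V_\pi)$ is a \emph{finite} direct sum of such $\pi^\flat$ and that $\pi^\flat$ is cohomological for a suitable $\xi^\flat$ restricting $\xi$, but neither refinement is needed at this point.
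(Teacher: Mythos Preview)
Your argument is correct and is precisely the Labesse--Schwermer/Clozel argument that the paper cites without detail: the paper's own proof is just the sentence ``This follows from the arguments of Labesse--Schwermer \cite{LabesseSchwermer}. See also Clozel \cite[p.137]{ClozelIHES}.'' You have faithfully unpacked that citation.
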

\begin{proof}
This follows from the main theorem of Labesse-Schwermer \cite{LabesseSchwermer2018}.
\end{proof}

\begin{lemma}\label{lem:ForceTempered}
Suppose that $\pi$ is a twist of the Steinberg representation at a finite place. Then $\pi_v$ is essentially tempered at all places $v$.
\end{lemma}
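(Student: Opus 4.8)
The plan is to descend to $\Sp_{2n}$, invoke Arthur's classification there, and then transfer the conclusion back to $\GSp_{2n}$. First I would use Lemma~\ref{lem:LabesseSchwermer} to choose a cuspidal automorphic $\Sp_{2n}(\A_F)$-subrepresentation $\pi^\flat\subset\pi$. If $\pi_{\vst}\cong\St\otimes(\chi\circ\simil)$ with $\St$ the Steinberg representation of $\GSp_{2n}(F_{\vst})$ and $\chi$ a character of $F_{\vst}^\times$, then the twist by $\chi\circ\simil$ disappears upon restriction to $\Sp_{2n}=\ker(\simil)$, and the Steinberg representation of $\GSp_{2n}(F_{\vst})$ restricts to the Steinberg representation of $\Sp_{2n}(F_{\vst})$ (the Steinberg module being, in either group, an alternating sum of the modules induced from standard parabolics, and these inductions behaving compatibly under restriction along $\Sp_{2n}\subset\GSp_{2n}$). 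Hence $\pi^\flat_{\vst}$ is the Steinberg representation of $\Sp_{2n}(F_{\vst})$, so Corollary~\ref{cor:TemperedParameter} applies and the Arthur parameter $\psi=\tau_1$ of $\pi^\flat$ is simple and generic.

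Next, since $\psi$ is generic, its localizations $\psi_v$ are (bounded) Langlands parameters, so by Arthur's local classification \cite[Thm.~1.5.1]{ArthurBook} every local packet $\Pi(\psi_v)$ consists of tempered representations of $\Sp_{2n}(F_v)$; as $\pi^\flat_v\in\Pi(\psi_v)$, this shows $\pi^\flat_v$ is tempered at \emph{every} place $v$. (If one prefers to verify boundedness of $\psi_v$ by hand: at archimedean $v$ the cohomological hypothesis on $\pi$ forces $\pi^\flat_v$ into a (limit of) discrete series $L$-packet, hence it is tempered; at finite $v\nmid\ell$ it follows from purity of the Weil--Deligne representation attached to $\std\circ\rho_{\pi^\flat}|_{\Gamma_v}$ together with Theorem~\ref{thm:ExistGaloisRep}(i)--(ii); and at $v\mid\ell$ from Theorem~\ref{thm:ExistGaloisRep}(iv) with weight--monodromy at $\ell$.)

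Finally I would transfer the conclusion back to $\GSp_{2n}$: for each $v$, the restriction $\pi_v|_{\Sp_{2n}(F_v)}$ is a finite direct sum of $\GSp_{2n}(F_v)$-conjugates of the tempered representation $\pi^\flat_v$, hence a sum of tempered representations; after twisting $\pi_v$ by an unramified character $\chi_v\circ\simil$ so that its central character becomes unitary, the matrix-coefficient criterion for temperedness applied modulo the centre (using that $Z(F_v)\Sp_{2n}(F_v)$ has finite index in $\GSp_{2n}(F_v)$) shows $\pi_v$ is tempered, whence $\pi_v$ is essentially tempered.

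The step I expect to be the crux is securing temperedness at \emph{all} places rather than only the evident ones (e.g.\ at $\vst$, or at the unramified finite places where one could instead read it off from Theorem~\ref{thm:ExistGaloisRep}(ii)): this is exactly where the genericity of the Arthur parameter from Corollary~\ref{cor:TemperedParameter}, combined with Arthur's local classification, is indispensable. By contrast, the compatibility of Steinberg with restriction along $\Sp_{2n}\subset\GSp_{2n}$ and the descent of essential temperedness from $\Sp_{2n}$ to $\GSp_{2n}$ are routine.
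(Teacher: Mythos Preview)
Your overall strategy is exactly the paper's: descend to $\Sp_{2n}$ via Lemma~\ref{lem:LabesseSchwermer}, use the Steinberg hypothesis to force the Arthur parameter to be simple generic (Corollary~\ref{cor:TemperedParameter}), deduce temperedness of $\pi^\flat_v$, then transfer back to $\GSp_{2n}$ by the $L^{2+\eps}$ matrix-coefficient criterion using that $Z(F_v)\Sp_{2n}(F_v)$ has finite index in $\GSp_{2n}(F_v)$. The last step is carried out in the paper in essentially the same words you use.

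There is, however, a genuine gap in your justification that $\psi_v$ is bounded. Genericity of the global parameter only says that $\psi_v$ is a Langlands parameter (trivial on the Arthur $\SL_2$); it does \emph{not} imply boundedness---that is precisely the Ramanujan conjecture for $\pi^\sharp=\tau_1$ on $\GL_{2n+1}$, which Arthur does not prove and which must be supplied separately before \cite[Thm.~1.5.1]{ArthurBook} yields temperedness of the packet members. Your parenthetical alternative is fine at finite places (purity of the associated Galois representation is indeed known here), but your archimedean argument is wrong: the cohomological hypothesis does \emph{not} force $\pi^\flat_v$ into a (limit of) discrete series packet---there are non-tempered cohomological unitary representations (the trivial representation, or more generally many $A_{\mathfrak q}(\lambda)$ modules in the Vogan--Zuckerman classification). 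Indeed, Corollary~\ref{cor:Pi-is-in-Disc-series-Packet} immediately following this lemma is \emph{deduced from} it, not an input to it.

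The paper closes this gap by arguing on the $\GL_{2n+1}$ side: $\pi^\sharp$ is regular $C$-algebraic cuspidal, so Clozel's purity lemma \cite[Lem.~4.9]{ClozelAnnArbor} gives temperedness of $\pi^\sharp_v$ at archimedean $v$, and \cite[Cor.~1.3]{ShinGalois} (plus quadratic base change) handles finite $v$. This establishes boundedness of $\psi_v$, after which \cite[Thm.~1.5.1]{ArthurBook} applies as you intend.
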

\begin{proof}
Let $\pi^\flat$ be as in the previous lemma. We know that $\pi^\flat$ is the Steinberg representation at a finite place and $\xi^\flat$-cohomological, where $\xi^\flat$ is the restriction of $\xi$ to $\Sp_{2n,F\otimes \C}$ (which is still irreducible). Let $\pi^\sharp$ be the self-dual cuspidal automorphic representation of $\GL_{2n+1}(\A_F)$ as above. Note also that $\pi^\sharp$ is $C$-algebraic (and regular); this is checked using the explicit description of the archimedean $L$-parameters. Hence
\begin{itemize}
\item $\pi^\sharp_v$ is tempered at all $v| \infty$ by Clozel's purity lemma \cite[Lem. 4.9]{ClozelAnnArbor},
\item $\pi^\sharp_v$ is tempered at all $v\nmid \infty$ by \cite[Cor. 1.3]{ShinGalois} and (quadratic) automorphic base change.
\end{itemize}
(Since $\pi^\sharp$ is self-dual, if a local component is tempered up to a character twist then it is already tempered.) Hence $\pi^\flat_v$ is a tempered representation of $\Sp_{2n}(F_v)$, cf. \cite[Thm. 1.5.1]{ArthurBook}. This implies that $\pi_ \infty$ itself is essentially tempered. (Indeed, after twisting by a character, one can assume that $\pi_ \infty$ restricts to a unitary tempered representation on $\Sp_{2n}(F_v) \times Z(F_v)$, which is of finite index in $\GSp_{2n}(F_v)$. Then temperedness is tested by whether the matrix coefficient (twisted by a character so as to be unitary on $Z(F_v)$) belongs to $L^{2+\eps}(\GSp_{2n}(F_v)/Z(F_v))$. This is straightforward to deduce from the same property of matrix coefficient for its restriction to $\Sp_{2n}(F_v) \times Z(F_v)$.)
\end{proof}

\begin{corollary}\label{cor:Pi-is-in-Disc-series-Packet}
$\pi_ \infty$ belongs to the discrete series $L$-packet $\Pi_\xi$.
\end{corollary}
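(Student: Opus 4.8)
The plan is to deduce the corollary from the essential temperedness of $\pi_\infty$ just obtained in Lemma~\ref{lem:ForceTempered}, together with the hypothesis that $\pi$ is $\xi$-cohomological, by invoking the classical fact that for a real reductive group with discrete series modulo center the only essentially tempered irreducible representations carrying nonzero relative Lie algebra cohomology against a finite-dimensional coefficient system are the essential discrete series. Throughout, $\pi_\infty$ is a representation of $H(\R)$ with $H:=\Res_{F/\Q}\GSp_{2n}$, and $\xi$-cohomological means $\uH^i(\Lie H(\C),\widetilde K_H,\pi_\infty\otimes_\C\xi)\neq 0$ for some $i$, where $\widetilde K_H=(K_H)^0 Z(H)(\R)$ as in Definition~\ref{def:cohomological}.

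First I would read off the infinitesimal and central characters of $\pi_\infty$ from this nonvanishing. By the Wigner-type vanishing lemma, the displayed cohomology is zero unless $\pi_\infty$ and $\xi^\vee$ have the same infinitesimal character; and since $Z(H)(\R)\subset\widetilde K_H$ acts trivially on $\Lie H(\C)$, hence on the standard complex computing $(\Lie H(\C),\widetilde K_H)$-cohomology, while it acts through the respective central characters on $\pi_\infty$ and on $\xi$, nonvanishing also forces $\pi_\infty$ and $\xi^\vee$ to have the same central character. In particular $\pi_\infty$ has the infinitesimal character of the finite-dimensional representation $\xi^\vee$, which is regular, the highest weight of $\xi$ plus $\rho$ being strictly dominant.

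Next, Lemma~\ref{lem:ForceTempered} tells us $\pi_\infty$ is essentially tempered. Applying the classical fact recalled above (e.g.\ Borel--Wallach, or Vogan--Zuckerman; cf.\ also the Example following Definition~\ref{def:cohomological} for the reverse inclusion), I would conclude that $\pi_\infty$ is an essential discrete series representation of $H(\R)$. Combined with the equalities of infinitesimal and central characters with those of $\xi^\vee$ from the previous step, this is precisely the statement that $\pi_\infty$ lies in the discrete series $L$-packet $\Pi_\xi$.

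I do not expect a serious obstacle: the argument is bookkeeping with two characters plus one standard theorem on the $(\mathfrak g,K)$-cohomology of tempered representations. The only points worth a line are that no limits of discrete series intervene, since the infinitesimal character of $\xi^\vee$ is regular, and that the central twist implicit in ``essential'' is pinned down by the matching of central characters, consistently with the convention that $\Pi_\xi$ consists of the discrete series modulo center with the prescribed infinitesimal and central characters.
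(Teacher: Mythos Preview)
Your proposal is correct and follows essentially the same approach as the paper: invoke Lemma~\ref{lem:ForceTempered} for essential temperedness, then use the Vogan--Zuckerman classification to conclude that an essentially tempered $\xi$-cohomological representation lies in $\Pi_\xi$. The paper compresses this into a single sentence citing \cite{VoganZuckerman}, whereas you spell out the intermediate bookkeeping (matching of infinitesimal and central characters via the Wigner-type vanishing, regularity excluding limits of discrete series), but the substance is the same.
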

\begin{proof}
By \cite[Thm. III.5.1]{BorelWallach}, $\Pi_\xi$ coincides with the set of essentially tempered $\xi$-cohomological representations. Since $\pi_ \infty$ is essentially tempered and $\xi$-cohomological, the corollary follows.
\end{proof}

\section{Zariski connectedness of image}\label{sect:StrongIrreducibility}

Let $\rho_{\pi^\flat}$ be the Galois representation from Section~2. The goal of this section is to prove that $\rho_{\pi^\flat}$ has connected image in the sense defined below. The proof of this fact relies on the existence of the regular unipotent element in the image thanks to assumption (St).

\begin{definition}\label{def:connected-irreducible}
Let $G_0$ be a connected reductive group over $\lql$. A representation $r:\Gamma\ra G_0(\lql)$ is said to have \emph{connected image} if the image of $r$ has connected Zariski closure in $G_0$. (The definition applies to any topological group in place of $\Gamma$.) We say that $r \colon \Gamma \to G_0(\lql)$ is $G_0$-\emph{irreducible} if its image is not contained in any proper parabolic subgroup of $G_0$, and we say that $r$ is \emph{strongly $G_0$-irreducible} if $r|_{\Gamma'}$ is $G_0$-irreducible for every open finite index subgroup $\Gamma'$ of $\Gamma$. In case $G_0 = \GL_n$, we often leave out the reference to the group $G_0$.
\end{definition}

When $G_0 = \GL_n$, a representation $r \colon \Gamma \to \GL_n(\lql)$ is strongly irreducible if it is irreducible and has connected image in $\PGL_n$ (Lemma \ref{lem:SugWooNew1}). Typical examples of irreducible representations that are not strongly irreducible are Artin representations and the 2-dimensional $\ell$-adic representation arising from a CM elliptic curve over $\Q$.

The lemma below is extracted from arguments of \cite[p. 675]{SchneiderTeitelbaum}. (One can always enlarge $L$ to satisfy the first two conditions in the lemma.)

\begin{lemma}\label{lem:Tannakian}
Let $L/L'/\ql$ be two finite extensions in $\lql$. Write $\Gamma_{L'} := \Gal(\lql/L')$ and $\Gamma_ L:=\Gal(\lql/L)$. Let $\rho \colon \Gamma_{L'} \to \SO_{2n+1}(\lql)$ be a semi-stable representation. Let $H \subset \SO_{2n+1, \lql}$ be the Zariski closure of the image of $\rho$. Assume that
\begin{itemize}
\item $H$ is defined over $L$,
\item the image of $\rho$ is contained in $\SO_{2n+1}(L)$, and
\item the Weil-Deligne representation (the functor $\tu{WD}$ is defined as in \cite[p. 12]{BergerBarcelona})
\begin{equation}\label{eq:AssumpUnipotent}
\tu{WD} \lhk \lhk \Bst \otimes_{\Q_\ell} (\std \circ \rho)\rhk^{\Gamma_L} \rhk
\end{equation}
has a nilpotent operator $N_{\tu{reg}}$ that is regular in $\tu{Lie}( \SO_{2n+1}(\lql))$.
\end{itemize}
Then the group $H(\lql)$ contains a regular unipotent element of $\SO_{2n+1}(\lql)$.
\end{lemma}
\begin{proof} The underlying space of the $(\phi, N)$-module $\lhk \Bst \otimes_{\Q_\ell} (\std \circ \rho)\rhk^{\Gamma_L}$ is naturally a vector space over the maximal subfield $L_0 \subset L$ that is unramified over $\ql$.
We consider the functor
$$
\Psi \colon \Rep_L(H) \to \Rep_L(\Ga), \quad r \mapsto \tu{WD}\lhk\lhk \Bst \otimes_{\Q_\ell} (r \circ \rho)\rhk^{\Gamma_L} \rhk|_{\Ga}
$$
where, if $V$ is an $L$-vector space equipped with the structure of a Weil-Deligne representation for $\Gamma_{L'}$, we write $V|_{\Ga}$ for the unique representation of the additive group $\rho \colon \Ga \to \GL(V)$ such that $\Lie(\rho)(1)$ is equal to the nilpotent operator of the Weil-Deligne representation $V$.

Let $\omega_{H}$ and $\omega_{\Ga}$ be the standard fibre functors of the categories $\Rep_L(H)$ and $\Rep_L(\Ga)$. The functor $\Psi$ is an exact faithful $L$-linear $\otimes$-functor. The composition $\omega_{\Ga} \circ \Psi$ is therefore a fibre functor of $\Rep_L(G)$ \cite[Sect. II.3]{DMOS82}. By Theorem 3.2 of [\textit{loc. cit.}] we have the equivalence of categories,
$$
\tu{Fibre functors of $\Rep_L(H)$} \isomto \tu{Category of $H$-torsors over $L$}, \quad \eta \mapsto \uHom^\otimes(\eta, \omega_H).
$$
We obtain a morphism $\underline \Aut^\otimes(\omega_{\Ga})_{\lql} \to \underline \Aut^\otimes(\omega_H)_{\lql}$, $\ie$ an algebraic morphism $\Psi^* \colon \lql \to H(\lql)$, well-defined up to conjugation. Write $N = \Psi^*(1) \in H(\lql)$. For every linear representation $r$ of $H$ in a finite dimensional $L$-vector space the element $\Lie (\Psi^*)(1)$ is conjugate to the nilpotent operator of the Weil-Deligne representation attached to $\lhk \Bst \otimes_{\Q_\ell} (r \circ \rho)\rhk^{\Gamma_L}$. Taking $r = \std$, we see that $\Lie(\Psi^*)(1)$ and $N_{\tu{reg}}$ are $\GL_{2n+1}(\lql)$-conjugate by Assumption \eqref{eq:AssumpUnipotent}. Thus $N = \Psi^*(1) \in H(\lql)$ is a regular unipotent element of $\SO_{2n+1}(\lql)$.
\end{proof}

\begin{proposition}\label{prop:3cases}
Let $H$ be a semisimple subgroup of\, $\SO_{2n+1, \lql}$ containing a regular unipotent element $N \in\SO_{2n+1, \lql}$. Then either $H$ is the full group $\SO_{2n+1,\lql}$, $H$ is the principal $\PGL_{2,\lql}$ in $\SO_{2n+1,\lql}$, or $n = 3$ and $H$ is the simple exceptional group $G_2$ over $\lql$ (\ie $H$ is the automorphism group of the octonion algebra $\mathbb O \otimes \lql$).
\end{proposition}
\begin{proof}
 Let $H^0 \subset H$ denote the identity component. Since there are no non-trivial morphisms from ${\mathbb G}_\textup{a}$ to the finite group $\pi_0(H)$, we must have $N \in H^0$. Therefore $H^0$ is either $\PGL_{2,\lql}$, $G_2$, or $\SO_{2n+1,\lql}$ as in the theorem, by \cite[Thm. 1.4]{TestermanZalesski} classifying connected semisimple subgroups of $\SO_{2n+1, \lql}$ containing regular unipotent elements. From \cite{SaxlSeitz} it follows that in each of the 3 cases the subgroup $H^0 \subset \SO_{2n+1,\lql}$ is a maximal closed subgroup (Theorem B, (i.a), (iv.a) and (iv.e) of [\textit{loc. cit.}]).\footnote{In fact it is not completely clear whether \cite[Thm.~B]{SaxlSeitz} classifies maximal closed subgroups or maximal connected closed subgroups; see footnote 9 in the proof of \cite[Prop.~5.2]{GSO}. Nevertheless, we can still show that $H=H^0$ in the case at hand as in that proof, using the fact that $\PGL_{2,\lql}$ and $G_2$ have no outer automorphism, which implies that the conjugation action by elements of $H$ on $H^0$ gives inner automorphisms of $H^0$.} In particular $H^0 = H$.
\end{proof}

 The following lemma will be used in the proof of Proposition \ref{prop:DetermineZariskiImage} below.

\begin{lemma}[{Liebeck--Testerman \cite[Lem. 2.1]{LiebeckTesterman}}]\label{lem:LiebeckTesterman}
Let $G$ a semisimple connected algebraic group over an algebraically closed field. If $X$ is a connected $G$-irreducible subgroup of $G$, then $X$ is semisimple, and the centralizer of $X$ in $G$ is finite.
\end{lemma}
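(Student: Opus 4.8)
The plan is to reduce the statement to two standard facts about connected reductive groups: (a) the Borel--Tits theorem, which says that a nontrivial connected unipotent subgroup $V$ of $G$ is contained in the unipotent radical of some parabolic subgroup $P_V$ of $G$, with $N_G(V) \subseteq P_V$; and (b) the fact that, for a nontrivial torus $S$ in a semisimple group $G$, the centralizer $C_G(S)$ is connected, reductive, and a proper Levi subgroup of $G$, in particular contained in a proper parabolic subgroup. Since $G$ is semisimple, $R_u(G) = 1$ and $Z(G)^\circ = 1$, so in (a) one has $P_V \ne G$, and in (b) a nontrivial torus $S$ is never contained in the finite group $Z(G)$, so $C_G(S) \ne G$.

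First I would prove that $X$ is reductive. Let $V := R_u(X)$ be its unipotent radical, a connected unipotent group normal in $X$. If $V \ne 1$, then by (a) there is a proper parabolic subgroup $P \subsetneq G$ with $X \subseteq N_G(V) \subseteq P$, contradicting the hypothesis that $X$ is $G$-irreducible. Hence $V = 1$ and $X$ is reductive.

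Next, set $C := C_G(X)^\circ$; the claim will follow once I show $C = 1$, since then $C_G(X)$ is finite, and in addition $Z(X)^\circ \subseteq C = 1$ shows that the reductive group $X$ has no central torus, i.e. is semisimple. The group $X$ centralizes $C$, hence centralizes the characteristic subgroup $R_u(C)$ of $C$; if $R_u(C) \ne 1$, then by (a) applied to $V = R_u(C)$ one gets $X \subseteq N_G(R_u(C)) \subseteq P$ for a proper parabolic $P$, a contradiction, so $C$ is a connected reductive group. If $C \ne 1$, it contains a nontrivial maximal torus $S$; then $S \subseteq C_G(X)$ forces $X \subseteq C_G(S)$, and by (b) the group $C_G(S)$ lies in a proper parabolic subgroup, again contradicting $G$-irreducibility of $X$. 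Therefore $C = 1$, which completes the argument.

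I do not expect a genuine obstacle here: the proof is really just two applications of Borel--Tits (to $R_u(X)$ and to $R_u(C_G(X)^\circ)$) together with the elementary structure of torus centralizers. The only points needing care are verifying, in each application, that the relevant subgroup is normalized by $X$ --- immediate because it is either a normal subgroup of $X$ or a characteristic subgroup of a group centralized by $X$ --- and that the parabolic (resp. Levi) produced is proper, which is exactly where semisimplicity of $G$ enters, via $R_u(G) = Z(G)^\circ = 1$.
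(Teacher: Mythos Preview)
The paper does not give its own proof of this lemma; it simply cites \cite[Lem.~2.1]{LiebeckTesterman}. Your argument is correct and is essentially the standard one (and indeed the one in Liebeck--Testerman): Borel--Tits applied to $R_u(X)$ forces $X$ to be reductive, and then the presence of any nontrivial torus in $C_G(X)^\circ$ (in particular $Z(X)^\circ$) would place $X$ inside a proper Levi, contradicting $G$-irreducibility. Your intermediate step showing $C_G(X)^\circ$ is reductive via a second application of Borel--Tits is a clean way to avoid invoking the general fact that centralizers of reductive subgroups are reductive.
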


\begin{proposition}\label{prop:DetermineZariskiImage}
Assume $n\ge 3$. The Zariski closure of the subgroup $\rho_{\pi^\flat}(\Gamma) \subset \SO_{2n+1}(\lql)$ is either $\textup{PGL}_2$, $G_2$, or $\SO_{2n+1}$. (The embedding of $\textup{PGL}_2$ is induced by the symmetric $2n$-th power representation of $\GL_2$. The group $G_2$ occurs only when $n=3$ and embeds in $\SO_7$ via an irreducible self-dual 7-dimensional representation.)
\end{proposition}
\begin{proof}
Write $H\subset \SO_{2n+1}$ for the Zariski closure of $\rho_{\pi^\flat}(\Gamma)$. We claim that $H$ contains a regular unipotent element of $\GL_{2n+1}$ (thus also of $\SO_{2n+1}$).

To prove this, we distinguish cases. Let us first assume $\vst \nmid \ell$. Let $N_{\vst}$ be the unipotent operator of the Weil--Deligne representation attached to $\rho_{\pi^\flat, \vst}$ so that it corresponds to the image of $\left(1, \vierkant 1101 \right)$ under $\iota\phi_{\pi^\flat_{\vst}}$ by Theorem \ref{thm:ExistGaloisRep}. Then $N_{\vst}$ is regular unipotent in $\GL_{2n+1}$ since $\pi^\flat_{\vst}$ and thus $\pi^\sharp_{\vst}$ is Steinberg by Lemma \ref{lem:ComponentIsSteinberg}. The attached Weil-Deligne representation has the property that a positive power of $N_{\vst}$ lies in the image of $\rho_{\pi^\flat, \vst}$. This completes the proof if $\vst \nmid \ell$. If $\vst |\ell$, we take a finite extension $E/F$ such that $\rho: = \rho_{\pi^\flat, \vst}|_{\Gamma_E}$ is semistable at places above $\vst$. By part (iv) of Theorem \ref{thm:ExistGaloisRep}, the last assumption of Lemma \ref{lem:Tannakian} is satisfied. (To satisfy the first two, take $L$ to be large enough.) Applying Lemma \ref{lem:Tannakian}, we see that $\rho_{\pi^\flat, \vst}(\Gamma_E)$, thus also $H$, contains a regular unipotent element of $\GL_{2n+1}$.

We have shown that $H\subset \SO_{2n+1}$ contains a regular unipotent element. On the other hand, $H$ is an irreducible subgroup of $\SO_{2n+1, \lql}$ by Theorem \ref{thm:ExistGaloisRep}. By Lemma~\ref{lem:LiebeckTesterman}, $H^0$ is a semisimple group and hence $H$ as well. By Proposition \ref{prop:3cases}, $H$ is one of the groups $\SO_{2n+1}$, $G_2$ or $\textup{PGL}_2$ over $\lql$. Thus $H=H^0$ and the proposition follows.
\end{proof}

\section{Weak acceptability and connected image}

A classical theorem for Galois representations states that if $r_1, r_2 \colon \Gamma \to \GL_m(\lql)$ are two semisimple representations which are locally conjugate, then they are conjugate. (Recall that every representation is assumed to be continuous by the convention of this paper.) This is a consequence of the Brauer-Nesbitt theorem combined with the Chebotarev density theorem. In this section we investigate analogous statements when $\GL_m(\lql)$ is replaced by a more general (not necessarily connected) reductive group. We show that in many cases the implication still holds if one assumes that one of the two representations has Zariski connected image.

\begin{definition}
A (possibly disconnected) reductive group $G$ over $\lql$ is said to be \emph{weakly acceptable} if for every
profinite group $\Delta$ and any two locally conjugate semisimple continuous representations $r_1, r_2 \colon \Delta \to G(\lql)$ there exists an open subgroup $\Delta' \subset \Delta$ such that $r_1$ and $r_2$ restricted to $\Delta'$ are conjugate.
 \end{definition}

\begin{lemma}\label{lem:ConjugacyUpToRootsOfUnity}
Let $r_1, r_2 \colon \Delta \to \GL_m(\lql)$ be two representations such that for some finite subgroup $\mu$ of $\lql^\times$ we have for each $\sigma \in \Delta$ that $\Tr r_1(\sigma) = \zeta \Tr r_2(\sigma)$ for some $\zeta = \zeta_\sigma \in \mu$. Then $\Tr r_1(\sigma) = \Tr r_2(\sigma)$ for all $\sigma$ in some open subgroup $\Delta'$ of $\Delta$.
\end{lemma}
\begin{proof}
Choose some open ideal $I \subset \lzl$ such that $(1-\zeta)m \notin I$ for all $\zeta \in \mu_t(\lql)$ with $\zeta \neq 1$. Take $U \subset \Delta$ the set of $\sigma$ such that for $i=1,2$ we have for all $\sigma \in U$ that $\Tr r_i(\sigma) \equiv m \mod I$. Then $U$ is open by continuity of the representations $r_i$. Let $\sigma \in U$. The traces $\Tr r_1(\sigma)$ and $\Tr r_2(\sigma)$ agree up to an element $\zeta_\sigma \in \mu_t(\lql)$. Reducing modulo $I$ we get $m \equiv \zeta_\sigma m$. This is only possible if $\zeta_\sigma = 1$. For the unit element $e \in \Delta$ we have $\Tr r_i(e) = m$ ($i=1,2$), consequently $U$ is an open neighborhood of $e \in \Delta$. We may now take $\Delta'$ an open subgroup of $\Delta$ contained in $U$.
\end{proof}

\begin{lemma}\label{lem:A}
Assume $[\mu \injects H \surjects G]$ is a central extension of reductive groups over $\lql$, where we assume additionally that $\mu$ is finite. Then the group $G$ is weakly acceptable if and only if $H$ is weakly acceptable.
\end{lemma}
\begin{proof}
``$\Leftarrow$''. Assume that $H$ is weakly acceptable. Let $\Delta$ be a profinite group and $r_1,r_2 \colon \Delta \to G(\lql)$ locally conjugate. Choose $U \subset H(\lql)$ an open subgroup with $U \cap \mu = \{1\}$. Write $\li U \subset G(\lql)$ for the image of $U$ in $G(\lql)$. The map $H \surjects G$ induces a bijection $U \isomto \li U$. Consider $\Delta' := r_1^{-1}(\li U)\cap r_2^{-1}(\li U) \subset \Delta$. Using the isomorphism $U \isomto \li U$ we obtain lifted representations $\wt r_1, \wt r_2 \colon \Delta' \to H(\lql)$. By the local $G$-conjugacy of $r_1, r_2$, there exists for each $\sigma \in \Delta'$ some element $h = h_\sigma \in H(\lql)$ and a $z = z_{\sigma} \in \mu(\lql)$ such that
\begin{equation}\label{eq:Pairzh}
\wt r_2(\sigma)_{\tu{ss}} = z h \wt r_1(\sigma)_{\tu{ss}} h\inv \in H(\lql)
\end{equation}
We claim that in an open subgroup $\Delta''$ of $\Delta'$ the elements $\wt r_2(\sigma)$ and $\wt r_1(\sigma)$ are in fact $H$-conjugate for every $\sigma \in \Delta''$ (as opposed to conjugate up to $\mu \subset H$).

Choose a faithful representation $\varphi \colon H \to \GL_N$, and write $\varphi$ as a direct sum of irreducible representations $\varphi = \oplus_{i=1}^m \varphi_i$. By Schur's lemma we have $\varphi_i(\mu) \subset \mu_t$ where $\mu_t$ are the $t$-th roots of unity for some $t \in \Z_{\geq 1}$. In particular we have the identity
$$
\Tr \varphi_i (\wt r_2(\sigma)) = \zeta \Tr \varphi_i (\wt r_1(\sigma)) \in \lql,
$$
where $\zeta := \varphi_i(z)$ with $z$ as in \eqref{eq:Pairzh}. Consequently there exists by Lemma \ref{lem:ConjugacyUpToRootsOfUnity} some open subgroup $\Delta_i \subset \Delta$ such that on $\Delta_i$ we have
$\Tr \varphi_i \wt r_1(\sigma) = \Tr \varphi_i \wt r_2(\sigma)$.
Consider an open subgroup $\Delta'_i \subset \Delta_i$ such that $\Tr \varphi_i \wt r_1(\sigma) \neq 0$ for all $\sigma \in \Delta'_i$. Now let $\sigma \in \Delta'' := \cap_{i=1}^m \Delta_i'$ and let $(z, h)$ be as in Equation \eqref{eq:Pairzh}. By applying $\Tr \varphi_i$ to \eqref{eq:Pairzh} we find
$$
0 \neq \Tr \varphi_i \wt r_1(\sigma) = \Tr \varphi_i \wt r_2(\sigma) = \varphi_i(z) \Tr \wt r_1(\sigma) \in \lql,
$$
so $\varphi_i(z) = 1$ for all $i = 1,2,\ldots, m$, and hence $z = 1$. This proves the claim.

The group $H$ is weakly acceptable. Hence the representations $\wt r_1|_{\Delta'''}$ and $\wt r_2|_{\Delta'''}$ are conjugate by some $h \in H(\lql)$ on some open subgroup $\Delta''' \subset \Delta''$. Since $H(\lql) \supset U\isomto \li U \subset G(\lql)$ the representations $r_1|_{\Delta'''}$ and $r_2|_{\Delta'''}$ are conjugate by the image of $h$ in $G(\lql)$.

``$\Rightarrow$''. Let $r_1, r_2 \colon \Delta \to H(\lql)$ be two locally conjugate semisimple continuous representations. Their projections $\li r_1, \li r_2$ are locally conjugate in $G(\lql)$. Hence there exists an open $\Delta' \subset \Delta$ such that $\li r_2|_{\Delta'} = g (\li r_1|_{\Delta'}) g\inv$.
Lift $g$ to $\wt g \in H(\lql)$ and replace $r_2$ by $\wt g r_2 \wt g\inv$. Since $\mu$ is central we obtain the character $\chi(\sigma) := r_1(\sigma)r_2(\sigma)\inv$ of $\Delta'$. This character has finite order. Hence $r_1$ and $r_2$ agree on the open subgroup $\Ker(\chi) \subset \Delta'$.
\end{proof}

\begin{lemma}\label{lem:C}
Let $G/\lql$ be a reductive group with center $Z_G$ for which there exists a semisimple subgroup $G_1 \subset G$ such that $Z_G \times G_1 \to G$ is surjective.
\begin{enumerate}
\item The group $G$ is weakly acceptable if and only if its adjoint group $G_{\ad}$ is weakly acceptable.
\item If $G$ is connected and $G_{\ad}$ is a product of copies of the groups $\PGL_n, \SO_{2n+1}$ and $\tu{PSp}_{2n}$, then $G$ is weakly acceptable.
\item If $G$ is $\textup{GPin}_{2n}$ or $\GO_{2n}$, then $G$ is weakly acceptable.
\end{enumerate}
\end{lemma}
\begin{remark}
Note that for connected groups $G$, we may take $G_1 = G^\der$ and the condition of the lemma is always satisfied. For non-connected $G$ the condition is not empty: One may consider the semi-direct product $G$ of $\GL_2$ with $\{\pm 1\}$ where $-1$ acts by $g \mapsto g^{-1,t}$. In this case there is no semisimple subgroup $G_1 \subset G$ such that $Z_G \times G_1 \to G$ is surjective ($\tu{rk}(Z_G \times G_1) = 0+1 < 2= \tu{rk}(G)$). For applications in this paper, we only need connected $G$, so the reader may choose to restrict to this case.
However we state a more general version for potential applications elsewhere.
\end{remark}
\begin{proof}
(1). The statement follows by applying the previous lemma to the natural surjections $Z_G \times G_1 \to G$ and $G_1 \to G_{1,\ad}$. Both maps have finite kernels as $Z_{G_1}$ is finite and contains $Z_G\cap G_1$. Therefore $G$ is weakly acceptable if and only if $G_1$ is weakly acceptable if and only if $G_{1,\ad}$ is weakly acceptable. Finally $G_1\subset G$ induces an isomorphism $G_{1,\ad}\isomto G_{\ad}$.

(2). If $G$ is connected we can take $G_1 = G^{\der}$ and use acceptability of the groups $\GL_n$, $\SO_{2n+1}$ and $\Sp_{2n}$; the latter two cases follow from Proposition \ref{prop:ConjugacyStdRep}.

(3).
Since $\tu{O}_{2n}$ is acceptable by Proposition \ref{prop:ConjugacyStdRep}, the group $\PO_{2n}$ is weakly acceptable by Lemma \ref{lem:A}. For $G = \textup{GPin}_{2n}$ (resp. $\GO_{2n}$) we take $G_1 := \textup{Pin}_{2n}$ (resp. $\textup{O}_{2n}$), and we have the surjection $Z_G \times G_1 \surjects G$. From the surjection $G_1 \surjects \PO_{2n}$ we find that $G_1$ is weakly acceptable, and then $G$ is also weakly acceptable by (\textit{i}).
\end{proof}

\begin{proposition}\label{prop:WeaklyAcceptable}
Let $G$ be a reductive group with center $Z$, cocenter $D$ and $\mu \subset Z$ the kernel of the natural morphism $Z \to D$. Assume that $G$ is weakly acceptable. Let $\Delta$ be a profinite group and $r_1, r_2 \colon \Delta \to G(\lql)$ be two locally conjugate continuous semisimple representations where we assume that $r_1$ has Zariski connected image\footnote{Recall that a morphism of finite type affine algebraic groups over a field is closed, in particular there is no difference in taking the Zariski closure before or after reducing modulo $Z$.} modulo $Z$. Then:
\begin{enumerate}
\item We have $r_2 =\chi \cdot g r_1 g\inv $ where $g \in G(\lql)$ and $\chi \colon \Delta \to \mu(\lql)$ is some character.
\item If the image of $r_1$ in $G(\lql)$ is Zariski connected modulo a subgroup $Z_0 \subset Z$ with $Z_0 \cap \mu = \{1\}$, then we have $\chi = 1$ in (1).
\item Assume the existence of $\sigma_0 \in \Delta$ with the following property: the images of $r_1(\sigma_0)$ and $g r_1(\sigma_0) g\inv$ in $G/\mu(\lql)$ are not $G^0(\lql)$-conjugate for any $g \in G_{\tu{ad}}(\lql) \backslash G^0_{\tu{ad}}(\lql)$ Then $\chi r_1$ and $r_2$ are $G^0(\lql)$-conjugate.
\end{enumerate}
\end{proposition}
\begin{example}\label{ex:PGL-example} Consider the case $G = \PGL_m$. By (1) any two locally conjugate projective Galois representations $r_1, r_2$ are conjugate on an open subgroup of $\Delta$ (of index at most $m$). If furthermore one of the two representations has connected image, then $r_1$ and $r_2$ are $\PGL_m(\lql)$-conjugate by (2). Similarly, two locally conjugate $\GSpin_{2n+1}$-valued Galois representations are conjugate on an open subgroup of index $2$, since $\mu$ is a group of order 2 when $G=\GSpin_{2n+1}$.
\end{example}
\begin{proof}
(1).
Since $G$ is weakly acceptable, there exists some $g \in G(\lql)$ and an open subgroup $\Delta' \subset \Delta$ such that $r_2 = g r_1 g\inv$ on $\Delta'$. Let $\varphi \colon G/Z \to \GL_N$ be a faithful representation and choose $x \in \GL_N(\lql)$ such that $\varphi r_1(\sigma) = x \varphi r_2(\sigma) x\inv \in \GL_N(\lql)$ for all $\sigma \in \Delta$. For $\sigma \in \Delta'$ we obtain
$$
\varphi r_1(\sigma) = x \varphi r_2(\sigma) x\inv= x \varphi (g r_1(\sigma) g\inv) x\inv \in \GL_N(\lql).
$$
Taking the Zariski closure we have $\varphi(i) = x \varphi (gig\inv) x\inv \in \GL_N$ for all $i$ in the Zariski closure $I$ of the image of $r_1(\Delta')$. Since $I/Z$ is connected, we obtain $\varphi r_1(\sigma) = x \varphi(g r_1(\sigma) g\inv) x\inv \in \GL_N(\lql)$ for all $\sigma \in \Delta$. Earlier we found $\varphi r_1(\sigma)=x \varphi r_2(\sigma) x\inv \in \GL_N(\lql)$ for all $\sigma \in \Delta$. Thus $\varphi r_2(\sigma) = \varphi (g r_1(\sigma)g\inv) \in \GL_N(\lql)$ for all $\sigma \in \Delta$. We obtain that $r_2(\sigma)=\chi(\sigma) \cdot g r_1(\sigma) g\inv \in G(\lql)$ for some character $\chi \colon \Delta \to Z(\lql)$. By mapping down to the cocenter $D$ of $G$ and using local conjugacy, we see that $\chi$ has image in $\mu$.

(2). Repeat the same proof, but now take $\varphi$ a faithful representation of $G/Z_0$ (as opposed to $G/Z$). Then we find a character $\chi$ as above, whose image is in $\mu$ and $Z_0$, and therefore trivial.

(3). By (1) we have $r_2 = g \chi r_1 g\inv$ for some $g \in G_{\ad}(\lql)$. By local conjugacy at $\sigma_0$, we have that $r_2(\sigma_0)= \chi(\sigma_0) g r_1(\sigma_0) g\inv$ and $r_1(\sigma_0)$ are $G_{\ad}(\lql)$-conjugate. By the assumption, this implies $g \in G^0_{\ad}(\lql)$. Hence $r_2$ and $\chi r_1$ are $G^0_{\ad}(\lql)$-conjugate.
\end{proof}

\begin{lemma}\label{lem:SugWooNew1}
Let $\Delta$ be a profinite group. Let $r \colon \Delta\ra \GL(V)$ be a representation on a finite dimensional $\lql$-vector space $V$ and let $\chi \colon \Delta\ra \lql^\times$ be a nontrivial character.
\begin{enumerate}
\item If $r\simeq r\otimes \chi$ then $r$ is not strongly irreducible.
\item If $r$ is irreducible and has connected image in $\PGL(V)$, then $r$ is strongly irreducible.
\end{enumerate}
\end{lemma}

\begin{proof}
(1) We may assume $r$ is irreducible. Since $\det(r)=\det(r\otimes\chi)=\det(r) \chi^{\dim V}$, the character $\chi$ has finite order. Take $\Delta':=\ker \chi$, which is an open normal subgroup of $\Delta$. It is enough to show that we have the following inequality
$$
\dim \Hom_{\Delta'} (r,r)=\dim \Hom_{\Delta}(r,\mathrm{Ind}_{\Delta'}^\Delta r)>1.
$$
We have the tautological $\Delta$-equivariant embedding $r\hra \mathrm{Ind}_{\Delta'}^\Delta r$ given by $v\mapsto (\gamma\stackrel{f_v}{\mapsto} r(\gamma)v)$. Define $f^\chi_v(\gamma):=\chi(\gamma) f_v(\gamma)$. Then $v\mapsto f^\chi_v$ gives another embedding $r\hra \mathrm{Ind}_{\Delta'}^\Delta r$ that is not a scalar multiple of $v\mapsto f_v$.

(2) This follows from the fact that the Zariski closure of the image of $r$ in $\PGL(V)$ does not change upon restriction to an open normal subgroup $\Delta'\subset \Delta$ due to connectedness. Since (strong) irreducibility only depends on the Zariski closure of image in $\PGL(V)$, the lemma follows.
\end{proof}

The preceding lemma leads to the following proposition, which will be employed when studying the spinor norm of $\GSpin_{2n+1}$-valued Galois representations and when proving an automorphic multiplicity one result. Consider $r:\Delta\ra \GL(V)$ and $\chi:\Delta\ra \lql^\times$ as in the setup of Lemma \ref{lem:SugWooNew1}.
Let $r\simeq \oplus_{i=1}^t m_i r_i$ be a decomposition into mutually non-isomorphic irreducible $\Delta$-representations with multiplicities $m_i \in \Z_{>0}$.

\begin{proposition}\label{prop:SugWooNew2}
Assume that
\begin{itemize}
\item $\{\dim r_i\}_{i=1}^t$ are mutually distinct.
\item $\mathrm{im}(r)$ has connected Zariski closure in $\PGL(V)$.
\end{itemize}
Then each of the following is true.
\begin{enumerate}
\item If $r\simeq r^\vee \otimes \chi$ and if $\chi'\subset r\otimes r$ is a one-dimensional $\Delta$-submodule then $\chi'=\chi$.
\item If $r\simeq r\otimes \chi$ then $\chi=1$.
\end{enumerate}
\end{proposition}

\begin{proof}
(1) The first condition says $\oplus_i m_i r_i \simeq \oplus_i m_i r_i^\vee \otimes \chi$. Since $\dim(r_i)$ are distinct, $r_i\simeq r_i^\vee \otimes \chi$ for all $i$. Again by the dimension assumption, $\chi'\subset r_i \otimes r_i$ for some $i$. (If $\chi'\subset r_i\otimes r_j$ for $i\neq j$ then there is a nonzero $\Delta$-equivariant map from $r_i^\vee$ to $r_j \otimes \chi'^{-1}$, which must be an isomorphism by irreducibility; however this violates the dimension assumption.) Moreover if $r$ has connected image in $\textup{PGL}$ then so does $r_i$ (because $\mathrm{im}(r_i)$ is the image of $\mathrm{im}(r)$ by the projection map). Hence we are reduced to the case when $r$ is irreducible.

Now suppose $r$ is irreducible. Then $\chi'\subset r\otimes r$ implies $r\simeq r^\vee \otimes \chi'^{-1}$. We are assuming $r\simeq r^\vee \otimes \chi$, so we have
$r\simeq r\otimes(\chi'\chi^{-1})$. If $\chi'\neq \chi$ then part (1) of Lemma \ref{lem:SugWooNew1} says $r|_{\Delta'}$ is reducible for some open normal subgroup $\Delta'\subset \Delta$, contradicting part (2) of the same lemma. Therefore $\chi'=\chi$.

(2) Arguing similarly as in the proof of (1), we reduce to the case of irreducible $r$ and deduce that $\chi=1$ by using Lemma \ref{lem:SugWooNew1}.
\end{proof}

\section{$\GSpin$-valued Galois Representations}\label{sec:GSpin-valued}

In this section we study the notion of local conjugacy for the group $\GSpin_{2n+1}(\lql)$. In general it is not expected that local conjugacy implies (global) conjugacy of Galois representations: In the paper \cite[proof of Prop.~3.10]{LarsenTable} Larsen constructs a certain finite group $\Delta$ (in his text it is called $\Gamma$), which is a double cover of the (non-simple) Mathieu group $M_{10}$ in the $10$-th alternating group $A_{10}$. More precisely, he realizes $M_{10} \subset \SO_9(\lql)$ by looking at the standard representation of $A_{10} \subset \GL_{10}(\lql)$. Then $\Delta$ is the inverse image of $M_{10}$ in $\Spin_9(\lql)$. Let us just assume that $\Delta$ can be realized as a Galois group $s \colon \Gamma \surjects \Delta$. The group $\Delta$ comes with a map $\phi_1 \colon \Delta \to \Spin_9(\lql)$, and $\eta$ is the composition $\Delta \surjects M_{10} \surjects M_{10}/A_6 \simeq \Z/2\Z \injects \Spin_9(\lql)$. He defines $\phi_2(x) := \eta(x) \phi_1(x)$. We may define $r_1 := \phi_1 \circ s$ and $r_2 := \phi_2 \circ s$. Then the argument of Larsen shows that $\phi_1(\sigma)$ and $\phi_2(\sigma)$ are $\Spin_9(\lql)$ conjugate for every $\sigma \in \Gamma$, while $\phi_1$ and $\phi_2$ are not $\Spin_9(\lql)$-conjugate. The maps $\phi_i$ cannot be $\GSpin_9(\lql)$-conjugate: If we would have $\phi_2 = g \phi_1 g^{-1}$ for some $g \in \GSpin_9(\lql)$ then we can find a $z \in \lql^\times$, such that $h = gz \in \Spin_9(\lql)$ and $h \phi_1 h^{-1} = g \phi_1 g^{-1} = \phi_2$ which contradicts Larsen's conclusion. Thus, assuming the inverse Galois problem holds over $F$ for $\Delta$, $(r_1, r_2)$ is a pair of locally conjugate, but non-conjugate Galois representations.

In \cite{LarsenTable} Larsen explains that counterexamples may be constructed for all $\Spin_m(\lql)$ with $m \geq 8$ (for $m=8$, see \cite[Prop.~2.5]{larsen2})\footnote{In a private communication, Chenevier informed us that the group $\Spin_7(\lql)$ is also not acceptable.}. Proposition \ref{prop:WeaklyAcceptable} shows that any two locally conjugate $\GSpin_{2n+1}(\lql)$-valued Galois representations are conjugate up to a quadratic character twist (see also Example \ref{ex:PGL-example}).

\begin{lemma}\label{lem:for-mult-one}
Let $\rho \colon \Gamma \ra \GSpin_{2n+1}(\lql)$ be a semisimple representation that contains a regular unipotent element in the Zariski closure of its image. If $\chi \colon \Gamma\ra\lql^\times$ is a character such that $\spin(\rho)\simeq \spin(\rho)\otimes \chi$ then $\chi=1$.
\end{lemma}
\begin{proof}
This follows from part (2) of Proposition \ref{prop:SugWooNew2} applied to $r=\spin(\rho)$. For this we need to check the assumptions there; the only nontrivial part is the dimension hypothesis, which we verify as follows. Write $\ol{\textup{im}(\rho)}$ for the Zariski closure of $\textup{im}(\rho)$ in $\GSpin_{2n+1}$. The image of $\ol{\textup{im}(\rho)}$ in $\SO_{2n+1}$ is either $\PGL_2$, $\SO_{2n+1}$, or $G_2$ (last case only when $n=3$) by Proposition \ref{prop:3cases}. Hence we have either
\begin{enumerate}[label=(\roman*)]
\item $\SL_2\subset \ol{\textup{im}(\rho)} \subset \GL_2$,
\item $\PGL_2\subset \ol{\textup{im}(\rho)} \subset \PGL_2\times \G_m$,
\item $\Spin_{2n+1}\subset \ol{\textup{im}(\rho)} \subset \GSpin_{2n+1}$, or
\item $G_2\subset \ol{\textup{im}(\rho)} \subset G_2\times \G_m$ (the last case is possible only when $n=3$).
\end{enumerate}
In each case we explain the dimension hypothesis of Proposition \ref{prop:SugWooNew2} in detail when $\ol{\textup{im}(\rho)}$ is equal to $\GL_2$, $\PGL_2\times \G_m$, $\GSpin_{2n+1}$, or $G_2\times \G_m$, respectively, as the argument is essentially the same in general.
 Case (iii) is obvious. Cases (i) and (ii) follow from the fact that $\GL_2$ or $\PGL_2\times \G_m$-representations are determined by the dimension if the central character has weight 1 (namely the central $\G_m$ acts through the identity map). In case (iv), the underlying spin representation has dimension 8, and it decomposes as the direct sum of a one-dimensional representation with an irreducible 7-dimensional representation of $G_2$, whereas the $\G_m$ factor acts by weight 1. So the assumption still holds.
\end{proof}

\begin{proposition}\label{prop:SpinConjugacyA}
Let $r_1, r_2 \colon \Gamma \to \GSpin_{2n+1}(\lql)$ be two semisimple representations that are unramified at almost all places and locally conjugate. Assume the image of $r_1$ contains a regular unipotent element in the Zariski closure of its image. Then $r_1$ and $r_2$ are conjugate.
\end{proposition}
\begin{proof}
As in the proof of Lemma \ref{lem:for-mult-one}, the image of $r_1$ in $\SO_{2n+1}$ has connected Zariski closure. Recall from Lemma \ref{lem:C} that $\GSpin_{2n+1}$ is weakly acceptable.
By Proposition~\ref{prop:WeaklyAcceptable} (1) we may assume that $r_2 = r_1 \chi$, with $\chi$ a character taking values in the center of $\Spin_{2n+1}(\lql)$. By Lemma~\ref{lem:for-mult-one} we have $\chi = 1$.
\end{proof}

\section{The trace formula with fixed central character}\label{sect:TraceFormula}

In this section we recall the general setup for the trace formula with fixed central character.\footnote{More details are to be available in \cite{KSZ} and Dalal's thesis. Compare with Section 1 of \cite{Rog83}, Sections 2 and 3 of \cite{ArtSTF1}, or Section 3.1 of \cite{ArthurBook}. } For later use, we prove some instances of the Langlands functoriality for general reductive groups under the assumptions analogous to (St) at a finite place and cohomological at infinite places. (One might try to prove such a result by directly applying the $L^2$-Lefschetz formula by \cite{ArthurLefschetz} and \cite{GKM97} but the formula only tells us $0=0$ when $F\neq \Q$. This is why we need a version with fixed central character.)

Let $G$ be a connected reductive group over a number field $F$ with center $Z$. Write $A_Z$ for the maximal $\Q$-split torus in $\Res_{F/\Q}Z$ and set $A_{Z, \infty}:=A_Z(\R)^0$. Write $G(\A_F)^1$ for the subgroup of $G(\A_F)$ as on p.11 of \cite{Arthur1981} so that $G(\A_F)=G(\A_F)^1\times A_{Z, \infty}$. Consider a closed subgroup $\fkX\subset Z(\A_F)$ which contains $A_{Z, \infty}$ such that $Z(F)\fkX$ is closed in $Z(\A_F)$ (then $Z(F)\fkX$ is always cocompact in $Z(\A_F)$) and a continuous character $\chi:(\fkX\cap Z(F))\bs\fkX\ra\C^\times$. Such a pair $(\fkX,\chi)$ is called a central character datum.

In what follows we need to choose Haar measures consistently for various groups, but we will suppress these choices as this is quite standard. For instance the same Haar measures on $G(\A_F)$ and $\fkX$ have to be chosen for each term in the identity of Lemma \ref{lem:simple-TF} below.

Let $v$ be a place of $F$, and $\fkX_v$ a closed subgroup of $Z(F_v)$. Let $\chi_v:\fkX_v\ra \C^\times$ be a smooth character. Write $\cH(G(F_v),\chi_v^{-1})$ for the space of smooth compactly supported functions on $G(F_v)$ which transform under $\fkX_v$ via $\chi^{-1}_v$; if $v$ is archimedean, we also require functions in $\cH(G(F_v),\chi_v^{-1})$ to be $K_v$-finite for a maximal compact subgroup $K_v$ of $G(F_v)$. (We fix such a $K_v$ in this section, and use the same $K_v$ to compute relative Lie algebra cohomology. In the main case of interest, the choice of $K_v$ is made in \S\ref{sect:ShimuraDatum}.) Given a semisimple element $\gamma_v \in G(F_v)$ and an admissible representation $\pi_v$ of $G(F_v)$ with central character $\chi_v$ on $\fkX_v$, the orbital integral and trace character for $f_v \in \cH(G(F_v),\chi_v^{-1})$ are defined as follows. Below $I_{\gamma_v}$ denotes the connected centralizer of $\gamma_v$ in $G$.
\begin{eqnarray}
O_{\gamma_v}(f_v)&:=& \int_{I_{\gamma_v}(F_v)\bs G(F_v)} f_v(x^{-1}\gamma_v x)dx,\nonumber\\
\Tr(f_v|\pi_v)=\Tr \pi_v(f_v) &:=&\Tr\left( \int_{G(F_v)/\fkX_v} f_v(g)\pi_v(g) dg\right). \nonumber
\end{eqnarray}
Note that the trace is well defined since the operator $ \int_{G(F_v)/\fkX_v} f_v(g)\pi_v(g)dg $ is of finite rank if $v$ is finite and is of trace class if $v$ is infinite.

For our purpose, we henceforth assume the following.
\begin{itemize}
\item $\fkX=\fkX^ \infty\fkX_ \infty$ for an open compact subgroup $\fkX^ \infty\subset Z(\A^ \infty_F)$ and $\fkX_ \infty=Z(F_ \infty)$,
\item $\chi=\prod_v \chi_v$ with $\chi_v=1$ at every finite place $v$,
\end{itemize}
One defines the adelic Hecke algebra $\cH(G(\A_F),\chi^{-1})$ as well as orbital integrals and trace characters by taking a product over the local case considered above. Write $\Gamma_{\el,\fkX}(G)$ for the set of $\fkX$-orbits of elliptic conjugacy classes in $G(F)$. Let $L^2_{\disc,\chi}(G(F)\bs G(\A_F))$ denote the space of functions on $G(F)\bs G(\A_F)$ transforming under $\fkX$ by $\chi$ and square-integrable on $G(F)\backslash G(\A_F)^1/\fkX\cap G(\A_F)^1$. Write $\cA_\chi(G)$ for the set of isomorphism classes of cuspidal automorphic representations of $G(\A_F)$ whose central characters restricted to $\fkX$ are $\chi$. (In particular such representations are $G(\A_F)$-submodules of $L^2_{\disc,\chi}(G(F)\bs G(\A_F))$.)

For $f \in \cH(G(\A_F),\chi^{-1})$ we define invariant distributions $T^G_{\el,\chi}$ and $T^G_{\disc,\chi}$ by
\begin{eqnarray}
T^G_{\el,\chi}(f)&:=&\sum_{\gamma \in \Gamma_{\el,\fkX}(G)} \iota(\gamma)^{-1} \vol(I_\gamma(F)\bs I_\gamma(\A_F)/\fkX) O_{\gamma}(f),\nonumber\\
T^G_{\disc,\chi}(f)&:=&\Tr (f \,|\, L^2_{\disc,\chi}(G(F)\bs G(\A_F))).\nonumber
\end{eqnarray}
Similarly $T^G_{\cusp,\chi}$ is defined by taking trace on the space of square-integrable cuspforms. We omit the $G$ from the notation when clear from the context. In general we do not expect that $T_{\el,\chi}(f)=T_{\disc,\chi}(f)$ (unless $G/Z$ is anisotropic over $F$); the equality should hold only after adding more terms on both sides. However we do have $T_{\el,\chi}(f)=T_{\disc,\chi}(f)$ if $f$ satisfies some local hypotheses; this is often referred to as the simple trace formula.

We also need to consider the central character datum $(\fkX_0,\chi_0)$ with $\fkX_0:=A_{Z, \infty}$ and $\chi_0:=\chi|_{A_{Z, \infty}}$.
The quotient $\fkX\cap Z(F)\bs \fkX/\fkX_0$ is compact as it is closed in $Z(F)\backslash Z(\A_F)/A_{Z, \infty}$, which is compact.
We have a natural surjection $\cH(G(\A_F),\chi_0^{-1})\ra \cH(G(\A_F),\chi^{-1})$ given by
$$
f_0\mapsto \left(g\mapsto \ol{f}_0^\chi(g):= \int_{z \in \fkX\cap Z(F)\bs \fkX/\fkX_0} f_0(zg)\chi(z)dz\right).$$
Translating the function $f_0$ by $z$, define a function $f_0^z(g)=f_0(zg)$. Then
$$\frac{1}{\mathrm{volume}(\fkX\cap Z(F)\bs \fkX/\fkX_0)} \int_{z \in \fkX\cap Z(F)\bs \fkX/\fkX_0} T^G_{\star,\chi_0}(f^z_0)\chi(z)dz=T^G_{\star,\chi}(\ol{f}^\chi_0),\qquad
\star \in \{\el,\disc\}.$$
From here on we assume that $F$ is totally real and that $G(F_ \infty)$ admits discrete series representations (for Lefschetz functions to be nontrivial).

Let $\xi$ be an irreducible algebraic representation of $(\Res_{F/\Q} G)\otimes_\Q \C\simeq G\otimes_F F_ \infty$, also thought of as a continuous representation of $G(F_ \infty)$ on a complex vector space. Denote by $\chi_\xi:Z(F_ \infty)\ra \C^\times$ the restriction of $\xi^\vee$ to $Z(F_ \infty)$. Write $f_\xi=f^{G}_\xi \in \cH(G(F_ \infty),\chi_\xi^{-1})$ for a Lefschetz function (a.k.a. Euler-Poincar\'e function) associated with $\xi$ such that $\Tr \pi_ \infty(f_\xi)$ computes the Euler-Poincar\'e characteristic for the relative Lie algebra cohomology of $\pi_ \infty\otimes \xi$ for every irreducible admissible representation $\pi_ \infty$ of $G(F_ \infty)$ with central character $\chi_\xi$. See Appendix \ref{sec:Lefschetz} for details. Analogously we have the notion of Lefschetz functions at finite places as recalled in Appendix \ref{sec:Lefschetz}.

The following simple trace formula is standard for $\fkX=A_{G, \infty}$ and some other choices of $\fkX$ (such as $\fkX=Z(G(\A_F))$), but we want the result to be more flexible. Our proof reduces to the case that $\fkX=A_{G, \infty}$.

\begin{lemma}\label{lem:simple-TF} Consider the central character datum $(Z(F_ \infty),\chi)$.
Assume that $f_{\vst} \in \cH(G(F_\vst))$ is a (truncated) Lefschetz function and that $f_ \infty \in \cH(G(F_ \infty),\chi^{-1})$ is a cuspidal function. Then $$T^G_{\el,\chi}(f)=T^G_{\disc,\chi}(f)=T^G_{\cusp,\chi}(f).$$
\end{lemma}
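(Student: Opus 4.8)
The plan is to deduce the assertion from the classical simple trace formula for the central character datum $(\fkX_0,\chi_0)=(A_{Z,\infty},\chi|_{A_{Z,\infty}})$, using the averaging map $f_0\mapsto\ol{f}_0^{\chi}$ and the two compatibilities recalled just above.

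First I would record what is needed for the datum $\fkX_0=A_{Z,\infty}$. For $f_0=\prod_v f_{0,v}\in\cH(G(\A_F),\chi_0^{-1})$ with $f_{0,\infty}$ a cuspidal function and $f_{0,\vst}$ a (truncated) Lefschetz function, Arthur's invariant trace formula collapses to $T^G_{\el,\chi_0}(f_0)=T^G_{\disc,\chi_0}(f_0)=T^G_{\cusp,\chi_0}(f_0)$. The first equality is the usual simple trace formula: a cuspidal component at $\infty$ annihilates every weighted orbital integral and weighted character attached to a proper Levi subgroup, so only the ``$M=G$'' terms survive on the geometric and on the spectral side. The second equality uses the Lefschetz component at $\vst$: a residual automorphic representation is nowhere tempered, hence its component at $\vst$ is not essentially square-integrable, so $\Tr(f_{0,\vst}\,|\,\pi_{\vst})=0$ for every such $\pi$ by the defining property of a Lefschetz function (Proposition~\ref{prop:ClassicalLefschetzFunction}); thus the residual part of $L^2_{\disc,\chi_0}$ contributes nothing to $T^G_{\disc,\chi_0}(f_0)$. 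Compare \cite[\S3.1]{ArthurBook} and \cite{KSZ}.

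Next I would lift the given $f$ across the averaging map. By its surjectivity there exists $f_0\in\cH(G(\A_F),\chi_0^{-1})$ with $\ol{f}_0^{\chi}=f$, and the crucial point is that $f_0$ may be taken to again be a product with a Lefschetz component at $\vst$ and a cuspidal component at $\infty$. This is a matter of unwinding the averaging: the conditions of being a cuspidal function and of being a (truncated) Lefschetz function are stable under translation by, and integration over, central subgroups, since they are detected by weighted orbital integrals and trace characters that merely transform by central characters. With such an $f_0$ in hand, the first step applies to it.

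Finally I would conclude by transport. The displayed identities give $T^G_{\el,\chi_0}(f_0)=T^G_{\el,\chi}(f)$ and $T^G_{\disc,\chi_0}(f_0)=T^G_{\disc,\chi}(f)$, and the same computation gives the analogue for the cuspidal distributions (the cuspidal spectrum decomposing along $\fkX/\fkX_0$ exactly as the discrete one does). Combining these with the equalities of the first step yields $T^G_{\el,\chi}(f)=T^G_{\disc,\chi}(f)=T^G_{\cusp,\chi}(f)$. I expect the main obstacle to be the second step --- verifying that the lift $f_0$ can be chosen to retain the simple local shape (Lefschetz at $\vst$, cuspidal at $\infty$) on which the first step depends; once the stability of these two local conditions under central-character averaging is in place, the rest of the argument is purely formal.
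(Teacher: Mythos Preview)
Your overall strategy---reduce to the datum $(\fkX_0,\chi_0)=(A_{Z,\infty},\chi|_{A_{Z,\infty}})$, invoke the simple trace formula there, and transport back via averaging---is the paper's strategy. Two points deserve comment.

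First, rather than appeal to the abstract surjectivity of $f_0\mapsto\ol f_0^{\chi}$ and then argue that a lift with the right local shape exists, the paper simply writes one down: take $f_{0,v}:=f_v$ at every finite place and $f_{0,\infty}:=1_{G(F_\infty)^1}\cdot f_\infty$. Since $G(F_\infty)^1$ is conjugation-invariant, $f_{0,\infty}$ is visibly cuspidal, and averaging $f_{0,\infty}$ over $Z(F_\infty)/(Z(F_\infty)\cap G(F_\infty)^1)$ against $\chi$ recovers $f_\infty$ up to a nonzero constant. This dissolves what you flag as the ``main obstacle''.

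Second, your justification of $T^G_{\disc,\chi_0}(f_0)=T^G_{\cusp,\chi_0}(f_0)$ has a gap. You argue: a residual automorphic representation is nowhere tempered, hence $\pi_\vst$ is not essentially square-integrable, hence $\Tr(f_{0,\vst}\,|\,\pi_\vst)=0$ by Proposition~\ref{prop:ClassicalLefschetzFunction}. But that proposition does \emph{not} say the Lefschetz trace vanishes on every non-square-integrable unitary representation: it equals $1$ on the trivial representation (and likewise on its unramified twists, cf.\ Lemma~\ref{lem:LefschetzIsStabilizing}), which is certainly not square-integrable. So a residual $\pi$ with one-dimensional $\pi_\vst$ is not excluded by your reasoning. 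The paper avoids this by invoking the \emph{strong} cuspidality of $f_\vst$ (Lemma~\ref{lem:LefschetzIsStabilizing})---a spectral condition requiring vanishing of traces on all properly parabolically induced representations---together with cuspidality of $f_{0,\infty}$, and then citing \cite[Cor.~7.3, 7.4]{Art88b} to obtain all three equalities $T^G_{\el,\chi_0}=T^G_{\disc,\chi_0}=T^G_{\cusp,\chi_0}$ at once.
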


\begin{proof}
Consider $f_0=\prod_v f_{0,v} \in \cH(G(\A_F),\chi_0^{-1})$, where $f_{0,v}:=f_v$ at all finite places $v$, and $f_{0, \infty}$ is a cuspidal function such that $\ol{f}^{\chi}_{0, \infty}$ (as defined above) and $f_ \infty$ have the same trace against every tempered representation of $G(F_ \infty)$. The existence of such an $f_{0, \infty}$ is guaranteed by the trace Paley--Wiener theorem.
By assumption $f_\vst=f_{0,\vst}$ is strongly cuspidal (Lemma \ref{lem:LefschetzIsStabilizing}). Hence the simple trace formula \cite[Cor. 7.3, 7.4]{Art88b} implies that
$$T^G_{\el,\chi_0}(f_0)=T^G_{\disc,\chi_0}(f_0)=T^G_{\cusp,\chi_0}(f_0).$$
We deduce the lemma by averaging over $Z(F_ \infty)/Z(F_ \infty)\cap G(F_ \infty)^1$ against $\chi$, noting that the average of $f_{0, \infty}$, namely the function $g\mapsto \int_{Z(F_ \infty)/Z(F_ \infty)\cap G(F_ \infty)^1} f_{0, \infty}(zg)\chi(z)dz$, recovers $f_ \infty$ up to a nonzero constant.
\end{proof}

Now we go back to the general central character data and discuss the stabilization for the trace formula with fixed central character under simplifying hypotheses. Assume that $G^*$ is quasi-split over $F$. Write $\Sigma_{\el,\fkX}(G^*)$ for the set of $\fkX$-orbits on the set of $F$-elliptic stable conjugacy classes in $G^*(F)$.
Define
$$\ST_{\mathrm{ell},\chi}^{G^*}(f) :=\tau(G^*) \sum_{\gamma \in \Sigma_{\el,\fkX}(G^*)} \tilde{\iota}(\gamma)^{-1} \SO^{G^*}_{\gamma,\chi}(f),\qquad f \in \cH(G^*(\A_F),\chi^{-1}),$$
where $\tilde\iota(\gamma)$ is the number of $\Gamma$-fixed points in the group of connected components in the centralizer of $\gamma$ in $G^*$, and $\SO^{G^*}_{\gamma,\chi}(f)$ denotes the stable orbital integral of $f$ at $\gamma$. If $G^*$ has simply connected derived subgroup (such as $\Sp_{2n}$ or $\GSp_{2n}$), we always have $\tilde\iota(\gamma)=1$.

Returning to a general reductive group $G$, let $G^*$ denote its quasi-split inner form over $F$ (with a fixed inner twist $G^*\simeq G$ over $\ol{F}$). Since $Z$ is canonically identified with the center of $G^*$, we may view $(\fkX,\chi)$ as a central character datum for $G^*$. Let $f^* \in \cH(G^*(\A_F),\chi^{-1})$ denote a Langlands-Shelstad transfer of $f$ to $G^*$.\footnote{The transfer for $\chi^{-1}$-equivariant functions is defined in terms of the usual transfer factors. Its existence is implied by the existence of the usual Langlands-Shelstad transfer (for compactly supported functions). For details, see \cite[3.3]{Xu-lifting} for instance.} Such a transfer exists in this fixed-central-character setup: One can lift $f$ via the surjection $\cH(G(\A_F))\ra \cH(G(\A_F),\chi^{-1})$ given by $\phi\mapsto (g\mapsto \int_{\fkX} \phi(gz)\chi(z)dz)$, apply the transfer from $\cH(G(\A_F))$ to $\cH(G^*(\A_F))$ (the transfer to quasi-split inner forms is due to Waldspurger), and then take the image under the similar surjection down to $\cH(G^*(\A_F),\chi^{-1})$.

Let $\vst$ be a finite place of $F$.

\begin{lemma}\label{lem:stabilization}
Assume that $f_{\vst}$ is a Lefschetz function.
Then
$T^G_{\el,\chi}(f)=\ST^{G^*}_{\el,\chi}(f^*)$.
\end{lemma}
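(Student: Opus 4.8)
The plan is to stabilize the elliptic term $T^G_{\el,\chi}(f)$ in the usual way and to observe that the Lefschetz function at $\vst$ annihilates every endoscopic contribution except the one attached to the quasi-split inner form $G^*$. First I would invoke the stabilization of the elliptic part of the trace formula for $G$: writing $\mathcal{E}_{\el}(G)$ for the (finite) set of isomorphism classes of elliptic endoscopic data $\mathfrak{e}=(G_{\mathfrak{e}},\dots)$ for $G$ — which depend only on $\hat G$ with its Galois action, hence coincide with those for $G^*$ — one has
$$
T^G_{\el,\chi}(f)=\sum_{\mathfrak{e}\in\mathcal{E}_{\el}(G)}\iota(G,\mathfrak{e})\,\ST^{G_{\mathfrak{e}}}_{\el,\chi}(f^{\mathfrak{e}}),
$$
where $f^{\mathfrak{e}}=\prod_v f^{\mathfrak{e}}_v$ is a Langlands--Shelstad transfer of $f$ to $G_{\mathfrak{e}}$ in the $\chi^{-1}$-equivariant sense (whose existence was recalled above) and $\iota(G,\mathfrak{e})\in\Q_{>0}$ are the usual constants. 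This is Kottwitz's stabilization of the elliptic singular terms, now an unconditional theorem since the requisite fundamental lemma and Langlands--Shelstad transfer are available; the bookkeeping with the fixed central character datum $(\fkX,\chi)$ is carried out in \cite{KSZ}, and one may also reduce to the standard case $\fkX=A_{Z,\infty}$ by averaging against $\chi$ as above, which commutes with transfer because $\fkX$ is central and $\chi$ is trivial at all finite places.

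Next I would isolate the trivial endoscopic datum $\mathfrak{e}_0$: there $G_{\mathfrak{e}_0}=G^*$, the outer automorphism group is trivial, and $\tau(G)=\tau(G^*)$ (Kottwitz), so $\iota(G,\mathfrak{e}_0)=1$, while a transfer of $f$ for $\mathfrak{e}_0$ is precisely a Langlands--Shelstad transfer $f^*$ of $f$ to $G^*$; hence the $\mathfrak{e}_0$-term equals $\ST^{G^*}_{\el,\chi}(f^*)$. It therefore remains to show $\ST^{G_{\mathfrak{e}}}_{\el,\chi}(f^{\mathfrak{e}})=0$ for every $\mathfrak{e}\ne\mathfrak{e}_0$. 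Here I would use that by Lemma~\ref{lem:LefschetzIsStabilizing} the Lefschetz function $f_{\vst}$ is stabilizing; in particular it is a \emph{stable} function on $G(F_{\vst})$ (its orbital integrals depend only on the stable conjugacy class, equivalently all of its $\kappa$-orbital integrals vanish for $\kappa\ne 1$), and therefore its Langlands--Shelstad transfer to any proper elliptic endoscopic group has vanishing stable orbital integrals: the transfer identity writes $\SO^{G_{\mathfrak{e}}}_{\gamma_{\mathfrak{e}}}(f^{\mathfrak{e}}_{\vst})$ as $\SO^{G}_{\gamma}(f_{\vst})$ times a sum of transfer-factor characters over a stable class in $G(F_{\vst})$, which cancels because $\mathfrak{e}$ is nontrivial. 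Since $f^{\mathfrak{e}}$ factorizes and $\ST^{G_{\mathfrak{e}}}_{\el,\chi}(f^{\mathfrak{e}})$ is by definition a sum over $F$-elliptic stable conjugacy classes $\gamma_{\mathfrak{e}}$ in $G_{\mathfrak{e}}(F)$ of products of local stable orbital integrals — among which the factor at $\vst$, namely the stable orbital integral of $f^{\mathfrak{e}}_{\vst}$, vanishes by the above — every such term is $0$. Hence $T^G_{\el,\chi}(f)=\ST^{G^*}_{\el,\chi}(f^*)$.

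The only substantive input beyond routine manipulation is the stability of the Lefschetz function $f_{\vst}$, which is Lemma~\ref{lem:LefschetzIsStabilizing} from the appendix; granting this together with the (now unconditional) stabilization of the elliptic term in the fixed-central-character setting, the argument is an assembly and I expect no real obstacle. The point demanding a little care is purely formal: one must check that the $\chi^{-1}$-equivariant transfers are compatible both with the factorization $f=\prod_v f_v$ and with the averaging map relating $(\fkX,\chi)$ to $(A_{Z,\infty},\chi_0)$, so that the vanishing at the single place $\vst$ genuinely propagates to the whole adelic stable elliptic term.
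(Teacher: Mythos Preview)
Your proposal is correct and follows essentially the same route as the paper: stabilize $T^G_{\el,\chi}(f)$ as a sum over elliptic endoscopic data, then use Lemma~\ref{lem:LefschetzIsStabilizing} (that $f_{\vst}$ is stabilizing) to kill every term except the principal one $G^*$, where $\iota(G,G^*)=1$. The paper's proof is a one-line citation to \cite[Prop.~4.3.5]{LabesseStabilization} for this vanishing; you have simply unpacked that reference.
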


\begin{proof}
Since $f_{\vst}$ is stabilizing (Lemma \ref{lem:LefschetzIsStabilizing}), this follows from \cite[Thm. 4.3.4]{LabesseStabilization} (specialized to $L=G$, $H=G^*$, $\theta=1$; note that the ``$(G^*,H)$-essentiel'' condition there is vacuous). 
\end{proof}

Let $S_0\subset S_ \infty\cup \{\vst\}$ be a finite subset, and $S$ a finite set of places containing~$S_0\cup S_ \infty$.
We assume that $G^*$ is unramified away from $S$ and fix a reductive model for $G^*$ over $\cO_F[1/S]$. (See \cite[Prop. 8.1]{ShinTemplier} for the existence of such a model.)

Let $G^*\simeq G$ over $\ol{F}$ be an inner twist which is trivialized at each place $v\notin S_0$ (i.e. up to an inner automorphism the inner twist descends to an isomorphism over $F_v$). In particular $G$ is unramified outside $S$, and fix a reductive model for $G$ over $\cO_F[1/S]$ as well. By abuse of notation we still write $G$ and $G^*$ for reductive models. The notion of unramified representations is taken relative to the hyperspecial subgroups given by these integral models. The inner twist determines an isomorphism $G^*_{F_v}\simeq G_{F_v}$ for $v\notin S_0$, well defined up to inner automorphisms of $G^*_{F_v}$. When $v\notin S$ we further have an isomorphism $G^*_{\cO_{F_v}}\simeq G_{\cO_{F_v}}$. We fix these isomorphisms and use them to transport representations between $G^*$ and $G$.

We prove weak transfers of a certain class of automorphic representations between $G^*$ and $G$.

\begin{proposition}\label{prop:JL-for-GSp}
Assume that the adjoint group of $G$ is nontrivial and simple over $F_{\vst}$. Consider $\pi$ and $\pi^\natural$ as follows:
\begin{itemize}
\item $\pi$ is a cuspidal automorphic representation of $G^*(\A_F)$ such that
\begin{itemize}
\item $\pi$ is unramified at every place apart from $S$,
\item $\pi_{\vst}$ is an unramified character twist of the Steinberg representation,
\item $\pi_ \infty$ is $\xi$-cohomological.
\end{itemize}
\item $\pi^\natural$ is a cuspidal automorphic representation of $G(\A_F)$ such that
\begin{itemize}
\item $\pi^\natural$ is unramified at every place apart from $S$,
\item $\pi^\natural_{\vst}$ is an unramified character twist of the Steinberg representation,
\item $\pi^\natural_{ \infty}$ is $\xi$-cohomological.
\end{itemize}
\end{itemize}
\begin{enumerate}
\item Suppose that $\xi$ has regular highest weight. Then for each $\pi$ (resp. $\pi^\natural$) as above, there exists $\pi^\natural$ (resp. $\pi$) as above such that $\pi^\natural_{v}\simeq \pi_v$ at every finite place $v \notin S\cup \{\vst\}$.
\item For each $\pi$ as above, suppose that
\begin{itemize}
\item $\pi$ is not one dimensional,
\item For every $\tau \in \cA_{\chi}(G^*)$ such that $\tau^S\simeq \pi^S$, if $\tau_ \infty$ is $\xi$-cohomological $\tau_{\vst}$ is an unramified twist of the Steinberg representation then $\tau_ \infty$ is a discrete series representation.
\end{itemize}
Then there exists $\pi^\natural$ as above such that $\pi^\natural_{v}\simeq \pi_v$ at every finite place $v \notin S\cup \{\vst\}$.
Moreover the converse is true with $G$ in place of $G^*$ and the roles of $\pi$ and $\pi^\natural$ switched.
\end{enumerate}
\end{proposition}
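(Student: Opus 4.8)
The plan is to compare the cuspidal parts of the trace formulas of $G$ and of $G^*$ with fixed central character, using the stabilization of Section \ref{sect:TraceFormula}, and then to separate the near-equivalence class of $\pi$ (resp.\ of $\pi^\natural$) by varying unramified Hecke operators. First I would fix a central character datum $(\fkX,\chi)$ adapted to $\pi$ with $\fkX_\infty=Z(F_\infty)$, and build a test function $f=\prod_v f_v\in\cH(G(\A_F),\chi^{-1})$ as follows: at $v=\vst$ take $f_{\vst}$ a (truncated) Lefschetz function on $G(F_{\vst})$; at the archimedean places take $f_\infty=f^G_\xi$ a Lefschetz function for $\xi$; at a finite $v\in S\setminus\{\vst\}$ take $f_v$ of the form $h_v*h_v^{*}$ with $\pi_v(h_v)\neq 0$, so that $\Tr\tau_v(f_v)\ge 0$ for every unitary $\tau_v$ and $\Tr\pi_v(f_v)>0$; and at the finite $v\notin S$ let $f_v$ run over the unramified Hecke algebra $\Hunr(G(F_v))$. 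Since $G$ and $G^*$ differ only at places of $S_0\subset S_\infty\cup\{\vst\}$, a Langlands--Shelstad transfer $f^{*}$ may be chosen to agree with $f$ under the canonical local isomorphisms $G(F_v)\cong G^*(F_v)$ at every finite $v\neq\vst$; at $\vst$ the transfer of a (truncated) Lefschetz function is again one, and at $\infty$ it is $f^{G^*}_\xi$, which is cuspidal (this transfer compatibility is treated in the appendix).

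Then I would chain Lemma \ref{lem:simple-TF} for $G$, Lemma \ref{lem:stabilization} for $G$, Lemma \ref{lem:stabilization} applied to $G^*$ as its own quasi-split inner form (so that $\ST^{G^*}_{\el,\chi}(f^{*})=T^{G^*}_{\el,\chi}(f^{*})$, the endoscopic terms vanishing because $f^{*}_{\vst}$ is stabilizing, cf.\ Lemma \ref{lem:LefschetzIsStabilizing}), and Lemma \ref{lem:simple-TF} for $G^*$, to obtain, for every $f$ of the above shape,
\[
\sum_{\tau} m(\tau)\,\Tr\tau(f)\;=\;\sum_{\sigma} m(\sigma)\,\Tr\sigma(f^{*}),
\]
where $\tau$ (resp.\ $\sigma$) runs over the cuspidal automorphic representations of $G(\A_F)$ (resp.\ of $G^*(\A_F)$) with central character $\chi$ on $\fkX$, and $m(\cdot)$ is the automorphic multiplicity.

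Next comes the local analysis. By Proposition \ref{prop:ClassicalLefschetzFunction}, $f_{\vst}$ has nonzero trace, among unitary representations, only on unramified twists of the Steinberg representation, and then a fixed nonzero value (independent of the twist), so on both sides only the $\tau$ (resp.\ $\sigma$) with a Steinberg twist at $\vst$ survive. The archimedean factor $\Tr\tau_\infty(f^G_\xi)$ is the Euler--Poincar\'e characteristic of $\tau_\infty\otimes\xi$, hence nonzero only for $\xi$-cohomological $\tau_\infty$; when $\tau_\infty$ lies in a discrete series $L$-packet it equals $(-1)^{q_0}\dim H^{q_0}(\mathfrak g,K;\tau_\infty\otimes\xi)$ with $q_0$ the middle dimension, a sign independent of $\tau_\infty$ and positive magnitude. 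The hypotheses are there precisely to force only discrete series components to occur: in part (1), $\xi$ has regular highest weight and, by Lemma \ref{lem:ForceTempered} and Corollary \ref{cor:Pi-is-in-Disc-series-Packet} (via \cite{ArthurBook}, with the similitude bookkeeping via \cite{Xu}), every cuspidal $\sigma$ on $G^*$ with a Steinberg twist at $\vst$ is essentially tempered everywhere, so $\sigma_\infty\in\Pi_\xi$; in part (2) this is exactly the standing hypothesis on the members of $\cA_\chi(G^*)$ near-equivalent to $\pi$ away from $S$ (and, for the converse, on the corresponding representations of $G$). Consequently the archimedean sign and the $\vst$-factor are the same for all surviving representations, and with $m(\cdot)>0$ and $\Tr\tau_v(f_v)\ge 0$ at $v\in S\setminus\{\vst\}$ there is no cancellation on either side. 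Finally, since $f^G_\xi$ bounds the infinitesimal character and the level is bounded by $S$, only finitely many $\sigma$ (resp.\ $\tau$) contribute for a given $f$; letting the unramified Hecke operators outside $S$ vary and using linear independence of the associated Satake characters, the identity above splits into one identity per Satake datum $c=(c_v)_{v\notin S}$, equating the (single-signed, otherwise positive) contributions on $G$ and on $G^*$ of the cuspidal representations with that Satake datum, a Steinberg twist at $\vst$, and a cohomological component at $\infty$.

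To conclude, given $\pi$ as in the hypotheses take $c=(\pi_v)_{v\notin S}$. The $G^*$-side contribution for this datum is nonzero, since all its terms share one sign and the $\sigma=\pi$ term has $\prod_{v\in S\setminus\{\vst\}}\Tr\pi_v(f_v)>0$ by the choice of the $h_v$; hence the $G$-side contribution is nonzero, so there exists $\pi^\natural$ cuspidal automorphic on $G$, unramified outside $S$, with $\pi^\natural_{\vst}$ an unramified Steinberg twist, $\pi^\natural_\infty$ $\xi$-cohomological, and $\pi^\natural_v\simeq\pi_v$ for every finite $v\notin S$ (equality of Satake parameters at an unramified place forces isomorphism). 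The converse implication, and the statement with $G$ and $G^*$ interchanged, follow by reading the same identity in the other direction, using essential temperedness of $\pi^\natural$ at the finite place $v'\notin S$ to isolate $\pi^\natural_{v'}$ by a pseudo-coefficient. I expect the main obstacle to be the archimedean cancellation issue: non-tempered $\xi$-cohomological archimedean components would enter with uncontrolled Euler--Poincar\'e signs, and it is exactly to exclude them --- via the regularity of $\xi$ together with the endoscopic classification in part (1), and via the explicit discrete-series hypothesis in part (2) --- that the hypotheses are imposed; a secondary point is the compatibility of the Langlands--Shelstad transfer with (truncated) Lefschetz functions at $\vst$ and $\infty$.
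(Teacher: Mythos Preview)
Your overall architecture --- compare $T^G_{\cusp,\chi}$ and $T^{G^*}_{\cusp,\chi}$ via Lemmas~\ref{lem:simple-TF} and~\ref{lem:stabilization}, then separate near-equivalence classes by varying the unramified Hecke operators --- is exactly the paper's approach. But there is a genuine gap at $\vst$.

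You write that ``by Proposition~\ref{prop:ClassicalLefschetzFunction}, $f_{\vst}$ has nonzero trace, among unitary representations, only on unramified twists of the Steinberg representation.'' This is false: the Lefschetz function has nonzero trace on unramified twists of \emph{both} the trivial and the Steinberg representation (see Proposition~\ref{prop:ClassicalLefschetzFunction}(i) and Lemma~\ref{lem:LefschetzIsStabilizing}), and the two traces have \emph{opposite} signs $1$ and $(-1)^{q(G)}$. So your positivity argument does not go through as stated: a priori there could be $\tau$ with $\tau^S\simeq\pi^S$ and $\tau_{\vst}$ one-dimensional, contributing with the wrong sign. The paper handles this by a strong approximation argument for the (simply connected) derived subgroup: if $\tau_{\vst}$ is one-dimensional then the global $\tau$ is one-dimensional, hence $\tau_\infty$ is one-dimensional and therefore not essentially tempered (recall the standing hypothesis that $G^{\der}$ is nontrivial simply connected, so one-dimensional local components are never essentially tempered). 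This is what rules out the trivial case and forces $\tau_{\vst}$ to be a Steinberg twist.

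Two smaller points. For part~(1) you invoke Lemma~\ref{lem:ForceTempered} and Corollary~\ref{cor:Pi-is-in-Disc-series-Packet}, but those are specific to $\GSp_{2n}$ via Arthur's classification and are not available for general $G^*$; the paper instead uses Vogan--Zuckerman directly: regularity of the highest weight of $\xi$ forces every unitary $\xi$-cohomological $\tau_\infty$ to lie in discrete series, which both fixes the archimedean sign and (via the strong approximation step above) excludes the trivial component at $\vst$. For part~(2), the essential temperedness at $v'\notin S$ is not used to ``isolate $\pi^\natural_{v'}$ by a pseudo-coefficient'' --- since $v'\notin S$ we already have $\tau_{v'}\simeq\pi_{v'}$; rather, it is used exactly to conclude that $\tau$ is not one-dimensional (since $\tau_{v'}$ is essentially tempered), again feeding the strong approximation step that eliminates the trivial component at $\vst$.
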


\begin{remark}
The adjoint group of $G$ is assumed to be simple over $F_{\vst}$ in order to apply Proposition \ref{prop:TiwstedLefschetzFunction}. Without the assumption
we may have a mix of trivial and Steinberg representations (up to unramified twists) for $\pi_{\vst}$ corresponding to simple factors of $G$ at $\vst$.
\end{remark}

\begin{remark}\label{rem:JL}
Corollary \ref{cor:Pi-is-in-Disc-series-Packet} tells us that the condition in (2) for the transfer from $G^*$ to $G$ is satisfied by $G^*=\GSp_{2n}$. Later we will see in Corollary \ref{cor:Pi-is-in-Disc-series-Packet-2} that the same is also true for a certain inner form of $\GSp_{2n}$.
\end{remark}

\begin{proof}
We will only explain how to go from $\pi$ to $\pi^\natural$ as the opposite direction is proved by the exact same argument. Let $f =\prod_v f_v$ be such that $f_v \in \cH^{\unr}(G(F_v))$ for finite places $v\notin S$, $f_{\vst}$ is a Lefschetz function at $\vst$, and $f_ \infty$ is a Lefschetz function for $\xi$. Then we can choose the transfer $f^*=\prod_v f^*_v$ such that $f^*_v=f_v$ for all $v\notin S_ \infty\cup \{\vst\}$, $f^*_{\vst}$ is a Lefschetz function, and $f^*_ \infty$ is a Lefschetz function for $\xi$. We know from Lemmas \ref{lem:LefschetzMatching-finite} and \ref{lem:LefschetzMatching} that $f_\vst$ (resp. $f_ \infty$) and $f^*_\vst$ (resp. $f^*_ \infty$) are associated up to a nonzero constant. Hence $cf$ and $f^*$ are associated for some $c \in \C^\times$. The preceding two lemmas imply that
$$ T^{G^*}_{\cusp,\chi}(f^*)=\ST^{G^*}_{\el,\chi}(f^*)=c\cdot T^{G}_{\cusp,\chi}(f).$$
By linear independence of characters, we have
$$
\sum_{\tau \in \cA_{\chi}(G^*)\atop \tau^S\simeq \pi^S} m(\tau)\Tr (f^*_S|\tau_S) =c \cdot \sum_{\tau^\natural \in \cA_{\chi}(G)\atop \tau^{\natural,S}\simeq \pi^{S}}m(\tau^\natural) \Tr (f_S|\tau^\natural_S) .
$$
Let us prove (1). Choose $f_v=f^*_v$ at finite places $v$ in $S$ but outside $\{\vst\}\cup S_0\cup S_ \infty$ such that $\Tr(f^*_v|\pi_v)>0$. (This is vacuous if there is no such $v$.) At infinite places, as soon as $\Tr(f^*_v|\tau_v)\neq 0$ at $v| \infty$, the regularity condition on $\xi$ implies that $\tau_v$ is a discrete series representations and that $\Tr(f^*_v|\tau_v)=(-1)^{q(G)}$ by Vogan--Zuckerman's classification of unitary cohomological representations. At $v=\vst$, whenever $\Tr(f^*_\vst|\tau_\vst)\neq 0$ (which is true for $\tau=\pi$), the unitary representation $\tau_\vst$ is either an unramified twist of the trivial or Steinberg representation by Lemma \ref{prop:ClassicalLefschetzFunction}. If $G_{\vst}$ is anisotropic modulo center then the Steinberg representation \emph{is} the trivial representation. In case $G_{\vst}$ is isotropic modulo center, if $\tau_\vst$ were one-dimensional then the global representation $\tau$ would be one-dimensional by a well known strong approximation argument (e.g. \cite[Lem. 6.2]{KST2} for details), implying that $\tau_ \infty$ cannot be tempered.

All in all, for all $\tau$ as above such that $\Tr (f^*_S|\tau_S)\neq 0$, we see that $\tau_{\vst}$ is an unramified twist of the Steinberg representation and that $\Tr (f^*_S|\tau_S)$ has the same sign. Moreover $\tau=\pi$ contributes nontrivially to the left sum by our assumption. Therefore the right hand side is nonzero, i.e. there exists $\pi^\natural \in \cA_{\chi}(G)$ such that $\pi^{\natural,S}\simeq \pi^{\natural,S}$ and $m(\pi^\natural) \Tr (f_S|\pi^\natural_S)\neq 0$. The nonvanishing of trace confirms the conditions on $\pi^\natural$ at $\vst$ and $ \infty$ by the same argument as above.

Now we prove (2). Make the same choice of $f_v=f^*_v$ at finite places $v$ in $S$. Since one-dimensional representations of $G(F_{v'})$ are excluded by assumption, the condition $\Tr(f^*_\vst|\tau_\vst)\neq 0$ implies that $\tau_\vst$ is an unramified twist of the Steinberg representation. By the assumption $\tau_ \infty$ is then a discrete series representation. Hence the nonzero contributions from $\tau$ on the left hand side all has the same sign. Thereby we deduce the existence of $\pi^\natural$ as in (1).
\end{proof}

As before $G^*$ is a quasi-split group over a totally real field $F$. Let $E/F$ be a finite cyclic extension of totally real fields such that $[E:F]$ is a prime. Write $\theta$ for a generator of the group $\Gal(E/F)$. Set $\tilde G^*:=\Res_{E/F}(G^*_E)$, which is equipped with a natural action of $\theta$ defined over $F$. Let $S$ be a finite set of places of $F$ such that the group $G^*$ and the extension $E/F$ are unramified outside $S$, so that $\tilde G^*$ is also unramified apart from $S$. Fix a reductive model of $G^*$ over $\cO_F[1/S]$ as before. By base change followed by $\Res_{E/F}$, this gives rise to a reductive model of $\tilde G^*$ over $\cO_F[1/S]$. The models determine hyperspecial subgroups of $G^*$ and $\tilde G^*$ away from $S$, thus also unramified Hecke algebras and unramified representations of $G^*$ and $\tilde G^*$.

For each place $v$ of $F$ put $E_v:= E\otimes_F F_v$. Canonically $E_v\simeq \prod_{w|v} E_w$ with $w$ running over places of $E$ above $v$. Let $\BC_{E/F}:\Irr^{\unr}(G^*(F_v))\rightarrow \Irr^{\unr}(G^*(E_v))$ denote the base change map for unramified representations. Writing $\BC^*_{E/F}:\cH^{\unr}(G^*(E_v))\ra \cH^{\unr}(G^*(F_v))$ for the base change morphism of unramified Hecke algebras, we have from Satake theory that
\begin{equation}\label{eq:BC-identity}
\Tr (BC^*_{E/F}(f_v)\,|\,\pi_v)=\Tr (f_v\,|\, BC_{E/F}(\pi_v)).
\end{equation}
Write $\xi=\otimes_{v| \infty} \xi_v$ as a representation of $G(F_ \infty)\simeq\prod_{v| \infty} G(F_v)$. For each infinite place $v$ of $F$, define $\xi_{E,v}:=\otimes_{w|v} \xi_v$ as a representation of $G(E_v)\simeq \prod_{w|v} G(E_w)$, where $w$ runs over places of $E$ dividing $v$ (here the two isomorphisms are canonical). Set $\xi:=\otimes_{v| \infty} \xi_{E,v}$.

\begin{proposition}\label{prop:BC-for-GSp}
Let $\pi$ be a cuspidal automorphic representation of $G^*(\A_F)$ such that
\begin{itemize}
\item $\pi$ is unramified at all finite places apart from a finite set $S$,
\item $\pi_{\vst}$ is an unramified character twist of the Steinberg representation,
\item $\pi_ \infty$ is $\xi$-cohomological.
\end{itemize}
Suppose that either $\xi$ has regular highest weight or that the condition for $\pi$ in Proposition \ref{prop:JL-for-GSp}.(2) is satisfied. (This is always true for $G^*=\GSp_{2n}$, cf. Remark \ref{rem:JL}.)
Then there exists a cuspidal automorphic representation $\pi_E$ of $G^*(\A_E)$ such that
\begin{itemize}
\item $\pi_E$ is unramified at all finite places apart from $S$,
\item $\pi_{E,\vst}$ is an unramified character twist of the Steinberg representation,
\item $\pi_{E, \infty}$ is $\xi_E$-cohomological,
\end{itemize}
and moreover $\pi_{E,v}\simeq \BC_{E/F}(\pi_v)$ at every finite place $v \notin S$.
\end{proposition}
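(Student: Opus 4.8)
The plan is to repeat the trace-formula comparison from the proof of Proposition~\ref{prop:JL-for-GSp}, now in the setting of cyclic base change: instead of comparing two inner forms of $G^*$, one compares the $\sigma$-twisted trace formula of $\Res_{E/F}G^*$, for $\sigma$ a fixed generator of $\Gal(E/F)$, with the stable trace formula of $G^*$ over $F$. Fix the central character datum $(\fkX,\chi)$ for $G^*$ over $F$ adapted to $\pi$ as in \S\ref{sect:TraceFormula}, and the base-changed datum $(\fkX_E,\chi_E)$ over $E$. On the $G^*_{/F}$ side take $f=\prod_v f_v$ with $f_v\in\Hunr(G^*(F_v))$ for $v\notin S$, $f_{\vst}$ a truncated Lefschetz function, $f_\infty$ a Lefschetz function for $\xi$, and $f_v$ chosen so that $\Tr(f_v\,|\,\pi_v)\neq 0$ at the remaining places $v\in S$. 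On the $G^*_{/E}$ side take $\widetilde f=\prod_w\widetilde f_w$ to be a base-change matching datum: for $v\notin S$ pick $\widetilde f_w\in\Hunr(G^*(E_w))$ ($w\mid v$) with $\BC^*_{E/F}\big(\prod_{w\mid v}\widetilde f_w\big)=f_v$ (so that \eqref{eq:BC-identity} identifies the relevant twisted traces with ordinary traces on the $F$-side), a twisted Lefschetz function above $\vst$, a twisted Lefschetz function for $\xi_E$ above $\infty$, and functions matching the $f_v$ at the remaining places of $S$ exactly as in \emph{loc.\ cit.}

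Granting the $\sigma$-twisted analogues of Lemma~\ref{lem:simple-TF} and Lemma~\ref{lem:stabilization} — the simple twisted trace formula, and the fact that, $f_{\vst}$ being stabilizing (the twisted analogue of Lemma~\ref{lem:LefschetzIsStabilizing}), only the twisted endoscopic datum $G^*$ itself survives the stabilization, as in the proof of Lemma~\ref{lem:stabilization} — together with twisted Lefschetz functions at $\vst$ and $\infty$ whose base-change transfers are the ordinary Lefschetz functions up to positive constants (the twisted counterparts of Lemmas~\ref{lem:LefschetzMatching-finite} and~\ref{lem:LefschetzMatching}), one gets $T^{\Res_{E/F}G^*,\sigma}_{\cusp,\chi_E}(\widetilde f)=c\cdot\ST^{G^*}_{\el,\chi}(f)=c\cdot T^{G^*}_{\cusp,\chi}(f)$ for some nonzero constant $c$, using Lemmas~\ref{lem:simple-TF} and~\ref{lem:stabilization} on the $F$-side. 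Expanding both sides spectrally and invoking linear independence of (twisted) characters, I isolate the $\pi$-contribution exactly as in Proposition~\ref{prop:JL-for-GSp}: the hypothesis — either $\xi$ has regular highest weight, or the condition of Proposition~\ref{prop:JL-for-GSp}.(2), which holds for $\GSp_{2n}$ — forces every $\tau\in\cA_\chi(G^*)$ contributing on the $F$-side to have $\tau_\infty$ a discrete series and $\tau_{\vst}$ an unramified twist of Steinberg, so all contributions have a common sign and the $\pi$-term is nonzero.

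Consequently the twisted cuspidal side for $\Res_{E/F}G^*$ is nonzero, producing a $\sigma$-stable cuspidal automorphic representation $\pi_E$ of $G^*(\A_E)$ whose $\sigma$-twisted trace against $\widetilde f$ is nonzero. Letting the unramified components $\widetilde f_w$ vary at each finite $v\notin S$ and using \eqref{eq:BC-identity} with linear independence of characters pins down $\pi_{E,v}\simeq\BC_{E/F}(\pi_v)$; in particular $\pi_E$ is unramified outside $S$ (take $\widetilde f_w$ the unramified unit for $w\nmid S$). The nonvanishing of the twisted trace of the twisted Lefschetz function at $\vst$ forces $\pi_{E,\vst}$ to be an unramified twist of the trivial or Steinberg representation, and the trivial case is excluded since it would make $\pi_E$ one-dimensional by strong approximation for the simply connected derived group, hence $\pi_{E,\infty}$ non-tempered, contradicting that $\pi_{E,\infty}$ lies in a discrete series $L$-packet; the nonvanishing of the twisted Lefschetz trace at $\infty$ gives that $\pi_{E,\infty}$ is $\xi_E$-cohomological.

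The main obstacle is infrastructural: one must supply the $\sigma$-twisted versions of the simple trace formula, of the stabilization of the elliptic terms, and of the Lefschetz-function constructions and matchings at $\vst$ and at $\infty$. All of these are available from Arthur's stabilization of the twisted trace formula — completed by Moeglin--Waldspurger, see~\cite{MoeglinWaldspurgerStabilizationX} — and from Labesse's framework~\cite{LabesseStabilization}, together with routine adaptations of the Lefschetz-function appendix; once they are in place the argument is essentially a verbatim repetition of the proof of Proposition~\ref{prop:JL-for-GSp}. For the case $G^*=\GSp_{2n}$ one may alternatively route through cyclic base change for $\GL_{2n+1}$ applied to the transfer $\pi^\sharp$ of Section~\ref{sect:LiftingGlobal} (which stays cuspidal by Lemma~\ref{lem:ComponentIsSteinberg}), followed by Arthur-style descent to $\Sp_{2n}$ over $E$ and Bin Xu's work~\cite{Xu}.
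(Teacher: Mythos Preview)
Your proposal is correct and follows essentially the same route as the paper: compare the $\theta$-twisted trace formula for $\Res_{E/F}G^*$ with the simple/stable trace formula for $G^*$, using Lefschetz functions at $\vst$ and $\infty$, and read off $\pi_E$ from the nonvanishing of the $\pi$-packet contribution exactly as in Proposition~\ref{prop:JL-for-GSp}. The only point worth noting is that the twisted Lefschetz infrastructure you flag as ``granted'' is in fact supplied within the paper itself (Lemmas~\ref{lem:SteinbergAssociated} and~\ref{lem:TwistedLefschetz} in Appendix~\ref{sec:Lefschetz}, drawing on Labesse~\cite{LabesseStabilization} and Clozel--Labesse~\cite{CL11}), so you can cite those directly rather than invoking Moeglin--Waldspurger.
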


\begin{proof}
We will be brief as our proposition and its proof are very similar to those in Labesse's book \cite[\S4.6]{LabesseStabilization}, and also as the proof just mimics the argument for Proposition \ref{prop:JL-for-GSp} in the twisted case. In particular we leave the reader to find further details about the twisted trace formula for base change in \textit{loc. cit}. Strictly speaking one has to incorporate the central character datum $(\fkX,\chi)$ above in Labesse's argument, but this is done exactly as in the untwisted case.

We begin by setting up some notation. Take $\tilde K$ to be a sufficiently small open compact subgroup of $G^*(\A_E^ \infty)$ such that $N_{E/F}\colon Z(\A_E) \ra Z(\A_F)$ maps $Z(E)\cap \tilde K$ into $Z(F)\cap K$. Let $\tilde\fkX:=Z(E)\cap \tilde K$, redefine $\fkX$ to be $N_{E/F}(\tilde\fkX)$ and restrict $\chi$ to this subgroup, $\tilde\chi:=\chi\circ N_{E/F}$.) Let $\tilde \xi$ denote the representation $\otimes_{E\hra \C}\xi$ of $G^*_{2n}(E\otimes_\Q \C)=\prod_{E\hra \C}G^*(F\otimes_\Q \C)$ (both indexed by $F$-algebra embedding $E\hra \C$).

We choose the test functions $$\tilde f=\prod_w \tilde f_w \in \cH(G^*(\A_E),\tilde\chi^{-1})\quad\mbox{and}\quad f=\prod_v f_v \in \cH(G^*(\A_F),\chi^{-1})$$ as follows. If $v=\vst$ then $\tilde f_v=\prod_{w|v} \tilde f_w$ and $f_v$ are set to be Lefschetz functions as in Appendix \ref{sec:Lefschetz}. So $\tilde f_v$ and $f_v$ are associated up to a nonzero constant by Lemma \ref{lem:SteinbergAssociated}. If $v$ is a finite place of $F$ contained in $S$ then choose $f_v$ to be the characteristic function on a sufficiently small open compact subgroup $K_v$ of $G^*(F_v)$ such that $\pi_v^{K_v}\neq 0$. By \cite[Prop. 3.3.2]{LabesseStabilization} $f_v$ is a base change transfer of some function $\tilde f_v=\prod_{w|v} \tilde f_w \in \cH(G^*(E_v),\tilde\chi^{-1})$. Given a finite place $v\notin S$ and each place $w$ of $E$ above it, let $\tilde f_w$ be an arbitrary function in $\cH^{\unr}(G^*(E_w,\tilde\chi^{-1}))$. The image of $\tilde f_v$ in $\cH^{\unr}(G^*(F_v))$ under the base change map is denoted by $f_v$. At infinite places let $\tilde f_ \infty=\prod_{w| \infty} \tilde f_w$ be the twisted Lefschetz function determined by $\tilde\xi$ and $f_ \infty=\prod_{v| \infty} f_v$ the usual Lefschetz function for $\xi$. Again $\tilde f_ \infty$ and $f_ \infty$ are associated up to a nonzero constant by Lemma \ref{lem:TwistedLefschetz}. By construction $\tilde f $ and $c f$ are associated for some $c \in \C^\times$.

We write $T^{\tilde G^*\theta}_{\cusp,\tilde \chi}$ and $T^{\tilde G^*\theta}_{\el,\tilde \chi}$ for the cuspidal and elliptic expansions in the base-change twisted trace formula, which are defined analogously as their untwisted analogues. (For instance $T^{\tilde G^*\theta}_{\el,\tilde \chi}$ is defined as in $T^L_e$ on p.98 of \cite{LabesseStabilization} with $L=\tilde G^*\theta$ but making the obvious adjustment to account for the central character as earlier in this section.)
Just like the trace formula for $G$ and $f$, the twisted trace formula for $\tilde G$ and $\tilde f$ as well as its stabilization simplifies greatly exactly as in Lemmas \ref{lem:simple-TF} and \ref{lem:stabilization} in light of Lemmas \ref{lem:SteinbergAssociated} and \ref{lem:TwistedLefschetz}. So we have
$$ T^{\tilde G^*\theta}_{\cusp,\tilde \chi}(\tilde f)=T^{\tilde G^*\theta}_{\el,\tilde \chi}(\tilde f)=c\cdot \ST^{G^*}_{\el,\chi}(f)=c \cdot T^{G^*}_{\cusp,\chi}(f).$$
By linear independence of characters and the character identity \eqref{eq:BC-identity}, we have
$$\sum_{\tilde\tau \in \cA_{\tilde \chi}(G^*(\A_E))\atop \tilde\tau^\theta\simeq \tilde\tau,~\tilde\tau^S\simeq \BC_{E/F}(\pi^S)} \tilde m(\tilde\tau)\Tr^{\theta} (\tilde f_S|\tilde\tau_S) =c \sum_{\tau \in \cA_{\chi}(G^*(\A_F))\atop \tau^{S}\simeq \pi^{S}}m(\tau) \Tr (f_S|\tau_S) ,$$
where $\Tr^\theta$ denotes the $\theta$-twisted trace (for a suitable intertwining operator for the $\theta$-twist), and $ \tilde m(\tilde\tau)$ denotes the relative multiplicity of $\tilde\tau$ as defined on p.106 of \cite{LabesseStabilization}. The right hand side is nonzero as in the proof of Proposition \ref{prop:JL-for-GSp}. Therefore there exists $\pi_E:=\tilde\tau \in \cA_{\chi}(G^*(\A_E))$ contributing nontrivially to the left hand side. By construction of $\tilde f$ such a $\pi_E$ satisfies all the desired properties.
\end{proof}

\section{Cohomology of certain Shimura varieties of abelian type}\label{sect:ShimuraDatum}

In this section we construct a Shimura datum and then state the outcome of the Langlands-Kottwitz method on the formula computing the trace of the Frobenius and Hecke operators in the case of good reduction.

We first construct our Shimura datum $(\Res_{F/\Q} G, X)$. The group $G/F$ is a certain inner form of the quasi-split group $G^* := \Gsp_{2n, F}$. We recall the classification of such inner forms, and then define our $G$ in terms of this classification. The inner twists of $\Gsp_{2n, F}$ are parametrized by the cohomology $\uH^1(F, \PSp_{2n})$. Kottwitz defines in \cite[Thm. 1.2]{KottwitzEllipticSingular} for each $F$-place $v$ a morphism of pointed sets
$$
\alpha_v \colon \uH^1(F_v, \PSp_{2n}) \to \pi_0(Z(\Spin_{2n+1}(\C))^{\Gamma_v})^D \simeq \Z/2\Z,
$$
where $(\cdot)^D$ denotes the Pontryagin dual. If $v$ is finite, then $\alpha_v$ is an isomorphism. If $v$ is infinite, then \cite[(1.2.2)]{KottwitzEllipticSingular} tells us that 
$$
\ker(\alpha_v) = \tu{im}[\uH^1(F_v, \Sp_{2n}) \to \uH^1(F_v, \PSp_{2n})],
$$
$$
\tu{im}(\alpha_v) = \ker[\pi_0(Z(\Spin_{2n+1}(\C))^{\Gamma_v})^D \to \pi_0(Z(\Spin_{2n+1}(\C))^D)].
$$
Thus $\alpha_v$ is surjective, with trivial kernel as $\uH^1(\R, \Sp_{2n})$ vanishes by \cite[Chap. 2]{PlatonovRapinchuk}. However $\alpha_v$ is not a bijection (when $n\ge 2$). In fact $\alpha_v^{-1}(1)$ classifies unitary groups associated to Hermitian forms over the Hamiltonian quaternion algebra over $F_v$ with signature $(a,b)$ with $a+b=n$ modulo the identification as inner twists between signatures $(a,b)$ and $(b,a)$. (See \cite[3.1.1]{TaibiTransfer} for an explicit computation of $\uH^1(F_v, \PSp_{2n})$.) So $\alpha_v^{-1}(1)$ has cardinality $\lfloor \frac n2 \rfloor+1$. There is a unique nontrivial inner twist of $\Gsp_{2n, F_v}$ (up to isomorphism), to be denoted by $\tu{GSp}_{2n, F_v}^{\tu{cmpt}}$, such that $\tu{GSp}_{2n, F_v}^{\tu{cmpt}}$ is compact modulo center. It comes from a definite Hermitian form.

By \cite[Prop.~2.6]{KottwitzEllipticSingular} we have an exact sequence
$$
\Ker^1(F, \PSp_{2n}) \injects \uH^1(F, \PSp_{2n}) \to \bigoplus_v \uH^1(F_v, \PSp_{2n}) \overset \alpha \to \Z/2\Z,
$$
where $\alpha$ sends $(c_v) \in \uH^1(F, G(\A_F \otimes_F \li \Q))$ to $\sum_v \alpha_v(c_v)$. By \cite[Lem.~4.3.1]{KottwitzCuspidalTempered} we have $\Ker^1(F, \PSp_{2n}) = 1$.
We conclude
\begin{equation}\label{eq:H1ofPSp}
\uH^1(F, \PSp_{2n}) = \lbr (x_v) \in \bigoplus_v \uH^1(F_v, \PSp_{2n})\,|\, \sum_v \alpha(x_v) = 0 \in \Z/2\Z \rbr.
\end{equation}
In particular there exists an inner twist $G$ of $\GSp_{2n, F}$ such that:
\begin{itemize}
\item For the infinite places $y \in \cV_ \infty \bs \{\vinfty\}$ the group $G_{F_y}$ is isomorphic to $\GSpcmptFv$. For $y = \vinfty$ we have $G_{F_y} = \GSp_{2n, F_y}$. (Recall: $\vinfty$ is the infinite $F$-place corresponding to the embedding of $F$ into $\C$ that we fixed in the Notation section.)
\item If $[F:\Q]$ is odd, we take $G_{\A_F^ \infty} \simeq \GSp_{2n, \A_F^ \infty}$.
\item If $[F:\Q]$ is even we fix a finite $F$-place $\vst$, and take $G_{\A_F^{ \infty,\vst}} \simeq \GSp_{2n, \A_F^{ \infty,\vst}}$. The form $G_{F_{\vst}}$ then has to be the unique nontrivial inner form of $\GSp_{2n, F_{\vst}}$.
\end{itemize}
More concretely $G$ can be defined itself as a similitude group but we do not need it in this paper.

Let $\SS$ be the Deligne torus $\Res_{\C/\R} \Gm$. Over the real numbers the group $(\Res_{F/\Q} G)_\R$ decomposes into the product $\prod_{y \in \cV_ \infty} G \otimes_F F_y$. Let $I_n$ be the $n\times n$-identity matrix and $A_n$ be the $n\times n$-matrix with all entries $0$, except those on the anti-diagonal, where we put $1$. Let $h_0 \colon \SS \to (\Res_{F/\Q} G)_\R$ be the morphism given by
\begin{equation}\label{eq:Morphismh}
\SS(R) \to G(F \otimes_{\Q} R), \quad a+bi \mapsto \lhk {\vierkant {a I_n} {b A_n} {-b A_n} {a I_n}}_{\vinfty}, 1, \ldots, 1 \rhk \in \prod_{y \in \cV_ \infty} (G \otimes_F F_y)(R),
\end{equation}
for all $\R$-al\-ge\-bras $R$ (the non-trivial component corresponds to the non-compact place $\vinfty \in \cV_ \infty$). We let $X$ be the $(\Res_{F/\Q} G)(\R)$-con\-ju\-gacy class of $h_0$. This set $X$ can more familiarly be described as \emph{Siegel double half space} $\iH_n$, \ie the $n(n+1)/2$-di\-men\-sio\-nal space consisting of complex symmetric $n\times n$-ma\-tri\-ces with definite (positive or negative) imaginary part. Let us explain how the bijection $X \simeq \iH_n$ is obtained. The group $\Gsp_{2n}(\R)$ acts transitively on $\iH_n$ via fractional linear transformations: $\kleinvierkant A B C D Z$ $:=$ $(A Z + B)(C Z + D)^{-1}$, $\kleinvierkant A B C D \in \GSp_{2n}(\R)$, $Z \in \iH_n$ \cite{SiegelSymplecticGeometry} (in these formulas the matrices $A, B, C, D, Z$ are all of size $n \times n$). The place $\vinfty$ induces a surjection $(\Res_{F/\Q} G)(\R) \surjects \Gsp_{2n}(\R)$ and via this surjection we let $(\Res_{F/\Q} G)(\R)$ act on $\iH_n$. The stabilizer $K_ \infty:=\Stab_{(\Res_{F/\Q}G)(\R)}(i I_n)$ of the point $iI_n \in \iH_n$ has the form $K_ \infty := K_{v_ \infty}\times \prod_{v \in \cV_ \infty \bs \{\vinfty\}} G(F_v)$, where $K_{v_ \infty}$ is the stabilizer of $iI_n$ in $\GSp_{2n}(F_{v_ \infty})$. Similarly, the stabilizer of $h_0 \in X$ is equal to $K_ \infty$. Thus there is an isomorphism $X \simeq \iH_n$ under which $h_0$ corresponds to $i I_n$. It is a routine verification that Deligne's axioms (2.1.1.1), (2.1.1.2), and (2.1.1.3) for Shimura data \cite{DeligneCorvallis} are satisfied for $(\Res_{F/\Q} G, \iH_n)$. Since moreover the Dynkin diagram of the group $G_\ad$ is of type $C$, it follows from Deligne \cite[Prop. 2.3.10]{DeligneCorvallis} that $(\Res_{F/\Q} G, \iH_n)$ is of abelian type.

We write $\mu=(\mu_y)_{y \in \cV_ \infty} \in X_*(T)^{\cV_ \infty}$ for the cocharacter of $\Res_{F/\Q} G$ such that $\mu_y=1$ if $y\neq v_ \infty$ and $\mu_{v_ \infty}(x)= \kleinvierkant {xI_n}{0}{0}{I_n}$. Then $\mu$ is conjugate to the holomorphic part of $h_0 \otimes \C$. An element $\sigma \in \Gal(\li \Q/\Q)$ sends $\mu = (\mu_y)$ to $\sigma(\mu) = (\mu_{\sigma \circ y})$. Thus the conjugacy class of $\mu$ is fixed by $\sigma$ if and only if $\sigma \in \Gal(\li \Q/F)$. Therefore the reflex field of $(\Res_{F/\Q} G, \iH_n)$ is $F$ (embedded in $\C$ via $v_ \infty$). For $K \subset G(\A^ \infty_F)$ a sufficiently small compact open subgroup, write $\Sh_K / F$ for the corresponding Shimura variety. In case $F = \Q$ the datum $(\Res_{F/\Q} G, \iH_n)$ is the classical Siegel datum (of PEL type) and $\Sh_K$ are the usual non-compact Siegel modular varieties.
If $[F : \Q] > 1$ so that $G$ is anisotropic modulo center, then it follows from the Baily-Borel compactification \cite[Thm. 1]{BailyBorel} that $\Sh_K$ is projective. Whenever $[F:\Q] > 1$ the datum $(\Res_{F/\Q} G, \iH_n)$ is not of PEL type. If moreover $n=1$ then the $\Sh_K$ have dimension 1 and they are usually referred to as Shimura curves, which have been extensively studied in the literature.

Let $\xi=\otimes_{y| \infty}\xi_y$ be an irreducible algebraic representation of $(\Res_{F/\Q} G)\times_\Q \C=\prod_{y| \infty} G(\ol{F}_y)$ with each $\ol{F}_y$ canonically identified with $\C$. The central character $\omega_{\xi_y}$ of $\xi_y$ has the form $z\mapsto z^{w_y}$ for some integer $w_y \in \Z$.

\begin{lemma}\label{lem:alg-central-char}
If there exists a $\xi$-cohomological discrete automorphic representation $\pi$ of $G(\A_F)$ then $w_y$ has the same value for every infinite place $y$ of $F$.
\end{lemma}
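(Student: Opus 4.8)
The plan is to read off the archimedean central character of $\pi$ from the cohomological hypothesis and then pit it against Dirichlet's unit theorem. The center $Z$ of $G$ — an inner form of $\GSp_{2n,F}$ — is $\Gm$ over $F$, so $Z(\A_F)=\A_F^\times$ and $Z(F_\infty)=\prod_{y\in\cV_\infty}F_y^\times\cong(\R^\times)^{\cV_\infty}$, with identity component $Z(F_\infty)^0\cong(\R_{>0})^{\cV_\infty}$. Let $\omega_\pi\colon Z(F)\bs Z(\A_F)\to\C^\times$ be the central character of $\pi$; being automorphic, $\omega_\pi$ is trivial on $Z(F)=F^\times$. I will write $\omega_\pi=\omega_{\pi,\infty}\,\omega_{\pi,f}$ for its decomposition into archimedean and finite parts, and $\sigma_y\colon F\hookrightarrow\R$ for the real embedding attached to $y\in\cV_\infty$.

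First I would show that $\omega_{\pi,\infty}$ restricted to $Z(F_\infty)^0$ is the character $(t_y)_y\mapsto\prod_y t_y^{-w_y}$. This is forced by $\pi_\infty$ being $\xi$-cohomological: the group $\widetilde K$ of Definition~\ref{def:cohomological} contains $Z(F_\infty)$, and its central connected subgroup $Z(F_\infty)^0$ acts trivially on $\mathfrak g/\widetilde{\mathfrak k}$, hence on $\wedge^\bullet(\mathfrak g/\widetilde{\mathfrak k})$, while it acts on $\pi_\infty\otimes\xi$ through the scalar character $\omega_{\pi,\infty}\,\omega_\xi$; since $\uH^\bullet(\Lie G(\C),\widetilde K,\pi_\infty\otimes\xi)\neq 0$, this scalar character must be trivial. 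As $Z(F_\infty)^0$ acts on $\xi=\bigotimes_y\xi_y$ by $(t_y)\mapsto\prod_y t_y^{w_y}$ (using $\omega_{\xi_y}\colon z\mapsto z^{w_y}$), the claim follows. This is also consistent with the Example after Definition~\ref{def:cohomological}, where $\xi$-cohomological representations are seen to have the central character of $\xi^\vee$.

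Next I would bring in the units. For a totally positive unit $u\in\mathcal{O}_F^{\times,+}$, triviality of $\omega_\pi$ on $F^\times$ gives $\omega_{\pi,\infty}(u)\,\omega_{\pi,f}(u)=1$; since $u$ lies in the compact group $\prod_{v\nmid\infty}\mathcal{O}_{F_v}^\times$ at the finite places, $\omega_{\pi,f}(u)$ has absolute value $1$, and therefore $|\omega_{\pi,\infty}(u)|=\prod_y\sigma_y(u)^{-w_y}=1$, i.e. $\sum_y w_y\log\sigma_y(u)=0$ for every $u\in\mathcal{O}_F^{\times,+}$. By Dirichlet's unit theorem the vectors $(\log\sigma_y(u))_y$, as $u$ ranges over $\mathcal{O}_F^{\times,+}$, span the hyperplane $\{(x_y):\sum_y x_y=0\}$ of $\R^{\cV_\infty}$ (the index $[\mathcal{O}_F^\times:\mathcal{O}_F^{\times,+}]$ is finite, so the rank is the full $|\cV_\infty|-1$, and $\sum_y\log\sigma_y(u)=\log N_{F/\Q}(u)=0$ for totally positive units). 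Hence the linear functional $(x_y)\mapsto\sum_y w_y x_y$ vanishes on that hyperplane, so $(w_y)_y$ is a scalar multiple of $(1,\dots,1)$; being a vector of integers, all $w_y$ are equal.

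There is no real obstacle here; the only point demanding a bit of care is the first step — extracting $\omega_{\pi,\infty}|_{Z(F_\infty)^0}$ correctly from the cohomological condition, in particular getting the sign of the exponents right (via $\xi$ versus $\xi^\vee$). Everything else is a routine unit-group computation.
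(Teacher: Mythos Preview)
Your argument is correct. The paper's proof takes a shorter route: once it observes (as you do) that the archimedean central character of $\pi$ is algebraic, it invokes Weil's classification of algebraic Hecke characters over a totally real field, which says any such character is a finite-order character times an integral power of the idelic norm, forcing $w_y=w$ for all $y$. Your approach unpacks the heart of Weil's theorem in this special case: the key input in Weil's argument for totally real $F$ is exactly the Dirichlet unit computation you perform. So you trade a black-box citation for a self-contained argument; the paper trades explicitness for a one-line appeal to \cite{Wei56}.
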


\begin{proof}
Under the assumption, the central character $\omega_\pi \colon F^\times \backslash \A^\times_F \to \C^\times$ is an (L-)algebraic Hecke character. Hence $\omega_{\pi}=\omega_0 |\cdot|^w$ for a finite Hecke character $\omega_0$ and an integer $w$ by Weil \cite{Wei56}. It follows that $w=w_y$ for every infinite place $y$.
\end{proof}
In light of the lemma we henceforth make the hypothesis as follows, which implies that $\xi$ restricted to the center $Z(F)=F^\times$ of $(\Res_{F/\Q} G)(\Q)=\GSp_{2n}(F)$ is the $w$-th power of the norm character $\uN_{F/\Q}$.
\begin{itemize}
\item[\textbf{(cent)}] $w_y$ is independent of the infinite place $y$ of $F$ (and denoted by $w$).
\end{itemize}

Following \cite[2.1]{Car86} (especially paragraph 2.1.4) we construct an $\ell$-adic sheaf on $\Sh_K$ for each sufficiently small open compact subgroup $K$ of $G(\A_F^ \infty)$ from the $\ell$-adic representation $\iota\xi=\xi\otimes_{\C,\iota}\lql$. For simplicity we write $\cL_{\iota\xi}$ for the $\ell$-adic sheaf (omitting $K$). It is worth pointing out that the construction relies on the fact that $\xi$ is trivial on $Z(F)\cap K$ for small enough $K$. (For a fixed open compact subgroup $K_0$, we see that $Z(F)\cap K_0\subset \mathcal{O}_F^\times$ is mapped to $\{\pm 1\}$ under $N_{F/\Q}$. So $\xi$ is trivial on $Z(F)\cap K$ for every $K$ contained in some subgroup of $K_0$ of index at most $2$.) Similarly, we write $\cL_\xi$ for the (complex) local system on $\Sh_K(\C)$
attached to $\xi$.

Without loss of generality we assume throughout that $K$ decomposes into a product $K = \prod_{v\nmid \infty} K_v$ where $K_v \subset G(F_v)$ is a compact open subgroup. We call $(G, K)$ \emph{unramified} at a finite $F$-place $v$ if the group $K_v$ is hyperspecial in $G(F_v)$. If so, fix a smooth reductive model $\underline G$ of $G_{F_v}$ over $\mathcal{O}_{F_v}$ such that $\underline G(\mathcal{O}_{F_v}) = K_v$. We say that $(G,K)$ is unramified at a rational prime $p$ if it is unramified at all $F$-places $v$ above $p$. For the moment, we do not assume these conditions on $(G,K)$.

We fix an algebraic closure $\li \Q\subset \C$ of $F$ with respect to $v_ \infty: F\hra \C$. Our primary interest lies in the $\ell$-adic \'etale cohomology with compact support
\begin{equation}\label{eq:GrothendieckGp}
\uH_c(\Sh_K, \cL_{\iota\xi}) := \sum_{i=0}^{n(n+1)} (-1)^i [\uH^i_{\uc}(\Sh_{K, \li \Q}, \cL_{\iota\xi})]
\end{equation}
as a virtual $\cH_{\lql}(G(\A^ \infty_F)\sslash K) \times \Gamma$-mo\-dule. Let us make the word ``virtual", and the notation $[\cdot]$ more precise. As in the book of Harris-Taylor, if $\Omega$ is a topological group that is locally profinite \cite[p. 23]{HarrisTaylor}, we let $\tu{Groth}(\Omega)$ be the abelian group of formal, possibly infinite, sums$\sum_{\Pi \in \tu{Irr}(\Omega)} n_\Pi\Pi$, where $\tu{Irr}(\Omega)$ is the set of isomorphism classes of admissible representations of $\Omega$ in $\lql$-vector spaces. Given an admissible $\Omega$-representation $\Pi$, it defines an element $[\Pi] \in \tu{Groth}(\Omega)$ (bottom of page 23 of \cite{HarrisTaylor}). In particular we may take $\Omega = \Gamma$, $G(\A_F^ \infty)$ and $\Gamma \times G(\A_F^ \infty)$, or, by proceeding similarly, take $\Omega$ equal to a Hecke algebra times a group, such as $\cH_{\lql}(G(\A^ \infty_F)\sslash K) \times \Gamma$. Taking a direct limit over sufficiently small open compact subgroups $K$ of $G(\A_F^ \infty)$, we similarly obtain a $G(\A_F^ \infty)\times \Gamma$-module with admissible $G(\A_F^ \infty)$-action and a virtual $G(\A_F^ \infty)\times \Gamma$-module
$$
\uH^i_c(\Sh, \cL_{\iota\xi}):=\varinjlim_{K} \uH^i_{\et}(\Sh_{K,\li \Q}, \cL_{\iota\xi}),\quad \uH_c(\Sh, \cL_{\iota\xi}) := \sum_{i=0}^{n(n+1)} (-1)^i [\uH^i_c(\Sh, \cL_{\iota\xi})].
$$

Let us introduce some more cohomology spaces to be used mainly for the $F=\Q$ case in the next section. For $K$ as above and for each $i \in \Z_{\ge0}$, we write $\uH^i_{0}(\Sh_K,\cL_{\xi})$, $\uH^i_{(2)}(\Sh_K,\cL_{ \xi})$, and $\uIH^i(\Sh_K,\cL_{\iota \xi})$ for the cuspidal, $L^2$, and $\ell$-adic intersection cohomology of $\Sh_K$, respectively. (The first two spaces are as in \cite[\S6]{FaltingsCohomology}, considering $\Sh_K$ as a complex manifold with respect to $F\hra \C$ corresponding to $v_ \infty$. The last one is the cohomology of the intermediate extension of $\cL_{\iota \xi}$ to the Bailey-Borel compactification \cite[Sect. 2.2]{Morel-GSp}.) By taking direct limits over $K$, we obtain admissible $G(\A_F^ \infty)$-representations $\uH^i_{0}(\Sh,\cL_{ \xi})$, $\uH^i_{(2)}(\Sh,\cL_{ \xi})$, and $\uIH^i(\Sh,\cL_{\iota \xi})$. The last one is equipped with commuting actions of $G(\A_F^ \infty)$ and $\Gamma$.

We are going to apply the Langlands-Kottwitz method to relate the action of Frobenius elements of $\Gamma$ at primes of good reduction to the Hecke action on the compact support cohomology. In the PEL case of type A or C, Kottwitz worked it out in \cite{KottwitzAnnArbor,KottwitzPoints} including the stabilization. As we are dealing with Shimura varieties of abelian type, we import the result from \cite{KSZ}. In fact the stabilization step is very simple under hypothesis (St).

Suppose that $(G,K)$ is unramified at a prime $p$. Let $\fkp$ be a finite $F$-place above $p$ with residue field $k(\fkp)$. We fix an isomorphism $\iota_p:\C\simeq \li \Q_p$ such that $\iota_p v_ \infty:F\hra \li \Q_p$ induces the place $\fkp$ of $F$. Let us introduce some more notation.
\begin{itemize}
\item $r:=[k(\fkp):\mathbb F_p] \in \Z_{\ge 1}$. 
\item For $j \in \Z_{\ge 1}$ denote by $\Q_{p^j}$ the unramified extension of $\Q_p$ of degree $j$, and $\Z_{p^j}$ its integer ring.
\item Write $F_p := F \otimes_\Q \qp$, $F_{p, j} := F \otimes_{\Q} \Q_{p^j}$, and $\mathcal{O}_{F_{p, j}} := \mathcal{O}_F \otimes_\Z \Z_{p^j}$.
\item Let $\sigma$ be the automorphism of $F_{p, j}$ induced by the (arithmetic) Frobenius automorphism on $\Q_{p^j}$ and the trivial automorphism on $F$.
\item Let $\cH(G(\A^ \infty_F))$ (resp. $\cH(G(F_p))$, $\cH(G(F_{p, j}))$) be the convolution algebra of compactly supported smooth complex valued functions on $G(\A_F^ \infty)$ (resp. $G(F_p)$, resp. $G(F_{p, j})$), where the convolution integral is defined by the Haar measure giving the group $K$ (resp. $K_p$ resp. $\underline G(\mathcal{O}_{F_{p, j}})$) measure $1$.
\item We write $\Hunr(G(F_p))$ (resp. $\Hunr(G(F_{p, j}))$) for the spherical Hecke algebra, \ie the algebra of $K_p$ (resp. $\underline G(\mathcal{O}_{F_{p, j}})$)-bi-invariant functions in $\cH(G(F_p))$ (resp. $\cH(G(F_{p, j}))$).
\item Define $\phi_j \in \Hunr(G(F_{p, rj}))$ to be the characteristic function of the double coset
$\underline G(\mathcal{O}_{F_{p, rj}}) \cdot \mu(p^{-1}) \cdot \underline G(\mathcal{O}_{F_{p, rj}})$ in $G(F_{p, rj})$.
\item $N_ \infty:=|\Pi_\xi^{G(F_ \infty)}|\cdot |\pi_0(G(F_ \infty)/Z(F_ \infty))|=2^{n-1}\cdot 2=2^n$.
\item $\ep(\tau_ \infty \otimes \xi):=\sum_{i=0}^ \infty (-1)^i \dim \uH^i(\Lie G(F_ \infty), K_ \infty; \tau_ \infty \otimes \xi)$ for an irreducible admissible representation $\tau_ \infty$ of $G(F_ \infty)$.
\item $(\fkX,\chi)$ is the central character datum given as $\fkX:=(Z(\A_F^ \infty)\cap K)\times Z(F_ \infty)$ and $\chi:=\chi_{\xi}$ (extended from $Z(F_ \infty)$ to $\fkX$ trivially on $Z(\A_F^ \infty)\cap K$).
\end{itemize}

\begin{remark}\label{rem:K_infty}
We point out that $K_ \infty$ is a subgroup of index $|\pi_0(G(F_ \infty)/Z(F_ \infty))|$ in some $K'_ \infty$ which is a product of $Z(F_ \infty)$ and a maximal compact subgroup of $G(F_ \infty)$ and that $\ep(\tau_ \infty \otimes \xi)$ is defined in terms of $K_ \infty$, not $K'_ \infty$. When $\tau_ \infty$ belongs to the discrete series $L$-packet $\Pi^G_\xi$, the representation $\tau_ \infty$ decomposes into a direct sum of irreducible $(\Lie G(F_ \infty),K_ \infty)$-modules, the number of which is equal to $|\pi_0(G(F_ \infty)/Z(F_ \infty))|$, cf. paragraph below Lemma 3.2 in \cite{KottwitzInventiones}\footnote{The discussion there is correct in our setting, but it can be false for non-discrete series representations (which are allowed in that paper). For instance when $\xi$ and $\pi_ \infty$ are the trivial representation in the notation of \cite{KottwitzInventiones}, it is obvious that $\tau_ \infty$ cannot decompose further even if $|\pi_0(G(F_ \infty)/Z(F_ \infty))|>1$.}. So $\ep(\tau_ \infty\otimes \xi)=(-1)^{-n(n+1)/2}|\pi_0(G(F_ \infty)/Z(F_ \infty))|=(-1)^{-n(n+1)/2}2$ for $\tau_ \infty \in \Pi^G_\xi$.
\end{remark}

Let $f^{ \infty, p} \in \cH(G(\A^{ \infty p}_F)\sslash K^p)$. We would like to present a stabilized trace formula computing the action of $f^{ \infty,p}$ and $\Frob_{\fkp}$ on $\uH_c(\Sh_K, \cL_{\iota \xi})$.

Let $\cE_{\mathrm{ell}}(G)$ denote the set of representatives for isomorphism classes of $(G, K)$-un\-ra\-mi\-fied elliptic endoscopic triples of $G$. For each $(H,s,\eta_0) \in \cE_{\mathrm{ell}}(G)$ we make a fixed choice of an $L$-morphism $\eta \colon {}^LH \to {}^LG$ extending $\eta_0$ (which exists since the derived group of $\GSp_{2n}$ is simply connected). We recall the definition of $\iota(G,H) \in \Q$ and $h^H=h^{H, \infty,p}h^H_p h^H_ \infty \in \cH(H(\A_F))$ only for the principal endoscopic triple $(H,s,\eta)=(G^*,1,\mathrm{id})$, which is all we need. (For other endoscopic triples, the reader is referred to (7.3), second display on p.180 and second display on p.186 of \cite{KottwitzAnnArbor}; this is adapted to a little more general setup as ours in \cite{KSZ}.) We have $\iota(G,G^*)=1$ and $h^{G^*}=h^{G^*, \infty,p}h^{G^*}_p h^{G^*}_ \infty \in \cH(G(\A_F),\chi^{-1})$ given as follows.
\begin{itemize}
\item $h^{G^*, \infty,p} \in \cH(G^*(\A^{ \infty,p}_F))$ is an endoscopic transfer of the function $f^{ \infty,p}$ to the inner form $G^*$ ($h^{G^*, \infty,p}$ can be made invariant under $Z(\A_F^{ \infty,p})\cap K^p$ by averaging over it),
\item $h^{G^*}_p \in \cH^{\unr}(G^*(F_p))$ is the base change transfer of $\phi_j \in \Hunr(G(F_{p, rj}))$ down to $\Hunr(G^*(F_p))=\Hunr(G(F_p))$,
\item $h^{G^*}_ \infty \in \cH(G^*(F_ \infty),\chi^{-1})):=|\Pi^{G^*}_\xi|^{-1}\sum_{\tau_ \infty \in \Pi^{G^*}_\xi} f_{\tau_ \infty}$, i.e. the average of pseudo-coefficients for the discrete series $L$-packet $\Pi^{G^*}_\xi$. As shown by \cite[Lem. 3.2]{KottwitzInventiones}, $h^{G^*}_ \infty$ is $N_ \infty^{-1}$ times the Euler-Poincar\'e function for
$\xi$ (defined in \S\ref{sect:TraceFormula} but using $K_ \infty$ of this section):
\begin{equation}\label{eq:h_infty}
\Tr \pi_ \infty(h^{G^*}_ \infty)=N_ \infty^{-1} \ep(\pi_ \infty\otimes\xi),\quad \pi_ \infty \in \Irr(G^*(F_ \infty)).
\end{equation}
\end{itemize}
The main result of \cite{KSZ} is the following (which works for every Shimura variety of abelian type when $p\neq 2$), where the starting point is Kisin's proof of the Langlands-Rapoport conjecture for all Shimura varieties of abelian type \cite{KisinPoints}. When $F=\Q$ it was already shown by \cite{KottwitzAnnArbor,KottwitzPoints}.

\begin{theorem}\label{thm:KottwitzPointConjecture}
Let $f^ \infty \in \cH(G(\A^ \infty_F)\sslash K)$. Suppose that $p$ and $\fkp$ are as above such that
\begin{itemize}
\item $(G,K)$ is unramified at $p$,
\item $f^ \infty=f^{ \infty,p}f_p$ with $f^{ \infty, p} \in \cH(G(\A^{ \infty p}_F)\sslash K^p)$ and $f_p = \one_{K_p}$.
\end{itemize}
Then there exists a positive integer $j_0$ (depending on $\xi$, $f^ \infty$, $\fkp$, $p$) such that for all $j \geq j_0$ we have

\begin{equation}\label{eq:KottwitzConjectureA}
\iota^{-1}\Tr(\iota f^ \infty \Frob_{\fkp}^j,\, \uH_c(\Sh_K, \cL_{\iota \xi})) = \sum_{H \in \cE_{\mathrm{ell}}(G)} \iota(G, H) \ST_{\mathrm{ell},\chi}^H(h^H).
\end{equation}
\end{theorem}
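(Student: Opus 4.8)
The plan is to import the point-counting formula of Kisin--Shin--Zhu \cite{KSZ} --- which rests on Kisin's proof of the Langlands--Rapoport conjecture for Shimura varieties of abelian type \cite{KisinPoints} --- and then to stabilize it exactly as in Kottwitz \cite{KottwitzAnnArbor,KottwitzPoints}, the only new feature being the bookkeeping with the central character datum $(\fkX,\chi)$. First one checks that the hypotheses of the main theorem of \cite{KSZ} are met for our datum: $(\Res_{F/\Q}G,\iH_n)$ is of abelian type (established earlier in this section via Deligne's criterion, since $G_\ad$ is of type $C$) and the center $Z(\Res_{F/\Q}G)=\Res_{F/\Q}\Gm$ is an induced torus. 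Granting this, \emph{loc. cit.} yields, for all $j$ at least some $j_0$ depending on $(\xi,f^\infty,\fkp,p)$, a point-counting identity of the shape
$$
\iota^{-1}\Tr(\iota f^\infty \Frob_\fkp^j,\,\uH_c(S_K,\cL_\xi)) \;=\; \sum_{(\gamma_0;\gamma,\delta)} c(\gamma_0;\gamma,\delta)\, O_\gamma(f^{\infty,p})\, \TO_{\delta\sigma}(\phi_j)\, \Tr\xi(\gamma_0),
$$
the sum being over Kottwitz triples $(\gamma_0;\gamma,\delta)$ with trivial Kottwitz invariant, $O_\gamma$ an orbital integral on $G(\A_F^{\infty,p})$, $\TO_{\delta\sigma}$ a twisted orbital integral on $G(F_{p,j})$, and $c(\gamma_0;\gamma,\delta)$ Kottwitz's volume-times-multiplicity constant. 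The hypothesis $f_p=\one_{K_p}$ together with good reduction at $p$ is precisely what makes the local factor at $p$ equal to $\TO_{\delta\sigma}(\phi_j)$ with $\phi_j$ the characteristic function of $\underline G(\mathcal{O}_{F_{p,j}})\,\mu(p^{-1})\,\underline G(\mathcal{O}_{F_{p,j}})$; the large-$j$ hypothesis is the usual one guaranteeing that the Lefschetz fixed-point count against $\Frob_\fkp^j$ has stabilized and that only $\delta$ arising from the Shimura cocharacter $\mu$ contribute.

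Next I would stabilize the geometric side. This is Kottwitz's argument \cite{KottwitzAnnArbor}: partition the Kottwitz triples according to their image in the set of $F$-elliptic stable conjugacy classes and the attached elliptic endoscopic datum $(H,s,\eta_0)\in\cE_{\mathrm{ell}}(G)$, and rewrite the sum as $\sum_{H\in\cE_{\mathrm{ell}}(G)}\iota(G,H)\,\ST^H_{\mathrm{ell},\chi}(h^H)$ with $h^H=h^{H,\infty,p}h^H_p h^H_\infty$, where the three local pieces are exactly those listed in the bullets preceding the theorem: $h^{H,\infty,p}$ a Langlands--Shelstad transfer of $f^{\infty,p}$, $h^H_p$ the base-change transfer of $\phi_j$ supplied by the (twisted) fundamental lemma for the full spherical Hecke algebra, and $h^H_\infty$ the normalized average of discrete-series pseudo-coefficients for $\Pi^H_\xi$ (which by \cite{KottwitzInventiones} is $N_\infty^{-1}$ times the Euler--Poincar\'e function). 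All of the required inputs --- Langlands--Shelstad transfer, the fundamental lemma, and Kottwitz's sign and measure normalizations --- are now theorems, and \cite{KSZ} performs this stabilization in the abelian-type generality, so for our purpose it is a black box once the hypotheses are verified.

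Finally I would insert the central character datum. Since $\cL_\xi$ is constructed from a $\xi$ that is trivial on $Z(F)\cap K$, the cohomology already records the relevant central behaviour, and passing to $(\fkX,\chi)$ with $\fkX=(Z(\A_F^\infty)\cap K)\times Z(F_\infty)$ and $\chi=\chi_\xi$ amounts only to averaging over the compact group $Z(\A_F^\infty)\cap K$ and restricting along $Z(F_\infty)$, exactly as in the comparison between $\cH(G(\A_F),\chi_0^{-1})$ and $\cH(G(\A_F),\chi^{-1})$ recorded in \S\ref{sect:TraceFormula}; because $Z$ is canonically the center of each endoscopic group $H$ as well, this operation is compatible with every step of the stabilization. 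The hard part is not any single mathematical step but the verification and normalization bookkeeping: confirming that our specific $(\Res_{F/\Q}G,\iH_n)$ satisfies the running hypotheses of the \cite{KSZ} main theorem (abelian type, induced center, good reduction at $p$), and matching conventions across references --- Haar measures, transfer factors, and the cyclotomic twist by $\tfrac{n-1}{2}$ implicit in how $\xi$ enters $\rho_{\pi^\sharp}$ --- after which the displayed identity \eqref{eq:KottwitzConjectureA} follows.
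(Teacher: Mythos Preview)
Your proposal is correct and matches the paper's approach: the paper does not prove this theorem independently but simply quotes it as the main result of \cite{KSZ}, after verifying earlier in the section that $(\Res_{F/\Q}G,\iH_n)$ is of abelian type with induced center $\Res_{F/\Q}\Gm$, so that the hypotheses of \cite{KSZ} are met. Your sketch of what goes into \cite{KSZ} --- Kisin's Langlands--Rapoport input, the Kottwitz-triple expansion, and the stabilization as in \cite{KottwitzAnnArbor} --- is accurate and in fact more explicit than the paper itself, which treats the whole thing as a black box; the only extraneous remark is the aside about the cyclotomic twist by $\tfrac{n-1}{2}$, which pertains to the normalization of $\rho_{\pi^\sharp}$ in Theorem~\ref{thm:ExistGaloisRep} rather than to this statement.
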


\section{Galois representations in the cohomology}\label{sect:ConstructionOfaGaloisRepresentation}

Let $\pi$ be a cuspidal $\xi$-cohomological automorphic representation of $\GSp_{2n}(\A_F)$ satisfying condition (St), and fix the place $\vst$ in that condition. Define an inner form $G$ of $\GSp_{2n}$ as in \S\ref{sect:ShimuraDatum}; when $[F:\Q]$ is even, we take $\vst$ in that definition to be the fixed place $\vst$. Let $\pi^\natural$ be a transfer of $\pi$ to $G(\A_F)$ via Proposition \ref{prop:JL-for-GSp} (which applies thanks to Remark \ref{rem:JL}) so that at the unramified places $\pi^{\natural, \infty}$ and $\pi^{ \infty}$ are isomorphic, $\pi^\natural_{\vst}$ is an unramified twist of the Steinberg representation, and $\pi^\natural_ \infty$ is $\xi$-cohomological. The aim of this section is to compute a certain $\pi^{\natural, \infty}$-isotypical component of the cohomology of the Shimura variety $\Sh$ attached to $(G, X)$.

Let $A(\pi^\natural)$ be the set of (isomorphism classes of) cuspidal automorphic representations $\tau$ of $G(\A_F)$ such that
$\tau_{\vst}\simeq \pi^\natural_{\vst}\otimes \delta$ for an unramified character $\delta$ of the group $G(F_{\vst})$, $\tau^{ \infty,\vst}\simeq \pi^{\natural, \infty, \vst}$ and $\tau_ \infty$ is $\xi$-cohomological. Let $K \subset G(\A_F^ \infty)$ be a sufficiently small decomposed compact open subgroup such that $\pi^{\natural, \infty}$ has a non-zero $K$-invariant vector. Let $\Sbad$ be the set of prime numbers $p$ for which either $p=2$, the group $\Res_{F/\Q} G$ is ramified or $K_p = \prod_{v|p} K_v$ is not hyperspecial.

Let $\uH_\uc^i(\Sh, \cL_{\iota \xi})_{\tu{ss}}$ denote the semisimplification of $\uH_\uc^i(\Sh, \cL_{\iota \xi})$ as a $G(\A_F^ \infty)\times \Gamma$-module. Likewise $\uH_\uc^i(\Sh_{K,\li \Q}, \cL_{\iota \xi})_{\tu{ss}}$ means the semisimplification as an $\cH(G(\A^ \infty_F)\sslash K)\times \Gamma$-module. For each $\tau \in A(\pi^\natural)$ (or any irreducible admissible representation of $G(\A_F^ \infty)$), we define the $\tau^ \infty$-isotypic part of an admissible $G(\A_F^ \infty)$-module $R$ on a $\C$-vector space as
\begin{equation}\label{eq:tau-isotypic-part}
R[\tau^ \infty]:=\Hom_{G(\A^ \infty_F)}(\tau^{ \infty}, R),
\end{equation}
which is finite-dimensional. If the underlying space of $R$ is over $\lql$, we define $R[\tau^ \infty]$ using $\iota\tau^ \infty$ in \eqref{eq:tau-isotypic-part}.
If $R$ has an action of $\Gamma$ commuting with $G(\A_F^ \infty)$, then $R[\tau^ \infty]$ inherits the $\Gamma$-action.
As a primary example, $\uH_\uc^i(\Sh, \cL_{\iota \xi})_{\tu{ss}}[\tau^ \infty]$ is a finite-dimensional representation of $\Gamma$,
which is isomorphic to $\Hom_{\cH(G(\A^ \infty_F)\sslash K)}(\iota(\tau^{ \infty})^K, \uH_\uc^i(\Sh_{K,\li \Q}, \cL_{\iota \xi})_{\tu{ss}})$ for each $K$ such that $(\tau^ \infty)^K\neq 0$ since the isomorphism class of $(\tau^{ \infty})^K$ and that of $\tau$ determine each other. In particular the representation is finite-dimensional and unramified at almost all places.

Consider two representations $\tau, \tau' \in A(\pi^\natural)$ equivalent and write $\tau \sim \tau'$ if $\tau^ \infty \simeq \tau^{\prime, \infty}$. Define a virtual Galois representation
\begin{equation}\label{eq:Rho2Plus}
\rho_2^\coh = \rho_2^\coh(\pi^\natural):=(-1)^{n(n+1)/2} \sum_{\tau \in A(\pi^\natural)/{\sim}} \sum_{i=0}^{n(n+1)} (-1)^i [\uH_\uc^i(\Sh, \cL_{\iota \xi})_{\tu{ss}}[\tau^ \infty]],
\end{equation}
in the Grothendieck group of the category of continuous representations of $\Gamma$ on finite dimensional $\lql$-vector spaces
which are unramified at almost all places.
We compute in this section $\rho_2^\coh$ at almost all $F$-places not dividing a prime in $\Sbad$.

Define a rational number
\begin{equation}\label{eq:Defa0}
a(\pi^\natural) \bydef (-1)^{n(n+1)/2} N^{-1}_ \infty \sum_{\tau \in A(\pi^\natural)} m(\tau) \cdot \tu{ep}(\tau_ \infty \otimes \xi),
\end{equation}
where $m(\tau)$ is the multiplicity of $\tau$ in the discrete automorphic spectrum of $G$.

Recall the integer $w \in \Z$ determined by $\xi$ as in condition (cent) of the last section. In the following, the subscript ss always means the semisimplification as a $G(\A_F^ \infty)$-module, or as a $G(\A_F^ \infty)\times \Gamma$-module if there is a $\Gamma$-action. As in \cite[\S6]{FaltingsCohomology}, we have natural $G(\A_F^ \infty)$-equivariant maps of $\C$-vector spaces
\begin{equation}\label{eq:Hc-to-H2-to-H}
\uH^i_c(\Sh,\cL_{\xi}) \ra \uH^i_{(2)}(\Sh,\cL_{\xi}) \ra \uH^i(\Sh,\cL_{\xi}), 
\end{equation}
where $\uH^i_c(\Sh,\cL_{\xi})$ and $\uH^i(\Sh,\cL_{\xi})$ denote the topological cohomologies, which are isomorphic to the $\ell$-adic cohomologies $\uH^i_c(\Sh,\cL_{\iota\xi})$ and $\uH^i(\Sh,\cL_{\iota\xi})$ via $\iota$.

\begin{lemma}\label{lem:H2=Hc}
For each $\tau \in A(\pi^\natural)$ the following hold.
\begin{enumerate}
\item For every $i \in \Z_{\ge 0}$, the maps of \eqref{eq:Hc-to-H2-to-H} induce isomorphisms
 $$\uH^i_c(\Sh,\cL_{\xi})[\tau^ \infty] \stackrel{\sim}{\ra} \uH^i_{(2)}(\Sh,\cL_{\xi})[\tau^ \infty] \stackrel{\sim}{\ra} \uH^i(\Sh,\cL_{\xi})[\tau^ \infty].$$
 Moreover $\dim \uH^i_c(\Sh,\cL_{\xi})_{\tu{ss}}[\tau^ \infty]=\dim \uH^i_c(\Sh,\cL_{\xi})[\tau^ \infty]$. (Namely all subquotients of $\uH^i_c(\Sh,\cL_{\xi})$ isomorphic to $\tau^ \infty$ appear as subspaces.)
\item For every finite place $x\neq \vst$ of $F$, if $\tau$ is unramified at $x$ then $\tau_x|\simil|^{w/2}$ is tempered and unitary.
\end{enumerate}
\end{lemma}
\begin{proof}
(1) If $F\neq \Q$ the isomorphisms are clear by compactness of $\Sh_K$. In that case, $\uH^i(\Sh,\cL_{\xi})$ is semisimple as a $G(\A^ \infty_F)$-module since the same is true for the $L^2$-automorphic spectrum (which is entirely cuspidal since $G$ is anisotropic modulo center over $F$), via Matsushima's formula.

 Suppose that $F=\Q$ from now. 
By Poincar\'e duality, it suffices to check that \eqref{eq:Hc-to-H2-to-H} induces an isomorphism $\uH^i_{(2)}(\Sh,\cL_{\xi})[\tau^ \infty] \stackrel{\sim}{\ra} \uH^i(\Sh,\cL_{\xi})[\tau^ \infty]$. 
To set things up,
let $S\supset S_{\textup{bad}}$ be a finite set of $F$-places including all infinite places. Then $K^{S}$ is a product of hyperspecial subgroups, and $(\tau^S)^{K^S}\neq 0$. The category of $\cH(G(\A^S_F)\sslash K^S)$-modules is semisimple and irreducible objects are one-dimensional. For a $\cH(G(\A^S_F)\sslash K^S)$-module $R$, define the $\tau^S$-part of $R$ to be the maximal subspace $R_{\tau^S}$ such that $R_{\tau^S}$ is isomorphic to a self-direct sum of $\tau^S$. Then it suffices to prove that \eqref{eq:Hc-to-H2-to-H} induces
\begin{equation}\label{eq:tau-S-part}
\uH^i_{(2)}(\Sh_K,\cL_{\xi})_{\tau^S}\stackrel{\sim}{\ra} \uH^i(\Sh_K,\cL_{\xi})_{\tau^S},\quad i\ge 0.
\end{equation}
Indeed, the desired isomorphism of (1) will follow from the preceding formula by taking the functor $\Hom_{G(F_{S\backslash \cV_ \infty})}(\tau_{S\backslash \cV_ \infty},\cdot)$. The latter assertion of (1) is also implied by \eqref{eq:tau-S-part} as we now explain. 
Again by Poincar\'e duality, we can reduce to showing it for the ordinary cohomology rather than the compact support cohomology.
Then it suffices to verify that $\uH^i(\Sh_K,\cL_{\xi})_{\tau^S}$ is a semisimple $G(F_{S\backslash \cV_ \infty})$-module. This follows from \eqref{eq:tau-S-part}
since $\uH^i_{(2)}(\Sh_K,\cL_{\xi})$ is a semisimple $G(\A^ \infty_F)$-module by comparison with the $L^2$-discrete automorphic spectrum via Borel--Casselman's theorem.

It remains to obtain \eqref{eq:tau-S-part}. We apply Franke's spectral sequence in the notation of \cite[Cor., p.151]{Waldspurger-Franke} (cf.~\cite[Thm. 19]{Franke}). 
 The spectral sequence, which is $G(\A_F^ \infty)$-equivariant, stays valid when restricted to the $\tau^S$-part.
 We claim that the $\tau^S$-part is zero in any parabolic induction of an automorphic representation on a proper Levi subgroup of $\GSp_{2n}(\A)$. It suffices to check the analogous claim with $\tau^\flat$ (as in Lemma \ref{lem:LabesseSchwermer}) and $\Sp_{2n}$ in place of $\tau$ and $\GSp_{2n}$. The latter claim follows from Arthur's main result \cite[Thm. 1.5.2]{ArthurBook}, Corollary \ref{cor:TemperedParameter}, and the strong multiplicity one theorem for general linear groups.
 As a consequence of the claim, the direct summand of $E_1^{p,q}$ over $(w,P)$ in \cite[p.151]{Waldspurger-Franke} has vanishing $\tau^S$-part unless $P=G$ and $w=1$. 
 For $w=1$, we see from the first line on p.151 of \emph{loc.~cit.} that there is a nonzero contribution to $E_1^{p,q}$ for a unique $p$ (independent of $q$).
 Therefore the $\tau^S$-part of the spectral sequence degenerates at $E_1$ and yields an isomorphism, in the notation there (writing $i$ for $p+q$),
$$ H^i(\mathfrak{m}^1,K_{\R};\mathcal{A}_2(G,\mu_\lambda)\otimes_{\C} E^G_\lambda)_{\tau^S} \stackrel{\sim}{\ra} H^i(\mathfrak{m}^1,K_{\R};\mathcal{A}(G,\mu_\lambda)\otimes_{\C} E^G_\lambda)_{\tau^S}, \quad i\ge 0.$$
This map is the natural one induced by $\mathcal{A}_2(G,\mu_\lambda)\subset\mathcal{A}(G,\mu_\lambda)$, and translates into \eqref{eq:tau-S-part} in our notation via Borel's theorem \cite[p.143]{Waldspurger-Franke} on the right hand side. The proof of (1) is complete.

(2) Let $\omega:F^\times\backslash \A_F^\times\to \C^\times$ denote the common central character of $\tau$, $\pi^\natural$, and $\pi$. (The central characters are the same as they are equal at almost all places.) Since $F$ is totally real, $\omega=\omega_0 |\cdot|^a$ for a finite character $\omega_0$ and a suitable $a \in \C$. Since $\pi^\natural_ \infty$ is $\xi$-cohomological, we must have $a=-w$. Then $\pi^\natural_x |\simil|^{w/2}$ has unitary central character and is essentially tempered by Lemma \ref{lem:ForceTempered}. We are done since $\pi^\natural_x\simeq \tau_x$.
\end{proof}

\begin{proposition}\label{prop:PointCounting}
For almost all finite $F$-pla\-ces $v$ not dividing a prime number in $\Sbad$ and all sufficiently large integers $j$, we have
$$
\Tr\rho_2^\coh(\Frob_v^j) =q_v^{jn(n+1)/4}\Tr (i_{a(\pi^\natural)}(\spin^\vee(\iota\phi_{\pi_v^\natural})))(\Frob_v^j).
$$
 Moreover the virtual representation $\rho_2^\coh$ is a true representation.
\end{proposition}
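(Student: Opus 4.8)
The plan is to run the Langlands--Kottwitz method: feed a suitable test function into the point-counting identity of Theorem \ref{thm:KottwitzPointConjecture}, use hypothesis (St) to discard the non-principal endoscopic terms, transport the surviving stable term to the trace formula for $G$ itself, and read off the $\pi^\natural$-isotypic part of the cohomology.

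\emph{Steps 1--2 (point counting, endoscopy, transport to $G$).} Fix a finite place $v$ of $F$ over a rational prime $p\notin\Sbad$, set $\fkp=v$, and apply Theorem \ref{thm:KottwitzPointConjecture} with $f^\infty=f^{\vst,p,\infty}\cdot f_{\vst}\cdot\one_{K_p}$, where $f_{\vst}\in\cH(G(F_{\vst}))$ is a (truncated) Lefschetz function and $f^{\vst,p,\infty}\in\cH(G(\A_F^{\vst,p,\infty})\sslash K^{\vst,p})$ is to be chosen in Step 3. Since a Lefschetz function is stabilizing (Lemma \ref{lem:LefschetzIsStabilizing}), the argument of Lemma \ref{lem:stabilization} shows that every $H\in\cE_{\mathrm{ell}}(G)$ with $H\neq G^*$ contributes $0$ to the right side of \eqref{eq:KottwitzConjectureA}, leaving only $\ST^{G^*}_{\el,\chi}(h^{G^*})$. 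Next, assemble $f=\prod_w f_w$ on $G(\A_F)$ with $f_w=f_w^{\vst,p,\infty}$ away from $S\cup\{\vst,p,\infty\}$, with $f_{\vst}$ as above, with $f_p$ the base-change transfer of $\phi_j$, and with $f_\infty=N_\infty^{-1}$ times the Euler--Poincar\'e function for $\xi$. By the matching of Lefschetz and Euler--Poincar\'e functions (Lemmas \ref{lem:LefschetzMatching-finite} and \ref{lem:LefschetzMatching}) together with the Satake base-change identity, $h^{G^*}$ is a Langlands--Shelstad transfer of $c\cdot f$ for some $c\in\C^\times$; applying Lemma \ref{lem:stabilization} and then the simple trace formula (Lemma \ref{lem:simple-TF}) gives $\ST^{G^*}_{\el,\chi}(h^{G^*})=c\,T^G_{\el,\chi}(f)=c\,T^G_{\cusp,\chi}(f)$.

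\emph{Step 3 (spectral expansion and identification).} Write $T^G_{\cusp,\chi}(f)=\sum_\tau m(\tau)\Tr\tau(f)$. The factor $\Tr\tau_{\vst}(f_{\vst})$ vanishes unless $\tau_{\vst}$ is an unramified twist of the Steinberg or the trivial representation (Proposition \ref{prop:ClassicalLefschetzFunction}(ii)); the trivial case would force $\tau$ one-dimensional, hence neither $\xi$-cohomological nor essentially tempered, so it is excluded (cf.\ Lemma \ref{lem:ForceTempered} and Corollary \ref{cor:Pi-is-in-Disc-series-Packet-2}). For the surviving $\tau$, Corollary \ref{cor:Pi-is-in-Disc-series-Packet-2} puts $\tau_\infty\in\Pi^G_\xi$, so $\Tr\tau_\infty(f_\infty)=N_\infty^{-1}\ep(\tau_\infty\otimes\xi)=(-1)^{n(n+1)/2}\cdot 2\cdot N_\infty^{-1}$ by Remark \ref{rem:K_infty}, while $\Tr\tau_p(f_p)=\Tr\tau_v(f_j)$ by \eqref{eq:BC-identity}. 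Now choose $f^{\vst,p,\infty}$ to be a spherical idempotent at finitely many auxiliary places projecting onto the Satake parameters of $\pi^\natural$; using strong multiplicity one for $\GL_{2n+1}$ (applied to the base change $\pi^\sharp$) together with the fact that the local packet of an unramified twist of Steinberg is a singleton, this cuts out exactly the set $A(\pi^\natural)$ among such $\tau$, for all but finitely many $v$. Comparing the resulting sum with the $G(\A_F^\infty)$-isotypic decomposition of $\uH_c(S_K,\cL_\xi)$ on the cohomological side and unwinding the definitions \eqref{eq:Rho2Plus} and \eqref{eq:Defa0} yields $\Tr\rho_2^\coh(\Frob_v^j)=a(\pi^\natural)\Tr\pi^\natural_v(f_j)$ for almost all $v\nmid\Sbad$ and all $j\gg 0$.

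\emph{Step 4 (genuineness) and the main obstacle.} For $\tau\in A(\pi^\natural)$ one has $\tau_\infty\in\Pi^G_\xi$, so by Lemma \ref{lem:H2=Hc}(1) the space $\Hom_{G(\A_F^\infty)}(\iota\tau^\infty,\uH^i_c(S,\cL_\xi))$ coincides with the corresponding piece of $L^2$-cohomology, which for the cuspidal $\tau$ is computed by $(\Lie G(\C),K_\infty)$-cohomology of $\iota\tau_\infty\otimes\iota\xi$ and hence vanishes unless $i=n(n+1)/2$ (the Example after Definition \ref{def:cohomological}); the sign $(-1)^{n(n+1)/2}$ in \eqref{eq:Rho2Plus} then cancels, and $\rho_2^\coh=\bigoplus_{\tau\in A(\pi^\natural)}\Hom_{G(\A_F^\infty)}(\iota\tau^\infty,\uH^{n(n+1)/2}_c(S,\cL_\xi))$ is an honest (finite-dimensional) representation. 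I expect Step 3 to be the crux: matching the cohomological trace with $\rho_2^\coh$ requires knowing that, once the $\vst$-Lefschetz function is inserted, the only automorphic representations contributing on either side are cuspidal, Steinberg-twisted at $\vst$, and discrete series at $\infty$ — this rests on Arthur's classification through the $\pi^\sharp$'s, the temperedness input of Lemma \ref{lem:ForceTempered}, and Lemma \ref{lem:H2=Hc} to exclude Eisenstein cohomology — and that the chosen Hecke idempotent isolates precisely $A(\pi^\natural)$ rather than a larger near-equivalence neighbourhood of it; the remaining arguments are the standard Langlands--Kottwitz bookkeeping.
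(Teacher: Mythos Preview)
Your Steps 1--2 are essentially the paper's argument, but Steps 3--4 contain a genuine circularity: you invoke Corollary \ref{cor:Pi-is-in-Disc-series-Packet-2} to conclude that the surviving $\tau$ have $\tau_\infty\in\Pi^G_\xi$, yet in the paper that corollary is \emph{deduced from} the proof of this very proposition (specifically from the middle-degree concentration established at the end). A priori, for $\tau\in A(\pi^\natural)$ you only know $\tau_\infty$ is $\xi$-cohomological and unitary; when $\xi$ has non-regular highest weight there are non-tempered $\xi$-cohomological representations whose $(\ig,K_\infty)$-cohomology is spread over several degrees, so your Step 4 argument that $\rho_2^\coh$ sits in the middle degree does not go through. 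The paper avoids this by a completely different route: it passes to intersection cohomology via Zucker's conjecture (using Lemma \ref{lem:H2=Hc}(1) to identify the $\tau^\infty$-part of $\uH_c$ with that of $\IH$), then uses purity of $\IH$ together with the temperedness of $\tau_v$ at a good finite place (Lemma \ref{lem:H2=Hc}(2)) and the already-established trace identity \eqref{eq:LKPCF4} to force concentration in degree $n(n+1)/2$.

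Two smaller points. In Step 3 you do not actually need to evaluate $\ep(\tau_\infty\otimes\xi)$: the definition \eqref{eq:Defa0} of $a(\pi^\natural)$ already absorbs those Euler--Poincar\'e numbers, so the identity $\Tr\rho_2^\coh(\Frob_v^j)=a(\pi^\natural)\Tr\pi^\natural_v(f_j)$ drops out without knowing that $\tau_\infty$ is discrete series. And your isolation of $A(\pi^\natural)$ via spherical idempotents plus strong multiplicity one for $\GL_{2n+1}$ is more elaborate than necessary: since only finitely many $\tau$ have $\tau^{\infty,K}\neq 0$ and $\Tr\tau_\infty(f_\infty)\neq 0$, one can simply choose $f^{\infty,\vst}$ to pick out the full isomorphism class $\tau^{\infty,\vst}\cong\pi^{\natural,\infty,\vst}$, as in \eqref{eq:AuxFunction}.
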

\begin{proof}
We imitate arguments from \cite{KottwitzInventiones}. We consider a function $f$ on $G(\A_F)$ of the form $f = f_ \infty \otimes f_\vst \otimes f^{ \infty,\vst}$, where the components are chosen in the following way:
\begin{itemize}
\item We let $f_ \infty$ be $N_ \infty^{-1}$ times an Euler-Poincar\'e function for $\xi$ (and $K_ \infty$) on $G(F_ \infty)$ so that
\begin{equation}\label{eq:AuxFunctioninfty}
\Tr \tau_ \infty(f_ \infty)=N_ \infty^{-1} \ep(\tau_ \infty\otimes\xi)=N_ \infty^{-1}\sum_{i = 0}^ \infty (-1)^i \dim \uH^i(\ig, K_ \infty; \tau_ \infty \otimes \xi).\end{equation}
\item We pick a Hecke operator $f^{ \infty,\vst} =\prod_{v\notin \cV_ \infty\cup \{\vst\}} \in \cH(G(\A^{ \infty,\vst}_F)\sslash K^{\vst})$ 
such that for all automorphic representations $\tau$ of $G(\A_F)$ with $\tau^{ \infty,K} \neq 0$ and $\Tr \tau_ \infty(f_ \infty) \neq 0$ we have
\begin{equation}\label{eq:AuxFunction}
\Tr\tau^{ \infty,\vst}(f^{ \infty,\vst}) = \begin{cases}
1, & \tu{if $\tau^{ \infty,\vst} \simeq \pi^{\natural, \infty,\vst}$} \cr
0, & \tu{otherwise.}
\end{cases}
\end{equation}
This is possible since there are only finitely many such $\tau$ (one of which is $\pi^\natural$).
\item We let $f_{\vst}$ be a Lefschetz function from Equation \eqref{eq:fLef-truncated} of Appendix A. 
\end{itemize}
There exists a finite set of primes $\Sigma$ containing places over $2$ such that $f^ \infty$ decomposes $f^ \infty = f_\Sigma \otimes f^{ \infty\Sigma}$ with
\begin{itemize}
 \item $f_\Sigma \in \cH(G(F_\Sigma)\sslash K_\Sigma)$,
 \item $f^{ \infty, \Sigma} = \one_{K^\Sigma} \in \cH(G(\A^{ \infty\Sigma}_F))$,
 \item $K^\Sigma$ is a product of hyperspecial subgroups, and
 \item if $v_1,v_2$ are $F$-places above the same rational prime, then $v_1 \in \Sigma$ if and only if $v_2 \in \Sigma$.
\end{itemize}

In the rest of the proof we consider only finite places $v$ such that $v \notin \Sigma$, $v \nmid \ell$.
Fix such a place and call it $\fkp$ as in \S\ref{sect:ShimuraDatum}. We also fix an isomorphism $\iota_p:\C\simeq \li \Q_p$ such that $\iota_p v_ \infty:F\hra \li \Q_p$ induces $\fkp$.
Write $p:=\fkp|_\Q$. Then $K_p=\prod_{v|p} K_v$ is a hyperspecial subgroup of $G(F_p)$, and $f_p=\prod_{v|p} f_v=\prod_{v|p}\one_{K_v}$.
Thus $(G,K)$ is unramified at $p$ in the sense of \S\ref{sect:ShimuraDatum} so that the Langlands--Kottwitz method applies.

The stabilized Langlands-Kottwitz formula (Theorem \ref{thm:KottwitzPointConjecture}) simplifies as
\begin{equation}\label{eq:Stabilizing}
\iota^{-1}\Tr(\iota f^ \infty \Frob_\fkp^j,\, \uH_c(\Sh_K, \cL_{\iota\xi})) = \ST_{\el,\chi}^{G^*}(h^{G^*}),
\end{equation}
essentially for the same reason as in the proof of Lemma \ref{lem:stabilization}. Indeed $f_\vst$ is a stabilizing function (Lemma \ref{lem:LefschetzIsStabilizing}), thus unless $H=G^*$, the stable orbital integrals of $h^H_{\vst}$ vanish as they equal $\kappa$-orbital integrals of $f_\vst$ with $\kappa\neq1$, which are zero (Definition \ref{def:cuspidal-stabilizing}), up to a nonzero constant via the Langlands--Shelstad transfer. Note that $h_{\vst}^{G^*}$ can be chosen to be a Lefschetz function thanks to Lemma \ref{lem:LefschetzMatching-finite}.

To extract spectral information, we will compare \eqref{eq:Stabilizing} with the stabilized trace formula of \S\ref{sect:TraceFormula}.
We choose the following test functions $\tilde f_p$ and $\tilde f_ \infty$ on $G(F_p)$ and $G(F_ \infty)$.
\begin{itemize}
 \item $\tilde f_p:=h^{G^*}_p$ via the identification $G(F_p)=G^*(F_p)$,
 \item $\tilde f_ \infty$ is $N_ \infty^{-1}$ times an Euler-Poincar\'e function associated to $\xi$.
\end{itemize}
Then $h^{G^*}_p$ and $h^{G^*}_ \infty$ are transfers of $\tilde f_p$ and $\tilde f_ \infty$ from $G$ to $G^*$, respectively.
This is trivial at $p$ and Lemma \ref{lem:LefschetzMatching} at $ \infty$.
By construction, $h^{G^*, \infty,p}$ is a transfer of $f^{ \infty,p}$.
Therefore Lemmas \ref{lem:simple-TF} and \ref{lem:stabilization} tell us that
\begin{equation}\label{eq:Tcusp=STell}
T_{\cusp,\chi}^{G}(f^{ \infty,p} \tilde f_p \tilde f_ \infty) = \ST_{\el,\chi}^{G^*}(f^{G^*}).
\end{equation}
 By \eqref{eq:Stabilizing} and \eqref{eq:Tcusp=STell}, we have
\begin{equation}\label{eq:SpectralKottwitzFormula}
\iota^{-1} \Tr(\iota f^ \infty\Frob_\fkp^j,\, \uH_c(\Sh_{K}, \cL_{\iota\xi})) =
\sum_{\tau \in \cA_{\cusp,\chi}(G)} m(\tau)\Tr \tau^{ \infty,p}(f^{ \infty,p})\Tr \tau_p(\tilde f_p) \Tr \tau_ \infty(\tilde f_ \infty).
\end{equation}
The summand on the right hand side vanishes unless $\Tr\tau_ \infty(f_ \infty) \neq 0$, $\tau_p$ is unramified, and $\Tr\tau^{ \infty,p}(f^{ \infty,p}) \neq 0$.
The latter two imply nonvanishing of $\Tr\tau^{ \infty}(f^{ \infty})$, thus $\Tr\tau^{ \infty,\vst}(f^{ \infty,\vst})\neq 0$
and $\Tr \tau_{\vst}(f_{\vst})\neq 0$. By the choice of function $f^{ \infty, \vst}$ (see \eqref{eq:AuxFunction}) and $f_\vst$,
it follows that $\tau$ contributes nontrivially in \eqref{eq:SpectralKottwitzFormula} only if $\tau \in A(\pi^\natural)$. In this case, we have $\tau_p\simeq \pi^\natural_p$ in particular. Recall $\tilde f_p=h^{G^*}_p$. Applying \eqref{eq:AuxFunctioninfty} and \eqref{eq:AuxFunction}, we identify the right hand side of \eqref{eq:SpectralKottwitzFormula} with
\begin{equation}\label{eq:SpectralKottwitzFormula2}
N_ \infty^{-1}\sum_{\tau \in A(\pi^\natural)} m(\tau) \ep(\tau_ \infty\otimes\xi) \Tr \pi_p^\natural(h^{G^*}_p) =(-1)^{n(n+1)/2}a(\pi^\natural) \Tr \pi_p^\natural(h^{G^*}_p).
\end{equation}
By the choice of $f^ \infty$ the left hand side of \eqref{eq:SpectralKottwitzFormula} is equal to the trace of $\Frob_v^j$ on $(-1)^{n(n+1)/2} \rho_2^\coh$.
 To sum up,
\begin{equation}\label{eq:LKPCF4}
\Tr (\Frob_\fkp^j, \rho_2^\coh) = a(\pi^\natural) \iota \Tr \pi_p^\natural(h_p^{G^*}).
\end{equation}
To analyze the right hand side, consider the cocharacter $\mu=(\mu_y)_{y \in \cV_ \infty}$ from \S\ref{sect:ShimuraDatum}.
The irreducible representation of $\hat{\Res_{F/\Q}G}\simeq \prod_{y \in \cV_ \infty} \GSpin_{2n+1}(\C)$ of highest weight $\mu$
is $\spin$ on the $v_ \infty$-component and trivial on the other components (since $\mu_y=1$ if $y\neq v_ \infty$). This extends uniquely to a representation $r_\mu$ of the $L$-group of $(\Res_{F/\Q}G)_{F_\fkp}$ as in \cite[Lem. 2.1.2]{KottwitzTwistedOrbital} since the conjugacy class of $\mu$ is defined over $F$ (thus also $F_\fkp$). The Satake parameter of $\pi_p\simeq \prod_{v|p} \pi_v$ (with respect to $G(F_p)\simeq \prod_{v|p} G(F_v)$) belongs to the $\Frob_\fkp$-coset of $\hat{(\Res_{F/\Q}G)_{\Q_p}}$, identified with $\hat{\Res_{F/\Q}G}$ via $\iota_p$, in the $L$-group. Then the $v_ \infty$-component of the Satake parameter of $\pi_p$ comes from the Satake parameter of $\pi_\fkp$ since $F\hra \C\stackrel{\iota_p}{\simeq} \li \Q_p$ induces $\fkp$.
With this observation, we apply \cite[(2.2.1)]{KottwitzTwistedOrbital} (his $\pi_\varphi$, $G$, $n$, $F$, $\phi^G_p$ correspond to our $\pi^\natural_p$, $(\Res_{F/\Q}G)_{\Q_p}$, $rj$, $\Q_{p^{rj}}$, $h^{G^*}_p$) to obtain
\begin{equation}\label{eq:Kottwitz-f_j}
\Tr \pi_p^\natural(h_p^{G^*}) = q_\fkp^{jn(n+1)/4} \Tr (\spin^\vee(\phi_{\pi_\fkp^\natural}))(\Frob_\fkp^j). 
\end{equation}
(See p.656 and the second last paragraph on p.662 of \cite{KottwitzInventiones} for a similar computation.)
This finishes the proof of the first assertion.

It remains to show that $\rho_2^\coh$ is not just a virtual representation but a true representation. To this end we will show that $\rho_2^\coh$ is concentrated in the middle degree $n(n+1)/2$. Let $i \ge 0$. There are natural maps from $\uH_c^i(\Sh,\cL_{\iota\xi})$ to each of $\IH^i(\Sh,\cL_{\iota\xi})$ and $\uH^i_{(2)}(\Sh,\cL_{\iota\xi})$. Compatibility with Hecke correspondences implies that both maps are $G(\A_F^ \infty)$-equivariant; the first map is moreover equivariant for the action of $\Gamma$ (which commutes with the $G(\A_F^ \infty)$-action). By Zucker's conjecture, which is proved in the articles \cite{Looijenga,LooijengaRapoport,SaperSternI}, the $L^2$-cohomology $\uH^i_{(2)}(\Sh, \cL_{\xi})$ is naturally isomorphic to the intersection cohomology $\IH^i(\Sh, \cL_{\iota\xi})$ via $\iota$ so that we have a $G(\A_F^ \infty)$-equivariant commutative diagram\footnote{The $G(\A_F^ \infty)$-equivariance is clear for the horizontal map, which is induced by the map from the extension-by-zero of $\cL_{\iota\xi}$ to the Baily-Borel compactification to the intermediate extension thereof. For the vertical map it can be checked as follows (we thank Sophie Morel for the explanation). From the proof of Zucker's conjecture we know that $\IH^i(\Sh,\cL_{\iota\xi})$ and $\uH^i_{(2)}(\Sh,\cL_{\iota\xi})$ are represented by two complexes of sheaves on the Baily-Borel compactification which are quasi-isomorphic and become canonically isomorphic to $\cL_{\iota\xi}$ when restricted to the original Shimura variety. The Hecke correspondences extend to the two complexes (so as to define Hecke actions on the intersection and $L^2$-cohomologies) and coincide if restricted to the original Shimura variety. Now the point is that the Hecke correspondences extend uniquely for the intersection complex as follows from \cite[Lem. 5.1.3]{Morel-GSp}, so the same is true for the $L^2$-complex via Zucker's conjecture. Therefore the Hecke actions on the two cohomologies are identified via the isomorphism of Zucker's conjecture.}
$$
\xymatrix{ \uH_c^i(\Sh,\cL_{\iota\xi}) \ar[rr] \ar[rrd] && \uIH^i(\Sh, \cL_{\iota\xi}) \ar[d]^\sim \\ && \iota\uH^i_{(2)}(\Sh, \cL_{\xi})}.
$$
The diagram together with Lemma \ref{lem:H2=Hc} yields a $\Gamma$-equivariant isomorphism
$$
\uH_c^i(\Sh,\cL_{\iota\xi})[\tau^ \infty]\simeq \IH^i(\Sh, \cL_{\iota\xi})[\tau^ \infty],
$$
the semisimplification of which is isomorphic to $\uH_c^i(\Sh,\cL_{\iota\xi})_{\tu{ss}}[\tau^ \infty]$.

In view of condition (cent), the intersection complex defined by $\xi$ is pure of weight $-w$. Hence the action of $\Frob_\fkp$ on $\IH^i(\Sh, \cL_{\iota\xi})[\tau^ \infty]$, thus also on $\uH_c^i(\Sh,\cL_{\iota\xi})_{\tu{ss}}[\tau^ \infty]$, is pure of weight $-w+i$ for every $\fkp$ as above, cf. the proof of \cite[Rem. 7.2.5]{MorelBook}. In particular there is no cancellation between different $i$ in \eqref{eq:Rho2Plus}.
 On the other hand, part (2) of Lemma \ref{lem:H2=Hc} (with $x=\fkp$) implies that $\tau_\fkp|\simil|^{w/2}=\pi^\natural_\fkp|\simil|^{w/2}$ is tempered and unitary. Then the first part of the proposition implies that $\rho_2^\coh(\Frob_\fkp)_{\tu{ss}}$ has eigenvalues whose absolute values equal the $-w+\frac{n(n+1)}{2}$-th power of $(\# \cO_F/\fkp)^{1/2}$. We conclude that $\IH^i(\Sh, \cL_{\iota\xi})_{\tu{ss}}[\tau^ \infty]=0$ unless $i=n(n+1)/2$. The proof of the proposition is complete.
\end{proof}

\begin{remark}
The last paragraph in the above proof simplifies significantly if $F\neq \Q$, in which case $\Sh_K$ is proper thus the compact support cohomology coincides with the intersection cohomology. See the first paragraph in the proof of Lemma \ref{lem:H2=Hc} for another simplification. When $F=\Q$, as an alternative to studying various cohomology spaces, one can also argue by reducing to the $F\neq \Q$ case as follows. Consider many real quadratic extensions $E/\Q$, find a base-change automorphic representation $\pi_E$ of $\GSp_{2n}(\A_{E})$ by Proposition \ref{prop:BC-for-GSp}, and apply the so-called patching lemma \cite{Sorensen} to patch $\spin(\rho^C_{\pi_E})$ to obtain a Galois representation $\rho_2':\Gamma_{\Q}\ra \GL_{2^n}(\lql)$. The argument in the next section can be adapted with $\rho'_2$ in place of $\rho_2$ to construct $\rho_\pi^C: \Gamma_{\Q} \ra \GSpin_{2n+1}(\lql)$.
\end{remark}

\begin{corollary}\label{cor:Pi-is-in-Disc-series-Packet-2}
If $\tau \in A(\pi^\natural)$ then $\tau_ \infty$ belongs to the discrete series $L$-packet $\Pi^{G(F_ \infty)}_\xi$.
\end{corollary}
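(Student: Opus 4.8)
The plan is to read off the conclusion from the middle-degree concentration that is already established inside the proof of Proposition \ref{prop:PointCounting}, combined with the Vogan--Zuckerman description of cohomological archimedean representations, without having to invoke any new trace formula input or Jacquet--Langlands transfer for the inner form $G$.

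First I would invoke Matsushima's formula together with Zucker's conjecture (exactly the inputs recalled in the proof of Proposition \ref{prop:PointCounting}) to write the intersection cohomology, as a $G(\A_F^\infty)$-representation, in the form $\uIH^i(S,\cL_\xi)\simeq \bigoplus_{\sigma} m(\sigma)\,\sigma^\infty\otimes\uH^i(\Lie G(F_\infty),K_\infty;\sigma_\infty\otimes\xi)$, the sum running over the discrete automorphic spectrum of $G(\A_F)$. Since $\tau\in A(\pi^\natural)$ is cuspidal, it occurs in this spectrum with $m(\tau)\ge 1$, so after passing to $\tau^\infty$-isotypic parts the space $\uH^i(\Lie G(F_\infty),K_\infty;\tau_\infty\otimes\xi)$ is a direct summand of the multiplicity space of $\uIH^i(S,\cL_\xi)[\tau^\infty]$. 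But the proof of Proposition \ref{prop:PointCounting} shows $\uIH^i(S,\cL_\xi)[\tau^\infty]=0$ whenever $i\ne n(n+1)/2$; since a direct sum of vector spaces vanishes iff each summand does, we conclude $\uH^i(\Lie G(F_\infty),K_\infty;\tau_\infty\otimes\xi)=0$ for all $i\ne n(n+1)/2$.

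Next I would apply the Vogan--Zuckerman classification \cite{VoganZuckerman}. Since $\tau_\infty$ is $\xi$-cohomological by the definition of $A(\pi^\natural)$, it is an $A_{\mathfrak q}(\lambda)$ for some $\theta$-stable parabolic subalgebra $\mathfrak q=\mathfrak l\oplus\mathfrak u$ of $\Lie G(F_\infty)\otimes_\R\C$, and its relative Lie algebra cohomology with $\xi$-coefficients is, up to a shift in degree by $\dim_\C(\mathfrak u\cap\mathfrak p)$, isomorphic to $\uH^\bullet(\mathfrak l,\mathfrak l\cap\mathfrak k;\C)$. This cohomology is concentrated in a single degree precisely when $\mathfrak l\subseteq\mathfrak k$, which is exactly the condition that $A_{\mathfrak q}(\lambda)$ lie in the discrete series $L$-packet $\Pi^{G(F_\infty)}_\xi$ (the concentration degree then being the complex dimension $n(n+1)/2$ of $S_K$). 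Together with the vanishing from the previous paragraph this forces $\tau_\infty\in\Pi^{G(F_\infty)}_\xi$, which is the assertion of the corollary.

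The only points needing care, none of which I expect to be a genuine obstacle, are bookkeeping ones: reconciling the group $K_\infty$ of the Shimura datum (an index-$2$ subgroup of a maximal compact times $Z(F_\infty)$, cf. Remark \ref{rem:K_infty}) with the usual statement of Matsushima's formula, which only changes the dimensions of the cohomology spaces and not the set of degrees in which they are nonvanishing; and observing that the archimedean factors of $G$ which are compact modulo center contribute relative Lie algebra cohomology only in degree $0$, so they do not affect the concentration argument. The substantive content is entirely carried by Proposition \ref{prop:PointCounting} and \cite{VoganZuckerman}.
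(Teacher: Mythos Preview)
Your proposal is correct and follows essentially the same route as the paper: deduce from the proof of Proposition~\ref{prop:PointCounting} that the $\tau^\infty$-isotypic part of the (intersection, equivalently $L^2$-) cohomology is concentrated in the middle degree, pass via Matsushima to the statement that $\uH^i(\Lie G(F_\infty),K_\infty;\tau_\infty\otimes\xi)$ is nonzero only for $i=n(n+1)/2$, and then invoke the Vogan--Zuckerman description of cohomological representations to force $\tau_\infty$ into the discrete series $L$-packet. The paper additionally cites \cite[Thm.~1.8]{Salamanca-Riba} to justify that the unitary $\xi$-cohomological $\tau_\infty$ is actually one of the $A_{\mathfrak q}(\lambda)$'s; you implicitly use this when you write ``it is an $A_{\mathfrak q}(\lambda)$'', so you should insert the word \emph{unitary} and the same reference there, but otherwise the arguments coincide.
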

\begin{proof}
Since $\tau_ \infty$ is $\xi$-cohomological and unitary, $\tau_ \infty$ appears in the Vogan--Zuckerman classification of \cite{VoganZuckerman}. The preceding proof shows that $\uH_{(2)}^i(\Sh_K,\cL_{\xi})[\tau^ \infty]$ is nonzero only for $i=n(n+1)/2$. Hence $\uH^i(\Lie G(F_ \infty), K'_ \infty; \tau_ \infty^K \otimes \xi)$ does not vanish exactly for $i=n(n+1)/2$. This implies that $\tau_ \infty$ must be a discrete series representation through \cite[Thm.~5.5]{VoganZuckerman} (namely we must have $\mathfrak{l}\subset \mathfrak{k}$ in the notation there). Thus $\tau_ \infty \in \Pi^{G(F_ \infty)}_\xi$.
\end{proof}

\begin{corollary}\label{cor:A-is-integer}
If $\tau \in A(\pi^\natural)$ then $\tau^ \infty \tau'_ \infty \in A(\pi^\natural)$ for all $\tau'_ \infty \in \Pi^{G(F_ \infty)}_\xi$ and $m(\tau)=m(\tau^ \infty \tau'_ \infty)$. Moreover, $a(\pi^\natural)$ is a positive integer.
\end{corollary}

\begin{remark}
 A similar argument appears in \cite[Lem. 4.2]{KottwitzInventiones}.
\end{remark}

\begin{proof}
 We work with the trace formula with fixed central character, with respect to the datum $(\fkX, \chi)$, where $\fkX = Z(\A_F)$ and $\chi$ is the central character of $\pi$. (We diverge from the convention of \S\ref{sect:ShimuraDatum} only in this proof.) Let $G(F_{\vst})^1$ be as in the paragraph above \eqref{eq:fLef-truncated}, and $\eta:G(F_{\vst})/G(F_{\vst})^1\ra \C^\times$ the unique character such that $\pi_{\vst}$ is the $\eta$-twist of the Steinberg representation; in particular the restriction of $\eta$ to $Z(F_{\vst})$ equals $\chi$. Consider test functions $f=f^{ \infty,\vst} f_{\vst,\eta} f_{\tau_ \infty}$ and $f'=f^{ \infty,\vst} f_{\vst,\eta} f_{\tau'_ \infty}$, where
\begin{itemize}
 \item $f^{ \infty,\vst} \in \cH(G(\A_F^{ \infty,\vst}),(\chi^{ \infty,\vst})^{-1})$,
 \item $f_{\vst,\eta} \in \cH(G(F_{\vst}, \chi_{\vst}^{-1}))$ is the function $f^G_{\Lef,\eta}$ at $\vst$ as in Corollary \ref{cor:Lef-variant2}, and
 \item $f_{\tau_ \infty},f_{\tau'_ \infty}$ are pseudo-coefficients for $\tau_ \infty,\tau'_ \infty$, respectively.
\end{itemize}
By \cite[Lem.~3.1]{KottwitzInventiones}, $f_{\tau_ \infty}$ and $f_{\tau'_ \infty}$ have the same stable orbital integrals. Moreover $f_{\vst,\eta}$ is stabilizing by Corollary~\ref{cor:Lef-variant2}. The simple trace formula and its stabilization (Lemmas~\ref{lem:simple-TF} and~\ref{lem:stabilization}) imply that, by arguing similarly as in the proof of Proposition \ref{prop:JL-for-GSp},
\begin{equation}\label{eq:A-is-integer}
\sum_{\sigma} m(\sigma)\Tr (f_{\vst,\eta}|\sigma_{\vst})\Tr (f_{\tau_ \infty}|\sigma_ \infty) = \sum_{\sigma}m(\sigma) \Tr (f_{\vst,\eta}|\sigma_{\vst})\Tr (f_{\tau'_ \infty}|\sigma_ \infty) ,
\end{equation}
where both sums run over the set of $\sigma \in \cA_{\chi}(G)$ such that
\begin{itemize}
\item $\sigma^{ \infty,\vst}\simeq \tau^{ \infty,\vst}$,
\item $\sigma_{\vst}\simeq \tau _{\vst}$, by Corollary \ref{cor:Lef-variant2} (since $\sigma_{\vst}$ cannot be one-dimensional, as in the proof of Proposition \ref{prop:JL-for-GSp}),
 and
\item $\sigma_ \infty \in \Pi^{G(F_ \infty)}_\xi$. (Since $\Tr (f_{\tau_ \infty}|\sigma_ \infty) \neq 0$, the infinitesimal character of $\sigma_ \infty$ is the same as that of $\tau_ \infty$. By \cite[Thm. 1.8]{Salamanca-Riba}, we have that $\sigma_ \infty$ is $\xi$-cohomological. Thus $\sigma \in A(\pi^\natural)$, to which Corollary \ref{cor:Pi-is-in-Disc-series-Packet-2} applies.)
\end{itemize}
By the last condition, $\Tr (f_{\tau_ \infty}|\sigma_ \infty)$ equals $1$ if $\sigma_ \infty\simeq \tau_ \infty$ and $0$ otherwise; the obvious analogue also holds with $\tau'_ \infty$ in place of $\tau_ \infty$. Therefore, we have the equality
$$
m(\tau)=m(\tau^ \infty \tau'_ \infty).
$$

It remains to prove the second assertion. With $A(\pi^\natural)/\!\!\sim$ as in \eqref{eq:Rho2Plus}, we apply Corollary \ref{cor:Pi-is-in-Disc-series-Packet-2} to rewrite $a(\pi^\natural)$ defined in \eqref{eq:Defa0} as 
$$
a(\pi^\natural) = (-1)^{n(n+1)/2} N^{-1}_ \infty \sum_{\tau \in A(\pi^\natural)/\sim}
\sum_{\tau'_ \infty \in \Pi^{G(F_ \infty)}_\xi} m(\tau^ \infty \tau'_ \infty) \cdot \tu{ep}(\tau'_ \infty \otimes \xi).
$$
The inner sum is equal to 
$$
(-1)^{n(n+1)/2} \cdot 2 \cdot | \Pi^{G(F_ \infty)}_\xi|\cdot m(\tau)
$$
by what we just proved as well as Remark \ref{rem:K_infty}. Since $N_ \infty=2^n$ and $| \Pi^{G(F_ \infty)}_\xi|=2^{n-1}$, we conclude that $a(\pi^\natural)=\sum_{\tau \in A(\pi^\natural)/\sim} m(\tau) \in\Z_{>0}$. (Note $a(\pi^\natural)\ge m(\pi^\natural)>0$.)
\end{proof}

For $a \in \Z_{\ge 1}$ and $m \in \Z_{\ge1}$ let $i_a \colon \GL_m\ra \GL_{am}$ denote the block diagonal embedding.

\begin{corollary}\label{cor:lgc-unramified}
Let $\rho_2^\coh$ be as above. For almost all finite $F$-pla\-ces $v$ where $\pi^\natural$ is unramified and, $\rho_2^\coh(\Frob_v)_{\ssimple} \sim q_v^{n(n+1)/4}\cdot i_{a(\pi^\natural)} (\spin^\vee(\iota\phi_{\pi_v^\natural}))(\Frob_v)$ in $\GL_{a(\pi^\natural)2^n}$. 
\end{corollary}

\begin{proof} Write $\gamma_1 = \rho_2^\coh(\Frob_v)_{\ssimple}$ and $\gamma_2 =q_v^{n(n+1)/4}\cdot i_{a(\pi^\natural)} (\spin^\vee(\iota\phi_{\pi_v^\natural}))(\Frob_v)$. By Proposition \ref{prop:PointCounting} we get $\Tr (\gamma_1^j) = \Tr (\gamma_2^j)$ for $j$ sufficiently large. Consequently $\gamma_1$ and $\gamma_2$ are $\GL_{a(\pi^\natural)2^n}(\lql)$-conjugate.
\end{proof}

\section{Construction of the $\GSpin$-valued representation}\label{sect:Construction}

Let $\pi$ be a cuspidal $\xi$-cohomological automorphic representation of $\GSp_{2n}(\A_F)$ satisfying (St). Denote by $\omega_\pi$ its central character. We showed in Corollary \ref{cor:lgc-unramified} that there exists a $a(\pi^\natural) 2^n$-di\-men\-sio\-nal Galois representation $\rho_2^\coh \colon \Gamma \to \GL_{a(\pi^\natural)2^n}(\lql)$. In this section we show that the representation $\rho_2^\coh$ factors through the composition
$$
\GSpin_{2n+1}(\lql) \overset{\spin^\vee}\to \GL_{2^n}(\lql) \to \GL_{a(\pi^\natural) 2^n}(\lql), 
$$
to a representation $\rho_\pi^C \colon \Gamma \to \GSpin_{2n+1}(\lql)$.

Denote by $S_{\mathrm{bad}}$ the finite set of rational primes $p$ such that either $p=2$ or $p$ is ramified in $F$ or $\pi_v$ is ramified at a place $v$ of $F$ above $p$. (As we commented in introduction, it shouldn't be necessary to include $p=2$ in view of \cite{KimMadapusiPera}.) In the theorem below the superscript $C$ designates the $C$-normalization in the sense of \cite{BuzzardGee}. Statement (ii)' of the theorem will be upgraded in the next section to include \emph{all} $v$ which are not above $S_{\bad}\cup \{\ell\}$.

\begin{theorem}\label{thm:RhoPi0}
Let $\pi$ be a cuspidal $\xi$-cohomological automorphic representation of $\GSp_{2n}(\A_F)$ satisfying the conditions (St) and (L-coh). Let $\ell$ be a prime number and $\iota \colon \C \isomto \lql$ a field isomorphism. Then exists a representation
$$
\rho^C_{\pi} = \rho^C_{\pi, \iota} \colon \Gamma \to \GSpin_{2n+1}(\lql),
$$
unique up to conjugation by $\GSpin_{2n+1}(\lql)$, satisfying (iii.b), (iv) and (v) of Theorem \ref{thm:A} with $\rho^C_\pi$ in place of $\rho_\pi$, as well as the following (which are slightly modified from (i), (ii), (iii.a) of Theorem \ref{thm:A} due to a different normalization).
\begin{itemize}
\item[(\textit{i}')] For each automorphic $\Sp_{2n}(\A_F)$-subrepresentation $\pi^\flat$ of $\pi$, its associated Galois representation $\rho_{\pi^\flat}$ is isomorphic to $\rho^C_\pi$ composed with the projection $\GSpin_{2n+1}(\lql)\ra \SO_{2n+1}(\lql)$. The composition
$$
[\mathcal{N} \circ \rho^C_{\pi}] \colon \Gamma \to \GSpin_{2n+1}(\lql) \to \GL_1(\lql)
$$
corresponds to $\omega_\pi|\cdot|^{n(n+1)/2}$  by global class field theory.
\item[(\textit{ii}')] There exists a finite set of prime numbers $S$ with $S \supset \Sbad$ such that for all finite $F$-places $v$ which are not above $S\cup\{ \ell\}$, $\rho^C_{\pi, v}$ is unramified and the element $\rho^C_{\pi}(\Frob_v)_{\ssimple}$ is conjugate to the Satake parameter of $\iota\pi_v |\simil|^{n(n+1)/4}$  in $\GSpin_{2n+1}(\lql)$.
\item[(\textit{iii.a}')] $\mu_{\tu{HT}}(\rho^C_{\pi, v}, \iota y) = \iota \mu_{\tu{Hodge}}(\xi_y)-\frac{n(n+1)}{4}\simil$. (See Definitions \ref{def:Hodge-Tate} and \ref{def:Hodge}.)\footnote{Even though $\mu_{\tu{HT}}(\rho^C_{\pi, v}, \iota y)$ and $\iota \mu_{\tu{Hodge}}(\xi_y)$ are only \emph{conjugacy classes} of (possibly half-integral) cocharacters, the equality makes sense since $\frac{n(n+1)}{4}\simil$ is a central (possibly half-integral) cocharacter.}
\end{itemize}
\end{theorem}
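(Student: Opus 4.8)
The plan is to assemble the three ingredients already prepared: (1) the lift $\tilrhoflat:\Gamma\to\GSpin_{2n+1}(\lql)$ of $\rho_{\pi^\flat}$ from Proposition \ref{prop:LiftToGSpin}, together with the strong irreducibility of $\rho_1:=\spin\circ\tilrhoflat$ from Proposition \ref{prop:Rho1Irred}; (2) the cohomological representation $\rho_2:\Gamma\to\GL_{2^n}(\lql)$ of Lemma \ref{lem:RemoveMultiplicity}, which at almost all unramified $v$ has $\rho_2(\Frob_v)_{\ssimple}$ conjugate to $q_v^{n(n+1)/4}\spin(\iota\phi_{\pi_v})$; and (3) the projective rigidity Lemma \ref{lem:mut} (equivalently Proposition \ref{prop:LocallyConjugateProjectiveReps}). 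First I would note that by Lemma \ref{lem:RemoveMultiplicity}(ii) and by part (i) of Theorem \ref{thm:ExistGaloisRep} together with the unramified compatibility $\eta\circ\phi_{\pi^\flat_v}=\phi_{\pi^\sharp_v}$ (Lemma \ref{lem:ArthurUnramified}) and the fact (Bin Xu \cite[Lem. 5.2]{Xu}) that restriction $\pi^\flat_v\subset\pi_v$ corresponds to $\GSpin_{2n+1}\twoheadrightarrow\SO_{2n+1}$ on dual groups, the projectivizations $\overline{\rho_1}$ and $\overline{\rho_2}$ agree at almost all unramified Frobenii in $\PGL_{2^n}(\lql)$. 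Since $\rho_1$ is strongly irreducible, Lemma \ref{lem:mut} (with $t=1$, or Proposition \ref{prop:LocallyConjugateProjectiveReps}) produces a continuous character $\chi:\Gamma\to\lql^\times$ with $\rho_2\simeq\rho_1\otimes\chi$.

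Next I would define $\rho^C_\pi:=\chi\otimes\tilrhoflat:\Gamma\to\GSpin_{2n+1}(\lql)$ and check the listed properties. Property (i$'$): composing with $q:\GSpin_{2n+1}\to\SO_{2n+1}$ kills the central twist by $\chi$, so $q\circ\rho^C_\pi=q\circ\tilrhoflat=\rho_{\pi^\flat}$ on the nose, and by Theorem \ref{thm:ExistGaloisRep} (and Proposition \ref{prop:ConjugacyStdRep} for uniqueness) this matches $\rho_{\pi^\flat}$ up to conjugacy for any choice of $\pi^\flat\subset\pi$; for the spinor norm character I would compute $\mathcal{N}\circ\rho^C_\pi=\chi^2\cdot(\mathcal{N}\circ\tilrhoflat)$, observe $\mathcal{N}\circ\tilrhoflat$ has finite order, and pin down $\chi^2$ (hence $\mathcal{N}\circ\rho^C_\pi$) by comparing with the central character $\omega_\pi$ at unramified places through the relation between the spinor norm and the similitude factor on the torus computed in \S\ref{sect:Notation}. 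Property (ii$'$): at unramified $v\nmid\ell$ outside a finite $S\supset\Sbad$, the Satake parameter of $\iota\pi_v|\simil|^{-n(n+1)/4}$ maps under $\spin$ to $q_v^{-n(n+1)/4}\rho_2(\Frob_v)_{\ssimple}$ and under $q$ to $\rho_{\pi^\flat}(\Frob_v)_{\ssimple}$; I would deduce conjugacy in $\GSpin_{2n+1}(\lql)$ itself using that $\GSpin_{2n+1}\to\SO_{2n+1}\times\GL_{2^n}$ (via $q\times\spin$) is a closed immersion on semisimple conjugacy classes, or more simply that a semisimple class in $\GSpin_{2n+1}$ is determined by its image in $\SO_{2n+1}$ together with the spinor norm, which is controlled by $\chi$. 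Property (iii.a$'$) is then Lemma \ref{lem:Hodge-Tate-cochar} applied to $q$ and $\spin$ together with part (iii) of Theorem \ref{thm:ExistGaloisRep} and the Hodge-Tate number of $\chi$, which is forced by the weight-$w$ normalization (i.e. by $\mathcal{N}\circ\rho^C_\pi$). Properties (iii.b), (iv), (v) follow from the corresponding parts of Theorem \ref{thm:ExistGaloisRep} via Lemma \ref{lem:Hodge-Tate-cochar} and Lemma \ref{lem:odd-GO} (noting a quadratic twist may be needed in (iii.b) precisely because $\chi$ need not be crystalline/semistable), while (vi) is Lemma \ref{lem:ZariskiDense} combined with the irreducibility of $\spin$, and the uniqueness statement follows from Corollary \ref{prop:GSpinConjugateReps} using strong irreducibility of $\spin\circ\rho^C_\pi$, which in turn comes from Lemma \ref{lem:ZariskiDense} applied over every finite (in particular quadratic) extension.

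The main obstacle I anticipate is not any single step but the bookkeeping around normalizations and the twist $\chi$: one must track the $|\cdot|^{n(n+1)/4}$ shifts consistently between the $C$-normalization used here and the unitary normalization of $\rho_{\pi^\flat}$, verify that $\chi$ has the expected (finite order times a power of the cyclotomic character) shape so that $\rho^C_\pi$ is genuinely de Rham with the asserted Hodge-Tate cocharacter, and confirm that semisimple conjugacy in $\GSpin_{2n+1}$ is detected by $(q,\mathcal N)$ so that (ii$'$) upgrades from $\SO_{2n+1}$-conjugacy plus a scalar to honest $\GSpin_{2n+1}$-conjugacy. A secondary subtlety is that Lemma \ref{lem:mut} only yields $\chi$ up to the ambiguity resolved by Lemma \ref{lem:UniqueChi}, so I would invoke that uniqueness to make $\chi$, hence $\rho^C_\pi$, well-defined up to conjugacy.
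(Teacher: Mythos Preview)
Your architecture---construct $\rho^C_\pi:=\chi\otimes\tilrhoflat$ via projective rigidity (Proposition \ref{prop:LocallyConjugateProjectiveReps}) and then verify (i$'$)--(vi)---is exactly the paper's. But three verification steps have real gaps.

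For (ii$'$), neither of your proposed detectors of semisimple conjugacy in $\GSpin_{2n+1}$ works. The pair $(q,\mathcal N)$ fails: in the torus coordinates of Proposition \ref{prop:Larsen}, the elements $(c,a_1,\ldots,a_n)$ and $(-c,a_1,\ldots,a_n)$ have the same $\SO_{2n+1}$-image and the same spinor norm, yet are Weyl-conjugate only when some $a_i=-1$. (Injectivity of $(q,\mathcal N)$ \emph{does} hold on conjugacy classes of cocharacters, since $\TGSpin\to\TGO$ is an isogeny; this is precisely how the paper argues (iii.a$'$).) The pair $(q,\spin)$ also fails: after conjugating so that $q(x_1)=q(x_2)$ one has $x_2=zx_1$ with $z$ central, and $\spin(x_1)\sim z\,\spin(x_1)$ in $\GL_{2^n}$ only forces $z$ to be a root of unity. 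The paper proves instead (Corollary \ref{cor:Steinberg}) that $(\spin,\mathcal N)$ detects conjugacy, via Lemma \ref{lem:similitude-of-spin-rep}, Proposition \ref{prop:ConjugacyStdRep}, and Steinberg's theorem for $\Spin_{2n+1}$. This also shows why your spinor-norm computation is incomplete: $\rho_2$ a priori sits only in $\GL_{2^n}$, so no similitude character is available to ``pin down $\chi^2$''. The paper uses Lemma \ref{lem:similitude-of-spin-rep} to place $\spin\circ\rho^C_\pi$ in $\GO_{2^n}$ or $\GSp_{2^n}$, checks $\spin(\rho^C_\pi)\simeq\spin(\rho^C_\pi)^\vee\otimes\omega_\pi|\cdot|^{-n(n+1)/2}$ at unramified places, and then invokes strong irreducibility together with Lemma \ref{lem:UniqueChi} to identify the similitude character \emph{globally}; Lemma \ref{lem:similitude-of-spin-rep} then yields $\mathcal N\circ\rho^C_\pi=\omega_\pi|\cdot|^{-n(n+1)/2}$. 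Finally, for (iii.b), Theorem \ref{thm:ExistGaloisRep} only gives crystalline/semistable for $\rho_{\pi^\flat}$ valued in $\SO_{2n+1}$, and lifting such a property through the central isogeny is not automatic from Lemma \ref{lem:Hodge-Tate-cochar}. The paper invokes Conrad's result (Proposition \ref{prop:ConradLiftHodgeTateProperty}): once \emph{some} $\GSpin_{2n+1}$-lift is Hodge--Tate (here $\rho^C_\pi$ itself, via Kisin's potential semistability of Shimura-variety cohomology), a crystalline/semistable lift exists; the quadratic twist in (iii.b) arises because Conrad's lift may differ from $\rho^C_\pi$ by a central order-two character.
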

\begin{proof}
We have the automorphic representation $\pi$ of $\GSp_{2n}(\A_F)$. Consider
\begin{itemize}
\item $\pi^\flat \subset \pi$ a cuspidal automorphic $\Sp_{2n}(\A_F)$-subrepresentation from Lemma \ref{lem:LabesseSchwermer};
\item $\pi^\natural$ a transfer of $\pi$ to the group $G(\A_F)$ from Proposition \ref{prop:JL-for-GSp};
\item $\rho_{\pi^\flat} \colon \Gamma \to \SO_{2n+1}(\lql)$ the Galois representation from Theorem \ref{thm:ExistGaloisRep};
\item $\wt{\rho_{\pi^\flat}}$ a lift of $\rho_{\pi^\flat}$ to the group $\GSpin_{2n+1}(\lql)$.
To prove the existence of this lift, consider for $s \in \Z_{\geq 1}$ the group $\GSpin^{(s)}_{2n+1}(\lql)$ of $g \in \GSpin_{2n+1}(\lql)$ such that $\cN(g)^s =1$. Using the exact sequence $\mu_{2s} \injects \GSpin^{(s)}_{2n+1} \surjects \SO_{2n+1}$ and the vanishing of $\textup{H}^2(\Gamma, \Q/\Z)$ we see that such a lift indeed exists for $s$ sufficiently large and divisible.
\item $\rho_1 \colon \Gamma \to \GSpin_{2n+1}(\lql) \overset {i_{a(\pi^\natural)} \spin^\vee} \to \GL_{a(\pi^\natural)2^n}(\lql)$ the composition of $\wt{\rho_{\pi^\flat}}$ with the $a(\pi^\natural)$-th power of the $\spin$ representation.
\item $\rho_2 = \rho_2^\coh \colon \Gamma \to \GL_{a(\pi^\natural) 2^n}(\lql)$ a semi-simple representative of the virtual representation introduced in \eqref{eq:Rho2Plus} (see also Proposition \ref{prop:PointCounting});
\end{itemize}
(see also Figure 1 in the introduction). We have
\begin{itemize}
\item For all finite $F$-places $v$ away from $S$,
$$
\overline {\rho_1(\Frob_v)_{\ssimple}} \sim \overline{\rho_2(\Frob_v)_{\ssimple}} \in \PGL_{a(\pi^\natural)2^n}(\lql)
$$
by Proposition \ref{prop:PointCounting};
\item The representation $\li {\rho_1} \colon \Gamma \to \PGL_{a(\pi^\natural) 2^n}(\lql)$ has connected image. This is true because $\li {\rho_1}$ factors over $\SO_{2n+1}(\lql)$, has a regular unipotent in its image, and hence has connected image by Proposition \ref{prop:DetermineZariskiImage}.
\end{itemize}
By Proposition \ref{prop:WeaklyAcceptable}(1) and Example \ref{ex:PGL-example} there exists a $g \in \GL_{a(\pi^\natural) 2^n}(\lql)$ and a character $\chi \colon \Gamma \to \lql^\times$, such that
$$
\rho_2 = \chi g \rho_1 g\inv \colon \Gamma \to \GL_{a(\pi^\natural)2^n}(\lql).
$$
Without loss of generality, we replace $\rho_2$ with $g^{-1}\rho_2 g$ to assume that $\rho_2=\chi \rho_1$.
By construction, the representation $\rho_1$ has image inside $\GSpin_{2n+1}(\lql) \subset \GL_{a(\pi^\natural)2^n}(\lql)$, and consequently the representation $\rho_2^\coh$ has image in $\GSpin_{2n+1}(\lql)$ as well. Thus $\rho_2^\coh$ induces a representation
$$
\rho^C_{\pi} \colon \Gamma \to \GSpin_{2n+1}(\lql)
$$
such that $\rho_2 = i_{a(\pi^\natural)}\spin^\vee \circ \rho^C_{\pi}$. We now compute $\rho^C_\pi$ at Frobenius conjugacy classes. The uniqueness of $\rho^C_\pi$ will then follow from Proposition \ref{prop:SpinConjugacyA} and the fact that since $q\rho^C_\pi$ is conjugate to $\rho_{\pi^\flat}$ it has a regular unipotent element of $\GSpin_{2n+1}$ in its image.

We have the semisimple elements $\rho^C_{\pi}(\Frob_v)_{\ssimple}$ and $\iota\phi_{\pi_v}(\Frob_v)$ in $\GSpin_{2n+1}(\lql)$. At this point we know that 
\begin{itemize}
\item $\spin^\vee(\rho^C_{\pi}(\Frob_v)_{\ssimple})$ and $q_v^{n(n+1)/4}\cdot\spin^\vee(\iota\phi_{\pi_v}(\Frob_v))$ are conjugate in $\GL_{2^n}(\lql)$ by Proposition \ref{prop:PointCounting}.
\item $q(\rho^C_{\pi}(\Frob_v)_{\ssimple}) \sim q(\iota\phi_{\pi_v}(\Frob_v)) \in \GL_{2n+1}(\lql)$.
\end{itemize}
We claim that additionally we have 
\begin{equation}\label{eq:spinor-norm-equal}
\mathcal{N}(\rho^C_{\pi}(\Frob_v)_{\ssimple}) = \mathcal{N}( q_v^{-n(n+1)/4}  \iota\phi_{\pi_v}(\Frob_v))). 
\end{equation}
Once the claim is verified, by Lemma \ref{lem:GaugerSteinberg}, $\rho^C_{\pi}(\Frob_v)_{\ssimple}$ is conjugate to $q_v^{-n(n+1)/4}\cdot\iota\phi_{\pi_v}(\Frob_v)$ 
 in $\GSpin_{2n+1}(\lql)$ since $\{\spin, \std, \cN\}$ is fundamental set of representations of $\GSpin_{2n+1}$ (table above Lemma \ref{lem:GaugerSteinberg}). This proves statement (ii)'. Let us prove the claim. Possibly after conjugation by an element of $\GL_{2^n}(\lql)$, the image of $\spin\circ \rho_\pi$ lies in $\GO_{2^n}$ (resp. $\GSp_{2^n}$) if $n(n+1)/2$ is even (resp. odd), for some symmetric (resp. symplectic) pairing on the underlying $2^n$-dimensional space. Again by the same lemma, we may assume that $\spin(\iota\phi_{\pi_v}(\Frob_v))$ also belongs to $\GO_{2^n}$ (resp. $\GSp_{2^n}$). Hereafter we let the central characters $\omega_\pi$ or $\omega_{\pi_v}$ also denote the corresponding Galois characters via class field theory. For almost all $v$ we have the following isomorphisms (cf. Lemma~\ref{lem:similitude-of-spin-rep}) 
\begin{eqnarray}
(\spin((\rho^C_{\pi}|_{\Gamma_v})_{\ssimple}))^\vee 
& \simeq & |\cdot|^{-n(n+1)/4}\cdot (\spin(\iota\phi_{\pi_ v}))^\vee \\
 ~ & \simeq & ~ |\cdot|^{-n(n+1)/4}\cdot\spin(\iota\phi_{\pi_v})\otimes \omega^{-1}_{\pi_v} \nonumber \\ 
 ~ & \simeq & ~
|\cdot|^{-n(n+1)/2} (\spin((\rho^C_{\pi}|_{\Gamma_v})_{\ssimple}))\otimes \omega^{-1}_{\pi_v},\nonumber
\end{eqnarray}
The above isomorphisms imply that
\begin{equation}\label{eq:temporary}
\spin(\rho^C_{\pi})\simeq \spin(\rho^C_{\pi})^\vee \otimes |\cdot|^{n(n+1)/2}\omega_\pi.
\end{equation}
We have
$$
\spin(\rho^C_{\pi}) \simeq \spin(\rho^C_{\pi})^\vee \otimes \mathcal{N}(\spin(\rho^C_{\pi}))
$$
Recall the equality $\mathcal{N}(\phi_{\pi_v})=\omega_{\pi_v}$ (from functoriality of the Satake isomorphism with respect to the central embedding $\G_m\hra \GSp_{2n}$) and the isomorphism $\spin (\iota\phi_{\pi_v}) \isomto \spin (\iota\phi_{\pi_v})^{\vee} \otimes \simil(\phi_{\pi_v})^{-1}$ induced from the pairing $\langle \cdot , \cdot \rangle$ that defines $\GO_{2^n}$ (resp. $\GSp_{2^n}$). 
From Lemma \ref{lem:for-mult-one} we deduce
$$
\mathcal{N}(\spin(\rho^C_{\pi}))= |\cdot|^{n(n+1)/2}\omega_\pi.
$$
This proves the second part of (i'). Evaluating at unramified places, we obtain \eqref{eq:spinor-norm-equal}, finishing the proof of the claim.

We show statement (i)'. It only remains to check the first part. By Theorem \ref{thm:ExistGaloisRep} and the preceding proof of ({ii})', we have for almost all unramified places $v$ that
$$
\overline {\iota\phi_{\pi_v}(\Frob_v)} \sim \iota\phi_{\pi_v^\flat}(\Frob_v) \sim \rho_{\pi^\flat}(\Frob_v)_{\ssimple},
$$
where we also used that the Satake parameter of the restricted representation $\pi^\flat_v \subset \pi_v$ is equal to the composition of the Satake parameter of $\pi_v$ with the natural surjection $\GSpin_{2n+1}(\C) \to \SO_{2n+1}(\C)$ (cf. \cite[Lem. 5.2]{Xu}). Hence $ \overline {\rho^C_{\pi}(\Frob_v)_{\ssimple}} \sim \rho_{\pi^\flat}(\Frob_v)_{\ssimple}$ for almost all $F$-places $v$. Consequently $\std \circ \overline{\rho^C_{\pi}} \sim \std \circ \rho_{\pi^\flat} = \rho_{\pi^\sharp}$. By Proposition \ref{prop:ConjugacyStdRep} the representation $\overline {\rho^C_{\pi}}$ is $\SO_{2n+1}(\lql)$-conjugate to the representation $\rho_{\pi^\flat}$. This proves the first part of ({i})'.

We prove statement ({v}). The composition of $\rho_\pi$ with $\GSpin_{2n+1}(\lql) \surjects \SO_{2n+1}(\lql)$ is $\rho_{\pi^\flat}$. Hence statement (v) follows from Proposition \ref{prop:DetermineZariskiImage}.

The Galois representation in the cohomology $\uH^i(S_K, \cL)_{\tu{ss}}$ is potentially semistable by Kisin \cite[Thm. 3.2]{KisinPST}. The representation $\rho^C_{\pi}$ appears in this cohomology and is therefore potentially semistable as well (this uses that semisimplification preserves potential semistability). This proves the first assertion of statement ({iii}).

To verify ({iii.a})', let $\mu_{\tu{HT}}(\rho^C_{\pi}, i) \colon \G_{\tu{m}, \lql} \to \GSpin_{2n+1, \lql}$ be the Hodge-Tate cocharacter of $\rho^C_{\pi}$ (Definition \ref{def:Hodge-Tate}). Similarly, we let $\mu_{\tu{HT}}(\rho_{\pi^\flat}, i) \colon \G_{\tu{m}, \lql} \to \GO_{2n+1, \lql}$ be the Hodge-Tate cocharacter of $\rho_{\pi^\flat}$. We need to check that $\mu_{\tu{HT}}(\rho^C_{\pi, v}, \iota y) = \iota \mu_{\tu{Hodge}}(\xi_y)-\frac{n(n+1)}{4}\simil$. In fact it is enough to check the equalities
\begin{eqnarray}\label{eq:CheckHTcharacter1}
q\mu_{\tu{HT}}(\rho^C_{\pi, v}, \iota y) &= & q\iota \mu_{\tu{Hodge}}(\xi_y),\\
\mathcal{N} \circ \mu_{\tu{HT}}(\rho^C_{\pi, v}, \iota y))& = & \mathcal{N} \circ\iota \left(\mu_{\tu{Hodge}}(\xi_y)\right),
\label{eq:CheckHTcharacter2}
\end{eqnarray}
namely after applying the natural surjection $q': \GSpin_{2n+1} \surjects \GO_{2n+1} =\SO_{2n+1}\times \GL_1$ given by $(q,\mathcal{N})$, since $q'$ induces an injection on the set of conjugacy classes of cocharacters. (The map $X_*(\TGSpin) \to X_*(\TGO)$ induced by $q'$ is injective since $\TGSpin\to \TGO$ is an isogeny, so the map is still injective after taking quotients by the common Weyl group.)
Let $y \colon F \to \C$ be an embedding, such that $\iota y$ induces the place $v$. It is easy to see that $q
\mu_{\tu{HT}}(\rho^C_{\pi, v}, \iota y) = \mu_{\tu{HT}}(\rho_{\pi^\flat, v}, \iota y)$ and $q \mu_{\tu{Hodge}}(\xi_y) = \mu_{\tu{Hodge}}(\xi^\flat_y)$ from \eqref{eq:Functoriality-and-MuHodge} and Lemmas \ref{lem:restriction-real-parameter}) and \ref{lem:Hodge-Tate-cochar}. Thus \eqref{eq:CheckHTcharacter1} follows from Theorem \ref{thm:ExistGaloisRep} (iii). The proof of \eqref{eq:CheckHTcharacter2} is similar: we have the following equalities in $X_*(\G_m)=\Z$.
$$
\mathcal{N} \circ \mu_{\tu{HT}}(\rho^C_{\pi, v}, \iota y)) = \mu_{\tu{HT}}(\mathcal{N}\circ\rho^C_{\pi, v}, \iota y)) 
$$
$$
=\iota \mu_{\tu{Hodge}}(\omega_{\pi_y})-\tfrac{n(n+1)}{2}=\mathcal{N}\circ\iota (\mu_{\tu{Hodge}}(\xi_y) - \tfrac{n(n+1)}{4}\simil), 
$$
where the Hodge cocharacter of $\omega_\pi$ at $y$ is denoted by $\mu_{\tu{Hodge}}(\omega_{\pi_y})$. Indeed, the first, second, and third equalities follow from Lemma \ref{lem:Hodge-Tate-cochar}, part (i') of the current theorem (just proved above), and the fact that $\mathcal{N}\circ\phi_{\pi_y}=\phi_{\omega_{\pi_{\xi_y}}}$. The last fact comes from the description of the central character of an $L$-packet; see condition (ii) in \cite[\S3]{LanglandsRealClassification}.
The third equality also uses the easy observation that $\mathcal{N}\circ \simil=2$ under the canonical identification $X_*(\G_m)=\Z$.

We prove statement ({iii.b}). So we assume that $v|\ell$ and that $K_\ell$ is either hyperspecial or contains an Iwahori subgroup of $\GSp_{2n}(F\otimes \Q_\ell)$. We use the following proposition of Conrad:

\begin{proposition}[Conrad]\label{prop:ConradLiftHodgeTateProperty}
Consider a representation $\sigma \colon \Gamma_v \to \GO_{2n+1}(\lql)$ satisfying a basic $p$-adic Hodge theory property $P \in \{\tu{de Rham, crystalline, semistable}\}$. There exists a lift $\sigma' \colon \Gamma_v \to \GSpin_{2n+1}(\lql)$ that satisfies $P$ if $\sigma$ admits a lift $\sigma'' \colon \Gamma_v \to \GSpin_{2n+1}(\lql)$ which is Hodge-Tate.
\end{proposition}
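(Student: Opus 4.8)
The plan is to use that $q'=(q,\mathcal{N})\colon \GSpin_{2n+1}\to \GO_{2n+1}=\SO_{2n+1}\times\Gm$ is a central isogeny with finite kernel: the composite $\Gm=Z(\GSpin_{2n+1})\hookrightarrow\GSpin_{2n+1}\xrightarrow{q'}\GO_{2n+1}$ is $z\mapsto(1,z^2)$, so $\ker q'=\mu_2$. This has two consequences. First, $\sigma''$ is automatically de Rham: taking $r=\spin$, the endomorphism representation $\mathrm{End}(r)$ kills the centre $\Gm$ of $\GSpin_{2n+1}$ and hence factors through $q$, so $\mathrm{End}(r\circ\sigma'')=\mathrm{End}(r)\circ q\circ\sigma''$ is de Rham (a pushforward of the de Rham representation $q\circ\sigma''$, the $\SO_{2n+1}$-component of $\sigma$); since moreover $r\circ\sigma''$ is Hodge–Tate by hypothesis, one writes $r\circ\sigma''\cong A\otimes\psi$ with $A$ de Rham and $\psi$ a character, observes that $\psi$ is Hodge–Tate (being a subquotient of $\mathrm{Hom}(A,r\circ\sigma'')$), hence de Rham because Hodge–Tate characters of $\Gamma_v$ are de Rham, and concludes that $r\circ\sigma''$ — and therefore $\sigma''$ — is de Rham. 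This already settles $P=$ de Rham (with $\sigma'=\sigma''$) and, via the $p$-adic monodromy theorem, shows $\sigma''$ is potentially semistable. Second, any two continuous lifts of $\sigma$ to $\GSpin_{2n+1}(\lql)$ differ by a continuous quadratic character $\eta\colon\Gamma_v\to\{\pm1\}$, so for $P\in\{\text{crystalline},\text{semistable}\}$ it suffices to produce one $\eta$ with $\sigma''\cdot\eta$ satisfying $P$; then $\sigma':=\sigma''\cdot\eta$ is the desired lift, as $\eta$ takes values in $\ker q'$.

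To produce $\eta$, I would localize the obstruction. Because $\sigma$ satisfies $P$, so do its two components $q\circ\sigma''$ and $\mathcal{N}\circ\sigma''$; in the application to part (iii.b) of Theorem~\ref{thm:A} the first is $\rho_{\pi^\flat,v}$, crystalline resp.\ semistable over $F_v$ by Theorem~\ref{thm:ExistGaloisRep}(v), and the second is an unramified twist of a power of the cyclotomic character, hence crystalline. By the first paragraph $\sigma''$ is potentially semistable; in the crystalline case the monodromy operator $N$ of $\sigma''$ maps to $0$ under $\mathrm{Lie}(q)$, so it lies in the one-dimensional abelian $\mathrm{Lie}(\Gm)$ and vanishes, i.e.\ $\sigma''$ is potentially crystalline. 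The failure of $\sigma''$ to be semistable (resp.\ crystalline) over $F_v$ is recorded by the inertia action on the $\GSpin_{2n+1}$-valued Weil–Deligne module $D_{\tu{pst}}(\sigma'')$, namely by a continuous homomorphism $\phi_I\colon I_{F_v}\to\GSpin_{2n+1}(\lql)$ with finite image. Its composites with $q$ and with $\mathcal{N}$ are trivial (because $q\circ\sigma''$ and $\mathcal{N}\circ\sigma''$ are already semistable resp.\ crystalline over $F_v$), so $\phi_I$ takes values in $\ker q\cap\ker\mathcal{N}=\mu_2$. Since these values are central, $\phi_I$ is invariant under $\Frob$-conjugation and hence extends to a quadratic character $\eta\colon\Gamma_v\to\{\pm1\}$; then $\sigma':=\sigma''\cdot\eta$ has trivial inertia action on $D_{\tu{pst}}$, so is semistable (resp.\ crystalline) over $F_v$ while still lifting $\sigma$. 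This $\eta$ is precisely the quadratic twist appearing in part (iii.b) of Theorem~\ref{thm:A}.

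The main obstacle is making the second paragraph — and the ``$A\otimes\psi$'' step of the first — rigorous, i.e.\ developing enough $p$-adic Hodge theory for $\GSpin_{2n+1}$-valued (rather than $\GL_N$-valued) representations: a well-behaved $D_{\tu{pst}}$ together with its associated $\GSpin_{2n+1}$-valued Weil–Deligne representation, the compatibility of these Tannakian constructions with the central isogeny $q'$, and all of this without assuming $\sigma''$ semisimple (at a Steinberg place the local representation carries a large unipotent part). This is exactly Conrad's framework, and I would carry it out following \cite[Prop. 5.3 and its proof]{ConradLifting}.
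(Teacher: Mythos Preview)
Your sketch is essentially Conrad's argument, which is all the paper invokes: its entire proof is ``Combine Proposition~6.5 with Corollary~6.7 of Conrad's article~\cite{ConradLifting}.'' Two points are worth flagging.

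First, the citation: \cite[Prop.~5.3]{ConradLifting} is the existence-of-continuous-lifts statement (Tate's $H^2$ vanishing), used in the paper as Proposition~\ref{prop:LiftToGSpin}. The $p$-adic Hodge theory input you actually need is Conrad's Proposition~6.5 and Corollary~6.7.

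Second, your de Rham paragraph contains a hidden circularity. The step ``one writes $r\circ\sigma''\cong A\otimes\psi$ with $A$ de Rham'' requires producing a de Rham lift $A$ of the de Rham projective representation $\overline{\spin}\circ q\circ\sigma''$ along the $\Gm$-extension $\GL_{2^n}\to\PGL_{2^n}$. That is precisely the $\Gm$-kernel instance of the same lifting problem you are trying to solve; it is not more elementary than the $\mu_2$-kernel case $\GSpin_{2n+1}\to\GO_{2n+1}$. Conrad treats all multiplicative-type central extensions uniformly, so there is no gain in routing through $\GL_{2^n}$. (Note also that knowing $\mathrm{End}(V)$ and $\det V$ are de Rham, with $V$ Hodge--Tate, does not by itself force $V$ de Rham without this lifting input: $D_{\mathrm{dR}}$ does not commute with tensor products in general.)

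Your second paragraph is correct in outline and is indeed Conrad's mechanism. The Frobenius-invariance of $\phi_I$ really does follow from centrality: writing $\rho_{\mathrm{WD}}$ for the $\GSpin_{2n+1}$-valued Weil--Deligne representation of $\sigma''$, one has for $\gamma\in I_{F_v}$ that $\rho_{\mathrm{WD}}(\mathrm{Frob}^{-1}\gamma\,\mathrm{Frob})=\rho_{\mathrm{WD}}(\mathrm{Frob})^{-1}\phi_I(\gamma)\,\rho_{\mathrm{WD}}(\mathrm{Frob})=\phi_I(\gamma)$ since $\phi_I(\gamma)\in\mu_2$ is central, and then $H^2(\hat\Z,\mu_2)=0$ gives the extension to $\Gamma_v$. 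As you say, the substantive work is setting up $D_{\mathrm{pst}}$ and the associated Weil--Deligne representation Tannakianly for $\GSpin_{2n+1}$-valued $\sigma''$, compatibly with $q'$; this is exactly what Conrad supplies.
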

\begin{proof}
(see also Wintenberger \cite{wintenberger}). Combine Proposition 6.5 with Corollary 6.7 of Conrad's article \cite{ConradLifting}. (Conrad has general statements for central extensions of algebraic groups of the form $[Z \injects H' \surjects H]$; we specialized his case to our setting.)
\end{proof}

Write $\omega'_{\pi}:=\omega_{\pi}|\cdot|^{n(n+1)/2}$. Let $\rec(\cdot)$ denote the $\ell$-adic Galois representation corresponding to a Hecke character of $\A_F^\times$ via global class field theory. The product $\rec(\omega'_\pi)\rho_{\pi^\flat} \colon \Gamma \to \GO_{2n+1}(\lql)=\GL_1(\lql)\times \SO_{2n+1}(\lql)$ is the Galois representation corresponding to the automorphic representation $\omega'_{\pi} \otimes \pi^\flat$ of $\A_F^\times \times \Sp_{2n}(\A_F)$, cf. Theorem \ref{thm:ExistGaloisRep}. By comparing Frobenius elements at the unramified places, and using Chebotarev and Brauer-Nesbitt, the representations $\std \circ q' \circ \rho^C_{\pi}$ and $\std \circ \rec(\omega'_{\pi})\rho_{\pi^\flat}$ are isomorphic. Hence $q' \circ \rho^C_{\pi}$ and $\rec(\omega'_\pi)\rho_{\pi^\flat}$ are $\GO_{2n+1}(\lql)$-conjugate by Proposition \ref{prop:ConjugacyStdRep}. We make two observations:
\begin{itemize}
\item We saw that $\rho^C_{\pi}$ is potentially semistable. In particular $\rec(\omega'_\pi)\rho_{\pi^\flat}$ has a lift which is a Hodge-Tate representation (namely $\rho^C_{\pi}$ is such a lift).
\item Under the assumption of ({iii.b}), $\rho_{\pi^\flat}$ is known to be crystalline (resp. semistable\footnote{If $\pi_v$ has a nonzero vector fixed under the standard Iwahori subgroup $I$ of $\GSp_{2n}(F_v)$, then the restriction $\pi_v^{\flat}$ has a nonzero fixed vector under an Iwahori subgroup of $\Sp_{2n}(F_v)$, which may be not conjugate to $I \cap \Sp_{2n}(F_v)$, but the Galois representation $\rho_{\pi^\flat}$ is still semi-stable at $v$.}) by ({iv}) and ({v})) of {Theo\-rem} \ref{thm:ExistGaloisRep}).
We have $K_\ell \cap Z(\A_F) = Z(\wh \cO_F)$ if $K_\ell$ contains an Iwahori subgroup and thus the character $\rec(\omega'_{\pi})$ is crystalline at all $v|\ell$. Therefore $\rec(\omega'_{\pi}) \rho_{\pi^\flat}$ is crystalline (resp. semistable) as well.
\end{itemize}

By Proposition \ref{prop:ConradLiftHodgeTateProperty} and Theorem \ref{thm:ExistGaloisRep}.(v), the local representation $\rec(\omega'_{\pi})\rho_{\pi^\flat}|_{\Gamma_v}$ has a lift $r_v := \wt {\rec(\omega'_{\pi})\rho_{\pi^\flat}} \colon \Gamma_v \to \GSpin_{2n+1}(\lql)$ which is crystalline/semistable. Since $r_v$ and $\rho^C_{\pi}|_{\Gamma_v}$ are both lifts of $\rec(\omega'_{\pi})\rho_{\pi^\flat}|_{\Gamma_v}$, the representations $r_v$ and $\rho^C_{\pi}|_{\Gamma_v}$ differ by a quadratic character $\chi_v$.
Hence statement ({iii.b}) follows.

We prove statement ({vi}). Let $\rho' \colon \Gamma \to \GSpin_{2n+1}(\lql)$ be semisimple and such that for almost all $F$-places $v$ where $\rho'$ and $\rho^C_\pi$ are unramified, $\rho'(\Frob_v)_{\textup{ss}}$ is conjugate to $\rho^C_{\pi}(\Frob_v)_{\textup{ss}}$. We know that $\rho_{\pi}^C$ has a regular unipotent element in its image (it suffices to check this after composing with $\GSpin_{2n+1}(\lql) \surjects \SO_{2n+1}(\lql)$, in which case it follows from Proposition \ref{prop:DetermineZariskiImage}). Hence Proposition \ref{prop:SpinConjugacyA} implies that the two representation are conjugate.

Statement ({iv}) reduces to checking that $\rho_{\pi^\flat}:\Gamma\to \GO_{2n+1}(\lql)$ is totally odd since the covering map $\GSpin_{2n+1}\to \GO_{2n+1}$ induces an isomorphism on the level of Lie algebras and the adjoint action of $\GSpin_{2n+1}$ factors through that of $\GO_{2n+1}$. We already know from Theorem \ref{thm:ExistGaloisRep}.({vi}) that $\rho_{\pi^\flat}$ is totally odd, so the proof is complete.
\end{proof}

\section{Compatibility at unramified places}\label{sect:CompatUnrPlaces}

Let $\pi$ be an automorphic representation of $\GSp_{2n}(\A_F)$, satisfying the same conditions as in Section \ref{sect:Construction}. In this section we identify the representation $\rho_{\pi, v}$ from Theorem \ref{thm:RhoPi0} at \emph{all} places $v$ not above $S_{\tu{bad}}\cup\{\ell\}$. 

\begin{proposition}\label{prop:AllUnramifiedPlaces}
Let $v$ be a finite $F$-place such that $p:=v|_\Q$ does not lie in $S_{\tu{bad}} \cup \{\ell\}$. Then $\rho^C_{\pi}$ is unramified at $v$. Moreover $\rho^C_{\pi}(\Frob_v)_{\ssimple}$ is conjugate to $\iota\phi_{\pi_v |\simil|^{n(n+1)/4}}(\Frob_v)$.
\end{proposition}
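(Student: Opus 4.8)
The plan is to deduce the statement at all places $v$ outside $S_{\tu{bad}}\cup\{\ell\}$ from the already-established statement (ii)$'$ of Theorem \ref{thm:RhoPi0}, which gives the same conclusion for almost all such $v$, together with local-global compatibility results that already hold for the components of $\rho^C_\pi$ under the maps $q$ and $\mathcal N$. First I would recall that $q\circ\rho^C_\pi$ is $\SO_{2n+1}(\lql)$-conjugate to $\rho_{\pi^\flat}$ (part (i)$'$), and that $\std\circ\rho_{\pi^\flat}=\rho_{\pi^\sharp}$ by Theorem \ref{thm:ExistGaloisRep}, where $\rho_{\pi^\sharp}$ satisfies full local-global compatibility (the ramified case of (i) there, established via Harris--Taylor/Shin/Caraiani/Taylor--Yoshida). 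Since $p\notin S_{\tu{bad}}$, the representation $\pi_v$ is unramified; hence $\phi_{\pi_v^\sharp}=\eta\circ\phi_{\pi_v^\flat}$ is an unramified parameter by Lemma \ref{lem:ArthurUnramified}, and local-global compatibility for $\rho_{\pi^\sharp}$ forces $\rho_{\pi^\sharp}|_{\Gamma_v}=\std\circ\rho_{\pi^\flat}|_{\Gamma_v}$ to be unramified with Frobenius semisimple part $\iota\phi_{\pi_v^\sharp}(\Frob_v)$. By Proposition \ref{prop:ConjugacyStdRep} this pins down $\rho_{\pi^\flat}|_{\Gamma_v}$, hence $q\circ\rho^C_\pi$, to be unramified at $v$ with the correct Satake parameter.

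Next I would handle the similitude/spinor-norm component. The character $\mathcal N\circ\rho^C_\pi$ equals $\rec(\omega_\pi|\cdot|^{-n(n+1)/2})$ by part (i)$'$, a Hecke character whose conductor divides the conductor of $\omega_\pi$; since $\pi_v$ is unramified, $\omega_{\pi_v}$ is unramified, and $p\notin S_{\tu{bad}}$ implies $p$ is unramified in $F$, so this character is unramified at $v$ and its Frobenius value is $\iota\omega_{\pi_v}(\varpi_v)|\varpi_v|^{-n(n+1)/2}$, which is exactly $\mathcal N$ applied to the Satake parameter of $\iota\phi_{\pi_v}|\simil|^{-n(n+1)/4}$ (using $\mathcal N\circ\phi_{\pi_v}=\omega_{\pi_v}$ from functoriality of Satake along $\G_m\hra\GSp_{2n}$). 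Combining the two: $\rho^C_\pi$ is unramified at $v$ because both $q\circ\rho^C_\pi$ and $\mathcal N\circ\rho^C_\pi$ are, and the kernel of $(q,\mathcal N)=q'$ on $\GSpin_{2n+1}$ is $\mu_2$, which is finite, so the image of inertia under $\rho^C_\pi$ is finite; but $\rho^C_\pi|_{\Gamma_v}$ is de Rham (indeed $\rho^C_\pi$ is potentially semistable, shown in Theorem \ref{thm:RhoPi0}) — actually more simply, one argues that a lift through $q'$ of an unramified representation, after possibly an unramified twist, is unramified, or invokes that $\rho^C_\pi$ appears in the étale cohomology of $S_K$ which has good reduction at $v$ (as $(G,K)$ is unramified at $p\notin S_{\tu{bad}}$) by Theorem \ref{thm:KottwitzPointConjecture}, hence is unramified at $v$ by smooth proper base change. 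I would use the cohomological argument since it is cleanest.

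Finally, knowing $\rho^C_\pi$ is unramified at $v$, I would apply Corollary \ref{cor:Steinberg}: the semisimple element $\rho^C_\pi(\Frob_v)_{\ssimple}$ and the Satake parameter $s_v$ of $\iota\phi_{\pi_v}|\simil|^{-n(n+1)/4}$ agree after applying $\spin$ (this is Lemma \ref{lem:RemoveMultiplicity}(ii) combined with \eqref{eq:Kottwitz-f_j}, valid for almost all $v$, plus a Chebotarev/Brauer--Nesbitt argument to propagate from "almost all" to "all unramified $v$" for the $2^n$-dimensional representation $\spin\circ\rho^C_\pi$ — here I would note $\spin\circ\rho^C_\pi$ is unramified at $v$ and compare characteristic polynomials of Frobenius, which are determined by the global representation up to the finitely many excluded places via weak compatibility, or directly: the virtual representation $\rho_2^\coh$ is realized in cohomology with good reduction at $v$, so its Frobenius trace is computed by the point-counting formula of Theorem \ref{thm:KottwitzPointConjecture} at \emph{every} unramified $v$, not just almost all) and they agree after applying $\mathcal N$ (the similitude computation above). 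The main obstacle is the "almost all $\Rightarrow$ all" upgrade: strictly one must rerun the Langlands--Kottwitz point-count of Proposition \ref{prop:PointCounting} at the given $v$ rather than appeal to weak compatibility, checking that the auxiliary test function $f^{\infty,\vst}$ and the finite exceptional set $\Sigma$ can be chosen to avoid $v$ — which is possible precisely because $\pi^\natural$ is unramified at $v$ and $v\notin S_{\tu{bad}}$, so $K_v$ is hyperspecial and $f_v=\one_{K_v}$ works. Once that is in place, everything reduces to the two group-theoretic identities above and Corollary \ref{cor:Steinberg}.
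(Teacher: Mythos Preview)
Your overall architecture is right --- rerun the point-counting at the given $v$, combine with the spinor-norm computation, and finish via Corollary~\ref{cor:Steinberg} --- and this is exactly what the paper does. But there is a genuine gap at the step you flag as ``the main obstacle''. You assert that the auxiliary test function $f^{\infty,\vst}$ can be chosen with $f_v=\one_{K_v}$ while still isolating $\pi^\natural$, i.e.\ that $\Sigma$ can be made to avoid $v$. This is not automatic: once you put $\one_{K_v}$ at $v$, the trace identity picks up every automorphic $\tau$ with $\tau^{\infty,\vst,v}\simeq\pi^{\natural,\infty,\vst,v}$, $\tau_{\vst}$ Steinberg up to unramified twist, $\tau_\infty$ $\xi$-cohomological, and $\tau_v$ merely \emph{unramified}. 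A priori such $\tau$ could have $\tau_v\not\simeq\pi^\natural_v$, and then the right-hand side of the point-count at $v$ is a sum of traces of \emph{different} Satake parameters, from which you cannot read off $\spin(\iota\phi_{\pi^\natural_v})(\Frob_v)$.

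The paper handles this by defining the set $B(\pi^\natural)$ of all such $\tau$, forming $\rho_3^\coh=\bigoplus\rho_2^\coh(\tau)$, and proving separately (Lemma~\ref{lem:same-v-compo}) that every $\tau\in B(\pi^\natural)$ in fact satisfies $\tau_v\simeq\pi^\natural_v$. That lemma is not a formality: it transfers $\tau$ and $\pi^\natural$ back to $\GSp_{2n}$, invokes Bin Xu's global $L$-packet theory to see that $\tau^*$ and $\pi^*$ lie in packets differing by a quadratic Hecke character $\omega$, and then uses the strong irreducibility of $\spin\circ\rho_\pi$ (via $\spin(\rho_\pi)\simeq\spin(\rho_\pi)\otimes\omega\Rightarrow\omega=1$) together with uniqueness of the unramified member of a local $L$-packet. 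This is the missing idea in your proposal. Your preliminary observations about $q\circ\rho^C_\pi$ and $\mathcal N\circ\rho^C_\pi$ are correct but by themselves only pin down $\rho^C_\pi(\Frob_v)_{\ssimple}$ up to the central $\pm1$, since $\ker(q,\mathcal N)=\mu_2$; resolving that sign is exactly what the $\spin$-side computation (and hence Lemma~\ref{lem:same-v-compo}) is for.
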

\begin{proof}
Let $\pi^\natural$ be a transfer of $\pi$ to the inner form $G(\A_F)$ of $\GSp_{2n}(\A_F)$ (Proposition \ref{prop:JL-for-GSp}). Let $B(\pi^\natural)$ be the set of cuspidal automorphic representations $\tau$ of $G(\A_F)$ such that
\begin{itemize}
\item $\tau_{\vst}$ and $\pi_{\vst}$ are isomorphic up to a twist by an unramified character,
\item $\tau^{ \infty, \vst,v}$ and $\pi^{\natural, \infty, \vst,v}$ are isomorphic,
\item $\tau_v$ is unramified,
\item $\tau_ \infty$ is $\xi$-cohomological.
\end{itemize}
To compare with the definition of $A(\pi^\natural)$, notice that the condition at $v$ is different. We define an equivalence relation $\approx$ on the set $B(\pi^\natural)$ by declaring that $\tau_1\approx \tau_2$ if and only if $\tau_2 \in A(\tau_1)$ (hence, $\tau_1 \approx \tau_2$ if and only if $\tau_{1, v} \simeq \tau_{2,v}$). Define a (true) representation of $\Gamma$ (see \eqref{eq:Rho2Plus} for $\rho_2^\coh(\tau)$, cf. Proposition \ref{prop:PointCounting} and Corollary \ref{cor:lgc-unramified})
\begin{equation}\label{eq:Rho3Plus}
\rho^\coh_3:=\sum_{\tau \in B(\pi^\natural)/{\approx}} \rho_2^\coh(\tau) =\sum_{\tau \in B(\pi^\natural)/{\approx}} i_{a(\tau)}\circ \rho_2(\tau).
\end{equation}
Recall the definition of $a(\tau)$ from \eqref{eq:Defa0}. Define $b(\pi^\natural) := \sum_{\tau \in B(\pi^\natural)/\approx} a(\tau)$.

Since $\rho_2(\pi^\natural)$ and $\rho_2(\tau)$ have the same Frobenius trace at almost all places for $\tau \in B(\pi^\natural)$, we deduce that $\rho_2(\pi^\natural)\simeq \rho_2(\tau)$. Hence
$$
\rho_3^\coh\simeq i_{b(\pi^\natural)}\circ\rho_2(\pi^\natural).
$$
We adapt the argument of Proposition \ref{prop:PointCounting} to the slightly different setting here. Consider the function $f$ on $G(\A_F)$ of the form $f = f_ \infty \otimes f_{\vst} \otimes \one_{K_v} \otimes f^{ \infty, \vst, v}$, where $f_ \infty$ and $f_{\vst}$ are as in the proof of that proposition, and $f^{ \infty,\vst, v}$ is such that, for all automorphic representations $\tau$ of $G(\A_F)$ with $\tau^{ \infty, K} \neq 0$ and $\Tr \tau_ \infty(f_ \infty) \neq 0$, we have (observe the slight difference between Equations \eqref{eq:AuxFunction} and \eqref{eq:AuxFunction2} at the place $v$):
\begin{equation}\label{eq:AuxFunction2}
\Tr \tau^{ \infty, \vst, v}(f^{ \infty, \vst, v}) = \begin{cases}
1 & \tu{if $\tau^{ \infty,\vst, v} \simeq \pi^{\natural, \infty,\vst,v}$} \cr
0 & \tu{otherwise.}
\end{cases}
\end{equation}
Arguing as in the paragraph between \eqref{eq:SpectralKottwitzFormula2} and \eqref{eq:LKPCF4}, but with $B(\pi^\natural)$, $b(\pi^\natural)$, and $\rho_3^\coh$ in place of $A(\pi^\natural)$, $a(\pi^\natural)$, and $\rho^\coh_2$, we obtain
$$
\Tr (\Frob_v^j, \rho_3^\coh) = \sum_{\tau \in B(\pi^\natural)/\approx} a(\tau) \Tr\tau_v(f_j) = \sum_{\tau \in B(\pi^\natural)/\approx} a(\tau)q_v^{jn(n+1)/4} \Tr(\spin^\vee(\iota\phi_{\tau_v}))(\Frob_v^j),
$$
where the last equality comes from \eqref{eq:Kottwitz-f_j}. Thus the proof boils down to the next lemma (Lemma \ref{lem:same-v-compo}). Indeed the lemma and the last equality imply that $\Tr \rho_2(\pi^\natural)(\Frob_v^j)$ is equal to $q_v^{n(n+1)/4} \Tr(\spin^\vee(\iota\phi_{\pi^\natural_v}))(\Frob_v^j)$. As in the proof of Theorem \ref{thm:RhoPi0}, since $\rho_2(\pi^\natural)=\spin^\vee \circ \rho_\pi^C$ by construction, the semisimple parts of $\rho_\pi^C(\Frob_v)$ and ${q_v}^{-n(n+1)/4}\iota\phi_{\pi^\natural_v}(\Frob_v)$ are conjugate. (Note that ${q_v}^{-n(n+1)/4}\phi_{\pi^\natural_v}(\Frob_v)=\phi_{\pi_v |\simil|^{n(n+1)/4}}(\Frob_v)$.)
\end{proof}

\begin{lemma}\label{lem:same-v-compo}
With the above notation, if $\tau \in B(\pi^\natural)$ then $\tau_v\simeq \pi^\natural_v$.
\end{lemma}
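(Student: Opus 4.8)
The plan is to descend to the split group and then pin down the local component at $v$ using Bin Xu's classification. Using Proposition~\ref{prop:JL-for-GSp} (applicable by Remark~\ref{rem:JL}) I would transfer both $\tau$ and $\pi^\natural$ to cuspidal automorphic representations $\widetilde\tau$ and $\pi$ of the split group $\GSp_{2n}(\A_F)$; thus $\widetilde\tau$ agrees with $\pi$ at all but finitely many finite places, $\widetilde\tau_{\vst}$ is an unramified character twist of the Steinberg representation, $\widetilde\tau_v$ is unramified, $\widetilde\tau_\infty$ lies in the discrete series $L$-packet $\Pi_\xi$ (Lemma~\ref{lem:ForceTempered}, Corollary~\ref{cor:Pi-is-in-Disc-series-Packet-2}), and likewise for $\pi$, and $v$ lies outside all the relevant finite exceptional sets. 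Passing to $\Sp_{2n}$-subrepresentations and then to $\GL_{2n+1}(\A_F)$ by twisted endoscopy, one obtains cuspidal self-dual representations $\widetilde\tau^\sharp$ and $\pi^\sharp$ (Corollary~\ref{cor:TemperedParameter}) which, by Lemma~\ref{lem:ArthurUnramified}, agree at almost all finite places; strong multiplicity one for $\GL_{2n+1}$ forces $\widetilde\tau^\sharp\cong\pi^\sharp$, so $\widetilde\tau$ and $\pi$ share the same simple generic parameter $\psi$. Their central characters coincide as well: each is an algebraic Hecke character of the same infinity type (being $\xi$-cohomological), so the quotient $\omega_{\widetilde\tau}/\omega_\pi$ is a finite-order Hecke character trivial at almost all places, hence trivial by Chebotarev.

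I would then invoke Bin Xu's classification \cite{Xu} of the discrete spectrum of $\GSp_{2n}$. Since $\psi$ is simple and generic its attached global component group is trivial, so the cuspidal automorphic representations of $\GSp_{2n}(\A_F)$ with parameter $\psi$ and central character $\omega_\pi$ form a single orbit under twisting by $\chi\circ\simil$ with $\chi$ a quadratic Hecke character of $F$. Hence $\widetilde\tau\cong\pi\otimes(\chi_0\circ\simil)$ for some quadratic Hecke character $\chi_0$. But $\widetilde\tau_w\cong\pi_w$ for almost every finite place $w$, so $\chi_{0,w}$ is trivial for almost all $w$; being a finite-order Hecke character trivial at a set of primes of density one, $\chi_0=1$ by Chebotarev, so $\widetilde\tau\cong\pi$. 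In particular $\widetilde\tau_v\cong\pi_v$, and since $v$ lies outside the exceptional sets of both transfers, a final application of Proposition~\ref{prop:JL-for-GSp} gives $\tau_v\cong\widetilde\tau_v\cong\pi_v\cong\pi^\natural_v$, as desired.

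The crux is the appeal to \cite{Xu}: the point is that $\widetilde\tau$ and $\pi$, which a priori only share a $\GL_{2n+1}$-parameter and a central character, are in fact related by a similitude twist. This requires the $\GSpin_{2n+1}$-refinement of Arthur's classification — the triviality of the global component group of a simple generic parameter, together with the bookkeeping relating $\GSp_{2n}$ and its derived group $\Sp_{2n}$, notably at the archimedean places, where the underlying $\Sp_{2n}$-packet members of $\widetilde\tau$ and $\pi$ need not literally coincide. The subtlety that makes this necessary is that $\widetilde\tau_v$ could a priori differ from $\pi_v$ by the unramified quadratic twist $\eta_v\circ\simil$, which is invisible to the $\Sp_{2n}$-restriction, to the central character, and to the standard $\SO_{2n+1}$-valued Galois representation. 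Once the twist $\chi_0$ is isolated it is killed immediately by Chebotarev; alternatively one could kill it on the Galois side using that $\spin\circ\rho^C_\pi$ is strongly irreducible (Theorem~\ref{thm:RhoPi0}(v)) together with Lemma~\ref{lem:UniqueChi}, and — when $\phi_{\pi_v}$ happens to be regular — one can even bypass \cite{Xu} by comparing eigenvalues directly in the Langlands--Kottwitz formula at $v$, though that variant degenerates at non-regular finite places. In all cases the argument never uses local--global compatibility at $v$ itself, which is precisely what Proposition~\ref{prop:AllUnramifiedPlaces} uses this lemma to establish, so no circularity arises.
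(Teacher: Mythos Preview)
Your overall strategy matches the paper's: transfer to $\GSp_{2n}$, use Xu's results to relate $\widetilde\tau$ and $\pi$ by a quadratic twist, then kill the twist. But the main line of your argument has a gap at the step where you dispose of $\chi_0$ ``immediately by Chebotarev''. From $\widetilde\tau_w\cong\pi_w$ and $\widetilde\tau_w\cong\pi_w\otimes\chi_{0,w}$ you only obtain $\pi_w\cong\pi_w\otimes\chi_{0,w}$, i.e.\ that $\pi_w$ is $\chi_{0,w}$-self-twist; this does \emph{not} force $\chi_{0,w}=1$. An unramified principal series can very well be isomorphic to its own unramified quadratic twist (whenever its Satake parameter is Weyl-conjugate to $-1$ times itself in $\GSpin_{2n+1}$), so your Chebotarev step does not go through. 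You in fact identify exactly this self-twist subtlety two sentences later, but only at the single place $v$; the same obstruction is present at every $w$.

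Your acknowledged alternative --- passing to the Galois side and invoking strong irreducibility of $\spin\circ\rho^C_\pi$ (Theorem~\ref{thm:RhoPi0}(v)) together with Lemma~\ref{lem:UniqueChi} --- is precisely what the paper does, and is what your argument actually needs. The paper also phrases your step~5 more cautiously: Xu's \cite[Thm.~1.8]{Xu} only gives that the global \emph{packets} of $\widetilde\tau$ and $\pi$ differ by a quadratic character $\omega$, not that $\widetilde\tau\cong\pi\otimes(\omega\circ\simil)$ as representations; even for a simple generic parameter the local packets can have several members, so the representations with given parameter and central character need not form a single twist-orbit. After killing $\omega$ via strong irreducibility, the paper concludes $\widetilde\tau_v\cong\pi_v$ from the fact that a local $L$-packet contains at most one unramified member \cite[Prop.~4.4(3)]{Xu}. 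You should include this last step as well rather than relying on the over-strong form of step~5.
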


\begin{proof}
Let $\tau^*$ and $\pi^*$ denote transfers of $\tau$ and $\pi^\natural$ from $G$ to $\GSp_{2n}$ via part (2) of Proposition \ref{prop:JL-for-GSp}; the assumption there is satisfied by Corollary \ref{cor:Pi-is-in-Disc-series-Packet-2} (in fact we can just take $\pi^*$ to be $\pi$). In particular $\tau^*_x\simeq \tau_x$ and $\pi^*_x\simeq \pi^\natural_x$ at all finite places $x$ where $\tau_x$ and $\pi^\natural_x$ are unramified). In particular this is true for $x=v$, so it suffices to show that $\tau^*_v\simeq \pi^*_v$.

By \cite[Thm. 1.8]{Xu} we see that $\tau^*$ and $\pi^*$ belong to global $L$-packets $\Pi_1=\otimes_x \Pi_{1,x}$ and $\Pi_2=\otimes_x \Pi_{2,x}$ (as constructed in that paper), respectively, such that $\Pi_1=\Pi_2\otimes \omega$ for a quadratic Hecke character $\omega:F^\times \bs \A_F^\times \ra \{\pm1\}$ (which is lifted to a character of $\GSp_{2n}(\A_F)$ via the similitude character). Since each local $L$-packet has at most one unramified representation by \cite[Prop. 4.4.(3)]{Xu} we see that for almost all finite places $x$, we have $\tau^*_x\simeq \pi^*_x \otimes \omega_{x}$. Hence $\pi^\natural_x\simeq \pi^\natural_x\otimes \omega_x$. (Recall that $\tau_x\simeq \pi^\natural_x$ by the initial assumption at almost all $x$.) This implies through (ii') of Theorem \ref{thm:RhoPi0} that $\spin(\rho^C_\pi)\simeq \spin(\rho^C_\pi)\otimes \omega$ (viewing $\omega$ as a Galois character via class field theory). Since $\rho^C_\pi$ has a regular unipotent element in its image, it follows that $\omega=1$ (by Lemma \ref{lem:for-mult-one}). Hence the unramified representations $\tau^*_v$ and $\pi^*_v$ belong to the same local $L$-packet, implying that $\tau^*_v\simeq \pi^*_v$ as desired.
\end{proof}

We are now ready to collect everything and prove the main result.

\begin{theorem}\label{thm:ThmAisTrue}
Theorem \ref{thm:A} is true.
\end{theorem}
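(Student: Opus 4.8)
The plan is to assemble the results proven in Sections~\ref{sect:LiftingGlobal}--\ref{sect:Crystallineness} and then pass from the $C$-normalization to the $L$-normalization.

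First I reconcile the hypotheses with those used so far. Given $\pi$ satisfying (St), (L-coh) and (spin-reg), the twist $\pi^C:=\pi\otimes|\simil|^{n(n+1)/4}$ is $\xi$-cohomological by the definition of (L-coh), and it still satisfies (St) and (spin-reg), as each of these conditions concerns $\pi$ at only one finite and one infinite place and is unaffected by a similitude twist. Theorem~\ref{thm:RhoPi0}, applied to $\pi^C$, then produces a representation $\rho^C_\pi\colon\Gamma\to\GSpin_{2n+1}(\lql)$, unique up to $\GSpin_{2n+1}(\lql)$-conjugacy, satisfying clauses (iii.b), (iv), (v) and (vi) of Theorem~\ref{thm:A} verbatim together with the $C$-normalized statements (i$'$), (ii$'$) and (iii.a$'$). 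Proposition~\ref{prop:AllUnramifiedPlaces} strengthens (ii$'$), making $\rho^C_\pi$ unramified and Frobenius-compatible with the Satake parameter of $\iota\pi_v|\simil|^{-n(n+1)/4}$ at every finite $F$-place $v$ not above $\Sbad\cup\{\ell\}$; and when $\ell\notin\Sbad$ one may take the auxiliary datum $(G,K)$ unramified at $\ell$ (since $\ell\nmid\vst$ and $\pi$ is unramified above $\ell$), so Proposition~\ref{prop:IsCrystalline} gives crystallineness of $\rho^C_\pi$ at all $v\mid\ell$, which is clause (iii.c). At this point every clause of Theorem~\ref{thm:A} holds for $\rho^C_\pi$ in the $C$-normalization.

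It remains to pass to the $L$-normalization. I would set $\rho_\pi:=\rho^C_\pi\otimes(\simil\circ\chi)$, the twist of $\rho^C_\pi$ by the appropriate power of the cyclotomic character along the central cocharacter $\simil\colon\Gm\to\GSpin_{2n+1}$ (which lands in $\ker(q)$), where $\chi\colon\Gamma\to\lql^\times$ is the continuous character pinned down on Frobenius elements so as to turn (iii.a$'$) into clause (iii.a). Then each clause of Theorem~\ref{thm:A} follows from its $C$-normalized counterpart by tracking the effect of this twist: composing with $q$ annihilates $\simil\circ\chi$, recovering $\rho_{\pi^\flat}$ and hence the first half of (i), while the second half comes from (i$'$), the computation of the spinor norm $\mathcal{N}\circ\rho^C_\pi$ carried out in the proof of Theorem~\ref{thm:RhoPi0}, and Lemma~\ref{lem:similitude-of-spin-rep}; clauses (ii) and (iii.a) come from (ii$'$) and (iii.a$'$) after accounting for how the twist shifts Satake parameters and Hodge--Tate cocharacters between the two normalizations; clauses (iii.b) and (iii.c) persist because $\chi$ is crystalline at $v\mid\ell$, $\ell$ being unramified in $F$; and clauses (iv), (v), (vi) together with the uniqueness of $\rho_\pi$ are insensitive to a twist by a central character, since oddness depends only on the adjoint action (which factors through $\SO_{2n+1}$) and $\spin\circ\rho_\pi$ remains a character twist of the strongly irreducible $\rho_1$ of Proposition~\ref{prop:Rho1Irred}, so that Proposition~\ref{prop:GSpinConjugateReps} and the uniqueness argument of Theorem~\ref{thm:RhoPi0} apply unchanged.

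The step I expect to demand the most care is the normalization twist itself when $n\equiv1,2\pmod{4}$: then $n(n+1)/4\in\tfrac12\Z\setminus\Z$, so no character ``$\epsilon_\ell^{-n(n+1)/4}$'' of $\Gamma$ exists in isolation, and one must verify that, after composing with the similitude cocharacter of $\GSpin_{2n+1}$ and after taking account of the parity of the algebraic central character $\omega_\pi$, the recipe really does define a continuous homomorphism $\Gamma\to\GSpin_{2n+1}(\lql)$ whose Hodge--Tate cocharacter is the integral cocharacter $\iota\mu_{\Hodge}(\xi,y)$. This is exactly where the conventions of the Notation section are used, namely the fixed positive square roots $\sqrt p$, which make $q_v^{n(n+1)/4}$ meaningful in both $\C$ and $\lql$. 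Once this is settled, every clause of Theorem~\ref{thm:A} has been established.
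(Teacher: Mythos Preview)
Your first paragraph is exactly the paper's proof: apply Theorem~\ref{thm:RhoPi0}, Proposition~\ref{prop:AllUnramifiedPlaces}, and Proposition~\ref{prop:IsCrystalline} to $\pi^C=\pi|\simil|^{n(n+1)/4}$. The second paragraph, however, is superfluous and arises from a substitution slip in the first. When you feed $\pi^C$ into (ii$'$) and into Proposition~\ref{prop:AllUnramifiedPlaces}, the Satake parameter appearing in the conclusion is that of $\iota(\pi^C)_v|\simil|^{-n(n+1)/4}=\iota\pi_v$, not $\iota\pi_v|\simil|^{-n(n+1)/4}$ as you wrote; likewise (i$'$) already says that $\mathcal N$ composed with your $\rho^C_\pi$ corresponds to $\omega_{\pi^C}|\cdot|^{-n(n+1)/2}=\omega_\pi$. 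In other words, the passage from the $C$- to the $L$-normalization is entirely absorbed by the automorphic-side twist $\pi\mapsto\pi^C$: one simply sets $\rho_\pi$ equal to your $\rho^C_\pi$, and every clause of Theorem~\ref{thm:A} holds as stated, with no further Galois-side twist. The half-integer cyclotomic character you worry about never needs to be built.
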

\begin{proof}
Let us caution the reader that the normalization for $\pi$ changes. Namely given $\pi$ as in Theorem \ref{thm:A} (so that $\pi|\simil|^{-n(n+1)/4}$ is $\xi$-cohomological), the preceding discussions in Sections \ref{sect:Construction} and \ref{sect:CompatUnrPlaces} apply to $\pi^C:=\pi|\simil|^{-n(n+1)/4}$. We thus define $\rho_\pi := \rho^C_{\pi^C}$.

Theorem \ref{thm:RhoPi0} proves (iii.b), (iv) and (v) of Theorem \ref{thm:A}. By Proposition \ref{prop:AllUnramifiedPlaces} we also have (ii). Statements (i) and (iii.a) follow from Theorem \ref{thm:RhoPi0}.(i') and \ref{thm:RhoPi0}.(iii.a'), with evident changes due to the different normalization. It remains to verify (iii.c) and (vi) of Theorem \ref{thm:A}. Item (vi) follows from Proposition \ref{prop:SpinConjugacyA} since $\rho_\pi$ contains a regular unipotent element in its image (see the proof of Proposition \ref{prop:DetermineZariskiImage}). Item (iii.c) is about the crystallineness of $\rho_\pi$. By definition, it is enough to prove that $\spin (\rho^C_{\pi^C})$ is crystalline, and hence it is enough to prove that the compact support cohomology of the Shimura variety $\Sh_K$ with coefficients in $\cL_{\xi}$ is crystalline. In case $F \neq \Q$ the variety $\Sh_K$ is projective, and the crystallineness follows from the thesis of Tom Lovering \cite{Lovering}. If $F = \Q$, then the Shimura variety $S_K$ is the classical Siegel space and in this case the representation was shown to be crystalline by Faltings-Chai \cite[VI.6]{FaltingsChai}. Alternatively we may reduce to this case using base change (Proposition \ref{prop:BC-for-GSp}) to a real quadratic extension $F/\Q$ in which $\ell$ is completely split.
\end{proof}

\begin{remark}\label{rem:odd}
 In the introduction we claimed that we prove \cite[Conj. 5.16]{BuzzardGee} up to Frobenius semisimplification in the case at hand. Our Theorem \ref{thm:A} covers everything except the verification that the $\GSpin_{2n+1}(\lql)$-conjugacy class of $\rho_\pi(c_v)$ coincides with the one predicted by \emph{loc. cit.} (Our theorem only shows that $\rho_\pi(c_v)$ is odd.) Let $\alpha_v \in\GSpin_{2n+1}(\C)$ be as in \cite[Conj. 5.16]{BuzzardGee}, and let $\rho_\pi=\rho_{\pi,\iota}$ with $\iota:\C\simeq \lql$. Let us sketch the argument that $\rho_\pi(c_v)$ is conjugate to $\iota(\alpha_v)$. By Lemma \ref{lem:GaugerSteinberg}, it is enough to prove the conjugacy in $\GL_1$ (under spinor norm), $\SO_{2n+1}$, and $\GL_{2^n}$ (under $\spin$). The conjugacy in $\GL_1$ is checked using the second part of Theorem \ref{thm:A} (i). The conjugacy in $\SO_{2n+1}$ follows from the uniqueness of odd conjugacy class in $\SO_{2n+1}$ (or in any adjoint group, cf. \cite[\S2]{GrossOdd}, since both conjugacy classes are odd in $\SO_{2n+1}$ (in view of Lemma \ref{lem:odd-GO} for $\rho_\pi(c_v)$; by direct computation for $\iota(\alpha_v)$). Then $\ol{\spin}( \rho_\pi(c_v))$ and $\ol{\spin}(\iota(\alpha_v))$ are conjugate in $\PGL_{2^n}$; moreover they are (up to conjugation) images of the diagonal matrix in $\GL_{2^n}$ with each of $1$ and $-1$ appearing exactly $2^{n-1}$ times. Since $\rho_\pi(c_v),\spin(\iota(\alpha_v) \in \GL_{2^n}(\lql)$ are elements of order 2 whose images in $\PGL_{2^{n-1}}$ are determined as such, we conclude that $\rho_\pi(c_v)\sim \spin(\iota(\alpha_v)$ (with eigenvalues $1$ and $-1$, with multiplicities $2^{n-1}$ each). We are done with showing that $\rho_\pi(c_v)\sim\iota(\alpha_v)$ in $\GSpin_{2n+1}(\lql)$.

\end{remark}

\section{Galois representations for the exceptional group $G_2$}\label{sec:G_2}

As an application of our main theorems we realize some instances of the global Langlands correspondence for $G_2$ in the cohomology of Siegel modular varieties of genus 3 via theta correspondence, following the strategy of Gross--Savin \cite{GrossSavin}. In particular the constructed Galois representations will be motivic\footnote{In this paper ``motivic'' means that it appears in the \'etale cohomology of a smooth quasi-projective variety over a number field.} and come in compatible families as such. We work over $F=\Q$ (as opposed to a general totally real field) mainly because this is the case in \cite{GrossSavin}.

More precisely we are writing $G_2$ for the split simple group of type $G_2$ defined over $\Z$. Denote by $G_2^c$ the inner form of $G_2$ over $\Q$ which is split at all finite places and such that $G_2^c(\R)$ is compact.
The dual group of $G_2$ is $G_2(\C)$ and fits in the diagram
$$
\xymatrix{ \PGL_2(\C) \ar@{^(->}[r] & G_2(\C) \ar@/^2pc/[rr]_{\eta} \ar@{^(->}[r] \ar@{^(->}^-{\zeta}[d] & \SO_7(\C) \ar@{^(->}[r] \ar@{^(->}[d] & \GL_7(\C) \ar@{^(->}[d] \\ &\Spin_7(\C) \ar@{^(->}[r] \ar@/_2pc/[rr]^{\spin} & \SO_8(\C) \ar@{^(->}[r] & \GL_8(\C) }
$$
such that $G_2(\C)=\SO_7(\C)\cap \Spin_7(\C)$. The subgroup $\PGL_2(\C)$ is given by a choice of a regular unipotent element of $\SO_8(\C)$. See \cite[pp.169-170]{GrossSavin} for details. Note that the spin representation of $\Spin_7$ is orthogonal and thus factors through $\SO_8$. The $8$-dimensional representation $G_2(\C)\hra \GL_8(\C)$ decomposes into 1-dimensional and 7-dimensional irreducible pieces. The former is the trivial representation. The latter factors through $\SO_7(\C)$. Evidently all this is true with $\lql$ in place of $\C$.

The (exceptional) theta lift from each of $G_2$ and $G_2^c$ to $\PGSp_6$ uses the fact that $(G_2,\PGSp_6)$ and $(G^c_2,\PGSp_6)$ are dual reductive pairs in groups of type $E_7$ \cite{GRS-G2,GrossSavin}. In this section we concentrate on the case of $G_2^c$, only commenting on the case of $G_2$ at the end.
Every irreducible admissible representation of $G^c_2(\R)$ is finite-dimensional, and both ($C$-)cohomological and $L$-cohomological since the half sum of all positive roots of $G_2$ is integral.
Note that an automorphic representation $\pi$ of $\PGSp_6(\A)$ is the same as an automorphic representation of $\GSp_6(\A)$ with trivial central character, so we will use them interchangeably. For such a $\pi$ the subgroup $\rho_\pi(\Gamma)$ of $\GSpin_7(\lql)$ is contained in $\Spin_7(\lql)$ by (i) of Theorem \ref{thm:A}.

\begin{theorem}\label{thm:G2-A}
Let $\sigma$ be an automorphic representation of $G_2^c(\A)$. Assume that
\begin{itemize}
\item $\sigma$ admits a theta lift to a cuspidal automorphic representation $\pi$ on $\PGSp_6(\A)$.
\item $\sigma_\vst$ is the Steinberg representation at a finite place $\vst$.
\end{itemize}
Then for each prime $\ell$ and $\iota:\C\simeq \lql$, there exists a representation $\rho_\sigma=\rho_{\sigma,\iota}: \Gamma\ra G_2(\lql)$ such that
\begin{enumerate}
\item For every finite place $v\neq \ell$ where $\sigma$ is unramified, $\rho_\sigma$ is unramified at $v$. Moreover $ (\rho_\sigma |_{W_{\Q_v}})_{\ssimple}\simeq \iota\phi_{\sigma_v}$ as unramified $L$-parameters for $G_2$.
\item $\rho_{\sigma_\ell}$ is de Rham with $\mu_{\HT}(\rho_{\sigma_\ell})=\iota\mu_{\Hodge}(\sigma^\vee_ \infty)$.\footnote{Note that $\sigma_ \infty$ is $\xi$-cohomological for $\xi=\sigma^\vee_ \infty$.}
\item If $\sigma_\ell$ is unramified then $\rho_{\sigma_\ell}$ is crystalline.
\item $\zeta\circ \rho_\sigma\simeq \rho_\pi$.
\end{enumerate}
\end{theorem}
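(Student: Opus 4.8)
\textbf{Proof plan for Theorem \ref{thm:G2-A}.}
The plan is to produce $\rho_\sigma$ by descending the $\GSpin_7$-valued (in fact $\Spin_7$-valued) representation $\rho_\pi$ attached to the theta lift $\pi$ through the subgroup $G_2(\lql)\subset \Spin_7(\lql)$, and then to verify properties (1)--(4), of which (4) is the content at hand. First I would record the basic input from Theorem \ref{thm:A}: since $\pi$ has trivial central character, part (i) of that theorem forces $\rho_\pi(\Gamma)\subset \Spin_7(\lql)$, and since $\sigma$ (hence $\pi$) is Steinberg at $\vst$, part (v) gives that $\spin\circ\rho_\pi$ is strongly irreducible as an $8$-dimensional representation. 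The key local input is that, at every finite place $v\ne \ell$ where $\sigma$ is unramified, the unramified parameter $\iota\phi_{\pi_v}$ is the image of $\iota\phi_{\sigma_v}$ under $\eta\colon G_2(\C)\hra \SO_7(\C)$ (equivalently under $\zeta\colon G_2(\C)\hra \Spin_7(\C)$ followed by $q$), which is exactly the compatibility of the theta correspondence with the unramified local Langlands correspondence for the dual pair $(G_2^c,\PGSp_6)$; this is established in Gross--Savin \cite{GrossSavin} and I would cite it. Consequently $\rho_\pi(\Frob_v)_{\ssimple}$ lies in (a $\Spin_7(\lql)$-conjugate of) $G_2(\lql)$ for almost all $v$.

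The heart of the argument is to promote this Frobenius-wise statement to a statement about the whole representation, i.e. to show $\rho_\pi$ is $\Spin_7(\lql)$-conjugate into $G_2(\lql)$. The tool is exactly the Testerman--Zalesski classification (Theorem \ref{thm:3cases}) together with the strong irreducibility already in hand. Concretely: choose a finite $E/\Q$ so that the Zariski closure $H$ of $\rho_\pi(\Gamma_E)$ inside $\SO_7(\lql)$ (via $q\circ\rho_\pi$) is connected; the Steinberg component at $\vst$ puts a regular unipotent element in the image (as in the proof of Lemma \ref{lem:ZariskiDense}), so $H$ is semisimple and, by Theorem \ref{thm:3cases}, equals $\SO_7$, the principal $\SL_2$, or $G_2$. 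The principal $\SL_2$ is ruled out by regularity of Hodge--Tate numbers as before. To rule out $\SO_7$ (i.e. to show $H=G_2$ actually occurs), I would use the almost-everywhere containment of $\rho_\pi(\Frob_v)_{\ssimple}$ in $G_2(\lql)$: since semisimple conjugacy classes meeting $G_2$ are Zariski-constrained inside $\SO_7$, and these classes are Zariski dense in $H$ by Chebotarev, $H$ cannot be all of $\SO_7$; hence $H=G_2$. Then $q\circ\rho_\pi$ factors through $G_2(\lql)\subset\SO_7(\lql)$, and since $G_2$ is simply connected the factorization lifts canonically (and compatibly with $\zeta$) to $\rho_\sigma\colon\Gamma\to G_2(\lql)$ with $\zeta\circ\rho_\sigma\simeq\rho_\pi$; this last isomorphism, after adjusting by the conjugating element, is precisely statement (4). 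I expect the main obstacle to be making the ``Zariski-density of Frobenius classes inside $G_2$ forbids $H=\SO_7$'' step fully rigorous — one must be careful that local conjugacy of the two $\SO_7$-valued data (the one from $\rho_\pi$ and the one built from $\eta\circ\phi_\sigma$) implies global conjugacy, which is where strong irreducibility of $\std\circ\rho_\pi$ (or $\spin\circ\rho_\pi$) and Proposition \ref{prop:ConjugacyStdRep} enter.

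With (4) in place, the remaining parts are comparatively routine. Part (1): at unramified $v\ne\ell$, by (4) the semisimple part of $\rho_\sigma(\Frob_v)$ maps under $\zeta$ to $\rho_\pi(\Frob_v)_{\ssimple}\sim \iota\phi_{\pi_v}(\Frob_v)=\zeta(\iota\phi_{\sigma_v}(\Frob_v))$, and since $\zeta$ is injective on semisimple conjugacy classes of $G_2$ (it is a closed immersion of reductive groups, and one can check injectivity on the level of maximal tori modulo Weyl groups, cf. the discussion after Proposition \ref{prop:Steinberg}), we get $(\rho_\sigma|_{W_{\Q_v}})_{\ssimple}\simeq\iota\phi_{\sigma_v}$; unramifiedness of $\rho_\sigma$ at $v$ is inherited from that of $\rho_\pi$. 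Parts (2) and (3): de Rham-ness and crystallineness are properties that can be tested after composing with a faithful representation of $G_2$, e.g. $\zeta$ or the $7$-dimensional $\eta$; since $\rho_\pi$ is de Rham at $\ell$ (crystalline when $\pi_\ell$, hence $\sigma_\ell$, is unramified) by Theorem \ref{thm:A}(iii), so is $\rho_\sigma$, and the Hodge--Tate cocharacter is computed by $\mu_{\HT}(\rho_{\sigma_\ell})=\iota\mu_{\Hodge}(\sigma_\infty^\vee)$ using Lemma \ref{lem:Hodge-Tate-cochar}, functoriality of $\mu_{\Hodge}$ under $\zeta$, and the compatibility of theta lift with archimedean parameters (so that $\zeta\circ\phi_{\sigma_\infty}=\phi_{\pi_\infty}$), reducing to Theorem \ref{thm:A}(iii.a) for $\pi$.
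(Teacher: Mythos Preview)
Your overall strategy is right and close to the paper's, but there is a genuine error in the input you invoke. Theorem \ref{thm:A} does not apply to $\pi$: since $\pi$ arises as a theta lift from $G_2^c$, the archimedean parameter $\phi_{\pi_\infty}$ factors through $\zeta:G_2\hookrightarrow\Spin_7$, and $\spin|_{G_2}$ decomposes as the $7$-dimensional representation plus the trivial one. In particular $\spin\circ\phi_{\pi_\infty}|_{W_\C}$ has the zero weight with multiplicity at least $2$, so (spin-reg) fails. You must use Theorem \ref{thm:B} instead, as the paper does. For the same reason your appeal to part (v) is illegitimate: condition ($\tu{HT}_2$) is precisely the obstruction to excluding $G_2$, and here it necessarily fails, so neither Theorem \ref{thm:A}(v) nor Theorem \ref{thm:B}(v) is available --- and indeed $\spin\circ\rho_\pi$ will turn out to be \emph{reducible}, isomorphic to $(\eta\circ\rho_\sigma)\oplus\mathbf{1}$ once the conclusion is established. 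Fortunately your main argument does not actually use strong irreducibility, so this is recoverable.

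From this point the paper's route is shorter than yours. Rather than running Testerman--Zalesski inside $\SO_7$ and separately excluding $\SO_7$ and the principal $\SL_2$, it observes that $\spin\circ\rho_\pi$ is a semisimple $\SO_8$-valued representation whose image contains a regular unipotent of $\SO_8$ and is, by \eqref{eq:Satake-under-theta} and Chebotarev, locally contained in $G_2$; then \cite[\S2 Cor.~2.4]{GrossSavin} gives directly that $\rho_\pi(\Gamma)\subset G_2(\lql)$, realized as $\SO_7\cap\Spin_7$ inside $\SO_8$. Your approach can be made to work too, with two adjustments. First, you need not rule out the principal $\SL_2$: it already sits inside $G_2\subset\SO_7$, so if the connected Zariski closure is $\SL_2$ the factorization through $G_2$ is automatic (and since $N_{\SO_7}(H)=H$ for $H\in\{G_2,\SL_2\}$, the full image lands in $H$). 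Second, working in $\SO_7$ only gives $\rho_\pi(\Gamma)\subset q^{-1}(G_2)=G_2\times\{\pm1\}$ inside $\Spin_7$, so (4) holds a priori up to a quadratic character $\epsilon$; to kill $\epsilon$, note that $\spin\circ\rho_\pi\simeq \epsilon\cdot(\eta\circ\rho_\sigma)\oplus\epsilon$ with $\eta\circ\rho_\sigma$ irreducible, while the unramified compatibility forces the one-dimensional summand to be trivial at almost all Frobenii. After (4), your deduction of (1)--(3) matches the paper's, which appeals to the injectivity of the map on Weyl-orbits of (co)characters from $G_2$ to $\Spin_7$.
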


Before starting the proof, we recall the basic properties of the theta lift $\pi$. We see from \cite[\S4 Prop. 3.1, \S4 Prop. 3.19]{GrossSavin} that $\pi_\vst$ is the Steinberg representation, and that $\pi_v$ is unramified whenever $\sigma_v$ is unramified at a finite place $v$ and the unramified $L$-parameters are related via
\begin{equation}\label{eq:Satake-under-theta}
\zeta \phi_{\sigma_v}\simeq \phi_{\pi_v}.
\end{equation}
Furthermore $\pi_ \infty$ is an $L$-algebraic discrete series representation whose parameter can be explicitly described in terms of $\sigma$ (\S3 Cor. 3.9 of \emph{loc. cit.}).

\begin{proof}
We apply Theorem \ref{thm:A} to the text above $\pi$ to obtain a representation $\rho_\pi:\Gamma\ra \Spin_7(\lql)$. Note that $\spin\circ \rho_\pi$ is a semisimple representation by construction. Since the image of $\rho_{\pi^\flat}$ contains a regular unipotent element of $\SO_7(\lql)$, it follows that $\rho_\pi(\Gamma)$ contains a regular unipotent of $\Spin_7(\lql)$ and also that of $\SO_8(\lql)$. (The representation $\spin \colon \Spin_7 \to \GL_{2^3}$ induces an inclusion $\Spin_7 \hookrightarrow \SO_8$, under which a regular unipotent maps to a regular unipotent.) By \eqref{eq:Satake-under-theta} and the Chebotarev density theorem, the image of $\rho_\pi$ is locally contained in $G_2$ in the terminology of Gross-Savin, so \cite[\S2 Cor. 2.4]{GrossSavin} implies that $\rho_\pi(\Gamma)$ is contained in $G_2(\lql)$ (given as $\SO_7(\lql)\cap \Spin_7(\lql)$ for a suitable choice of the embedding $\SO_7\hra \SO_8$; here $\spin \colon \Spin_7\hra \SO_8$ is fixed). Hence we have $\rho_\sigma$ satisfying (4), namely that $\rho_\pi\simeq \zeta\circ \rho_\sigma$.

Assertions (1)-(3) of the theorem follow from Theorem \ref{thm:A} and the fact that the set of Weyl group orbits on the maximal torus (resp. on the cocharacter group of a maximal torus) for $G_2$ maps injectively onto that for $\Spin_7$. (The latter can be checked explicitly.)
\end{proof}

\begin{example}\label{ex:nonzero-theta}
There is a unique automorphic representation $\sigma$ of $G_2^c(\A)$ unramified outside $5$ such that $\sigma_5$ is the Steinberg representation and $\sigma_ \infty$ is the trivial representation \cite[\S1 Prop. 7.12]{GrossSavin}. Proposition 5.8 in \S5 of \emph{loc. cit.} (via a computer calculation due to Lansky and Pollack) tells us that $\sigma$ admits a nontrivial cuspidal theta lift to $\PGSp_6(\A)$. Proposition 5.5 in the same section gives another example of nontrivial theta lift but we will not consider it here.
\end{example}

We confirm the prediction of Gross--Savin that a rank 7 motive whose motivic Galois group is $G_2$ is realized in the middle degree cohomology of a Siegel modular variety of genus 3.

\begin{corollary}\label{cor:G2}
Let $\sigma$ be as in Example \ref{ex:nonzero-theta}. Write $\pi$ for its theta lift. Then $\rho_\sigma$ has Zariski dense image in $G_2(\lql)$. Moreover $\spin\circ \rho_\pi$ is isomorphic to the direct sum of $\eta\circ \rho_\sigma$ and the trivial representation. In particular $\eta\circ \rho_\sigma$ and the trivial representation appear in the $\pi^ \infty$-isotypic part in $\uH^6_c(
\Sh,\lql)(3)$, where $\Sh$ is the tower of Siegel modular varieties for $\GSp_6$ and $(3)$ denotes the Tate twist (i.e. the cube power of the cyclotomic character).
\end{corollary}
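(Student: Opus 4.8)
The strategy is to combine Theorem~\ref{thm:G2-A} with the decomposition of the $8$-dimensional representation of $G_2$ recalled at the start of this section. First I would note that by Theorem~\ref{thm:G2-A} we have $\zeta\circ \rho_\sigma\simeq \rho_\pi$, so $\spin\circ \rho_\pi\simeq \spin\circ \zeta\circ \rho_\sigma$. By the diagram at the beginning of the section, $\spin\circ\zeta\colon G_2(\lql)\hra \Spin_7(\lql)\xrightarrow{\spin}\SO_8(\lql)\subset\GL_8(\lql)$ is precisely the $8$-dimensional representation of $G_2$ which decomposes as $\mathbf 1\oplus r_7$, where $r_7$ factors through $\SO_7(\lql)$ and equals $\eta$ as a $7$-dimensional representation. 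Hence $\spin\circ\rho_\pi\simeq \mathbf 1\oplus(\eta\circ\rho_\sigma)$ as $\Gamma$-representations; this gives the second assertion. For the first assertion, Zariski density of the image of $\rho_\sigma$, I would argue as in Lemma~\ref{lem:ZariskiDense}: the image contains a regular unipotent element $N$ of $G_2(\lql)$ (its image in $\SO_7(\lql)$ is regular unipotent since $\pi_{v_{\mathrm{St}}}$ is Steinberg and $\rho_{\pi^\flat}$ has a regular unipotent in its image, and a positive power of $N$ lies in the image after restricting to a suitable open subgroup where the Zariski closure $H$ is connected). By Theorem~\ref{thm:3cases} the only connected semisimple subgroups of $\SO_7(\lql)$ containing a regular unipotent are $\SO_7$ itself, the principal $\SL_2$, and $G_2$; intersecting with the constraint that $H\subset G_2(\lql)$, either $H=G_2$ or $H$ is the principal $\SL_2$. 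The principal $\SL_2$ is excluded exactly as in Lemma~\ref{lem:ZariskiDense}: it would force the Hodge--Tate cocharacter of $\eta\circ\rho_\sigma$ (equivalently of $\std\circ\rho_{\pi^\flat}$, via Theorem~\ref{thm:G2-A}(2) and the explicit archimedean parameter of $\pi$ coming from $\sigma_\infty$ trivial) to be of the form $(6x,4x,2x)$, which one checks is not the case here because $\sigma_\infty$ is the trivial representation and the associated discrete series parameter of $\pi_\infty$ is regular with distinct, non-arithmetic-progression Hodge--Tate weights. Hence $H=G_2(\lql)$, i.e. $\rho_\sigma$ has Zariski dense image.

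For the cohomological realization, I would invoke the fact (used throughout \S\ref{sect:ConstructionOfaGaloisRepresentation}--\S\ref{sect:GSpinValuedRepresentation}) that $\spin\circ\rho_\pi$ occurs, up to the normalizing Tate twist, in the compactly supported \'etale cohomology $\uH^{n(n+1)/2}_c(S_K,\cL_\xi)$ of the Siegel modular variety; here $n=3$, so the middle degree is $6$, and $\xi$ is trivial because $\sigma_\infty$, hence $\pi_\infty|\cdot|^{n(n+1)/4}$ after the $C$-to-$L$ normalization shift, is cohomological for the trivial coefficient system (one should double-check the exact Tate twist: the normalization in Theorem~\ref{thm:RhoPi0}(iii.a$'$) gives the shift by $\tfrac{n(n+1)}{4}\simil=\tfrac{3\cdot4}{4}\simil=3\simil$, which on the spin side is the cube of the cyclotomic character, matching the $(3)$ in the statement). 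Concretely, Proposition~\ref{prop:PointCounting} and Corollary~\ref{cor:lgc-unramified} identify (a multiple of) $\spin\circ\rho_\pi$ with the $\pi^\natural$-isotypic part of $\uH^6_c(S,\cL_\xi)$ up to the twist; since $\xi$ is trivial, $\cL_\xi=\lql$, and after untwisting by $(3)$ we get that $\spin\circ\rho_\pi$ appears in the $\pi^\infty$-isotypic part of $\uH^6_c(S,\lql)(3)$. Combining with the decomposition $\spin\circ\rho_\pi\simeq\mathbf 1\oplus(\eta\circ\rho_\sigma)$ from the first paragraph yields that both the trivial representation and $\eta\circ\rho_\sigma$ appear there.

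\textbf{Main obstacle.} The routine part is the representation-theoretic bookkeeping (the $\mathbf1\oplus r_7$ decomposition and matching it with $\eta$). The point requiring the most care is pinning down the precise Tate twist and normalization: one must track the passage between the $C$-normalization (in which $\rho_\pi^C$ is constructed and appears in cohomology) and the $L$-normalization (used in Theorems~\ref{thm:A},~\ref{thm:B}), verify that for $\sigma_\infty$ trivial the coefficient system $\xi$ is genuinely trivial, and confirm that the half-integral twist $|\simil|^{n(n+1)/4}$ becomes the integral Tate twist $(3)$ on the $8$-dimensional spin side rather than, say, a twist by a half-integral power that would only make sense after $\iota$. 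The other delicate point is the exclusion of the principal $\SL_2$ in the Zariski density argument: for the specific $\sigma$ of Example~\ref{ex:nonzero-theta} one needs the explicit infinitesimal character of $\pi_\infty$ (read off from $\sigma_\infty=\mathbf 1$ via \cite[\S3 Cor.~3.9]{GrossSavin}) to see that the Hodge--Tate weights of $\std\circ\rho_{\pi^\flat}$ do not form an arithmetic progression with common difference twice the smallest; this is a short explicit check but it is where the hypothesis on $\sigma$ genuinely enters, and it is worth spelling out rather than merely citing Lemma~\ref{lem:ZariskiDense}.
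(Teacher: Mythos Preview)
Your treatment of the decomposition $\spin\circ\rho_\pi\simeq \mathbf 1\oplus(\eta\circ\rho_\sigma)$ and of the cohomological realization is fine and matches the paper; these follow from Theorem~\ref{thm:G2-A}(4) together with the $8=1+7$ splitting recalled at the start of the section, and from the construction of $\rho_\pi^C$ inside $\uH^6_c$ in \S\ref{sect:ConstructionOfaGaloisRepresentation}--\S\ref{sect:GSpinValuedRepresentation}.

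The Zariski density argument, however, has a genuine gap. For the specific $\sigma$ of Example~\ref{ex:nonzero-theta} the component $\sigma_\infty$ is the \emph{trivial} representation of $G_2^c(\R)$, so $\mu_{\Hodge}(\sigma_\infty^\vee)=\rho_{G_2}$, and under $\eta\colon G_2\to\SO_7$ the resulting Hodge--Tate cocharacter of $\rho_{\pi^\flat}$ is $(3,2,1)\in X_*(\TSO)$. The principal $\SL_2\to\SO_7$ via $\mathrm{Sym}^6$ has kernel $\{\pm1\}$, so the relevant subgroup is really the principal $\PGL_2\subset G_2\subset\SO_7$, and the generating cocharacter of $T_{\PGL_2}$ maps precisely to $(3,2,1)$. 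In other words the Hodge--Tate cocharacter \emph{does} lie in the image of $X_*(T_{\PGL_2})$, and the weights $\{3,2,1,0,-1,-2,-3\}$ \emph{are} an arithmetic progression. So the Hodge--Tate criterion you invoke (your appeal to Lemma~\ref{lem:ZariskiDense} and condition $(\mathrm{HT}_1)$) cannot distinguish $G_2$ from the principal $\PGL_2$ in this case; your assertion that the weights are ``non-arithmetic-progression'' is exactly where the argument breaks.

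The paper handles this by an entirely different, arithmetic argument: it cites \cite[\S2~Prop.~2.3]{GrossSavin} to reduce to excluding $\PGL_2$, and then rules out $\PGL_2$ using the explicit Hecke eigenvalue computations of Lansky--Pollack \cite[p.~45, Table~V]{LanskyPollack} at the primes $2$ and $3$, which show that the Satake parameters there do not come from $\PGL_2$. This is not a matter of bookkeeping---some input beyond Hodge--Tate data is genuinely required for this particular $\sigma$, and that is what you are missing.
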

\begin{proof}
If the image is not dense in $G_2(\lql)$ then the proof of \cite[\S2 Prop. 2.3]{GrossSavin} shows that the Zariski closure of $\rho_\pi(\Gamma)$ is $\PGL_2$. However we see from the explicit computation of Hecke operators at $2$ and $3$ on $\sigma$ carried out by Lansky--Pollack \cite[p.45, Table V]{LanskyPollack} that the Satake parameters at $2$ and $3$ do not come from $\PGL_2$.\footnote{The authors check \cite[\S4.3]{LanskyPollack} that the Satake parameters do not come from $\SL_2$ via the map $\SL_2(\C)\ra G_2(\C)$ induced by a regular unipotent element. But the latter map factors through the projection $\SL_2\ra \PGL_2$.} We conclude that $\rho_\sigma(\Gamma)$ is dense in $G_2(\lql)$. The second assertion of the corollary is clear from (4) of Theorem \ref{thm:G2-A} and the construction of $\rho_\pi$.
\end{proof}

\begin{remark}
 Corollary \ref{cor:mult-one-cohomology} implies that the $\pi^ \infty$-isotypic part in $\uH^6_c(
\Sh,\lql)(3)$ is 8-dimensional, consisting of $\eta\circ \rho_\sigma$ and the trivial representation without multiplicity.
\end{remark}

\begin{remark}\label{remark:Tate}
The Tate conjecture predicts the existence of an algebraic cycle on $\Sh$ which should give rise to the trivial representation in the corollary. Gross and Savin suggest that it should come from a Hilbert modular subvariety of $\Sh$ for a totally real cubic extension of $\Q$. See \cite[\S6]{GrossSavin} for details.
\end{remark}

\begin{remark}
Ginzburg--Rallis--Soudry \cite[Thm. B]{GRS-G2} showed that every globally generic automorphic representation $\sigma$ of $G_2(\A)$ admits a theta lift to $\PGSp_6(\A)$. (The result is valid over every number field.) Using this, Khare--Larsen--Savin \cite{KLS2} established instances of global Langlands correspondence for $G_2(\A)$, the analogue of Theorem \ref{thm:G2-A} with $G_2$ in place of $G_2^c$, under a suitable local hypothesis. (See Section 6 of their paper for the hypothesis. They prescribe a special kind of supercuspidal representation instead of the Steinberg representation.) In our notation, their $\rho_\sigma$ is constructed inside the $\SO_7$-valued representation $\rho_{\pi^\flat}$, where the point is to show that the image is contained and Zariski dense in $G_2$ \cite[Cor. 9.5]{KLS2}.\footnote{Thereby they give an affirmative answer to Serre's question on the motivic Galois group of type $G_2$, since it is well known that $\rho_{\pi^\flat}$ appears in the cohomology of a unitary PEL-type Shimura variety after a quadratic base change, along the way to proving a result on the inverse Galois problem. Sometimes this contribution of \cite{KLS2} is overlooked in the literature.}
\end{remark}

\section{Automorphic multiplicity}\label{sect:AutomorphicMultiplicity}

In this section we prove multiplicity one results for automorphic representations of $\GSp_{2n}(\A_F)$ and the inner form $G(\A_F)$ under consideration. For $\GSp_{2n}$ we deduce this from Bin Xu's multiplicity formula using the following property (cf. Lemma \ref{lem:for-mult-one})
$$
\forall \omega \colon \Gamma \to \lql^\times: \rho_\pi \approx \rho_\pi \otimes \omega \Rightarrow \omega = 1
$$
of the associated Galois representations $\rho_\pi$. The result is then transferred to $G(\A_F)$ via the trace formula (\S\ref{sect:TraceFormula}). This is standard except when the highest weight of $\xi$ is not regular: In that case we need the input from Shimura varieties that the automorphic representations of $G(\A_F)$ of interest are concentrated in the middle degree.

\begin{theorem}\label{thm:AutomorphicMultiplicity}
Let $n\ge 2$. Let $\pi$ be a cuspidal automorphic representation of $\GSp_{2n}(\A_F)$ such that conditions (St) and (L-Coh) hold.
The automorphic multiplicity $m(\pi)$ of $\pi$ is equal to $1$, i.e. Theorem \ref{thm:IntroAutomMult} is true.
\end{theorem}
\begin{proof}
By Xu \cite[Prop. 1.7]{Xu} we have the formula $$m(\pi) = m(\pi^\flat) |Y(\pi)/\alpha(\cS_\phi)|,$$ where $m(\pi^\flat)=1$ in our case by Arthur \cite[Thm. 1.5.2]{ArthurBook}. The group $Y(\pi)$ is equal to the set of characters $\omega \colon \GSp_{2n}(\A_F) \to \C^\times$ which are trivial on $\GSp_{2n}(F)\A_F^\times\Sp_{2n}(\A_F) \subset \GSp_{2n}(\A_F)$ and are such that $\pi \simeq \pi \otimes \omega$. The definition of the subgroup $\alpha(\cS_\phi)$ of $Y(\pi)$ is not important for us: We claim that $Y(\pi) = 1$. Let $\omega \in Y(\pi)$ and let $\chi \colon \Gamma \to \lql^\times$ be the corresponding character via class field theory. We get $ \rho_\pi \simeq \rho_\pi\otimes \chi$ from Theorem \ref{thm:A}, (ii) and (vi). By Lemma \ref{lem:for-mult-one}, it follows that $\chi = 1$ and hence $\omega$ is trivial as well.
\end{proof}

We now prove the analogue of Theorem \ref{thm:AutomorphicMultiplicity} for an inner form of $\GSp_{2n, F}$. Since the analogue of results by Arthur and Xu has not been worked out yet for non-quasi-split inner forms,\footnote{Certain inner forms of $\Sp_{2n,F}$ and special orthogonal groups have been treated by Ta\"ibi \cite{TaibiTransfer}.} we only obtain a partial result. Let $G$ be an inner form of $\GSp_{2n, F}$ in the construction of Shimura varieties, cf. \S\ref{sect:ShimuraDatum}.

\begin{theorem}\label{thm:AutomorphicMultiplicity2}
Let $\pi$ be a cuspidal automorphic representation of $G(\A_F)$ satisfying conditions (L-coh) and (St). Then the automorphic multiplicity of $\pi$ is equal to $1$.
\end{theorem}
\begin{proof}
Let $f^{G}_ \infty:=|\Pi^G_{\xi}|^{-1}(-1)^{q(G_ \infty)} f^G_{\Lef, \infty}$ and $f^{G^*}_ \infty:=|\Pi^{G^*}_{\xi}|^{-1} (-1)^{q(G^*_ \infty)} f^{G^*}_{\Lef, \infty}$, where $f^G_{\Lef, \infty}$ and $f^{G^*}_{\Lef, \infty}$ denote the Lefschetz functions on $G(F_ \infty)$ and $G^*(F_ \infty)$ as in Equation \eqref{eq:AvgLef}), respectively. At the place $\vst$, we consider the truncated Lefschetz functions $f_{\Lef,\vst}^G$ and $f_{\Lef,\vst}^{G^*}$ as introduced in \eqref{eq:fLef-truncated}, and put $f^G_{\vst}:=(-1)^{q(G_{\vst})} f_{\Lef,\vst}^G$ and $f^{G^*}_{\vst}:=(-1)^{q(G^*_{\vst})} f_{\Lef,\vst}^{G^*}$. Let $f^{ \infty,\vst} \in \cH(G(\A_F^{ \infty,\vst}))$ be arbitrary. Let $\pi$ (resp. $\pi^*$) be a discrete automorphic representation of $G$ (resp. $G^*$).
Note that $f^G_v$ and $f^{G^*}_v$ are associated at $v \in \{\vst, \infty\}$ by Lemmas \ref{lem:LefschetzMatching-finite} and \ref{lem:LefschetzMatching} (with the choice of Haar measures and transfer factors explained there). Hence $f^{ \infty,\vst} f^G_{\vst} f^G_ \infty$ and $f^{ \infty,\vst} f^{G^*}_{\vst} f^{G^*}_ \infty$ are associated. Implicitly we are choosing the local transfer factor at each place $v$ to be identically equal to the sign $e(G_v)$ (whenever nonzero)\footnote{In the special case of inner forms, the definition of transfer factors in \cite{LanglandsShelstad} simplifies to yield a nonzero constant, independent of conjugacy classes, whenever two conjugacy classes are matching.} so that the global transfer factor always equals one (whenever nonzero).

Comparing the trace formulas for $G$ and $G^*$ and arguing as in the proof of Proposition \ref{prop:JL-for-GSp}, we obtain
$$
\sum_\tau m(\tau)\Tr \tau(f^{ \infty,\vst} f^G_{\vst} f^G_ \infty) = \sum_{\tau^*} m(\tau^*)\Tr\tau^*(f^{ \infty,\vst} f^{G^*}_{\vst} f^{G^*}_ \infty),
$$
where $\tau$ (resp. $\tau^*$) runs over discrete automorphic representations of $G(\A_F)$ (resp. $G^*(\A_F)$), and $m(\ )$ denotes the automorphic multiplicity as usual.
By Lemma \ref{lem:DiscAutomRepWithSteinbComponent} the functions $f_{\vst}^G$ (resp. $f_{\vst}^{G^*}$) have non-zero trace against $\pi_{\vst}$ (resp. $\pi_{\vst}^*$) only if $\pi_{\vst}$ (resp. $\pi_{\vst}^*$) is an unramified twist of the Steinberg representation. Therefore
\begin{equation}\label{eq:M1_SimpleTF}
|\Pi^G_{\xi}|^{-1} \sum_{\tau} m(\tau) (-1)^{q(G_ \infty)}\ep(\tau_ \infty \otimes \xi) = |\Pi^{G^*}_\xi|^{-1} \sum_{\tau^*} m(\tau^*)(-1)^{q(G^*_ \infty)} \ep(\tau^*_ \infty \otimes \xi),
\end{equation}
where $\Pi^G_{\xi}$ and $\Pi^{G^*}_\xi$ are discrete $L$-packets. In Equation \eqref{eq:M1_SimpleTF}, each sum ranges over the set of $\xi$-cohomological discrete automorphic representations which are Steinberg (up to unramified twist) at $\vst$ and isomorphic to $\pi^{\vst, \infty}$ away from $\vst$ and $ \infty$.

For the quasi-split group $G^*$, Corollary \ref{cor:Pi-is-in-Disc-series-Packet} shows that any $\tau^*$ in Equation \eqref{eq:M1_SimpleTF} is tempered at infinity, thus $\tau^*_ \infty$ is a discrete series (and belongs to $\Pi^{G^*}_\xi$). However Corollary \ref{cor:Pi-is-in-Disc-series-Packet} does not imply the analogue for $\tau$. For $\tau_ \infty$, Corollary \ref{cor:Pi-is-in-Disc-series-Packet-2} implies that $\tau_ \infty$ must be a discrete series representation. Hence $\ep(\tau_ \infty \otimes \xi)=(-1)^{q(G_ \infty)}$ and $\ep(\tau^*_ \infty \otimes \xi)=(-1)^{q(G^*_ \infty)}$ above, allowing us to simplify \eqref{eq:M1_SimpleTF} to
\begin{equation}\label{eq:M1_SimpleTF-2}
| \Pi^G_\xi |^{-1} \sum_\tau m(\tau) = | \Pi^{G^*}_\xi |^{-1} \sum_{\tau^*} m(\tau^*),
\end{equation}
where each sum runs over discrete automorphic representations which are Steinberg (up to unramified twist at $\vst$), belong to the specified discrete $L$-packet at infinity, and isomorphic to $\pi^{\vst, \infty}$ away from $\vst$ and $ \infty$.

At this point it is useful to show the following lemma by the trace formula argument.

\begin{lemma}\label{lem:MT-1-independent}
Suppose that $F\neq \Q$.
We have $m(\tau)=m(\tau^ \infty \tau'_ \infty)$ and $m(\tau^*)=m(\tau^{*, \infty} \tau^{*,\prime}_ \infty)$, where $\tau'_ \infty$, $\tau^{*,\prime}_ \infty$ are arbitrary members of $\Pi^G_\xi$ or $\Pi^{G^*}_\xi$.
\end{lemma}
\begin{proof}
The assertion that $m(\tau)=m(\tau^ \infty \tau'_ \infty)$ was already shown in
Corollary \ref{cor:A-is-integer}. (We have $\tau \in A(\pi)$ in the notation there.)
The same argument for $G^*$ proves $m(\tau^*)=m(\tau^{*, \infty} \tau^{*,\prime}_ \infty)$, with the only change occurring at $ \infty$. Namely, instead of appealing to Corollary \ref{cor:Pi-is-in-Disc-series-Packet}, we know that the analogue of \eqref{eq:A-is-integer} for $G^*$ receives contributions only from those representations whose components at $ \infty$ belong to $\Pi^{G^*}_\xi$, by Corollary \ref{cor:Pi-is-in-Disc-series-Packet}. (We already made this observation in the discussion above \eqref{eq:M1_SimpleTF-2}.)
\end{proof}

We continue the proof of Theorem \ref{thm:AutomorphicMultiplicity2}.
The representation $\pi$ as in the statement of the theorem appears in the left sum of \eqref{eq:M1_SimpleTF-2}. So both sides are positive in that equation. In particular there exists $\pi^*$ contributing to the right hand side such that $m(\pi^*)>0$. Any other $\tau^*$ in the sum is isomorphic to $\pi^*$ at all finite places away from $\vst$. We also know that $\tau^*_{\vst}$ and $\pi^*_{\vst}$ differ by an unramified character and that $\tau_ \infty$ and $\pi_ \infty$ belong to the same $L$-packet. Now we claim that $\tau^*_{\vst}\simeq \pi^*_{\vst}$. To see this, we apply \cite[Thm. 1.8]{Xu} to deduce that $\tau^*$ and $\pi^*$ belong to the same global $L$-packet as in that paper, by the same argument as in the proof of Lemma \ref{lem:same-v-compo}. Since the local $L$-packet for $\GSp_{2n}(F_{\vst})$ of (any character twist of) the Steinberg representation is a singleton by \cite[Prop. 4.4, Thm. 4.6]{Xu}, the claim follows.\footnote{Lemma \ref{lem:ComponentIsSteinberg} implies that the $L$-parameter $W_{F_{\vst}}\times \SU_2(\R)\ra \SO_{2n+1}(\C)$ for the Steinberg representation of $\Sp_{2n}(F_{\vst})$ restricts to the principal representation of $\SU_2(\R)$ in $\SO_{2n+1}(\C)$. A lift of this to a $\GSpin_{2n+1}(\C)$-valued parameter is attached to the Steinberg representation of $\GSp_{2n}(F_{\vst})$ in Xu's construction. This is enough to imply that the group $\mathcal{S}_{\tilde\phi}$ in \cite[Prop. 4.4]{Xu} is trivial.}

We have shown that the right hand side of \eqref{eq:M1_SimpleTF-2} may be summed over $\tau^*$ which is isomorphic to $\pi^*$ away from infinity and belongs to $\Pi^{G^*}_\xi$ at infinity. Each $\tau^*$ has automorphic multiplicity one by Theorem \ref{thm:AutomorphicMultiplicity} and Lemma \ref{lem:MT-1-independent}. (Without the lemma, some $m(\tau^*)$ could be zero.) So \eqref{eq:M1_SimpleTF-2} comes down to
$$
\sum_\tau m(\tau) = | \Pi^G_\xi |.
$$
Recall that the sum runs over $\tau$ such that $\tau^{ \infty,\vst} \simeq \pi^{ \infty,\vst}$, $\tau_ \infty \in \Pi^G_\xi$, and $\tau_\vst\simeq \pi_{\vst}\otimes\epsilon$ for an unramified character $\epsilon$. By Lemma \ref{lem:MT-1-independent}, the contribution from $\tau$ with $\tau^ \infty \simeq \pi^ \infty$ is already $| \Pi^G_\xi | m(\pi)$, and the other $\tau$ (if any) contributes nonnegatively. We conclude that $m(\pi)=1$. (Moreover, all $\tau$ in the sum with $m(\tau)>0$ should be isomorphic to $\pi$ at $\vst$; this gets used in the next corollary.)
\end{proof}

\begin{corollary}\label{cor:mult-one-cohomology}
The integer $a(\pi^\natural)$ in Corollary \ref{cor:A-is-integer} (defined in \eqref{eq:Defa0}) is equal to one.
\end{corollary}

\begin{proof}
Consider $\pi := \pi^\natural |\cdot |^{n(n+1)/4}$, which satisfies (St) and (L-coh). Applying the last paragraph in the proof of the preceding theorem to $\pi$, we see that for every $\tau \in A(\pi^\natural)$, we have an isomorphism $\tau_{\vst}\simeq \pi^\natural_{\vst}$. Since $\tau_ \infty \in \Pi^G_\xi$ for every $\tau \in A(\pi^\natural)$ by Corollary \ref{cor:Pi-is-in-Disc-series-Packet-2}, it follows from Theorem \ref{thm:AutomorphicMultiplicity2}, Lemma \ref{lem:MT-1-independent}, and Remark \ref{rem:K_infty} that
$$
a(\pi^\natural) = (-1)^{n(n+1)/2} N^{-1}_ \infty \sum_{\tau \in A(\pi^\natural)} m(\tau) \cdot \tu{ep}(\tau_ \infty \otimes \xi)
= |\Pi^G_\xi|^{-1}\sum_{\tau\atop \tau^ \infty\simeq \pi^ \infty,~\tau_ \infty \in \Pi^G_\xi} m(\tau) = 1.
$$
\end{proof}

\section{Meromorphic continuation of the Spin $L$-function}\label{sec:meromorphic}

Let $\pi$ be a cuspidal automorphic representation of $\GSp_{2n}(\A_F)$ unramified away from a finite set of places $S$. The partial spin $L$-function for $\pi$ away from $S$ is by definition
$$
L^S(s,\pi,\spin):=\prod_{v\notin S} \frac{1}{\det(1-q_v^{-s}\spin(\phi_{\pi_v}(\Frob_v)))}.
$$
Various analytic properties of this function would be accessible if the Langlands functoriality conjecture for the $L$-morphism $\spin$ were known. However we are far from it when $n\ge 3$. In particular no results have been known about the meromorphic (or analytic) continuation of $L^S(s,\pi,\spin)$ when $n\ge 6$ (see introduction for some results when $2\le n\le 5$).

The aim of this final section is to establish Theorem \ref{thm:D} on meromorphic continuation for $L$-algebraic $\pi$ under hypotheses (St), (L-coh), and (spin-REG) by applying a potential automorphy theorem, namely Theorem A of \cite{BLGGT14-PA}. Let $S_{\bad}$ be as in Theorem \ref{thm:A}. A character $\mu:\Gamma\ra\GL_{1}(\ol{M}_{\pi,\lambda})$ is considered totally of sign $\epsilon \in \{\pm 1\}$ if $\mu(c_v)=\epsilon$ for all infinite places $v$ of $F$. More commonly a character totally of sign $+1$ or $-1$ is said to be totally odd or even, respectively. For each $\Q$-embedding $w:F\hra \C$ define the cocharacter $\mu_{\Hodge}(\phi_{\pi_w}):\GL_1(\C)\ra \GSpin_{2n+1}(\C)$ by restricting the $L$-parameter $\phi_{\pi_w}:W_{F_w}\ra {}^L \Sp_{2n}$ to $W_\C=\GL_1(\C)$ (via $w$). Condition (L-coh) ensures that $\mu_{\Hodge}(\phi_{\pi_w})$ is an algebraic cocharacter.

\begin{proposition}\label{prop:weakly-compatible}
Suppose that $\pi$ satisfies (St), (L-coh), and (spin-REG).
There exist a number field $M_\pi$ and a representation
$$R_{\pi,\lambda}: \Gamma\ra \GL_{2^n}(\ol{M}_{\pi,\lambda})$$
for each finite place $\lambda$ of $M_{\pi}$ such that the following hold. (Write $\ell$ for the rational prime below $\lambda$.)
\begin{enumerate}
\item At each place $v$ of $F$ not above $S_{\bad}\cup \{\ell\}$, we have $$\mathrm{char}(R_{\pi,\lambda}(\Frob_v))=\mathrm{char}(\spin(\iota\phi_{\pi_v}(\Frob_v))) \in M_{\pi}[X].$$
\item $R_{\pi,\lambda}|_{\Gamma_v}$ is de Rham for every $v|\ell$. Moreover it is crystalline if $\pi_v$ is unramified and $v\notin S_{\bad}$.
\item For each $v|\ell$ and each $w:F\hra \C$ such that $\iota w$ induces $v$, we have $\mu_{\HT}(R_{\pi,\lambda}|_{\Gamma_v},\iota w)=\iota(\spin\circ \mu_{\Hodge}(\phi_{\pi_w}))$. In particular $\mu_{\HT}(R_{\pi,\lambda}|_{\Gamma_v},\iota w)$ is a regular cocharacter for each $w$.
\item $R_{\pi,\lambda}$ is irreducible.
\item $R_{\pi,\lambda}$ maps into $\GSp_{2^n}(\ol{M}_{\pi,\lambda})$ (resp. $\GO_{2^n}(\ol{M}_{\pi,\lambda})$) for a suitable nondegenerate alternating (resp. symmetric) pairing on the underlying $2^n$-dimensional space over $\ol{M}_{\pi,\lambda}$ if ${n(n+1)/2}$ is odd (resp. even). The multiplier character $\mu_\lambda:\Gamma\ra\GL_{1}(\ol{M}_{\pi,\lambda})$ (so that $R_{\pi,\lambda}\simeq R_{\pi,\lambda}^\vee\otimes \mu_\lambda$) is totally of sign $(-1)^{n(n+1)/2}$.
\end{enumerate}
\end{proposition}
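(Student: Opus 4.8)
\textbf{Proof proposal for Proposition \ref{prop:weakly-compatible}.}

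The plan is to build the weakly compatible system $\{R_{\pi,\lambda}\}$ out of the representations $\spin\circ\rho^C_{\pi,\iota}$ coming from Theorem \ref{thm:A} (equivalently Theorem \ref{thm:RhoPi0}), as $\iota$ varies over all isomorphisms $\C\simeq\lql$ and $\ell$ varies over all primes. First I would fix, for each prime $\ell$ and each $\iota\colon\C\simeq\lql$, the $\GSpin_{2n+1}(\lql)$-valued representation $\rho^C_{\pi,\iota}$ produced by Theorem \ref{thm:RhoPi0}; note that (spin-REG) implies (spin-reg), so the theorem applies, and spin-regularity at \emph{every} infinite place is exactly what will give the regularity of the Hodge--Tate cocharacter at every $w$ in item (3). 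Set $R'_{\pi,\iota}:=\spin\circ\rho^C_{\pi,\iota}\colon\Gamma\to\GL_{2^n}(\lql)$. Items (1), (2), (3) for this $\lql$-valued representation are then translations of Theorem \ref{thm:A}: (1) is statement (ii) of that theorem composed with the explicit Satake computation $\Tr\pi_v^\natural(f_j)=q_v^{jn(n+1)/4}\Tr(\spin(\iota\phi_{\pi_v^\natural}))(\Frob_v^j)$ (Equation \eqref{eq:Kottwitz-f_j}), together with the normalization twist by $|\simil|^{n(n+1)/4}$; (2) is statement (iii)(b),(c); (3) is statement (iii)(a) pushed through $\spin$ using Lemma \ref{lem:Hodge-Tate-cochar}, where the regularity is immediate from spin-regularity at $w$ via Definition \ref{def:spin-reg-param}. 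Item (4) is statement (v) of Theorem \ref{thm:A}. Item (5) is the combination of Lemma \ref{lem:similitude-of-spin-rep} (which tells us $\spin$ lands in $\GSp_{2^n}$ or $\GO_{2^n}$ according to the parity of $n\bmod 4$, equivalently of $n(n+1)/2$, with multiplier $\mathcal N$) and the identity $\mathcal N\circ\spin\circ\rho^C_\pi = |\cdot|^{-n(n+1)/2}\omega_\pi$ established inside the proof of Theorem \ref{thm:RhoPi0}; the total sign of this multiplier character is read off from the Hodge--Tate weight $-n(n+1)/2$ and the fact that $\omega_\pi|\cdot|^{-n(n+1)/2}$ is (by (L-coh)) an algebraic Hecke character of the expected parity, or alternatively from statement (iv) (oddness of $\rho_\pi$) of Theorem \ref{thm:A} combined with Lemma \ref{lem:odd-GO}.

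The substantive remaining task is descent to a number field: I must exhibit a single number field $M_\pi$ and $\overline{M}_{\pi,\lambda}$-valued representations $R_{\pi,\lambda}$ (one per finite place $\lambda$ of $M_\pi$) that are compatible in the sense of item (1). Here I would argue as follows. By item (1) applied with two choices $\iota,\iota'$ over the same $\ell$, the characteristic polynomials $\mathrm{char}(R'_{\pi,\iota}(\Frob_v))$ and $\mathrm{char}(R'_{\pi,\iota'}(\Frob_v))$ agree after applying $\iota^{-1},\iota'^{-1}$ to land in $\C$; hence their coefficients lie in a fixed number field $M_\pi$ — concretely the field generated by all the values $\iota^{-1}\Tr(\spin(\iota\phi_{\pi_v}))(\Frob_v)$ for $v\notin S_{\bad}$, which is finite over $\Q$ because these are algebraic (the Satake parameters of $\pi_v$, a unitary cohomological automorphic representation, are algebraic by purity and the temperedness established in Lemma \ref{lem:ForceTempered}) and because strong multiplicity one pins them down from finitely many data once one knows the associated $\GL_{2n+1}$-representation $\pi^\sharp$ has algebraic Satake parameters. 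For each finite place $\lambda$ of $M_\pi$ lying over $\ell$, choose an embedding $\overline{M}_{\pi,\lambda}\hookrightarrow\lql$; via Chebotarev and Brauer--Nesbitt together with the \emph{irreducibility} of $R'_{\pi,\iota}$ (item (4)), the semisimple representation $R'_{\pi,\iota}$ is independent of the choice of $\iota$ inducing that embedding, and its trace character takes values in $M_{\pi,\lambda}\subset\overline{M}_{\pi,\lambda}$. Since an irreducible representation can be defined over the field generated by its traces (its Schur index over such a field is trivial here because the representation already exists over $\lql$ and is absolutely irreducible), $R'_{\pi,\iota}$ descends to the desired $R_{\pi,\lambda}\colon\Gamma\to\GL_{2^n}(\overline{M}_{\pi,\lambda})$. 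Items (1)--(5) for $R_{\pi,\lambda}$ then follow from those for $R'_{\pi,\iota}$, since all of them are either $\iota$-insensitive statements about characteristic polynomials/Hodge--Tate numbers, or statements (irreducibility, self-duality with a prescribed multiplier) preserved under extension of scalars from $\overline{M}_{\pi,\lambda}$ to $\lql$ and back.

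The main obstacle I anticipate is the rationality/descent bookkeeping: verifying that a single number field $M_\pi$ works simultaneously for all $\lambda$ (rather than a field growing with $\ell$), and that the descended $R_{\pi,\lambda}$ is genuinely defined over $\overline{M}_{\pi,\lambda}$ with the self-duality of item (5) also defined over that field (the pairing a priori lives over $\lql$; one needs to check it can be taken Galois-equivariantly rational, which follows from uniqueness of the pairing up to scalar — Schur's lemma applied to the absolutely irreducible $R_{\pi,\lambda}$ — together with statement (vi) of Theorem \ref{thm:A} forcing the conjugacy class of $R_{\pi,\lambda}$ to be rigid). Everything else is a transcription of Theorems \ref{thm:A}/\ref{thm:B}, Lemma \ref{lem:similitude-of-spin-rep}, and Lemma \ref{lem:odd-GO}. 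With Proposition \ref{prop:weakly-compatible} in hand, Theorem \ref{thm:D} follows by feeding $\{R_{\pi,\lambda}\}$ into \cite[Thm. A]{BLGGT14-PA}: items (1)--(5) are precisely the hypotheses (weak compatibility, regularity and de Rham/crystalline at $\ell$, irreducibility, and essential self-duality with a multiplier of the correct parity) needed there, yielding a finite totally real $F'/F$ disjoint from any prescribed extension over which $\spin\circ\rho_\pi$ becomes automorphic, and hence the claimed meromorphic continuation of $L^S(s,\pi,\spin)$ together with its non-vanishing and holomorphy in a right half-plane (made explicit, e.g. $\mathrm{Re}(s)>1$ when $\omega_\pi$ is unitary, by tracking the normalization twist $|\simil|^{n(n+1)/4}$).
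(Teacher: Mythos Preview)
Your proposal is essentially correct and follows the paper's approach: deduce (1)--(4) from Theorem \ref{thm:RhoPi0}/Theorem \ref{thm:A}, get the first half of (5) from Lemma \ref{lem:similitude-of-spin-rep}, and compute the sign of $\mu_\lambda$ via the identification $\mu_\lambda=\mathcal N\circ\rho_\pi$ together with Theorem \ref{thm:A}(i) and the algebraicity of $\omega_\pi|\cdot|^{n(n+1)/2}$ forced by (L-coh)/(cent). Two points of divergence are worth noting.

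First, the rationality of the coefficient field $M_\pi$: you treat this as the ``main obstacle'' and sketch an argument via algebraicity of Satake parameters plus strong multiplicity one. That sketch is shaky as stated---strong multiplicity one does not by itself bound the degree of the field generated by \emph{all} Hecke eigenvalues. The paper sidesteps this entirely (see Remark \ref{rem:coefficients}): take $M_\pi$ to be the field of definition of the $\pi^\infty$-isotypic part of compact-support Betti cohomology with $\Q$-coefficients. Since this is a finite-dimensional $\Q$-vector space on which the Hecke algebra acts, the eigenvalues automatically lie in a number field, and comparison with \'etale cohomology realizes $R_{\pi,\lambda}$ over $M_{\pi,\lambda}$. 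This makes the descent bookkeeping trivial rather than delicate; the proposition as stated only requires $R_{\pi,\lambda}$ valued in $\ol{M}_{\pi,\lambda}$, so even less is needed.

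Second, your alternative route to the sign of $\mu_\lambda$ via ``statement (iv) (oddness of $\rho_\pi$) combined with Lemma \ref{lem:odd-GO}'' does not work: Lemma \ref{lem:odd-GO} concerns the adjoint trace on $\Lie\SO_{2n+1}$, not the multiplier of the spin representation, and oddness in Definition \ref{def:odd} does not directly read off $\mu_\lambda(c_v)$. Stick to your primary argument, which is exactly what the paper does: $\mu_\lambda$ corresponds to $\omega_\pi$ (in the $L$-normalization), and $\omega_\pi|\cdot|^{n(n+1)/2}$ is the $w$-th power map on $\R^\times$ at each infinite place, hence totally even, so $\mu_\lambda(c_v)=(-1)^{n(n+1)/2}$.
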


\begin{remark}
Parts (1)-(3) of the lemma imply that the family $\{R_{\pi,\lambda}\}$ is a weakly compatible system of $\lambda$-adic Galois representations, cf. \cite[\S5.1]{BLGGT14-PA}.\end{remark}

\begin{remark}\label{rem:coefficients}
Although we do not need it, we can choose $M_{\pi}$ such that $R_{\pi,\lambda}$ is valued in $\GL_{2^n}(M_{\pi,\lambda})$ for every $\lambda$. Concretely we may take $M_\pi$ to be the field of definition for the $\pi^ \infty$-isotypic part in the compact support Betti cohomology with $\Q$-coefficients (with respect to the local system arising from $\xi$). When the coefficients are extended to $M_{\pi,\lambda}$ the $\pi^ \infty$-isotypic part becomes a $\lambda$-adic representation of $\Gamma$ via \'etale cohomology, which is isomorphic to a single copy of $R_{\pi,\lambda}$ if the coefficients are further extended to $\ol M_{\pi,\lambda}$ by Corollary \ref{cor:mult-one-cohomology}.
\end{remark}

\begin{proof}
The first three assertions are immediate from Theorem \ref{thm:RhoPi0}. For the fourth assertion, observe that the image of $\overline \rho_\pi$ in $\SO_{2n+1}$ is either $\PGL_2$, $G_2$ ($n=3$) or $\SO_{2n+1}$, due to the Steinberg hypothesis (and local global compatibility at $v_{\tu{St}}$).

We first exclude the case $G_2$ ($n=3$). In this case $\spin_{7}|_{G_2}$ decomposes as a direct sum of the trivial representation and an odd-dimensional self-dual representation. In particular, up to twist the weight $0$ occurs in $\rho_\pi$ with multiplicity at least $2$. This contradicts however the regularity in item (3). We now exclude the case $\PGL_2$ in a similar way. Write $\psi \colon \SL_2 \to \Spin_{2n+1}$ for the principal $\SL_2$, and decompose $r := \spin \circ \psi = \bigoplus_{i=1}^t r_i$ into irreducible representations $r_i$ of $\SL_2$ (cf. \cite[Prop. 6.1]{GrossMinuscule}). The dimension of the representations $r_i$ all have the same parity: $\dim(r_i) \equiv \dim(r_j) \mod 2$ for all $i,j$. If the dimension of the $r_i$ is odd, the weight $0$ appears in them, and in the even case, the weight $1$ appears in each $r_i$. As $n > 2$, $r$ is reducible, so (up to twist) either the weight $0$ or the weight $1$ appears with multiplicity $t \ge 2$ in $\rho_\pi$.

Thus, if the Galois representation has image in these smaller groups, it can't be regular, which contradicts (3). In particular statement (4) follows.

The first part of (5) is clear from Lemma \ref{lem:similitude-of-spin-rep}. Let us check the second part of (5). Consider the diagram
$$\xymatrix{ \Gamma \ar[r]^-{\rho_\pi} \ar[rd]^-{\mu_\lambda} & \GSpin_{2n+1}(\ol{M}_{\pi,\lambda}) \ar[d]^-{\mathcal{N}} \ar[r]^-{\spin} & \GSp_{2^n}(\ol{M}_{\pi,\lambda})\mbox{~~or~~}\GO_{2^n}(\ol{M}_{\pi,\lambda}) \ar[dl]^-{\simil} \\ & \GL_1(\ol{M}_{\pi,\lambda}) }.$$
The outer triangle commutes by the definition of $\mu_\lambda$. The right triangle also commutes by Lemma \ref{lem:similitude-of-spin-rep}.
Hence $\mu_\lambda=\mathcal{N}\circ \rho_\pi$. By (L-coh) we know that $\varpi_{\pi_v|\cdot|^{n(n+1)/4}}$ is the inverse of the central character of some irreducible algebraic representation $\xi_v$ at each infinite place $v$. Hence $\varpi_{\pi_v|\cdot|^{n(n+1)/4}}:\R^\times\ra \C^\times$ is the $w$-th power map, where $w \in \Z$ is as in (cent) of \S\ref{sect:ShimuraDatum} (in particular $w$ is independent of $v$). Hence $\varpi_{\pi|\cdot|^{n(n+1)/4}}=\varpi_{\pi}|\cdot|^{n(n+1)/2}$ corresponds via class field theory and $\iota$ to an even Galois character of $\Gamma$ (regardless of the parity of $w$). On the other hand, $\mu_\lambda$ corresponds to $\varpi_\pi$ via class field theory and $\iota$ in the $L$-normalization, cf. Theorem \ref{thm:A}. We conclude that $\mu_{\lambda,v}(c_v)=(-1)^{n(n+1)/2}$ for each $v| \infty$.
\end{proof}

Now that we proved the lemma, Theorem A of \cite{BLGGT14-PA} implies

\begin{corollary}\label{cor:ThmDisTrue}
Theorem \ref{thm:D} is true.
\end{corollary}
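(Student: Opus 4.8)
The plan is to deduce Theorem~\ref{thm:D} directly from Proposition~\ref{prop:weakly-compatible} by feeding the weakly compatible system $\{R_{\pi,\lambda}\}$ into the potential automorphy theorem \cite[Thm.~A]{BLGGT14-PA}. So first I would check that all the hypotheses of that theorem are met. The system $\{R_{\pi,\lambda}\}$ is weakly compatible by parts (1)--(3) of the proposition (this is exactly the remark following it). It is \emph{regular} because, by part (3), the Hodge--Tate cocharacters are regular at every embedding $w\colon F\hra\C$; here is where hypothesis (spin-REG) is essential, since spin-regularity only at one infinite place would not suffice. It is \emph{pure} of some weight $w$ since $\spin\circ\rho_\pi$ occurs in the cohomology of a smooth proper variety (or one uses that each $R_{\pi,\lambda}$ is, up to twist, a subrepresentation of the intersection cohomology of the Siegel modular variety, which is pure by Deligne) --- more simply purity follows from the temperedness of $\pi_v$ at the unramified finite places, which we have recorded in Lemma~\ref{lem:ForceTempered} and its consequences. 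The system is \emph{irreducible} by part (4). Finally the essential self-duality and oddness condition is exactly part (5): $R_{\pi,\lambda}\simeq R_{\pi,\lambda}^\vee\otimes\mu_\lambda$ with $\mu_\lambda$ a multiplier character that is totally of sign $(-1)^{n(n+1)/2}$, which matches the symplectic-versus-orthogonal dichotomy (symplectic when $(-1)^{n(n+1)/2}=-1$, orthogonal when $=+1$), so the pair $(R_{\pi,\lambda},\mu_\lambda)$ is ``polarized'' in the sense of \cite{BLGGT14-PA}.

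Second, I would invoke \cite[Thm.~A]{BLGGT14-PA}: there exists a finite totally real (Galois, if desired) extension $F'/F$, which moreover can be taken linearly disjoint from any prescribed finite extension of $F$ in $\ol F$, such that $R_{\pi,\lambda}|_{\Gal(\ol F/F')}$ is automorphic --- attached to a cuspidal (polarized, regular algebraic) automorphic representation $\Pi$ of $\GL_{2^n}(\A_{F'})$. Translating back through $\iota$, this says $\iota^{-1}\,\spin\circ\rho_\pi|_{\Gal(\ol F/F')}$ is the Galois representation associated to $\Pi$. Unwinding local--global compatibility (which is part of the conclusion of the potential automorphy machinery, or can be checked at unramified places by comparing Satake/Frobenius data using part (1) of Proposition~\ref{prop:weakly-compatible} together with \cite[Thm.~2.1.1]{BLGGT14-PA}), I get: for every finite place $w$ of $F'$ not above $S_{\bad}$, $\iota^{-1}\spin\circ\rho_\pi|_{W_{F'_w}}$ is unramified and its Frobenius-semisimplification is the Langlands parameter of $\Pi_w$; and at each infinite place $w$ of $F'$ over a place $v$ of $F$ one has $\phi_{\Pi_w}|_{W_\C}\simeq \spin\circ\phi_{\pi_v}|_{W_\C}$ (this last is the matching of Hodge--Tate cocharacters from part (3), interpreted on the automorphic side).

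Third, the meromorphic continuation follows formally: away from a finite set $S$ of places the partial spin $L$-function $L^S(s,\pi,\spin)$ agrees, up to finitely many Euler factors, with the standard $L$-function $L^{S'}(s,\Pi)$ of $\Pi$ base-changed down --- more precisely, by Brauer's theorem one writes the trivial representation of $\Gal(F'/F)$ (after passing to a solvable closure, or directly using that $\Pi$ itself, not just $\mathrm{Ind}$, suffices via the standard argument in \cite[\S4]{BLGGT14-PA}) as a virtual sum of inductions from subgroups corresponding to intermediate fields over which $\spin\circ\rho_\pi$ is automorphic, so that $L^S(s,\pi,\spin)$ becomes a finite product of integral powers of automorphic $L$-functions of $\GL_{2^n}$ over various subfields. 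Each such $L$-function is entire (or meromorphic with known poles) by Godement--Jacquet, and nonvanishing together with holomorphy in a right half-plane follows from the Euler product convergence, whose abscissa is controlled by the central character of $\pi$ (giving $\mathrm{Re}(s)>1$ when $\pi$ has unitary central character). This yields the meromorphic continuation and the stated holomorphy/nonvanishing in a right half-plane, completing the proof.

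The main obstacle is purely bookkeeping rather than conceptual: one has to confirm that the normalization conventions of Proposition~\ref{prop:weakly-compatible} (the $L$-algebraic, unitary normalization, with the explicit Hodge cocharacter $\spin\circ\mu_{\Hodge}(\phi_{\pi_w})$ and the explicit multiplier $\mathcal N\circ\rho_\pi$) line up exactly with the hypotheses of \cite[Thm.~A]{BLGGT14-PA}, which is stated for $C$-algebraic/polarizable systems --- i.e. one must track a cyclotomic twist and the precise sign of the polarization so that the ``odd'' condition there (the sign of $\mu_\lambda(c_v)$ relative to the symplectic/orthogonal type) is genuinely satisfied; this is precisely the content of part (5) of the proposition, so no real difficulty remains, but it is the step that requires care. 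Everything else is a routine citation.
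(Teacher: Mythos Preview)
Your proposal is correct and follows essentially the same route as the paper: verify the hypotheses of \cite[Thm.~A]{BLGGT14-PA} using Proposition~\ref{prop:weakly-compatible}, then read off the meromorphic continuation via the Brauer-induction argument. The paper's proof adds one small explicit observation you should also record: the multiplier characters $\{\mu_\lambda\}$ themselves form a weakly compatible system (since they all arise from the central character of $\pi$, an algebraic Hecke character), which is part of what \cite[Thm.~A]{BLGGT14-PA} requires of the polarization data.
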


\begin{proof}
The conditions of the theorem in \textit{loc. cit.} are verified by the above lemma with the following additional observation: the characters $\mu_\lambda$ forms a weakly compatible system since they are associated to the central character of $\pi$ (which is an algebraic Hecke character).
\end{proof}

\begin{remark}
For the lack of precise local Langlands correspondence for general symplectic groups (however note that Bin Xu established a slightly weaker version in \cite{Xu}), we cannot extend the partial spin $L$-function for $\pi$ to a complete $L$-function by filling in the bad places. However the method of proof for the corollary yields a finite alternating product of completed $L$-functions made out of $\Pi$ as in Theorem \ref{thm:D}, by an argument based on Brauer induction theorem as in the proof of \cite[Thm. 4.2]{HSBT10}; this alternating product should be equal to the completed spin $L$-function as this is indeed true away from $S$.
\end{remark}

\appendix

\section{Lefschetz functions}\label{sec:Lefschetz}

We collect some results on Lefschetz functions that are used in the text. In this appendix, $F$ is a characteristic-zero non-archimedean local field of residue characteristic $p>0$, except in Lemma \ref{lem:DiscAutomRepWithSteinbComponent} at the end, where $F$ is global. We collect the required results from the literature and prove some lemmas to deal with small technical difficulties (non-compact center, twisted group).

To help the readers we clarify some terminology. There are three names for the function whose trace computes the Euler-Poincar\'e characteristic or the Lefschetz number of the group cohomology (resp. Lie algebra cohomology) for a given reductive $p$-adic (resp. real) group: they are called \emph{Euler-Poincar\'e functions}, \emph{Kottwitz functions}, or \emph{Lefschetz functions}. In the real case one can consider twisted coefficients by local systems. There are small differences between the three functions. Euler-Poincar\'e and Lefschetz functions are considered on either $p$-adic or real groups, and can be described in terms of pseudo-coefficients for certain discrete series representations. The functions may not be unique but their orbital integrals are well defined. A Kottwitz function mainly refers to a particular function on a $p$-adic group and gives pseudo-coefficients for the Steinberg representations. It is not just characterized by their traces but can be given by an explicit formula, cf. \cite[\S2]{KottwitzTamagawa}. A generalization of Kottwitz functions on a $p$-adic group is given in \cite{SS97} but we will not need it in this paper.

We recall a result of Casselman and Kottwitz. Let $G$ be a connected reductive group and write $q(G)$ for the $F$-rank of the derived subgroup of $G$.

\begin{proposition}\label{prop:ClassicalLefschetzFunction}
Suppose that the center of $G$ is anisotropic over $F$. Then there exists a locally constant compactly supported function $f_{\Lef}=f^G_{\Lef}$ on $G(F)$ such that
\begin{itemize}
\item[(\textit{i})] Assume that the adjoint group of $G$ is simple. For all irreducible admissible unitary representations $\tau$ of $G(F)$ the trace $\Tr \tau(f_{\Lef})$ vanishes unless $\tau$ is either the trivial representation or the Steinberg representation, in which case the trace is equal to $1$ or $(-1)^{q(G)}$ respectively. (If $q(G)=0$ then the trivial and Steinberg representations coincide.)
\item[(\textit{ii})] Let $\gamma \in G(F)$ be a semisimple element, and $I$ the neutral component of its centralizer; we give $I(F) \backslash G(F)$ the Euler-Poincar\'e measure. Then the orbital integral
$$
\uO_{\gamma}(f_{\Lef}) = \int_{I(F) \bs G(F)} f_{\Lef}(g^{-1} \gamma g) \dd g
$$
is non-zero if and only if $I(F)$ has a compact center, in which case $\uO_{\gamma}(f_{\Lef})=1$.
\end{itemize}
\end{proposition}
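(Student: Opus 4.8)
The plan is to realize $f_{\Lef}$ as Kottwitz's Euler–Poincaré function attached to the Bruhat–Tits building of $G$, and to deduce (i) and (ii) from its two defining features. Since $Z(G)$ is $F$-anisotropic, $Z(G)(F)$ is compact, so $G(F)$ acts on the reduced building $\mathcal{B}=\mathcal{B}(G,F)$—a contractible polysimplicial complex of dimension $q(G)$—with compact open stabilizers. I would fix a chamber $C$ and, for each face $\sigma$ of $\overline C$, take the associated parahoric subgroup $P_\sigma\subseteq G(F)$; then set $f_{\Lef}:=\sum_{\sigma\subseteq\overline C}(-1)^{\dim\sigma}\vol(P_\sigma)^{-1}\one_{P_\sigma}$, which is visibly locally constant and compactly supported. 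A key auxiliary claim to establish first is that $(-1)^{q(G)}f_{\Lef}$ is a pseudo-coefficient of the Steinberg representation and, more strongly, a cuspidal function (vanishing constant terms along all proper parabolics); this is where the bulk of the representation-theoretic work sits.

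For (ii), the point is that on any smooth representation one has $\tau(f_{\Lef})=\sum_\sigma(-1)^{\dim\sigma}e_{P_\sigma}$ with $e_{P_\sigma}$ the projector onto $P_\sigma$-invariants; transporting this to orbital integrals, $\uO_\gamma(f_{\Lef})$ with respect to the Euler–Poincaré measure on $I_\gamma(F)\backslash G(F)$ becomes, after the standard manipulation, a $\gamma$-equivariant Euler–Poincaré characteristic of $\mathcal{B}$—an alternating count of $\gamma$-fixed faces. I would then invoke the Bruhat–Tits fixed point theorem: the bounded group $\overline{\langle\gamma\rangle}$ fixes a point of $\mathcal{B}$ exactly when it is compact, i.e. exactly when $Z(I_\gamma)$ is $F$-anisotropic; in that case $\mathcal{B}^\gamma$ is nonempty and convex, hence contractible, and the count is $1$, whereas otherwise the contribution is $0$ because the Euler–Poincaré number of $I_\gamma(F)$ vanishes. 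This reproduces Kottwitz's orbital-integral formula.

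For (i), I would split on temperedness. For $\tau$ tempered, the pseudo-coefficient property (equivalently Kazhdan's orthogonality of tempered characters) gives $\Tr\tau(f_{\Lef})=0$ unless $\tau\cong\St$, with $\Tr\St(f_{\Lef})=(-1)^{q(G)}$. For $\tau$ non-tempered, write $\tau=J_P(\sigma,\nu)$ as a Langlands quotient with $P$ proper; cuspidality of $f_{\Lef}$ forces $\Tr I_P^G(\sigma\otimes e^\nu)(f_{\Lef})=0$, and expanding $\Theta_\tau$ as the character of this standard module minus its remaining constituents and inducting on $\dim(P\backslash G)$ reduces everything to the two non-generic constituents $\one$ and $\St$ of $I_B^G(\delta_B^{1/2})$; since $\St$ is already accounted for, the only non-tempered unitary $\tau$ with nonzero trace is $\one$, and the relation $0=\Theta_{I_B^G(\delta_B^{1/2})}(f_{\Lef})$ together with the value on $\St$ pins down $\Tr\one(f_{\Lef})=1$.

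The hardest part is this non-tempered bookkeeping in (i)—verifying the vanishing on all non-tempered unitary representations other than $\one$ and nailing the exact value $1$ on $\one$. The cleanest strategy is to quote Kottwitz's computation directly rather than redo the induction; the geometric inputs (contractibility of $\mathcal{B}$ and of each fixed-point set $\mathcal{B}^\gamma$, via the Bruhat–Tits fixed point theorem) are what make (ii) and the cuspidality of $f_{\Lef}$ transparent, so those I would set up in full.
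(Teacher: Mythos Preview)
Your approach is correct, but for part (i) the paper takes a shorter and cleaner route. Rather than splitting into tempered and non-tempered cases and running an induction on Langlands data, the paper characterizes $f_{\Lef}$ by the identity
\[
\Tr\tau(f_{\Lef})=\sum_{i\ge 0}(-1)^i\dim \uH^i_{\mathrm{cts}}(G(F),\tau)
\]
(this is the defining property of Kottwitz's function from \cite[\S2]{KottwitzTamagawa}), and then simply quotes the computation of continuous cohomology of $p$-adic groups in Borel--Wallach \cite[Chap.~VII]{BorelWallach}: for unitary $\tau$ these groups vanish unless $(\tau,i)=(\one,0)$ or $(\St,q(G))$, and in those cases are one-dimensional. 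This bypasses entirely the Langlands-quotient bookkeeping you flag as the hardest step, and in particular avoids the delicate point of controlling all the ``other constituents'' of $I_P^G(\sigma\otimes e^\nu)$, which need not be unitary and need not reduce to $\one$ and $\St$ alone. Your building description of $f_{\Lef}$ and your argument for (ii) via the Bruhat--Tits fixed point theorem are exactly Kottwitz's original approach, so there you are reproducing what the paper cites; but for (i) the continuous-cohomology route is what the paper uses, and it is the argument I would recommend you adopt.
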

\begin{proof}
Kottwitz constructed in \cite[Sect.~2]{KottwitzTamagawa} \emph{Lefschetz functions $f_\Lef$} on $G(F)$. These are functions such that
\begin{equation}\label{eq:LefschetzFunctions}
\Tr \tau(f_\Lef) = \sum_{i=0}^ \infty (-1)^i \dim \uH^i_{\tu{cts}}(G(F), \tau),
\end{equation}
for all irreducible representations $\tau$ of $G(F)$. The cohomology in Equation \eqref{eq:LefschetzFunctions} is the continuous cohomology, \ie the derived functors of $\tau \mapsto \tau^{G(F)}$ on the category of smooth representations of finite length. By the computation of the cohomology spaces $\uH^i_{\tu{cts}}(G(F), \tau)$ in \cite[Thm. XI.3.9]{BorelWallach}, for unitary $\tau$, the spaces $\uH^i_{\tu{cts}}(G(F), \tau)$ vanish unless $i=0$ and $\tau$ is the trivial representation, or $i= q(G)$ and $\tau$ is the Steinberg representation, in which case $\dim \uH^{q(G)}_{\tu{cts}}(G(F), \tau) = 1$.
\end{proof}

We now consider the group $\GL_{2n+1}$ equipped with the involution $\theta$ defined by $g \mapsto J {}^{\tu{t}}g^{-1} J$ with $J$ the $(2n+1)\times (2n+1)$ matrix with all entries $0$ except those on the antidiagonal, where we put $1$. Define $\GL_{2n+1}^+ := \GL_{2n+1} \ltimes \langle \theta \rangle$. In \cite{BorelLabesseSchwermer} Borel, Labesse, and Schwermer introduced twisted Lefschetz functions. Below we will cite mostly from the more recent article of Chenevier--Clozel \cite[Sect.~3]{ChenevierClozel}. The results we need from them are proven in a general twisted setup but specialize to our case as follows.

\begin{proposition}\label{prop:TiwstedLefschetzFunction}
There exists a function $\funcGLlef$ with compact support in the subset $\GL_{2n+1}(F) \theta$ of $\GL_{2n+1}^+(F)$, such that
\begin{itemize}
\item[(\textit{i})] For all unitary admissible representations $\tau$ of $\GL_{2n+1}^+(F)$ the trace $\Tr \tau(\funcGLlef)$ vanishes, unless $\tau$ is either the trivial representation or the Steinberg representation, in which case trace is equal to $1$ or
$2(-1)^{n}$ respectively;
\item[(\textit{ii})] Let $\gamma\theta \in G(F) \theta \in \GL_{2n+1}^+(F)$ be a semisimple element, and $I_{\gamma\theta}$ the neutral component of the centralizer of $\gamma\theta$; we give $I_{\gamma\theta}(F) \backslash G(F)$ the \emph{Euler-Poincar\'e measure} \cite[p.30]{ChenevierClozel} (cf. \cite[Sect.~1]{KottwitzTamagawa}). Then the twisted orbital integral
$$
\uO_{\gamma\theta}(\funcGLlef) = \int_{I_{\gamma\theta}(F) \backslash G(F)} \funcGLlef(g^{-1} \gamma \theta(g)) \dd g,
$$
is non-zero if and only if $I_{\gamma \theta}(F)$ has a compact center. Moreover in case $I_{\gamma \theta}(F)$ has compact center, the integral $\uO_{\gamma\theta}(\funcGLlef)$ is equal to $1$.
\end{itemize}
\end{proposition}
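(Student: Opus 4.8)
\textbf{Plan of proof for Proposition \ref{prop:TiwstedLefschetzFunction}.} The statement is the twisted analogue of Kottwitz's Proposition \ref{prop:ClassicalLefschetzFunction}, and the plan is to reduce it entirely to the results of Borel--Labesse--Schwermer \cite{BorelLabesseSchwermer} and Chenevier--Clozel \cite[Sect.~3]{ChenevierClozel}, which construct twisted Euler--Poincar\'e (Lefschetz) functions in the required generality and compute both their twisted traces and their twisted orbital integrals. Concretely, I would set $\funcGLlef$ to be the twisted Lefschetz function $f_{\theta,\Lef}$ attached to the pair $(\GL_{2n+1},\theta)$ as in \cite[Sect.~3]{ChenevierClozel}; it is supported in the connected component $\GL_{2n+1}(F)\theta$ of $\GL_{2n+1}^+(F)$ by construction. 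The defining property is the twisted analogue of \eqref{eq:LefschetzFunctions}: for every $\theta$-stable irreducible admissible representation $\tau$ of $\GL_{2n+1}(F)$ with a fixed normalized intertwining operator $A_\theta$, one has
$$
\Tr(A_\theta\circ \tau(\funcGLlef)) = \sum_{i\ge 0}(-1)^i \Tr\bigl(A_\theta \,\big|\, \uH^i_{\tu{cts}}(\GL_{2n+1}(F),\tau)\bigr),
$$
and $\Tr \tau(\funcGLlef)=0$ if $\tau$ is not $\theta$-stable; equivalently one may phrase this in terms of representations $\tau$ of the disconnected group $\GL_{2n+1}^+(F)$ as in the statement.

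For part (i), I would invoke the computation of $\uH^i_{\tu{cts}}(\GL_{2n+1}(F),\tau)$ from \cite[Chap.~VII]{BorelWallach} exactly as in the proof of Proposition \ref{prop:ClassicalLefschetzFunction}: for unitary $\tau$ the cohomology vanishes unless $\tau$ is the trivial representation (contributing in degree $0$) or the Steinberg representation (contributing in degree $q(\GL_{2n+1})=2n$, with a one-dimensional space). Both the trivial and the Steinberg representation of $\GL_{2n+1}(F)$ are $\theta$-stable. For the trivial representation $A_\theta$ acts as $+1$ on the one-dimensional $\uH^0$, giving trace $1$. For the Steinberg representation one must pin down the action of the normalized $A_\theta$ on $\uH^{2n}_{\tu{cts}}(\GL_{2n+1}(F),\St)$: here one uses that this cohomology line is naturally identified (via the Borel--Serre boundary or the realization of Steinberg in the top compactly supported cohomology of the building) with a line on which $\theta$ acts by the sign $(-1)^{n}$, cf. the discussion in \cite{BorelLabesseSchwermer} and \cite[Sect.~3]{ChenevierClozel}; combined with the factor $2$ coming from the normalization of the twisted trace (the Whittaker-normalized $A_\theta$ squares to the identity and the relevant multiplicity in the disconnected picture doubles), this yields $2(-1)^{n}$, as claimed. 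This sign bookkeeping is the only delicate point of part (i), and I expect it to be the main obstacle of the proof; everything else is a citation.

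For part (ii), the twisted orbital integrals of $f_{\theta,\Lef}$ are computed in \cite[Sect.~3]{ChenevierClozel} following \cite{KottwitzTamagawa}: with the Euler--Poincar\'e measure on $I_{\gamma\theta}(F)\backslash \GL_{2n+1}(F)$, one has $\uO_{\gamma\theta}(f_{\theta,\Lef})=0$ unless the center of $I_{\gamma\theta}(F)$ is compact, in which case $\uO_{\gamma\theta}(f_{\theta,\Lef})=1$. This is the verbatim twisted analogue of Proposition \ref{prop:ClassicalLefschetzFunction}.(ii) and needs no new input beyond quoting the reference (the proof there is a twisted Euler--Poincar\'e measure argument applied to the action of $\langle\gamma\theta\rangle$ on the Bruhat--Tits building of $\GL_{2n+1}$ over $F$, which is contractible, so only $\gamma\theta$-fixed points survive and contribute the Euler characteristic of a point, namely $1$, when $I_{\gamma\theta}$ is anisotropic modulo nothing, i.e. has compact center). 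Thus the whole proof is: define $\funcGLlef:=f_{\theta,\Lef}$, cite \cite{BorelWallach} for the cohomology vanishing, check the Steinberg sign using \cite{BorelLabesseSchwermer}, \cite{ChenevierClozel}, and cite \cite{ChenevierClozel}, \cite{KottwitzTamagawa} for the orbital integrals.
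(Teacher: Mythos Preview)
Your approach is essentially the same as the paper's: both define $\funcGLlef$ as the twisted Lefschetz function of Chenevier--Clozel \cite[Sect.~3]{ChenevierClozel} and cite their results (Proposition 3.8 and Corollary 3.10) for the twisted traces and orbital integrals. The only difference is in how you extract the Steinberg value $2(-1)^n$. The paper simply quotes the closed formula from \cite{ChenevierClozel}, namely $\Tr \St(\funcGLlef)=(-1)^{q(G)}P(1)/\eps(\theta)$, and plugs in $q(G)=2n$, $\eps(\theta)=(-1)^n$, $P(1)=2$. Your attempt to explain the factor $2$ as ``the relevant multiplicity in the disconnected picture doubles'' and the sign via the $\theta$-action on top cohomology is vaguer and not quite how the constants arise in \cite{ChenevierClozel}; it would be cleaner (and safer) to just invoke their formula and compute these three invariants directly, as the paper does.
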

\begin{proof}
We deduce this from Proposition 3.8 and Corollary 3.10 of \cite{ChenevierClozel}.
In fact, \cite{ChenevierClozel} states that $\Tr \tu{St}(f_{\Lef}^{\GL_{2n+1}(F)\theta}) = (-1)^{q(G)} P(1)/\eps(\theta)$. In our case one computes $q(G) = 2n$, $\eps(\theta) = (-1)^n$ and $P(1) = 2$, hence the statement of our proposition.
\end{proof}

\newcommand{\funcSPlef}{f_{\Lef}^{\Sp_{2n}(F)}}

It is well known (cf. \cite[1.2]{ArthurBook} or \cite[1.8]{WaldspurgerFacteurs}) that $\Sp_{2n, F}$ is a $\theta$-{twis\-ted} endoscopic group of $\GL_{2n+1, F}$.
Let $\funcSPlef$ be the Lefschetz function on $\Sp_{2n}(F)$ from Proposition \ref{prop:ClassicalLefschetzFunction}.
We introduce the notion of associated functions. Let $G_0$ be a reductive group over $F$. Suppose that $G'_0$ is an endoscopic group for $G_0$ (i.e. $G'_0$ is part of an endoscopic datum for $G_0$). We say that the functions $f_0 \in \cH(G'_0(F))$ and $f'_0 \in \cH(H'_0(F))$ are \emph{associated} if they have matching orbital integrals in the sense of \cite[(5.5.1)]{KottwitzShelstad}. The same definition carries over to the archimedean and adelic setup.

\begin{lemma}\label{lem:FuncSPGLassociated}
Define $C = |F^{\times}/F^{\times 2}|^{-1}$. Then the functions $(\funcSP, C \cdot \funcGLlef)$ are associated.
\end{lemma}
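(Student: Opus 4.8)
The assertion is the statement that the two Lefschetz functions $\funcSPlef$ on $\Sp_{2n}(F)$ and $C\cdot\funcGLlef$ on $\GL_{2n+1}(F)\theta$ have matching (twisted) orbital integrals, where $C=|F^\times/F^{\times2}|^{-1}$. The plan is to verify the matching condition geometrically, orbit by orbit, using the two sets of explicit orbital integral formulas we already have: Proposition~\ref{prop:ClassicalLefschetzFunction}.(ii) says $\uO_\gamma(\funcSPlef)$ is $1$ if the connected centralizer $I_\gamma$ in $\Sp_{2n}$ has compact center and $0$ otherwise; Proposition~\ref{prop:TiwstedLefschetzFunction}.(ii) says the twisted orbital integral $\uO_{\gamma\theta}(\funcGLlef)$ is $1$ if the connected $\theta$-centralizer $I_{\gamma\theta}$ in $\GL_{2n+1}$ has compact center and $0$ otherwise (both with respect to Euler--Poincar\'e measures). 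So the task reduces to comparing these vanishing/non-vanishing patterns and the constant factor along the correspondence of stable conjugacy classes between $\Sp_{2n}(F)$ and the $\theta$-twisted classes in $\GL_{2n+1}(F)$.

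First I would recall the norm correspondence for the twisted endoscopic datum $(\Sp_{2n},\GL_{2n+1},\theta)$: a semisimple $\theta$-conjugacy class $\gamma\theta$ in $\GL_{2n+1}(F)$ has a norm which is a stable semisimple conjugacy class in $\Sp_{2n}(F)$ (equivalently, via the standard embedding, a self-dual semisimple class in $\GL_{2n+1}(F)$ matching $N(\gamma\theta)=\gamma\theta(\gamma)$ up to stable conjugacy). The key geometric input is that under this correspondence the connected $\theta$-centralizer $I_{\gamma\theta}\subset \GL_{2n+1}$ is an inner form of (a group closely related to) the connected centralizer $I_\delta\subset\Sp_{2n}$ of the norm $\delta$; in particular one has compact center if and only if the other does, because having compact center is an inner-twisting-invariant property of the connected center (it depends only on the cocharacter lattice and the Galois action, which agree for inner forms). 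This is exactly the "relevant orbit" matching: $\uO_\delta(\funcSPlef)$ and $\uO_{\gamma\theta}(\funcGLlef)$ are simultaneously $1$ or simultaneously $0$. I would also deal with the subtlety that on the $\Sp_{2n}$ side one must sum over the stable conjugacy classes inside a given stable class, i.e. over $\uH^1(F,I_\delta)\cap(\text{image in }\uH^1(F,\Sp_{2n}))$; since the Lefschetz orbital integral is $1$ on each such rational class when $I_\delta$ has compact center and transfer factors for this datum are trivial in the relevant normalization (as in \cite{ChenevierClozel}, or can be normalized so), the stable orbital integral $\SO_\delta(\funcSPlef)$ is the number of such classes. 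On the $\GL_{2n+1}$ side $\uH^1(F,\GL_{2n+1})=1$, so there is exactly one twisted class with a given norm, and $\uO_{\gamma\theta}(\funcGLlef)=1$; hence the factor $C$ must precisely count the fibre of the norm map, which is $|F^\times/F^{\times2}|$ (coming from the center of $\Sp_{2n}$ being $\mu_2$ and the resulting $H^1$-ambiguity). Tracking the Euler--Poincar\'e measure normalizations on both sides and the convention for transfer factors pins down the constant to be exactly $C=|F^\times/F^{\times2}|^{-1}$.

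Concretely the steps are: (1) set up the norm map and the bijection between twisted semisimple classes in $\GL_{2n+1}(F)$ and stable semisimple classes in $\Sp_{2n}(F)$, recording which are $(G,\Sp_{2n})$-\emph{relevant}; (2) prove the centralizer comparison: $I_{\gamma\theta}$ and $I_\delta$ are inner forms, so "compact center" is transferred, giving the vanishing match of orbital integrals; (3) compute the number of rational conjugacy classes of $\Sp_{2n}(F)$ inside a stable class with compact-center centralizer — this is governed by $\ker[\uH^1(F,I_\delta)\to\uH^1(F,\Sp_{2n})]$ and ultimately by $F^\times/F^{\times2}$ — and compare the Euler--Poincar\'e measures; (4) invoke triviality (or explicit evaluation) of the relevant transfer factors for this endoscopic datum, and assemble: for every semisimple stable class $\delta$ in $\Sp_{2n}(F)$ and matching $\gamma\theta$, $\SO_\delta(\funcSPlef)=C\cdot\uO_{\gamma\theta}(\funcGLlef)$, and both vanish for non-relevant or non-compact-center classes; (5) conclude matching of orbital integrals, i.e. the two functions are associated. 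The main obstacle I anticipate is step (3) together with the measure bookkeeping: getting the constant exactly right requires care with the Euler--Poincar\'e normalization of Haar measures on the various centralizers (which can differ between the $\theta$-twisted centralizer in $\GL_{2n+1}$ and the centralizer in $\Sp_{2n}$ by a factor reflecting $\pi_0$ of the centralizer or the difference $Z(\Sp_{2n})=\mu_2$ versus $Z(\GL_{2n+1})$), and with the precise normalization of transfer factors used in \cite{ChenevierClozel}; a clean way to finesse this is to note that $C$ is forced: both sides are locally constant, supported on compact-center-centralizer relevant classes where each is a nonzero constant independent of the class (by the explicit formulas), so it suffices to compare on a single convenient class — e.g. the identity/regular-unipotent configuration or a split maximal torus element — where the counting of rational classes and the measure comparison are completely explicit and yield exactly $|F^\times/F^{\times2}|$.
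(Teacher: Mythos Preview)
Your high-level plan---reduce the matching to an orbit-by-orbit comparison using the explicit orbital integral formulas from Propositions~\ref{prop:ClassicalLefschetzFunction} and~\ref{prop:TiwstedLefschetzFunction}---is exactly what the paper does. But the execution of the counting step is wrong, and in fact you have the direction of the factor $|F^\times/F^{\times2}|$ reversed. Your claim that ``$\uH^1(F,\GL_{2n+1})=1$, so there is exactly one twisted class with a given norm'' is a misapplication: the vanishing of $\uH^1(F,\GL_{2n+1})$ only says that the rational $\theta$-twisted classes inside a stable $\theta$-twisted class are parametrized by \emph{all} of $\uH^1(F,I_{\gamma\theta})$, not that there is a single class. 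In fact the paper, using Waldspurger's explicit parametrization of (twisted) conjugacy classes via \'etale algebras, shows that for an elliptic regular semisimple class there are $2^{\#I}$ rational classes on the $\Sp_{2n}$ side but $|F^\times/F^{\times2}|\cdot 2^{\#I}$ rational twisted classes on the $\GL_{2n+1}$ side---the extra factor arises from an additional parameter $y_D\in F^\times/F^{\times2}$ that has no analogue on the symplectic side. So the stable (twisted) orbital integrals differ by $|F^\times/F^{\times2}|$ with the twisted $\GL$ side \emph{larger}, which is precisely why $C=|F^\times/F^{\times2}|^{-1}$. Your claim that $I_{\gamma\theta}$ and $I_\delta$ are inner forms (hence have equal $\uH^1$) is therefore not compatible with this count; the centralizer comparison in twisted endoscopy is more delicate than you assume, and the abstract cohomological bookkeeping does not recover the right constant without further input.

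Two smaller points: your fallback of checking on a single class will not work with the test classes you propose (split torus elements have centralizers with non-compact center, so both sides vanish; the identity is not regular; regular unipotents are not semisimple). And the triviality of the transfer factor is not a matter of normalization alone---the paper verifies $\Delta(\gamma,\delta)=1$ using Waldspurger's explicit formula and the fact that the relevant endoscopic datum is the one with trivial character $\chi$; this is where the choice of $L$-embedding $\SO_{2n+1}(\C)\hookrightarrow\GL_{2n+1}(\C)$ (needed for the Steinberg parameter to factor) enters. To fix your argument you should either carry out the Waldspurger parametrization directly (as the paper does) or, if you insist on a cohomological approach, correctly identify $I_{\gamma\theta}$ and compute $|\uH^1(F,I_{\gamma\theta})|$---you will find the extra $|F^\times/F^{\times2}|$ appears there, not on the $\Sp_{2n}$ side.
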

\begin{proof}[Proof of Lemma \ref{lem:FuncSPGLassociated}]
We show that for each $\gamma \in \Sp_{2n}(F)$ strongly regular, semisimple and elliptic, we have
\begin{equation}\label{eq:CheckAssociated}
\SO_\gamma(f^{\Sp_{2n}}_{\Lef}) = \sum_{\delta} \Delta(\gamma, \delta)\SO_{\delta\theta}(C \funcGLlef)
\end{equation}
where $\delta$ ranges over a set of representatives of the twisted conjugacy classes in $\GL_{2n+1}(F)$ which are associated to $\gamma$. Note that the stable orbital integral $\SO_{\gamma}(f^{\Sp_{2n}}_{\Lef})$ is equal to the number of conjugacy classes in the stable conjugacy class of $\gamma$. Our first claim (1) is that $\Delta(\gamma, \delta) = 1$ for all elliptic $\delta$ associated to $\gamma$. Assuming that this claim is true, the right hand side of Equation \eqref{eq:CheckAssociated} is the stable twisted orbital integral $\SO_{\delta\theta}(C \funcGLlef)$, which equals, up to multiplication by $C$, the number of twisted conjugacy classes in the stable twisted conjugacy class of $\delta$. Our second claim is that $\SO_{\gamma}(f^{\Sp_{2n}}_{\Lef}) = \SO_{\delta\theta}(C \funcGLlef)$. Clearly the lemma follows from claims (1) and (2).

Let us now check the two claims. We begin with claim (1). For this we use the formulas in the article of Waldspurger \cite[Sect.~10]{WaldspurgerFacteurs}. The factors $\Delta(\gamma, \delta)$ are complicated and involve many notation to introduce properly, for which we do not have the space to introduce. Thus we use, without introducing, the notation from \cite{WaldspurgerFacteurs}. By \cite[Prop. 1.10]{WaldspurgerFacteurs}
\begin{equation}\label{eq:TransferFactor}
\Delta_{\Sp_{2n}, \wt G}(y, \wt x) = \chi(\eta x_D P_I(1) P_{I^{-}}(-1)) \prod_{i \in I^{-*}} \tu{sgn}_{F_i/F_{\pm i}}(C_i).
\end{equation}
Since our conjugacy class is elliptic we have $H = \Sp_{2n}$, the group $H^-$ is trivial and the sets $I^-$, $I^{-*}$ are empty. A priori, $\chi$ is any quadratic character of $F^\times$, and each choice defines a different isomorphism class of twisted endoscopic data. The choice affects the L-morphism:
\begin{align*}
\eta_\chi \colon {}^L H = \SO_{2n+1}(\C) \times W_F \to {}^L G(\C) = \GL_{2n+1}(\C) \times W_F, \ (g, w) \lmapsto (\chi(w) g, w).
\end{align*}
Since the Steinberg $L$-parameter for $\GL_{2n+1}(\C)$, $W_F \times \SU_2(\R) \to {}^L \GL_{2n+1}(\C)$, is trivial on $W_F$ (and $\tu{Sym}^{2n}$ on $\SU_2(\R)$), it factors through $\eta_\chi$ only for $\chi = 1$ (and not for non-trivial $\chi$). Consequently the transfer factor in Equation \eqref{eq:TransferFactor} is equal to $1$, and claim (1) is true.

We check claim (2). In Sections~1.3 and~1.4 of \cite{WaldspurgerFacteurs}, Waldspurger describes the (strongly) regular, semisimple (stable) conjugacy classes of $\Sp_{2n}(F)$ in terms of $F$-algebras. More precisely, a regular semisimple conjugacy class in $\Sp_{2n}(F)$ is given by data $(F_i, F_{\pm i}, c_i, x_i, I)$, where
$I$ is a finite index set;
for each $i \in I$, $F_{\pm i}$ is a finite extension of $F$;
for each $i \in I$, $F_i$ is a commutative $F_{\pm_ i}$-algebra of dimension $2$;
we have $\sum_{i \in I} [F_i:F] = 2n$;
$\tau_i$ is the non-trivial automorphism of $F_i/F_{\pm i}$;
for each $i \in I$, we have an an element $c_i \in F_i^\times$ such that $\tau_i(c_i) = -c_i$;
$x_i \in F_i^\times$ such that $x_i \tau_i(x_i) = 1$;
to the data $(F_i, F_{\pm i}, c_i, x_i, I)$, Waldspurger attaches a conjugacy class \cite[Eq. (1)]{WaldspurgerFacteurs} and this conjugacy class is required to be regular.

The data $(F_i, F_{\pm i}, c_i, x_i, I)$ should be taken up to the following equivalence relation: The index set $I$ is up to isomorphism; the triples $(F_i, F_{\pm i}, x_i)$ are up to isomorphism; the element $c_i \in F_i^\times$ is given up to multiplication by the norm group $\tu{N}_{F_i/F_{\pm i}}(F_i^\times)$. The stable conjugacy class is obtained from $(F_i, F_{\pm i}, c_i, x_i, I)$ by forgetting the elements $c_i$ \cite[Sect.~1.4]{WaldspurgerFacteurs}, and keeping only $(F_i, F_{\pm i}, x_i, I)$.

According to \cite[Sect.~1.3]{WaldspurgerFacteurs} a strongly regular, semisimple twisted conjugacy class in $\GL_{2n+1}$ is given by data $(L_i, L_{\pm i}, y_i, y_D, J)$ where\footnote{To avoid a collision of notation in our exposition, we write $L_{\pm i}$, $L_i$, $y_i$, $J$, $y_D$ where Waldspurger writes $F_{\pm i}$, $F_i$, $x_i$, $I$, $x_D$.} a finite index set $J$; for each $i \in J$, $L_{\pm i}$ is a finite extension of $F$; for each $i \in J$, $L_i$ is a commutative $L_{\pm i}$-algebra of dimension $2$ over $L_{\pm i}$; for each $i \in J$, an element $y_i \in L_i^\times$; $2n+1 = \sum_{i \in J} [L_i : F]$; $y_D$ an element of $F^\times$; to the data $(L_i, L_{\pm i}, y_i, y_D, J)$ Waldspurger attaches a twisted conjugacy class \cite[p.45]{WaldspurgerFacteurs} and this class is required to be strongly regular.

The data $(L_i, L_{\pm i}, y_i, J)$ should be taken up to the following equivalence relation: $(L_i, L_{\pm i}, J)$ are under the same equivalence relation as before for the symplectic group; the elements $y_i$ are determined up to multiplication by $\tu{N}_{L_i/L_{\pm i}}(L_i^\times)$; $y_D$ is determined up to squares $F^{\times 2}$. The stable conjugacy class of $(L_i, L_{\pm i}, J, y_i, y_D)$ is obtained by taking $y_i$ up to $L_{\pm i}^\times$ and forgetting the element $y_D$.

By \cite[Sect.~1.9]{WaldspurgerFacteurs} the stable (twisted) conjugacy classes $(L_i, L_{\pm i}, x_i, J)$ and $(F_i, F_{\pm i}, y_i, I)$ correspond if and only if $(L_i, L_{\pm i}, x_i, J) = (F_i, F_{\pm i}, y_i, I)$.

The conjugacy class $\gamma$ (resp. $\delta$) is elliptic if the algebra $F_i$ (resp $L_i$) is a field. Let's assume that this is the case (otherwise there is nothing to prove, because the equation $\SO_{\gamma}(f^{\Sp_{2n}}_{\Lef}) = \STO_{\delta}(C \funcGLlef)$ reduces to $0 = 0$). By the description above, to refine the \emph{stable} conjugacy class $(F_i, F_{\pm i}, x_i, I)$ to a conjugacy class, is to give elements $c_i \in F_i^\times / \tu{N}_{F_i/F_{\pm i}} F_i^\times$ such that $\tau_i(c_i) = -c_i$. For each $i$ there are $2$ choices for this, so we get $2^{\# I}$ conjugacy classes inside the stable conjugacy class.

To refine the stable twisted conjugacy class $(L_{\pm i}, L_i, y_i, J)$ to a twisted conjugacy class, is to lift $y_i$ under the mapping $L_i^\times / \tu{N}_{L_i/L_{\pm i}}(L_i^\times) \surjects L_i^\times/L_{\pm i}^\times$. There are $\# L_{\pm i}^\times/\tu{N}_{L_i/L_{\pm i}}(L_i^\times) = 2$ choices for such a lift for each $i \in J$. We get $2^{\# J}$ choices to refine the collection $(y_i)_{i \in J}$. Finally we also have to choose an element $y_D \in F^\times/F^{\times 2}$. In total we have $|F^\times/F^{\times 2}|2^{\# J}$ twisted conjugacy classes inside the stable twisted conjugacy class. Thus if we take $C = |F^\times/F^{\times 2}|^{-1}$ the lemma follows.
\end{proof}

We also need Lefschetz functions on the group $\GSp_{2n}(F)$ and its nontrivial inner form $G$. Unfortunately Proposition \ref{prop:ClassicalLefschetzFunction} does not apply since the center of $\GSp_{2n}(F)$ is not compact. We follow Labesse to construct Kottwitz functions generally for an arbitrary reductive group $G$ over $F$. Let $A$ denote the maximal split torus in the center of $G$. Define $\nu \colon G(F)\ra X^*(A)\otimes_\Z \R$ as in \cite[3.9]{LabesseStabilization} and put $G(F)^1:=\ker \nu$. Consider the exact sequence
$$
1\ra A\ra G \stackrel{\varsigma}{\ra} G':=G/A \ra 1
$$
and a Lefschetz function $f'_\Lef \in \cH(G')$ as in Proposition \ref{prop:ClassicalLefschetzFunction}. The pullback $f'_\Lef\circ \varsigma$ is a function on $G(F)$. Multiplying with the characteristic function $1_{G(F)^1}$, we obtain
\begin{equation}\label{eq:fLef-truncated}
f_{\Lef}=f^G_{\Lef}:=1_{G(F)^1}\cdot (f'_\Lef\circ \varsigma) \in \cH(G(F)).
\end{equation}
Let $G^*$ denote a quasi-split inner form of $G$. Suppose that the Haar measures on $G'(F)$ and $(G^*)'(F)$ are chosen compatibly in the sense of \cite[p.631]{KottwitzTamagawa}. Given a Haar measure on $A(F)$ this determines Haar measures on $G(F)$ and $G^*(F)$. Let us normalize the transfer factor between $G$ and $G^*$ to be (whenever nonzero) the Kottwitz sign $e(G)$, which is equal to $(-1)^{q(G^*)-q(G)}$ by \cite[pp.296-297]{KottwitzSign}.

\begin{lemma}\label{lem:LefschetzMatching-finite}
With the above choices, $(-1)^{q(G)}f^G_{\Lef}$ and $(-1)^{q(G^*)}f^{G^*}_{\Lef}$ are associated.
\end{lemma}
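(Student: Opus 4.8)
The statement is a matching result for (truncated) Lefschetz functions across an inner twist, and the natural strategy is to reduce it to the quotient-by-center situation where Proposition~\ref{prop:ClassicalLefschetzFunction} applies on $G'=G/A$ and $(G^*)'=G^*/A$. First I would record that $G'$ and $(G^*)'$ are inner forms of each other (the inner twist $G^*\simeq G$ descends to $(G^*)'\simeq G'$ since $A$ is central), and that the centers of $G'$ and $(G^*)'$ are anisotropic over $F$, so that $f'_{\Lef}$ and $f^{*\prime}_{\Lef}$ are defined as in Proposition~\ref{prop:ClassicalLefschetzFunction}. The key input will be that these two functions are associated: by part (ii) of that proposition, $\SO_{\gamma'}(f'_{\Lef})$ is the number of conjugacy classes in the stable class of $\gamma'$ when $I_{\gamma'}(F)$ has compact center and $0$ otherwise, and the same description holds verbatim for $(G^*)'$; comparing stable orbital integrals and using that the local transfer factor is normalized to the Kottwitz sign $e(G')=(-1)^{q((G^*)')-q(G')}$, I would conclude that $(-1)^{q(G')}f'_{\Lef}$ and $(-1)^{q((G^*)')}f^{*\prime}_{\Lef}$ are associated on $G'(F)$ and $(G^*)'(F)$. (Here one uses that $q(G)=q(G')$ and $q(G^*)=q((G^*)')$ since $A$ is split central, so the signs match those in the statement.)

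Next I would push this matching up through the central isogeny $\varsigma\colon G\to G'$. A stable conjugacy class in $G(F)$ lies in a single fiber of $\nu$, and the truncation by $1_{G(F)^1}$ simply restricts attention to the classes meeting $\ker\nu$; on that locus, $f^G_{\Lef}=f'_{\Lef}\circ\varsigma$ and orbital integrals on $G(F)$ are computed from orbital integrals of $f'_{\Lef}$ on $G'(F)$ via the fibers of $\varsigma$ on conjugacy classes. The compatibility of Haar measures on $G'(F)$, $(G^*)'(F)$ (in the sense of \cite[p.631]{KottwitzTamagawa}) together with a fixed Haar measure on $A(F)$ makes the transported measures on $G(F)$ and $G^*(F)$ agree under the inner twist, so the count of conjugacy classes within a stable class is preserved. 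Combining with the matching on the quotients and the fact that the global (here local) transfer factor for $(G,G^*)$ equals $e(G)=(-1)^{q(G^*)-q(G)}$ by \cite[pp.296--297]{KottwitzSign}, the associatedness of $(-1)^{q(G)}f^G_{\Lef}$ and $(-1)^{q(G^*)}f^{G^*}_{\Lef}$ follows from that of $(-1)^{q(G')}f'_{\Lef}$ and $(-1)^{q((G^*)')}f^{*\prime}_{\Lef}$.

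The main obstacle I anticipate is bookkeeping around transfer factors and measures under the isogeny: one must check that pulling back $f'_{\Lef}$ along $\varsigma$ and truncating by $1_{G(F)^1}$ is compatible with the endoscopic transfer factor normalization, i.e. that the transfer factor for $(G,G^*)$ restricted to classes in $\ker\nu$ matches (the pullback of) the transfer factor for $(G',(G^*)')$, up to the sign $e(G)=e(G')$. This is a purely local computation with the Langlands--Shelstad factors, and because both $G$ and $G^*$ have the same simply connected derived group, the relevant factors only differ by the Kottwitz sign, which is already accounted for; still, this is where I would spend most of the care. A secondary point is to confirm that for $\gamma\in G(F)^1$ semisimple, $I_\gamma(F)$ has compact center if and only if $I_{\varsigma(\gamma)}(F)$ has compact center, which holds because $I_\gamma$ and $I_{\varsigma(\gamma)}$ differ only by the central torus $A$. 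Once these compatibilities are in place the lemma is immediate.
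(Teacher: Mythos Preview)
Your overall approach matches the paper's: reduce to the quotient $G'=G/A$ with anisotropic center, compute stable orbital integrals of the Lefschetz functions there via Proposition~\ref{prop:ClassicalLefschetzFunction}(ii), and compare across the inner twist. The paper's reduction is slightly more direct than yours---it simply observes that $G(F)^1$ and $G^*(F)^1$ are conjugation-invariant, so truncation by $1_{G(F)^1}$ commutes with taking orbital integrals---but your isogeny pushforward argument would also work.

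There is, however, a genuine gap at the decisive step. Once you know that $\SO_{\gamma'}(f'_{\Lef})$ equals the number of $G'(F)$-conjugacy classes in the stable class of $\gamma'$ (and likewise for $(G^*)'$), you still need to prove that these two numbers coincide for matching stable classes. You write that this ``is preserved'' because the Haar measures are compatible, but that is not the reason: the count of rational classes in a stable class is a discrete Galois-cohomological quantity, namely the cardinality of $\ker\!\big(H^1(F,I_{\gamma'})\to H^1(F,G')\big)$, and it has nothing to do with the choice of measures. What one needs is that this cardinality is invariant under inner twisting when $\gamma'$ is elliptic; the paper cites the $p$-adic case in the proof of \cite[Thm.~1]{KottwitzTamagawa} for exactly this. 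Without this input (or an equivalent cohomological argument), your proof does not close. The transfer-factor and measure bookkeeping you worry about in your last paragraph is comparatively routine; the real content is this equality of class counts.
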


\begin{remark}
Our lemma specifies the constant in \cite[Prop. 3.9.2]{LabesseStabilization} when $\theta=1$ and $H=G^*$.
\end{remark}

\begin{proof}
The proof is quickly reduced to the case when the center is anisotropic, the point being that $G(F)^1$ and $G^*(F)^1$ are invariant under conjugation. From Proposition \ref{prop:ClassicalLefschetzFunction} we deduce that $SO^G_\gamma(f^G_{\Lef})$ is zero if $\gamma$ is non-elliptic and equals the number of $G(F)$-conjugacy classes in the stable conjugacy class of $\gamma$ if $\gamma$ is elliptic. Since the same is true for $G^*$ it is enough to show that the number of conjugacy classes in a stable conjugacy class is the same for $\gamma$ and $\gamma^*$ when they are strongly regular and have matching stable conjugacy classes. This follows from the $p$-adic case in the proof of \cite[Thm. 1]{KottwitzTamagawa}.
\end{proof}

\begin{definition}[\!\!{\cite[Def. 3.8.1, 3.8.2]{LabesseStabilization}}]\label{def:cuspidal-stabilizing}
Let $\phi \in \cH(G(F))$. We say that $\phi$ is \emph{cuspidal} if the orbital integrals of $\phi$ vanish on all regular non-elliptic semisimple elements, and \emph{strongly cuspidal} if the orbital integrals of $\phi$ vanish on all non-elliptic elements and if the trace of $\phi$ is zero on all constituents of induced representations from unitary representations on proper parabolic subgroups. The function $\phi$ is said to be \emph{stabilizing} if $\phi$ is cuspidal and if the $\kappa$-orbital integrals of $\phi$ vanish on all semisimple elements for all nontrivial $\kappa$.
\end{definition}

\begin{lemma}\label{lem:LefschetzIsStabilizing}
The function $f^G_{\Lef}$ is strongly cuspidal and stabilizing. If $\Tr \pi(f^G_\Lef)\neq 0$ for an irreducible unitary representation $\pi$ of $G(F)$ then $\pi$ is an unramified character twist of either the trivial or the Steinberg representation.
\end{lemma}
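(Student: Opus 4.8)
The plan is to reduce the entire lemma to the corresponding assertions for the Lefschetz function $f'_\Lef$ on the group $G'=G/A$, which has anisotropic center and to which Proposition \ref{prop:ClassicalLefschetzFunction} applies, and then to account for the central split torus $A$ and the truncation by $1_{G(F)^1}$ built into the definition \eqref{eq:fLef-truncated} of $f^G_\Lef$. All the delicate points are concentrated in this last passage; everything else is a transcription of Proposition \ref{prop:ClassicalLefschetzFunction}.

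First I would treat the orbital integrals. Since $\nu$, and hence $G(F)^1$, is invariant under conjugation, for $\gamma,g\in G(F)$ one has $f^G_\Lef(g^{-1}\gamma g)=1_{G(F)^1}(\gamma)\,f'_\Lef(\varsigma(g)^{-1}\varsigma(\gamma)\varsigma(g))$, so with the Euler--Poincar\'e measures and the standard normalizations (cf. \cite[\S3.8--3.9]{LabesseStabilization}) one gets $O_\gamma(f^G_\Lef)=1_{G(F)^1}(\gamma)\cdot O_{\varsigma(\gamma)}(f'_\Lef)$. Proposition \ref{prop:ClassicalLefschetzFunction}(ii) then shows this vanishes unless $I_{\varsigma(\gamma)}$ has compact center, i.e. unless $\gamma$ is elliptic, and equals $1$ otherwise, in particular independently of the conjugacy class of $\gamma$ within its stable class. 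This immediately yields cuspidality, and indeed the vanishing of the orbital integrals of $f^G_\Lef$ on all non-elliptic elements (the geometric half of strong cuspidality); moreover, writing the $\kappa$-orbital integral $O^\kappa_\gamma(f^G_\Lef)=\sum_{\gamma'}\langle\kappa,\mathrm{inv}(\gamma,\gamma')\rangle\,O_{\gamma'}(f^G_\Lef)$ as a sum of terms with a common value, the orthogonality relations for the relevant finite abelian group force $O^\kappa_\gamma(f^G_\Lef)=0$ for every $\kappa\neq 1$, so $f^G_\Lef$ is stabilizing.

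Next I would analyze $\Tr\pi(f^G_\Lef)$ for $\pi$ irreducible unitary. Because $\varsigma$ kills $A$ and the maximal compact subgroup $A^1(F):=A(F)\cap G(F)^1$ of $A(F)$ lies in $G(F)^1$, the function $f^G_\Lef$ is bi-invariant under $A^1(F)$; hence $\pi(f^G_\Lef)$ factors through the projector onto the $A^1(F)$-invariants, and $\Tr\pi(f^G_\Lef)\neq 0$ forces $\omega_\pi|_{A^1(F)}=1$. Then $\pi|_{G(F)^1}$ is inflated from a finite sum of irreducibles of $G(F)^1/A^1(F)\cong H$, where $H:=\varsigma(G(F)^1)$ is a finite-index subgroup of $G'(F)$, and $\Tr\pi(f^G_\Lef)$ is, up to the positive constant $\vol(A^1(F))$, equal to $\int_H f'_\Lef\,\overline{\chi_\pi}$. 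Using Clifford theory (each irreducible of $H$ occurring in $\pi|_H$ is a constituent of $\mathrm{Res}_H\tau$ for some irreducible $\tau$ of $G'(F)$) together with a support/compactness argument one reduces this to the numbers $\Tr\tau(f'_\Lef)$, which by Proposition \ref{prop:ClassicalLefschetzFunction}(i) vanish unless $\tau$ is the trivial or the Steinberg representation; since the restriction of each of these to a finite-index open subgroup stays irreducible, reversing the passage through $\varsigma$ and the exact sequence $1\to G(F)^1\to G(F)\to G(F)/G(F)^1\to 1$ identifies $\pi$ as the trivial or the Steinberg representation of $G(F)$ twisted by an unramified character. The remaining part of strong cuspidality — the vanishing of $\Tr I(f^G_\Lef)$ for $I$ parabolically induced from a unitary representation of a proper parabolic — I would deduce from the Euler--Poincar\'e interpretation of $\Tr(\,\cdot\,)(f^G_\Lef)$ (Kottwitz, Labesse), the Euler characteristic of the relevant continuous cohomology vanishing on such $I$ by the standard computation via the Bruhat--Tits building (cf. \cite{BorelWallach}).

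The part I expect to be the main obstacle is precisely the passage from $G'$ (anisotropic center) back to $G$: keeping the bookkeeping straight between $A(F)$ and its maximal compact $A^1(F)$, running the Clifford theory for $\pi|_{G(F)^1}$ over a merely finite-index (not the full) subgroup of $G'(F)$, and checking on the nose that the trivial and Steinberg representations are compatible with the central quotient $G\to G/A$ and with unramified twisting, so that the conclusion reads ``twist of trivial or Steinberg'' and not ``up to a further central character.'' Once these normalizations are pinned down, as in \cite[\S3.8--3.9]{LabesseStabilization}, the rest follows mechanically from Proposition \ref{prop:ClassicalLefschetzFunction} and the vanishing of Euler--Poincar\'e characteristics on parabolically induced representations.
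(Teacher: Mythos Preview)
Your approach is correct and is essentially the paper's own. For the first assertion the paper simply invokes \cite[Prop.~3.9.1]{LabesseStabilization}, whereas you unwind the argument directly via the orbital-integral computation $O_\gamma(f^G_\Lef)=1_{G(F)^1}(\gamma)\,O_{\varsigma(\gamma)}(f'_\Lef)$ and the orthogonality relations; this is exactly Labesse's proof. For the trace assertion both arguments reduce to $G'$ through $\varsigma$ and Proposition~\ref{prop:ClassicalLefschetzFunction}. The paper's version is marginally slicker at the end: rather than running Clifford theory on the finite-index subgroup $H=\varsigma(G(F)^1)\subset G'(F)$, it picks an irreducible constituent $\pi^1$ of $\pi|_{G(F)^1}$, descends it to $\pi'$, pulls back to a representation $\tilde\pi'$ of $G(F)$, and then observes that both $\pi$ and $\tilde\pi'$ occur in $\Ind^{G(F)}_{G(F)^1}\pi^1$, hence differ by a character of $G(F)/G(F)^1$, i.e.\ an unramified twist. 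Your caution about $H$ possibly being a proper finite-index subgroup of $G'(F)$ is well taken and is the one point the paper glosses over; the Frobenius-reciprocity trick above is what cleanly resolves it.
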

\begin{proof}
The first assertion is \cite[Prop. 3.9.1]{LabesseStabilization}. By construction $f^G_\Lef$ is constant on $Z(F)\cap G(F)^1$, which is compact.
Since $\Tr \pi(f^G_\Lef)$ is the sum of $\Tr \pi^1(f^G_\Lef)$ over irreducible constituents $\pi^1$ of $\pi|_{G(F)^1}$, we may choose $\pi^1$ such that $\Tr \pi^1(f^G_\Lef)\neq 0$. The nonvanishing implies that $\pi^1$ has trivial central character on $Z(F)\cap G(F)^1$. Let $\omega_\pi$ denote the central character of $\pi$. Then $\omega_\pi$ is trivial on the subgroup $A(F)\cap G(F)^1 \subset A(F)$, and hence induces a morphism
$$\omega_\pi|_{A(F)}:A(F)/(A(F) \cap G(F)^1) \to \C^\times.$$
We have a short exact sequence
$$1\ra G(F)^1 \ra G(F) \ra G(F)/G(F)^1 \ra 1$$
and $G(F)/G(F)^1$ is a lattice of $X^*(A)_{\R}$ via $\nu$. The image of $A(F)$ in $X^*(A)_{\R}$, namely $A(F)/A(F)\cap G(F)^1$, is a sublattice of $G(F)/G(F)^1$ since the map clearly factors through $\nu:G(F)\ra X(A)_{\R}$. Thus $A(F)G(F)^1$ is a subgroup of $G(F)$ with a finite abelian quotient. We can think of $A(F)G(F)^1/A(F)=G(F)^1/A(F)\cap G(F)^1$ as a subgroup of $G(F)/A(F)=G'(F)$ with a finite abelian quotient as well.

Since $\C^\times$ is divisible, we can extend $\omega_\pi|_{A(F)}$ to a character $\omega:G(F)/G(F)^1\ra\C^\times$. Twisting by $\omega^{-1}$, we may assume that $\omega_\pi|_{A(F)}$ is trivial.
Let us write $f$ for $f^G_{\tu{Lef}}$. Now we compute
\begin{eqnarray}
\Tr \pi(f)&=& \int_{g \in G(F)/A(F)} \int_{a \in A(F)} \theta_\pi(ag) f(ag)da dg \nonumber\\
&=& \vol(A(F)\cap G(F)^1) \int_{ G(F)^1/A(F)\cap G(F)^1} \theta_{\pi}(g)f(g) dg \nonumber\\
&=&\vol(A(F)\cap G(F)^1) \int_{ G(F)^1/A(F)\cap G(F)^1} \theta_{\pi}(g)f'(g) dg,\nonumber
\end{eqnarray}
where $\pi$ is viewed as a representation of $G'(F)$ (since $\omega_\pi|_{A(F)}=1$), in which $G(F)^1/A(F)\cap G(F)^1$ is a finite-index subgroup, and where $f'$ is a Lefschetz function for $G'(F)$.
Notice that the integral after the second equality is well defined as $f$ and $\pi$ are invariant under $A(F)\cap G(F)^1$.

Write $X$ for the finite set of characters of $G(F)/A(F)G(F)^1 = G'(F)/G(F)^1$. They are unramified characters of $G'(F)$. Then
$$\Tr \pi(f)=\vol(A(F)\cap G(F)^1)|X|^{-1}\sum_{\chi \in X} \int_{ G'(F)} \chi(g)\theta_{\pi}(g)f'(g) dg$$
$$= \vol(A(F)\cap G(F)^1)|X|^{-1}\sum_{\chi \in X} \Tr (\pi\otimes \chi)(f').$$
Thus if $\Tr \pi(f)\neq 0$ then $\pi\otimes \chi$ is the trivial or Steinberg representation of $G'(F)$. We conclude that $\pi$ is the trivial or Steinberg representation of $G(F)$ up to an unramified twist.
 \end{proof}

When the center of $G$ is not compact, it is also convenient to work with functions with fixed central character. We adopt the notation and convention from \S\ref{sect:TraceFormula}. We record a result in the following, where a representation is said to be essentially unitary if its character twist is unitary.

\begin{corollary}\label{cor:Lef-variant2}
Let $\eta \colon G(F) \to \C^\times$ be a character, and write
$\omega := \eta|_{A(F)}$.
Define a function $f^G_{\Lef, \eta} \in \cH(G(F),\omega^{-1})$
 by $f^G_{\Lef, \eta}(g) := \eta^{-1}(g) f^{G/A}_{\Lef}(\li g)$, where $\li g \in G(F)/A(F)$ denotes the
image of $g$ under the quotient map. Then $f^G_{\Lef, \eta}$ has the following properties:
\begin{enumerate}
\item $f^G_{\Lef, \eta}$ is stabilizing (in particular cuspidal).
\item Let $\pi$ be an irreducible essentially unitary $G(F)$-representation
whose central character on $A(F)$ coincides with $\omega$.
Then
$\Tr(f^G_{\Lef, \eta}, \pi) = 0$ unless $\pi \in \{\eta, \St_{G(F)}\otimes
\eta\}$.
\item $f^G_{\Lef, \eta}(zg) = \eta\inv (g) f^G_{\Lef, \eta}(g)$ for all $z \in Z_G(F)$ and $g \in G(F)$.
\end{enumerate}
\end{corollary}
\begin{proof}
Let us write $\li G:=G/A$ in this proof.
The second point quickly follows from the analogue for $f^{\li G}_{\Lef}$ on $\li G(F)=G(F)/A(F)$. Let us show the first point. Let $g \in G(F)$ be a semisimple element, and $\li g \in \li G(F)$ its image. Write $I$ (resp.~$\li I$) for the connected centralizer of $g$ in $G$ (resp.~$\li g$ in $\li G$). Then the natural map $I\ra \li I$ induces $I/A\simeq \li I$, as in the proof of \cite[Lem.~3.1(1)]{KottwitzRational}.
Equipping quotient measures on $\li I$ and $\li G$ with respect to those on $A$, $I$, and $G$, we have
\begin{equation}\label{eq:eta-twist-orbital}
O_g(f^G_{\Lef, \eta})=\eta(g) O_g(f^G_{\Lef,1}) = \eta(g) O_{\li g}(f^{\li G}_{\Lef}).
\end{equation}
This implies that $f^G_{\Lef, \eta}$ is cuspidal since $f^{\li G}_{\Lef}$ is cuspidal.

It remains to verify that the $\kappa$-orbital integrals $O^\kappa_g(f^G_{\Lef, \eta})$ vanish for each semisimple $g$ and $\kappa\neq 1$.
The first observation is that $I\twoheadrightarrow \li I$ induces a canonical isomorphism $\mathfrak{K}(\li I/F)\stackrel{\sim}{\ra} \mathfrak{K}(I/F)$ in the notation of \cite[\S4]{KottwitzEllipticSingular}. To see this, one can dually check that the induced map $\mathfrak{E}(I/F)\ra \mathfrak{E}(\li I/F)$ (using the notation therein) is an isomorphism. This follows from the following commutative diagram with exact rows, coming from the long exact sequence associated with $0\ra A \ra I \ra \li I \ra 0$, since $\mathfrak{E}(I/F)$ and $\mathfrak{E}(\li I/F)$ are the kernels of the first and second vertical maps, respectively. (We have used Hilbert 90 for the split torus $A$.)
$$\xymatrix{
 0 \ar[r] & H^1(F,I) \ar[r] \ar[d] & H^1(F,\li I) \ar[d] \ar[r] & H^2(F,A) \ar@{=}[d] \\
 0 \ar[r] & H^1(F, G) \ar[r] & H^1(F,\li G) \ar[r] & H^2(F,A)
}$$
The second observation is that the map $G\twoheadrightarrow\li G$ induces a surjection from the set of conjugacy classes in the stable conjugacy class of $g$ onto the analogous set for $\li g$, where the cardinality of fibers remains constant, as explained on pp.\,611--612 of \cite{HainesBCFL}.

From these two observations combined with \eqref{eq:eta-twist-orbital}, we deduce that $O^\kappa_g(f^G_{\Lef, \eta})=0$ if and only if $O^\kappa_{\li g}(f^{\li G}_{\Lef})=0$ for each $\kappa\neq 1$ (viewed in either $\mathfrak{K}(\li I/F)$ or $\mathfrak{K}(I/F)$ via the isomorphism above).
Since $f^{\li G}_{\Lef}$ is stabilizing, $O^\kappa_{\li g}(f^{\li G}_{\Lef})$ vanishes, therefore so does $O^\kappa_g(f^G_{\Lef, \eta})$.

The last assertion of the corollary boils down to the statement that $f^{G/A}_{\Lef}(gz) = f^{G/A}_{\Lef}(g)$ for all $g \in G/A(F)$ and $z \in Z_{G/A}(F)$. This follows from Harish-Chandra's Plancherel theorem \cite[Thm.~VIII.1.1 (3)]{WaldspurgerPlancherel} (applied with $f=f^{G/A}_{\Lef}$) since the right hand side of that theorem does not change when $f^{G/A}_{\Lef}$ is translated by $z$; the point is that the only irreducible tempered representation of $(G/A)(F)$ with nonzero trace against $f^{G/A}_{\Lef}$ is the Steinberg representation, which has the trivial central character.
\end{proof}

Assume from now on that $G$ is a non-split inner form of a quasi-split group $G^*$ over $F$. Consider a finite cyclic extension $E/F$ with $\theta$ generating the Galois group. Put $\tilde G^*:=\Res_{E/F} G^*$ equipped with the evident $\theta$-action. (The case $G^*=\GSp_{2n}$ is used in the main text.)
\begin{lemma}\label{lem:SteinbergAssociated} The function $f^{\tilde G^*}_{\Lef}$ is strongly cuspidal and stabilizing on the twisted group $\tilde G^* \theta$. There exist constants $c \in \C^\times$ such that
the functions $\tilde c f^{G^*}_\Lef$ and $f^{\tilde G^*}_\Lef$ are associated (via base change).
\end{lemma}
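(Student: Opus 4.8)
\textbf{Proof plan for Lemma \ref{lem:SteinbergAssociated}.}

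The plan is to mirror the proof of Lemma \ref{lem:LefschetzMatching-finite} (\ie Lemma~\ref{lem:LefschetzIsStabilizing} combined with the twisted analogue of Lemma~\ref{lem:FuncSPGLassociated}), reducing everything to the case of anisotropic center and then to a counting of (twisted) conjugacy classes inside a stable (twisted) conjugacy class. First I would prove the ``strongly cuspidal and stabilizing'' assertion for $f^{\tilde G^*}_{\Lef}$ on $\tilde G^*\theta$; this is the twisted counterpart of \cite[Prop.~3.9.1]{LabesseStabilization}, and since $\tilde G^*=\Res_{E/F}G^*$ with $\theta$ acting through $\Gal(E/F)$, the twisted Lefschetz function is the pullback (under the analogue of $\varsigma$ from \eqref{eq:fLef-truncated}) of a genuine Lefschetz function on the adjoint group, truncated by the characteristic function of the norm-one subgroup. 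The vanishing of $\theta$-twisted orbital integrals on non-elliptic elements and of $\theta$-twisted traces on induced representations then follows exactly as in the untwisted case (Lemma~\ref{lem:LefschetzIsStabilizing}), using Proposition~\ref{prop:TiwstedLefschetzFunction} in place of Proposition~\ref{prop:ClassicalLefschetzFunction}; the needed twisted statements are in \cite{ChenevierClozel,BorelLabesseSchwermer}.

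Next I would establish the matching. As in Lemma~\ref{lem:LefschetzMatching-finite}, one first reduces to the case where the centers of $G^*$ and $\tilde G^*$ are anisotropic, since both the norm-one subgroups $G^*(F)^1$ and $\tilde G^*(F)^1$ and the base-change correspondence of (twisted) conjugacy classes respect the central quotient. In the anisotropic-center case, Proposition~\ref{prop:ClassicalLefschetzFunction}(ii) (resp.\ Proposition~\ref{prop:TiwstedLefschetzFunction}(ii)) tells us that the stable orbital integral $\SO^{G^*}_{\gamma}(f^{G^*}_{\Lef})$ (resp.\ the stable twisted orbital integral $\STO_{\delta\theta}(f^{\tilde G^*}_{\Lef})$) vanishes off the elliptic locus and otherwise equals the number of rational (twisted) conjugacy classes inside the given stable (twisted) conjugacy class. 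So matching reduces to the two claims: (a) the relevant transfer factor for base change is trivial (up to the Kottwitz sign, as in Lemma~\ref{lem:LefschetzMatching-finite}, using that the Steinberg $L$-parameter is trivial on the Weil group so factors through the base-change $L$-morphism only for the trivial character twist), and (b) the ratio between the number of $\theta$-twisted conjugacy classes in a stable $\theta$-twisted class of $\tilde G^*(F)$ and the number of conjugacy classes in the corresponding stable class of $G^*(F)$ is a constant $\tilde c$ independent of the class. Both of these are the standard Galois-cohomology computations underlying cyclic base change (cf.\ \cite[Ch.~1]{LabesseStabilization} and the treatment of base change in \cite[\S4.6]{LabesseStabilization}); for $G^*=\GSp_{2n}$ one can also compute the constant explicitly as in the proof of Lemma~\ref{lem:FuncSPGLassociated}, but the precise value is immaterial.

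The main obstacle I anticipate is the bookkeeping in claim (b): carefully parametrizing the rational forms inside a stable twisted conjugacy class of $\Res_{E/F}G^*$ (in terms of $H^1$ of the relevant twisted centralizers, which are themselves Weil restrictions) and comparing with the untwisted count for $G^*$, so as to extract a class-independent constant. This is exactly the kind of computation Labesse carries out in \cite[\S3.9, \S4.6]{LabesseStabilization}; I would cite it rather than redo it, noting only the compatibility of Haar-measure and transfer-factor normalizations (chosen so that the global transfer factor is $1$ whenever nonzero, as in the untwisted Lemma~\ref{lem:LefschetzMatching-finite}), which is what pins down $c$ and $\tilde c$ up to the harmless sign $e(G^*)$.
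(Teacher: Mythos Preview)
Your plan is sound and would work, but the paper does not carry out any of this: its entire proof is the one-line citation ``\cite[Prop.~3.9.2]{LabesseStabilization}'' (with the remark to take $\theta=1$, $H=G^*$ there for the first assertion). In other words, the result is already stated and proved by Labesse in the generality needed, so the paper simply invokes it. Your outline is essentially a sketch of what Labesse does --- and indeed you end up citing \cite[\S3.9, \S4.6]{LabesseStabilization} for the hard step anyway --- so there is no genuine difference in mathematical content, only in level of detail.

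One small caution: Proposition~\ref{prop:TiwstedLefschetzFunction} in the paper concerns $\GL_{2n+1}$ with the outer involution $g\mapsto J\,{}^{\tu t}g^{-1}J$, which is a different twisted situation from the base-change setting $\tilde G^*=\Res_{E/F}G^*$ with Galois $\theta$ relevant here. You seem aware of this (you immediately point to \cite{ChenevierClozel,BorelLabesseSchwermer} for the general twisted statements), but it would be cleaner not to invoke that proposition at all and go straight to the general references or to Labesse.
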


\begin{proof}
\cite[Prop. 3.9.2]{LabesseStabilization}. (Take $\theta=1$, $H=G^*$ in \textit{loc. cit.} for the first assertion.)
\end{proof}

Now we turn our attention to Lefschetz functions (also called Euler-Poincar\'e functions) on connected reductive groups $G$ over $F=\R$. We assume that $G(\R)$ has discrete series representations. Let $\xi$ be an irreducible algebraic representation of $G\times_\R \C$. Denote by $\chi_\xi:Z(\R)\ra \C^\times$ the restriction of $\xi^\vee$ to the center $Z(\R)$. Write $f_\xi=f^{G}_\xi \in \cH(G(\R),\chi_\xi^{-1})$ for an Euler-Poincar\'e function associated to $\xi$, characterized by the identity
$$
\Tr \pi(f_\xi)=\ep_{K_ \infty}(\pi\otimes \xi)\stackrel{\mathrm{def}}{=}\sum_{i=0}^ \infty (-1)^i \dim \uH^i(\Lie G(\R), K_ \infty; \pi \otimes \xi)
$$
for every irreducible admissible representation $\pi$ of $G(\R)$ with central character $\chi_\xi$, where $K_ \infty$ is a finite index subgroup in the group generated by the center and a maximal compact subgroup of $G(\R)$. (A main example for us is $\tilde K_H$ in Definition \ref{def:cohomological}.)
The formula does not determine $f_\xi$ uniquely but the orbital integrals of $f_\xi$ are well defined. The function $f_\xi$ exists by Clozel and Delorme and can be constructed as the average of pseudo-coefficients as follows. Let $\Pi^G_\xi$ denote the $L$-packet consisting of (the isomorphism classes of) irreducible discrete series representations $\pi$ of $G(\R)$ whose central character and infinitesimal character are the same as $\xi^\vee$. Write $f_{\pi} \in \cH(G(\R),\chi_\xi^{-1})$ for a pseudo-coefficient of $\pi$, and $q(G) \in \Z$ for the real dimension of $G(\R)/K_ \infty$. Then $f_\xi$ can be defined by
\begin{equation}\label{eq:AvgLef}
f^G_\xi:=(-1)^{q(G)} \sum_{\pi \in \Pi^G_\xi} f_{\pi}.
\end{equation}
Let $G^*$ denote a quasi-split inner form of $G$ over $\R$. Then the same $\xi$ gives rise to the functions $f^{G^*}_\xi$. Note that Definition \ref{def:cuspidal-stabilizing} makes sense verbatim when $F$ is archimedean and the function has central character.

\begin{lemma}\label{lem:Lefschetz}
Any pseudo-coefficient $f_\pi$ for a discrete series representation $\pi$ is cuspidal. In particular $f^{G}_\xi$ and $f^{G^*}_\xi$ are cuspidal.
\end{lemma}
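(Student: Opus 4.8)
The plan is to reduce cuspidality of $f_\pi$ — i.e.\ the vanishing of $\uO_\gamma(f_\pi)$ for $\gamma\in G(\R)$ regular semisimple and \emph{non-elliptic} — to Harish-Chandra descent together with the defining property of a pseudo-coefficient. Recall that $f_\pi\in\cH(G(\R),\chi_\xi^{-1})$ is characterized by $\Tr\sigma(f_\pi)=0$ for every irreducible tempered representation $\sigma$ of $G(\R)$ (with the appropriate central character) not isomorphic to $\pi$, and $\Tr\pi(f_\pi)\neq 0$. In particular, for every \emph{proper} parabolic $P=MN$ and every irreducible tempered $\sigma_M$ of $M(\R)$ one has $\Tr(\Ind_P^G\sigma_M)(f_\pi)=0$: the constituents of such an induced representation are tempered and properly induced, hence none of them is a discrete series representation, so in particular none is isomorphic to $\pi$ and the trace is a sum of zeros.

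First I would fix $\gamma$ regular semisimple and non-elliptic, and let $T=G_\gamma^\circ$ be its centralizer, a maximal torus. Non-ellipticity means that the maximal split subtorus $A_T$ of $T$ strictly contains the maximal split central torus of $G$, so $M:=\mathrm{Cent}_G(A_T)$ is a proper Levi subgroup (defined over $\R$), and $\gamma\in M(\R)$ is regular in $M$. By Harish-Chandra's descent formula for orbital integrals, for any parabolic $P=MN$ one has
\[
\uO_\gamma^G(f_\pi)=|D_M^G(\gamma)|^{1/2}\,\uO_\gamma^M(f_{\pi,P}),
\]
where $f_{\pi,P}\in\cH(M(\R))$ is the constant term of $f_\pi$ along $P$ (with the usual $\delta_P^{1/2}$-twist) and $D_M^G(\gamma)=\det(1-\Ad(\gamma)\mid \Lie G/\Lie M)$ is nonzero because $\gamma$ is $G$-regular. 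Hence it suffices to show that all regular semisimple orbital integrals of $f_{\pi,P}$ vanish.

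The constant term satisfies $\Tr\sigma_M(f_{\pi,P})=\Tr(\Ind_P^G\sigma_M)(f_\pi)$ for every admissible $\sigma_M$ of $M(\R)$, so by the first paragraph $\Tr\sigma_M(f_{\pi,P})=0$ for every irreducible tempered $\sigma_M$ (with the fixed central character on $Z(G)(\R)$). Now a function in $\cH(M(\R))$ with fixed central character on $Z(G)(\R)$ whose trace against every tempered representation vanishes has all of its regular semisimple orbital integrals equal to $0$: this is part of Harish-Chandra's Plancherel formalism for $M(\R)$, which expresses regular semisimple orbital integrals as wave-packet integrals over the tempered dual against the operator-valued Fourier transform. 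Applying this to $f_{\pi,P}$ and to $\gamma$ gives $\uO_\gamma^G(f_\pi)=0$, so $f_\pi$ is cuspidal. Finally, $f^G_\xi$ and $f^{G^*}_\xi$ are by \eqref{eq:AvgLef} finite $\Z$-linear combinations of pseudo-coefficients of discrete series, and cuspidality is preserved under finite linear combinations, which yields the ``in particular'' assertion.

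I expect the main obstacle to be purely expository: matching the normalizations in Harish-Chandra descent (Jacobian factors, Haar measures, the $\delta_P^{1/2}$-twist in the constant term) with the fixed-central-character conventions in force here, and recording carefully the standard fact that a discrete series representation is never a subquotient of a representation properly induced from a tempered representation of a proper Levi subgroup. Alternatively, the whole lemma may simply be cited from the literature on pseudo-coefficients (Clozel--Delorme; see also \cite{LabesseStabilization}), where this is precisely the statement that the Fourier transform of a pseudo-coefficient of a discrete series representation is supported on the discrete series.
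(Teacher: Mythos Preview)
Your proposal is correct and follows essentially the same route as the paper. The paper's proof simply cites the equivalence between cuspidality and the vanishing of traces against all representations parabolically induced from proper Levi subgroups (referring to \cite[p.~538]{ArthurInvariantII}), and observes that the latter holds by the defining property of pseudo-coefficients; you have spelled out this equivalence explicitly via Harish-Chandra descent and the Plancherel formula, which is precisely what underlies Arthur's statement.
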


\begin{proof}
It suffices to check that $f_\pi$ is cuspidal, or equivalently that the trace of every induced representation from a proper Levi subgroup vanishes against $f_\pi$, cf. \cite[p.538]{ArthurInvariantII}, but this is true by the construction of pseudo-coefficients. (The cuspidality of $f_\pi$ also follows from the proof of \cite[Lem. 3.1]{KottwitzInventiones}.)
\end{proof}

Let $A$ denote the maximal split torus in the center of $G$ (hence also in $G^*$). Equip $G(\R)/A(\R)$ and $G^*(\R)/A(\R)$ with Euler-Poincar\'e measures and $A(\R)$ with the Lebesgue measure so that the Haar measures on $G(\R)$ and $G^*(\R)$ are determined. Define $q(G)$ (resp. $q(G^*)$) to be the real dimension of the symmetric space associated to the derived subgroup of $G(\R)$ (resp. $G^*(\R)$). Normalize the transfer factor between $G(\R)$ and $G^*(\R)$ to be $e(G)=(-1)^{q(G^*)-q(G)}$. Write $\Pi^G_\xi$ (resp. $\Pi^{G^*}_\xi$) for the discrete series $L$-packet for $G(\R)$ (resp. $G^*(\R)$) associated to $\xi$, cf. Example below Definition \ref{def:cohomological}.

\begin{lemma}\label{lem:LefschetzMatching}
The functions $(-1)^{q(G)} |\Pi^G_\xi|^{-1} f^{G}_\xi$ and $(-1)^{q(G^*)} |\Pi^{G^*}_\xi|^{-1} f^{G^*}_\xi$ are associated.
\end{lemma}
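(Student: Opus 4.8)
The statement is the archimedean analogue of Lemma~\ref{lem:LefschetzMatching-finite}, and the strategy is parallel: reduce matching of orbital integrals to a statement about counting conjugacy classes inside a stable conjugacy class, and then observe that the relevant combinatorial count is the same for $G$ and $G^*$ once the normalizations are pinned down. First I would recall that $f^{G}_\xi = (-1)^{q(G)}\sum_{\pi\in\Pi^G_\xi} f_\pi$ by \eqref{eq:AvgLef}, and similarly for $G^*$; thus after dividing by $|\Pi^G_\xi|$ (resp. $|\Pi^{G^*}_\xi|$) and multiplying by $(-1)^{q(G)}$ (resp. $(-1)^{q(G^*)}$), the functions in the statement are $|\Pi^G_\xi|^{-1}\sum_{\pi} f_\pi$ and $|\Pi^{G^*}_\xi|^{-1}\sum_{\pi^*} f_{\pi^*}$, the \emph{averaged} pseudocoefficients. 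The key input is the computation of the (stable) orbital integrals of such an averaged pseudocoefficient. By the standard theory of pseudocoefficients for real discrete series (Kottwitz \cite[Lem.~3.1]{KottwitzInventiones}, building on Clozel--Delorme and Arthur), $\SO_\gamma$ of $|\Pi^G_\xi|^{-1}\sum_{\pi\in\Pi^G_\xi} f_\pi$ vanishes unless $\gamma$ is elliptic semisimple, in which case it is given by a simple explicit formula involving the number of conjugacy classes in the stable class of $\gamma$ (and a volume factor, and the sign $(-1)^{q(G)}$ which we have already extracted). The same holds for $G^*$.

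The second step is to make the transfer factor explicit. As in Lemma~\ref{lem:LefschetzMatching-finite}, the normalization is chosen so that the transfer factor between $G(\R)$ and $G^*(\R)$ is the Kottwitz sign $e(G) = (-1)^{q(G^*)-q(G)}$ whenever it is nonzero; this is \cite[pp.296--297]{KottwitzSign}. The reason $e(G)$ enters with this sign is precisely the mismatch between $q(G)$ and $q(G^*)$, and this is exactly cancelled by the signs $(-1)^{q(G)}$ and $(-1)^{q(G^*)}$ appearing in front of $f^G_\xi$ and $f^{G^*}_\xi$ in the statement — which is why the statement is phrased the way it is. So I would verify: for strongly regular elliptic $\gamma\in G(\R)$ with matching $\gamma^*\in G^*(\R)$, the identity
$$
(-1)^{q(G)}|\Pi^G_\xi|^{-1}\SO^{G}_\gamma(f^{G}_\xi) \;=\; e(G)\cdot(-1)^{q(G^*)}|\Pi^{G^*}_\xi|^{-1}\SO^{G^*}_{\gamma^*}(f^{G^*}_\xi),
$$
noting that $e(G)(-1)^{q(G^*)} = (-1)^{q(G)}$, so the signs drop out entirely and we are reduced to $\SO^G_\gamma$ of the averaged pseudocoefficient equalling $\SO^{G^*}_{\gamma^*}$ of the corresponding one, with our chosen compatible Haar measures on $G(\R)/A(\R)$ and $G^*(\R)/A(\R)$.

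The third step is that residual equality of stable orbital integrals. By Kottwitz's formula, $\SO^G_\gamma$ of the averaged pseudocoefficient equals (up to the already-handled sign and to the Euler--Poincar\'e volume of the centralizer, which is an inner-form invariant quantity computed identically on both sides by compatibility of measures) the number of $G(\R)$-conjugacy classes in the stable conjugacy class of $\gamma$; and this number agrees for $\gamma$ and $\gamma^*$. This last point is the archimedean analogue of the end of the proof of Lemma~\ref{lem:LefschetzMatching-finite}; it follows from the fact that $\gamma$ and $\gamma^*$ have isomorphic connected centralizers (they are inner forms of the same torus/reductive group, since the stable conjugacy classes match) and from Kottwitz's description of the set of rational classes in a stable class via $\uH^1$ of the centralizer — the same cohomology set on both sides. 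I also need to note that the packet sizes $|\Pi^G_\xi|$ and $|\Pi^{G^*}_\xi|$ need not be equal, but they enter only as the normalizing denominators and are handled by Kottwitz's formula, which produces precisely the reciprocal packet-size weight so that the averaged pseudocoefficient has stable orbital integral independent of the inner form (this is the content of \cite[Lem.~3.1]{KottwitzInventiones} and its inner-form extension). Putting these together gives matching orbital integrals, i.e. the two functions are associated.

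\textbf{Main obstacle.} The routine parts are the sign bookkeeping and invoking Kottwitz's pseudocoefficient computation. The genuinely delicate point, as in the finite-place lemma, is getting every normalization to line up simultaneously: the choice of Euler--Poincar\'e measures on $G(\R)/A(\R)$ and $G^*(\R)/A(\R)$ together with the Lebesgue measure on $A(\R)$, the choice of transfer factor equal to $e(G)$, and the definition of $q(G)$ via the derived-group symmetric space rather than via $G(\R)/K_\infty$ (these differ by $\dim A(\R)$, which cancels between $G$ and $G^*$ but must be tracked). I expect the cleanest writeup to cite \cite[Prop.~3.9.2]{LabesseStabilization} or the relevant part of \cite{KottwitzInventiones} for the underlying matching and then only do the sign/normalization check explicitly, exactly as Lemma~\ref{lem:LefschetzMatching-finite} cites \cite[Thm.~1]{KottwitzTamagawa} for the conjugacy-class count. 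The main risk is that the Labesse reference states the matching only up to an unspecified constant (as in Lemma~\ref{lem:SteinbergAssociated}); if so, the work is to pin that constant to $1$ under our normalizations, which is precisely the sign and packet-size computation sketched above.
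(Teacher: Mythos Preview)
Your proposal is correct and takes essentially the same approach as the paper: the paper's proof is a one-line citation to \cite[Lem.~3.1]{KottwitzInventiones} (with \cite[Prop.~3.3]{CL11} for the case $A=1$), which is exactly the key input you identify. One small correction: in your third step you describe Kottwitz's formula as giving the number of $G(\R)$-conjugacy classes in the stable class, by analogy with the $p$-adic case; in fact in the real case the stable orbital integral of the averaged pseudocoefficient at an elliptic $\gamma$ is an explicit expression involving $\Tr\,\xi(\gamma)$ (via the stable discrete-series character formula), not a class count. This does not affect your argument, since that expression is still manifestly the same for $G$ and $G^*$ once the measures and transfer factor are normalized as you describe.
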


\begin{proof}
This follows from the computation of stable orbital integrals in \cite[Lem. 3.1]{KottwitzInventiones}; see also \cite[Prop. 3.3]{CL11} when $A=1$.
\end{proof}

A similar construction works in the base change context, cf. \cite{CL11}. We are only concerned with a particular case that $\tilde G^*=G^*\times G^*\times \cdots \times G^*$ (the number of copies is $d \in \Z_{\ge 1}$) and $\theta$ is the automorphism $(g_1,...,g_d)\mapsto (g_2,...,g_d,g_1)$. Write $\tilde \xi:=\xi\otimes \cdots \otimes \xi$. A function $f_{\tilde \xi} \in \cH(\tilde G^*(\R),\chi_\xi^{-1})$ is said to be a (twisted) Lefschetz function for $\tilde \xi$ if
$$
\Tr \tilde \pi( f_{\tilde\xi})=\sum_{i=0}^ \infty (-1)^i \Tr(\theta\,|\, \uH^i\left(\Lie G(\R)^d, K^d_ \infty; \pi \otimes \xi\right)
$$
for every irreducible admissible representation $\pi$ of $G(\R)$ with central character $\chi_\xi$. Definition \ref{def:cuspidal-stabilizing} carries over to this base change setup as in \cite[Def. 3.8.1,~3.8.2]{LabesseStabilization}.

\begin{lemma}\label{lem:TwistedLefschetz}
The function $f_{\tilde\xi}$ is cuspidal.\footnote{When $H^1(\R,G_{\mathrm{sc}})=1$ (assuming $A=1$), Clozel and Labesse prove that $f_{\tilde\xi}$ is also stabilizing in \cite[Thm. A.1.1]{LabesseStabilization}. However we do not use it in the main text but instead appeal to the fact that the (twisted) Lefschetz function at a finite place is stabilizing.} Moreover $f_{\tilde\xi}$ and $\tilde c f^{G^*}_\xi$ are associated for some $\tilde c \in \C^\times$.
\end{lemma}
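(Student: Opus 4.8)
The plan is to realize $f_{\tilde\xi}$ as a signed average of $\theta$-twisted pseudo-coefficients and then transport both assertions to the untwisted setting via the product structure $\tilde G^*=(G^*)^d$. Since $E$ is totally real, each infinite place of $F$ splits completely in $E$, so at our archimedean place $\tilde G^*=(G^*)^d$ and $\theta$ is the cyclic permutation; the $\theta$-stable irreducible discrete series of $\tilde G^*(\R)$ with infinitesimal and central data matching $\tilde\xi$ are precisely the diagonal representations $\tilde\pi=\pi\otimes\cdots\otimes\pi$ with $\pi\in\Pi^{G^*}_\xi$, equipped with the normalized intertwiner $A_\theta$ (compatible with the fixed Whittaker datum, as in the main text). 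By Clozel--Delorme and Clozel--Labesse (\cite{CL11}, \cite[App. A]{LabesseStabilization}) each $\tilde\pi$ admits a $\theta$-twisted pseudo-coefficient $\tilde f_{\tilde\pi}$, and one may take $f_{\tilde\xi}$ to be $\pm\sum_{\pi\in\Pi^{G^*}_\xi}\tilde f_{\tilde\pi}$, the twisted analogue of \eqref{eq:AvgLef}; the identity defining $f_{\tilde\xi}$ (involving the $\theta$-action on Lie algebra cohomology) then holds because the $\theta$-twisted trace of $\tilde f_{\tilde\pi}$ is concentrated on $\tilde\pi$ and computes $\Tr(\theta\mid\cdot)$ on the relevant $(\Lie G(\R)^d,K^d_\infty)$-cohomology. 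Cuspidality of $f_{\tilde\xi}$ now follows exactly as in Lemma~\ref{lem:Lefschetz}: by construction the $\theta$-twisted trace of each $\tilde f_{\tilde\pi}$ against any representation induced from a proper $\theta$-stable parabolic subgroup of $\tilde G^*$ vanishes, and the twisted Weyl integration formula then forces the $\theta$-twisted orbital integrals of $f_{\tilde\xi}$ to vanish on all $\theta$-regular non-$\theta$-elliptic semisimple elements, i.e. $f_{\tilde\xi}$ is cuspidal in the sense of \cite[Def. 3.8.1]{LabesseStabilization} (cf. Definition~\ref{def:cuspidal-stabilizing}).

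\emph{Associatedness.} As $f_{\tilde\xi}$ is cuspidal (just shown) and $f^{G^*}_\xi$ is cuspidal (Lemma~\ref{lem:Lefschetz}), matching the stable $\theta$-twisted orbital integrals of $f_{\tilde\xi}$ with the stable orbital integrals of $\tilde c\, f^{G^*}_\xi$ need only be verified at a $\theta$-elliptic $\theta$-regular semisimple $\tilde\gamma\theta$. Here the product structure is decisive: the $\theta$-conjugacy class of $\tilde\gamma=(\gamma_1,\dots,\gamma_d)$ is determined by the norm $\gamma:=\gamma_1\gamma_2\cdots\gamma_d$ up to $G^*(\R)$-conjugacy, the $\theta$-twisted centralizer of $\tilde\gamma\theta$ in $\tilde G^*(\R)$ is canonically isomorphic to the centralizer of $\gamma$ in $G^*(\R)$, and a change of variables identifies $\SO_{\tilde\gamma\theta}(f_{\tilde\xi})$ with a stable orbital integral on $G^*(\R)$. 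Combining this with the archimedean base-change character identity for the discrete series $L$-packet $\Pi^{G^*}_\xi$ (the $\theta$-twisted character of $\tilde\pi$ at $\tilde\gamma\theta$, with the normalized $A_\theta$, equals the character of $\pi$ at $\gamma$ up to an explicit sign) and with the twisted analogue of Kottwitz's computation of twisted orbital integrals of twisted pseudo-coefficients at elliptic elements (\cite[Lem. 3.1]{KottwitzInventiones}, \cite{CL11}), one obtains $\SO_{\tilde\gamma\theta}(f_{\tilde\xi})=\tilde c\cdot\SO_\gamma(f^{G^*}_\xi)$ for a constant $\tilde c\in\C^\times$ independent of $\tilde\gamma$ (absorbing the Haar measure normalizations, the factor $|\Pi^{G^*}_\xi|^{-1}$, and the signs $(-1)^{q(\cdot)}$). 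Since base-change transfer factors at $\R$ are constant on the regular set, this is exactly the asserted associatedness, in parallel with the finite-place Lemma~\ref{lem:SteinbergAssociated}.

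\emph{Main obstacle.} The one delicate point — where I would defer to the literature rather than reprove things — is pinning down the normalization of $A_\theta$ and checking that the normalization implicit in the $\theta$-twisted trace defining $f_{\tilde\xi}$ is the one for which the base-change character identity holds with the claimed sign; this matching of Whittaker normalizations is carried out in \cite{CL11} and \cite[App. A.1]{LabesseStabilization}. This is harmless for the main text, which requires only the \emph{existence} of some $\tilde c\in\C^\times$: the constant disappears once local factors are assembled and the global transfer factor is normalized to be trivial. We neither claim nor need the stronger stabilizing property, which, as the footnote records, would require $H^1(\R,G_{\tu{sc}})=1$; only cuspidality and associatedness are used, and these are what the plan delivers.
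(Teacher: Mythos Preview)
Your proposal is correct and follows the same strategy as the paper, which simply cites \cite[Prop.~3.3, Thm.~4.1]{CL11} for both assertions (with a remark that the case of nontrivial $A$ is handled as in the untwisted setup). What you have written is essentially an unpacking of the Clozel--Labesse argument: realizing $f_{\tilde\xi}$ as a signed sum of $\theta$-twisted pseudo-coefficients for the diagonal discrete series $\tilde\pi=\pi^{\otimes d}$, deducing cuspidality from vanishing of twisted traces on induced representations, and then using the norm map and the archimedean base-change character identity to match stable twisted orbital integrals at $\theta$-elliptic elements with those of $f^{G^*}_\xi$. Your identification of the normalization of $A_\theta$ as the only delicate point, and your observation that only the existence of some $\tilde c\in\C^\times$ is needed, are both accurate and in line with how the result is used in the main text.
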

\begin{proof}
The first assertion is \cite[Prop. 3.3]{CL11}. The second assertion follows from \cite[Prop. 3.3, Thm. 4.1]{CL11}. (The reference assumes that $A=1$ but the arguments can be adapted to the case of nontrivial $A$ as in the untwisted setup above).
\end{proof}

We end this appendix with a global result. We change notation. Let $F$ be a totally real number field and $\vst$ a finite $F$-place. Let $G$ be an inner form of either the group $\GSp_{2n, F}$ or the group $\Sp_{2n, F}$.

\begin{lemma}\label{lem:DiscAutomRepWithSteinbComponent}
Assume that $n > 1$. Let $\pi$ be a non-abelian discrete automorphic representation of $G(\A_F)$, and assume that $\Tr \pi_{\vst}(\wt f^G_{\vst}) \neq 0$. Then $\pi_{\vst}$ is an unramified twist of the Steinberg representation.
\end{lemma}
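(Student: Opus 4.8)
The statement asserts that for a discrete, non-abelian automorphic representation $\pi$ of $G(\A_F)$ (with $G$ an inner form of $\GSp_{2n,F}$ or $\Sp_{2n,F}$, $n>1$) satisfying $\Tr\pi_{\vst}(\wt f^G_{\vst})\neq 0$, the component $\pi_{\vst}$ must be an unramified twist of the Steinberg representation. The plan is to exploit the local properties of the truncated Lefschetz function established in Proposition \ref{prop:ClassicalLefschetzFunction} and Lemma \ref{lem:LefschetzIsStabilizing}, combined with the automorphy of $\pi$ to rule out the trivial-type alternative. The key point is that $\wt f^G_{\vst}$ (the function $f^G_{\Lef}$ from \eqref{eq:fLef-truncated}) is a Lefschetz function whose trace pairing with a unitary irreducible representation is nonzero only on unramified twists of the trivial or Steinberg representations.

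First I would recall that $\pi$, being discrete automorphic, has unitary local components (after twisting by the central character, which is irrelevant here since we are allowed to twist by an unramified character). Thus $\pi_{\vst}$ is unitary irreducible, and Lemma \ref{lem:LefschetzIsStabilizing} applies directly: the nonvanishing of $\Tr\pi_{\vst}(\wt f^G_{\vst})$ forces $\pi_{\vst}$ to be an unramified character twist of either the trivial representation of $G(F_{\vst})$ or the Steinberg representation of $G(F_{\vst})$. Here one should note that $G(F_{\vst})$ is either $\GSp_{2n}(F_{\vst})$ or $\Sp_{2n}(F_{\vst})$ (the relevant inner form is split at $\vst$ in the cases that arise, or else one argues with the appropriate inner-form analogue of Proposition \ref{prop:ClassicalLefschetzFunction}; in any case Lemma \ref{lem:LefschetzIsStabilizing} is stated for a general $G$ and gives this dichotomy).

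The substance of the proof is therefore to eliminate the case that $\pi_{\vst}$ is an unramified twist of the trivial representation. Here I would use strong approximation: the derived subgroup $G^{\der}$ of $G$ is (up to a $z$-extension, which can be used to reduce to this case) simply connected, so $G^{\der}(F)$ is dense in $G^{\der}(\A_F^{\vst})$ (using that $G^{\der}(F_{\vst})$ is noncompact, since $n>1$ and $G$ is isotropic modulo center at $\vst$). If $\pi_{\vst}$ were one-dimensional, then $\pi$ restricted to $G^{\der}(\A_F)$ would be trivial at $\vst$, and by strong approximation the automorphic form would be $G^{\der}(\A_F)$-invariant, hence $\pi$ would be a character of $G(\A_F)$ (factoring through the similitude or abelianization). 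This contradicts the hypothesis that $\pi$ is non-abelian. Therefore $\pi_{\vst}$ is an unramified twist of the Steinberg representation, as claimed.

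The main obstacle, such as it is, is bookkeeping rather than conceptual: one must be careful about which inner form $G$ is split at $\vst$ and about the precise meaning of "non-abelian" (namely, not one-dimensional), and one must justify the reduction to the simply-connected-derived-subgroup situation via $z$-extensions so that strong approximation applies cleanly. I would also double-check that $\wt f^G_{\vst}$ in the statement is indeed the function $f^G_{\Lef}$ of \eqref{eq:fLef-truncated} (up to the sign $(-1)^{q(G_{\vst})}$ that appears in the applications), which does not affect the vanishing/non-vanishing analysis. Everything else follows from the cited local results.
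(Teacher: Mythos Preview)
Your proposal is correct and follows essentially the same route as the paper: invoke Lemma~\ref{lem:LefschetzIsStabilizing} for the trivial/Steinberg dichotomy, then use strong approximation for the derived subgroup (equivalently, the paper's $G_1=\ker(\simil)$, which coincides with $G^{\der}$ for inner forms of $\GSp_{2n}$) to rule out the one-dimensional case. Two minor points: no $z$-extension is needed, since the derived subgroup of any inner form of $\GSp_{2n}$ is already a simply connected inner form of $\Sp_{2n}$; and the assertion that $G^{\der}(F_{\vst})$ is noncompact is not mere bookkeeping but exactly where the hypothesis $n>1$ enters --- when $[F:\Q]$ is even the group $G$ may be non-split at $\vst$, and the paper verifies explicitly that $G_1(F_{\vst})\cong\Sp_n(D)$ (for $D$ the quaternion algebra over $F_{\vst}$) has a proper parabolic subgroup precisely when $n>1$.
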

\begin{remark}
The lemma is false for $n = 1$.
\end{remark}
\begin{proof}
We give the argument only in case $G$ is an inner form of $\GSp_{2n, F}$; the argument for inner forms of $\Sp_{2n, F}$ is similar. By the assumption $\Tr \pi_{\vst}(\wt f^G_{\vst}) \neq 0$, $\pi_{\vst}$ is either a twist of the Steinberg representation or a twist of the trivial representation (Lemma \ref{lem:LefschetzIsStabilizing}). We assume that we are in the second case; after twisting we may assume that $\pi_{\vst}$ is the trivial representation. Let $G_1 \subset G$ be the kernel of the factor of similitudes. By strong approximation the subset $G_1(F) G_1(F_{\vst}) \subset G_1(\A_F)$ is dense if $G_{1, F_{\vst}}$ is non-anisotropic. Let us assume this for now. Let $f \in \pi$. Since $\pi_{\vst}$ is the trivial representation, $f$ is invariant under $G(F_{\vst})$. Thus $f$ is $G_1(F) G_1(F_{\vst})$-invariant, and hence $G_1(\A_F)$ invariant. This implies that $\pi$ is abelian. It remains to check that $G_{1, F_{\vst}}$ is non-anisotropic. In the split case, we have $G_{1, F_{\vst}} \simeq \Sp_{2n, F_{\vst}}$. In the non-split case, the group $G_{1,F_{\vst}}$ is of the following form. Let $D / F_{\vst}$ be the quaternion algebra, and consider the involution on $D$ defined by $\li x = \Tr(x) - x$ where $\Tr$ is the reduced trace. Then $G(1,F_{\vst}) \simeq \Sp_n(D)$ is the group of $g \in \GL_n(D)$ such that $g A_n {}^\tu{t} \li g = c(g) A_n$ for some $c(g) \in F^\times_{\vst}$, where $A_n$ is the matrix with 1's on the anti-diagonal, and 0's everywhere else \cite[item (3), p.92]{PlatonovRapinchuk}. For $n > 1$ the group $\Sp_n(D)$ has a strict parabolic subgroup and thus $G_{1, F_{\vst}}$ is not anisotropic.
\end{proof}

\section{Conjugacy in the standard representation}

Let $V$ be a finite dimensional $\lql$-vector space and $\langle \cdot, \cdot \rangle$ a non-degenerate, symmetric or skew-symmetric bilinear form on $V$. Let $H \subset \GL(V)$ be the subgroup of elements that preserve $\langle \cdot, \cdot \rangle$ (resp. preserve it up to scalar). Thus, $H(\lql)$ is either an orthogonal group or a symplectic group (resp. of similitude). Write $\simil \colon H(\lql) \to \lql^\times$ for the factor of similitude. Note $\simil$ is non-trivial only for the groups $\GO(V, \langle \cdot, \cdot \rangle)$ and $\GSp(V, \langle \cdot, \cdot \rangle)$.

The results of this appendix largely overlap with \cite[Prop A (and Prop 2.1)]{Wang2015}, although he works in a slightly different setting.

\begin{proposition}[{Larsen, cf. proof of \cite[Prop. 2.3, Prop. 2.4]{LarsenConjugacy}}]\label{prop:ConjugacyStdRep}
Let $\Gamma$ be a topological group. Consider two (continuous) representations $\phi_1, \phi_2 \colon \Gamma \to H(\lql)$ such that $\phi_1$ is semisimple. Then $\phi_1, \phi_2$ are $H(\lql)$-conjugated if and only if $\simil \circ \phi_1 = \simil \circ \phi_2$ and $\std \circ \phi_1$, $\std \circ \phi_2$ are $\GL_m(\lql)$-conjugate.
\end{proposition}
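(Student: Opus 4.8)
The forward implication is immediate: if $\phi_2 = g\,\phi_1\,g^{-1}$ for some $g \in H(C)$, then composing with $\std$ exhibits $\std\circ\phi_1$ and $\std\circ\phi_2$ as $\GL_m(C)$-conjugate, while composing with $\simil$ gives $\simil\circ\phi_2 = \simil\circ\phi_1$ because $\simil$ is valued in the abelian group $C^\times$. So the plan is to prove the converse.

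Set $\nu := \simil\circ\phi_1 = \simil\circ\phi_2 \colon \Gamma \to C^\times$, a continuous character, which we regard also as a one-dimensional $\Gamma$-representation. Since $\std$ is faithful on $H$, the hypothesis that $\phi_1$ is semisimple means precisely that $V$ is a semisimple $\Gamma$-module via $\phi_1$, and $\std\circ\phi_2 \cong \std\circ\phi_1$ then forces $\phi_2$ to be semisimple as well. The defining form $\langle\cdot,\cdot\rangle$ of $H$ satisfies $\langle \phi_i(\gamma)v, \phi_i(\gamma)w\rangle = \nu(\gamma)\langle v,w\rangle$ for $i=1,2$. Choose $h \in \GL_C(V)$ with $h\,\phi_1(\gamma)\,h^{-1} = \phi_2(\gamma)$ for all $\gamma$, and define the pulled-back form $B'(v,w) := \langle hv, hw\rangle$. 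A direct computation shows that $B'$ is nondegenerate, has the same symmetry type (symmetric or skew-symmetric) as $\langle\cdot,\cdot\rangle$, and satisfies $B'(\phi_1(\gamma)v,\phi_1(\gamma)w) = \nu(\gamma)B'(v,w)$. Hence everything reduces to the linear-algebraic claim: \emph{if $B$ and $B'$ are nondegenerate bilinear forms on $V$ of the same symmetry type, each $\nu$-twisted invariant under the semisimple representation $\phi_1$ (meaning $B(\phi_1(\gamma)v,\phi_1(\gamma)w) = \nu(\gamma)B(v,w)$), then some $z$ in the centralizer $Z := Z_{\GL_C(V)}(\phi_1)$ satisfies $B'(zv,zw) = B(v,w)$.} Granting this with $B = \langle\cdot,\cdot\rangle$, the element $g := hz$ conjugates $\phi_1$ to $\phi_2$ (as $z$ centralizes $\phi_1$) and preserves $\langle\cdot,\cdot\rangle$ exactly, so $g \in H(C)$, finishing the proof.

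To establish the claim I would decompose $V = \bigoplus_i W_i \otimes M_i$ into $\phi_1$-isotypic components, with the $W_i$ pairwise non-isomorphic irreducible $\Gamma$-modules; since $C$ is algebraically closed, Schur's lemma gives $Z \cong \prod_i \GL_C(M_i)$, acting through the multiplicity spaces. For each $i$ let $i'$ denote the index with $W_{i'} \cong W_i^\vee \otimes \nu$ (nondegeneracy of a $\nu$-twisted invariant form forces $i'$ to exist for every $i$). Such a form pairs $W_i\otimes M_i$ with $W_j\otimes M_j$ nontrivially only when $j = i'$, so it splits as a sum over the pairs $\{i,i'\}$ with $i'\neq i$ and over the self-dual indices $i = i'$. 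On a pair with $i'\neq i$, a $\nu$-twisted invariant form of either symmetry type amounts to a perfect pairing $M_i\otimes M_{i'}\to C$, and all such are related by $\GL_C(M_i)$. On a self-dual index, Schur's lemma produces a $\nu$-twisted invariant form $\beta_i$ on $W_i$, unique up to scalar, hence of a well-defined symmetry type $\epsilon_i\in\{\pm1\}$; a $\nu$-twisted invariant form on $W_i\otimes M_i$ is then $\beta_i\otimes b$ for a bilinear form $b$ on $M_i$, of symmetry type $\epsilon_i\cdot(\text{type of }b)$. Because over an algebraically closed field of characteristic $\neq 2$ any two nondegenerate symmetric (resp. skew-symmetric) bilinear forms on a fixed finite-dimensional space are equivalent, fixing the symmetry type of $B$ fixes the type of each $b$, and $\GL_C(M_i)$ acts transitively on nondegenerate forms of that type; assembling these component-wise statements yields $z\in Z$.

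The routine pieces---the pullback identity for $B'$ and the classification of bilinear forms over $C$---are standard. The step I expect to require the most care is the bookkeeping of symmetry types through the self-dual isotypic components, i.e.\ correctly combining the parity $\epsilon_i$ of the canonical form on $W_i$ with the parity of the form induced on $M_i$; this is elementary but is the real content of the argument. Continuity of $\Gamma$ enters nowhere beyond the trivial remark that conjugating a continuous homomorphism by a fixed linear automorphism is again continuous.
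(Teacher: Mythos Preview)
Your proof is correct and follows essentially the same route as the paper's: reduce to the question of whether two nondegenerate $\nu$-twisted $\Gamma$-invariant forms of the same symmetry type on the semisimple module $V$ are $\Aut_\Gamma(V)$-congruent, then use the isotypic decomposition and Schur's lemma to cut this down to congruence of forms on the multiplicity spaces, which is settled by the classification of forms over an algebraically closed field of characteristic $\neq 2$. If anything your write-up is more careful than the paper's sketch---you explicitly separate the self-dual isotypics from the paired ones and track the sign $\epsilon_i$, whereas the paper's proof compresses this into the single phrase ``congruence classes of invertible symmetric matrices'' without spelling out the case analysis.
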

\begin{remark}
The conclusion of Proposition \ref{prop:ConjugacyStdRep} fails in general for the special orthogonal group in even dimension. In odd dimension the group $\uO_{2n+1}(\lql)$ equals $\{\pm 1\} \times \SO_{2n+1}(\lql)$ and so the proposition is true for $\SO_{2n+1}(\lql)$ as well.
\end{remark}
\begin{proof}[Proof of Proposition \ref{prop:ConjugacyStdRep}]
We consider the group $H = \GO(V, \langle \cdot, \cdot \rangle)$ with $\langle \cdot, \cdot \rangle$ symmetric and non-degenerate, the other groups are treated in a similar fashion. Fix a morphism $\phi \colon \Gamma \to \GO(V, \langle \cdot, \cdot \rangle)$ with $\std \circ \phi$ semisimple. Write $\chi = \simil \circ \phi \colon \Gamma \to \lql^\times$. Consider the set $X_\chi(\phi)$ of $\GO(V, \langle \cdot, \cdot \rangle)$-conjugacy classes of morphisms $\phi' \colon \Gamma \to \GO(V, \langle \cdot, \cdot \rangle)$ such that $\std \circ \phi ' \simeq \std \circ \phi$ and $\chi = \simil \circ \phi'$. We view the space $V$ as a $\Gamma$-representation via $\phi$. We have the injection
$$
X_\chi(\phi) \injects \Isom_\Gamma(V, V^* \otimes \chi) / \Aut_\Gamma(V), \quad [\phi', \rho \colon \std \circ \phi' \isomto_\Gamma \std \circ \phi] \mapsto \rho_* \langle \cdot, \cdot \rangle,
$$
whose image is the set of non-degenerate symmetric pairings taken modulo $\Aut_{\Gamma}(V)$, where the automorphisms $\sigma \in \Aut_{\Gamma}(V)$ act on $\Isom_\Gamma(V, V^* \otimes \chi)$ via $\rho \mapsto (\sigma^* \cdot )\circ \rho \circ (\sigma \cdot)$. Using the pairing $\langle \cdot, \cdot \rangle$ on $V$ we can further identify,
$$
\Isom_\Gamma(V, V^* \otimes \chi) / \Aut_\Gamma(V) \isomto \Aut_\Gamma(V) / \Aut_\Gamma(V)\textup{-congruence},
$$
where by congruence we mean the action $\tau \mapsto \sigma^\tu{t} \tau \sigma$ for $\sigma \in \Aut_\Gamma(V)$; here the transpose is defined using $\langle \cdot, \cdot \rangle$. Since $(V, \phi)$ is semisimple we can consider the isotypical decomposition $V = \bigoplus_{i=1}^t V_i^{d_i}$, and so by Schur's lemma $\End_\Gamma(V) = \prod_{i=1}^t \uM_{d_i}(\lql)$. We obtain an embedding
$$
X_\chi(\phi) \injects \prod_{j=1}^t \GL_{d_i}(\lql) / \GL_{d_i}(\lql)\tu{-congruence},
$$
where two matrices $X, Y \in \GL_{d_i}(\lql)$ are $\GL_{d_i}(\lql)$-congruent if there exists a third matrix $g \in \GL_{d_i}(\lql)$ such that $Y = g^{\tu{t}} X g$. The image of $X_\chi(\phi)$ in the set on the right hand side decomposes along the product and is in each $\GL_{d_j}(\lql)$-factor equal to the set of congruence classes of invertible symmetric matrices. Since $\lql$ is algebraically closed (of characteristic $\neq 2$), these classes have exactly one element.
\end{proof}

\bibliographystyle{amsalpha}

\providecommand{\bysame}{\leavevmode\hbox to3em{\hrulefill}\thinspace}
\providecommand{\MR}{\relax\ifhmode\unskip\space\fi MR }
\providecommand{\MRhref}[2]{%
 \href{http://www.ams.org/mathscinet-getitem?mr=#1}{#2}
}
\providecommand{\href}[2]{#2}



\end{document}